\documentclass[12pt]{amsart}
\usepackage{amssymb}
\usepackage{mathrsfs}
\usepackage{verbatim}
\address{Department of Mathematics, Yale University, New Haven, CT, USA}
\email{ivan.loseu@gmail.com}
\thanks{MSC 2010: 16E35, 16G99}
\newcommand{\OCat}{\mathcal{O}}
\newcommand{\Cat}{\mathcal{C}}
\newcommand{\gr}{\operatorname{gr}}
\newcommand{\K}{\mathbb{K}}

\newcommand{\Ext}{\operatorname{Ext}}
\newcommand{\End}{\operatorname{End}}
\newcommand{\Hom}{\operatorname{Hom}}
\newcommand{\Loc}{\operatorname{Loc}}
\newcommand{\A}{\mathcal{A}}
\newcommand{\Ca}{\mathsf{C}}
\newcommand{\Z}{\mathbb{Z}}
\newcommand{\C}{\mathbb{C}}
\newcommand{\R}{\mathbb{R}}

\newcommand{\WC}{\mathfrak{WC}}

\newcommand{\param}{\mathfrak{p}}
\newcommand{\paramq}{\mathfrak{P}}
\newcommand{\Str}{O}
\newcommand{\Coh}{\operatorname{Coh}}
\newcommand{\B}{\mathcal{B}}

\newcommand{\Tcal}{\mathcal{T}}
\newcommand{\g}{\mathfrak{g}}

\newcommand{\quo}{/\!/}
\newcommand{\VA}{\operatorname{V}}
\newcommand{\F}{\mathbb{F}}
\newcommand{\Q}{\mathbb{Q}}
\newcommand{\h}{\mathfrak{h}}
\newcommand{\M}{\mathcal{M}}

\newcommand{\U}{\mathcal{U}}

\newcommand{\I}{\mathcal{I}}
\newcommand{\HC}{\operatorname{HC}}

\newcommand{\Ring}{\mathsf{R}}
\newcommand{\Ifrak}{\mathfrak{I}}
\newcommand{\tf}{\mathfrak{t}}
\newcommand{\Weyl}{\mathbb{A}}

\newcommand{\Pic}{\operatorname{Pic}}
\newtheorem{Thm}{Theorem}[section]
\newtheorem{Prop}[Thm]{Proposition}
\newtheorem{Cor}[Thm]{Corollary}
\newtheorem{Lem}[Thm]{Lemma}
\theoremstyle{definition}
\newtheorem{Ex}[Thm]{Example}
\newtheorem{defi}[Thm]{Definition}
\newtheorem{Rem}[Thm]{Remark}

\numberwithin{equation}{section}
\oddsidemargin=0cm
\evensidemargin=0cm
\textwidth=16cm
\textheight=232mm
\unitlength=1mm
\title{On modular categories $\mathcal{O}$
for quantized symplectic resolutions}
\author{Ivan Losev}
\begin{document}
\begin{abstract}
In this paper we study highest weight and standardly stratified structures on modular analogs of categories
$\mathcal{O}$ over quantizations of symplectic resolutions. We also show how to recover the usual
categories $\mathcal{O}$ (reduced mod $p\gg 0$) from our modular categories.
More precisely, we consider a conical symplectic resolution that is
defined over a finite localization of $\Z$ and is equipped with a Hamiltonian action of
a torus $T$ that has finitely many
fixed points. We consider algebras $\A_\lambda$ of global sections of a quantization in characterstic
$p\gg 0$, where $\lambda$ is a parameter.
Then we consider  a category  $\tilde{\mathcal{O}}_\lambda$
consisting of all finite dimensional $T$-equivariant $\A_\lambda$-modules.
Under reasonable assumptions that hold in most examples of interest, we show that for $\lambda$ lying in a {\it p-alcove} $\,^p\!A$, the category
$\tilde{\mathcal{O}}_\lambda$ is highest weight (in some generalized sense).
Moreover, we show that every face of $\,^p\!A$ that survives in $\,^p\!A/p$ when $p\rightarrow \infty$
defines a standardly stratified structure on $\tilde{\OCat}_\lambda$.
We identify the associated graded categories for these standardly stratified
structures with reductions mod $p$ of the usual categories $\mathcal{O}$
in characteristic $0$. Applications of our construction include computations
of wall-crossing bijections in characteristic $p$ and the existence of gradings
on categories $\mathcal{O}$ in characteristic $0$.
\end{abstract}
\maketitle
\tableofcontents
\section{Introduction}
In this paper we study some aspects of the representation theory of quantizations of symplectic resolutions
in large positive characteristic.

\subsection{Representations in characteristic $0$}
We start by recalling some features of the representation theory in characteristic $0$, which is
more classical and more extensively studied. Let $X$
be a conical symplectic resolution  over $\C$ that is defined over a finite localization of $\Z$.
We assume that it comes with an action of a torus $T$ that is Hamiltonian, has finitely many fixed points,
and is  defined over a finite localization of $\Z$. Examples are provided by $X=T^*(G/B)$ (where $G$ is a semisimple
algebraic group, $B$ is a Borel subgroup, and $T$ is a maximal torus in $B$) or $X=\operatorname{Hilb}_n(\mathbb{A}^2)$
(and $T$ is a one-dimensional torus acting on $\mathbb{A}^2$ in a Hamiltonian way,  the action lifts
to $\operatorname{Hilb}_n(\mathbb{A}^2)$).

Consider the space $\paramq:=H^2(X,\C)$. To $\lambda\in \paramq$, we can assign a quantization $\A^\theta_\lambda$
of $X$ that is a microlocal sheaf of filtered algebras on $X$. For example, in the case of $X=T^*(G/B)$ we get
the microlocalization of the sheaf of $\lambda-\rho$-twisted differential operators on $G/B$. Taking the global
sections of the sheaf $\A^\theta_\lambda$, we get a filtered quantization of $\C[X]$ to be denoted by $\A_\lambda$.
In the case when $X=T^*(G/B)$ we get the central reductions of the universal enveloping algebra $U(\g)$,
while for the case of $X=\operatorname{Hilb}_n(\mathbb{A}^2)$ we recover the spherical subalgebras of the rational
Cherednik algebras for $S_n$ (introduced in \cite{EG}). We note that $T$ acts on $\A_\lambda$ and the action
is Hamiltonian.

Since $T$ has finitely many fixed points in $X$, it makes sense to speak about categories $\mathcal{O}$ for $\A_\lambda$,
see \cite[Sections 3.2,3.3]{BLPW}. These categories depend on a choice of a generic one-parameter subgroup $\nu:\C^\times\rightarrow T$
(where ``generic'' means that $X^{\nu(\C^\times)}=X^T$).  The category corresponding to $\nu$ is denoted by $\mathcal{O}_\nu(\A_\lambda)$. It can be defined as the category of finitely generated
$\A_\lambda$-modules that admit a $\nu(\C^\times)$-equivariant structure so that the weights of $\nu$
are bounded from above.   For $X=T^*(G/B)$ and regular $\lambda$, we recover an infinitesimal block of
the  classical BGG category  $\mathcal{O}$ (and the simples are labelled by the elements of the Weyl group),
while for $X=\operatorname{Hilb}_n(\mathbb{A}^2)$ we get the category $\mathcal{O}$ for the rational Cherednik
algebra for $S_n$ (and the simples are labelled by the partitions of $n$).
The categories $\mathcal{O}$ over quantized symplectic resolutions have been studied very extensively recently from various perspectives, see, e.g., \cite{BLPW,CWR,Perv}--
not to mention numerous papers dealing with special
cases such as BGG categories $\mathcal{O}$, categories $\mathcal{O}$ over rational Cherednik
algebras, Nakajima quiver varieties and quantized Coulomb branches.

Let us list some known results about the structure of $\mathcal{O}_\nu(\A_\lambda)$.
We can talk about {\it regular} (i.e., non-degenerate)
parameters $\lambda$. Under mild assumptions,  we will show below that there is a finite union of
affine hyperplanes (to be called {\it singular}) in $\paramq$ such that a parameter outside this finite union is
regular. For a regular parameter, the simples in $\mathcal{O}_\nu(\A_\lambda)$
are labelled by $X^T$, \cite[Section 5.1]{BLPW}. Also it is known that $\mathcal{O}_\nu(\A_\lambda)$
is a highest weight category provided $\lambda$ is regular, \cite[Section 5.2]{BLPW}.

In the examples we consider, these finite collections of hyperplanes are as follows.
For $X=T^*(G/B)$ we just take the root hyperplanes, $\ker\alpha^\vee$, where $\alpha^\vee$
runs over the set of coroots. For $X=\operatorname{Hilb}_n(\mathbb{A}^2)$ we usually consider a shifted parameter,
$c=\lambda-1/2$. Here for the hyperplanes we can take the following
collection of points: $\{-\frac{a}{b}+i| 1\leqslant a<b\leqslant n, i\in \Z, |i|\leqslant s\}$
for some non-negative integer $s$ (one can actually prove that $s=0$ but this is not going to
be relevant for our purposes).

Note that we can also consider the category of $T$-equivariant objects $\mathcal{O}_\nu^T(\A_\lambda)$
in $\mathcal{O}_\nu(\A_\lambda)$. This category is the direct sum of several copies of $\mathcal{O}_\nu(\A_\lambda)$
labelled by the characters of $T$, so we do not get anything new. The simples are labelled by
$X^T\times \mathfrak{X}(T)$, where $\mathfrak{X}(T)$ stands for the character lattice of $T$.
In this case, we can define a highest weight order on $X^T\times \mathfrak{X}(T)$
as follows: $(\mathsf{x},\kappa)<_\nu(\mathsf{x}',\kappa')$ if $\langle \nu,\kappa\rangle<
\langle\nu,\kappa'\rangle$.

\subsection{Representations in characteristic $p\gg 0$}
From now on and until the end of the introduction
suppose $\lambda$ is rational, i.e., $\lambda\in \paramq_{\Q}:=H^2(X,\Q)$.
We assume that all the objects from the previous section
($X$, the $T$-action, $\A_\lambda^\theta$) are defined over a finite localization $\Ring$
of $\Z$ depending only on the denominator of $\lambda$.
In particular, we have an $\Ring$-algebra $\A_{\lambda,\Ring}$ and the base
change $\A_{\lambda,\F}:=\F\otimes_{\Ring}\A_{\lambda,\Ring}$ for $\F:=\overline{\F}_p$
with $p\gg 0$ (we will also impose some congruence conditions on $p$, for example, when $X=\operatorname{Hilb}_n(\mathbb{A}^2)$ we will assume that $p+1$ is divisible by $n!$).
So we get a quantization $\A_{\lambda,\F}$ of $\F[X]$ for all
$\lambda\in \paramq_{\F}$. On $\A_{\lambda,\F}$ we still have a Hamiltonian action of $T_{\F}$.
In what follows we assume that the center of $\A_{\lambda,\F}$ is identified
with $\F[X_\F^{(1)}]$, where ``(1)'' denotes the Frobenius twist. This holds in most of the
examples of interest. In particular, the algebra $\A_{\lambda,\F}$ is a finite module
over its center, hence all irreducible $\A_{\lambda,\F}$-modules are finite
dimensional.

For $\lambda\in \paramq_{\F_p}(=H^2(X,\F_p))$ (this is the most interesting case),
we consider the category $\tilde{\OCat}(\A_{\lambda,\F})$
consisting of all finite dimensional $T_{\F}$-equivariant $\A_{\lambda,\F}$-modules.
Consider the {\it $p$-singular} hyperplanes in $\paramq_{\F_p}$ that are obtained as the reductions
mod $p$ of the singular hyperplanes mentioned in the previous section.
We  can still talk about the {\it regular} parameters in $\paramq_{\F_p}$:
the parameters lying outside these hyperplanes. The $p$-singular hyperplanes
split the integral lattice $\paramq_{\Z}$ into the union of regions to be called {\it p-alcoves}.
For example, for $X=T^*(G/B)$ we get the usual $p$-alcoves (the {\it $p$-singular} hyperplanes
are of the form $\langle\alpha^\vee,\bullet\rangle=pm$, where $\alpha^\vee$ is a coroot and
$m\in \Z$). In particular, we  have the so called fundamental $p$-alcove, it is consists
from the points $\lambda$ in the weight lattice such that $\langle \alpha_i^\vee,\lambda\rangle>0$
(where $\alpha_i^\vee$ are simple coroots) and $\langle\alpha_0^\vee,\lambda\rangle>-p$,
where $\alpha_0^\vee$ is the minimal coroot. In the case of $X=\operatorname{Hilb}_n(\mathbb{A}^2)$,
each $p$-alcove consists of integers $z$ satisfying $\displaystyle
\frac{(p+1)a}{b}+s<z<\frac{(p+1)a'}{b'}-s$, where $\displaystyle
\frac{a}{b}<\frac{a'}{b'}$ are rational numbers with denominators between
$2$ and $n$ such that the interval $\displaystyle (\frac{a}{b},\frac{a'}{b'})$
contains no such rational numbers (and the nonnegative integer $s$ is such as in
the previous section).

It turns out that inside of each $p$-alcove categories $\tilde{\OCat}(\A_{\lambda,\F})$
are naturally equivalent, Proposition \ref{Prop:cat_O_equiv}. The categories for two $p$-alcove that are opposite with
respect to a common face are perverse equivalent via  so called {\it wall-crossing
functors}.

\subsection{Highest weight  structures}
It turns out that the categories $\tilde{\OCat}(\A_{\lambda,\F})$ carry highest weight structures
(in some generalized sense to be explained below). We have one highest weight structure for each generic one-parameter subgroup of $T$.
Recall that a one-parameter subgroup $\nu:\mathbb{G}_m\rightarrow T$ is called {\it generic}
if $X^{\operatorname{im}\nu}=X^T$. We fix a one-parameter subgroup $\nu$ with this property.

Recall that a highest weight structure on an abelian category is a partial order on
the set of irreducible objects subject to certain upper triangularity conditions.
Usually, the definition is given in the case
when the category of interest is equivalent to that of modules over a finite
dimensional algebra. This is not the case with the category $\tilde{\OCat}(\A_{\lambda,\F})$:
the number of simples is infinite (and, moreover, the category does not have projectives,
they only exist in a suitable completion). However, we still have a partial order
on the set of simples that is defined as follows. The simples in $\tilde{\OCat}(\A_{\lambda,\F})$
are labelled by pairs $(\mathsf{x},\kappa)$, where $\mathsf{x}\in X^T$ and $\kappa\in \mathfrak{X}(T)$.
To $\mathsf{x}$ we can assign $\mathsf{c}_{\nu,\lambda}(\mathsf{x})\in \mathfrak{t}^*_{\Q}$,
the highest weight of the corresponding Verma module (over $\C$). We can now define
a partial order $\leqslant_{\lambda,\nu}$ on $X^T\times \mathfrak{X}(T)$ as follows: $(\mathsf{x},\kappa)\leqslant_{\lambda,\nu} (\mathsf{x}',\kappa')$ if
$(\mathsf{x},\kappa)=(\mathsf{x}',\kappa')$ or
$$\kappa-\mathsf{c}_{\nu,\lambda}(\mathsf{x})= \kappa'-\mathsf{c}_{\nu,\lambda}(\mathsf{x}')\operatorname{mod}p,
\langle\nu,\kappa\rangle<\langle\nu,\kappa'\rangle.$$
Now pick two integers $z_1\leqslant z_2$. We can consider the subquotient
$\tilde{\OCat}(\tilde{\A}_{\lambda,\F})_{[z_1,z_2]}$ corresponding to all
labels $(\mathsf{x},\kappa)$ with $z_1\leqslant \langle\nu,\kappa \rangle\leqslant z_2$.

Our first principal result, Theorem \ref{Thm:phw_structure}, is that this subquotient
category is highest weight with respect
to the partial order $\leqslant_{\nu,\lambda}$. Note that when $\dim T=1$, then
the number of simples in $\tilde{\OCat}(\A_{\lambda,\F})_{[z_1,z_2]}$ is finite,
while when $\dim T>1$, we need to slightly extend the definition of a highest
weight category.

It turns out that the equivalences of the categories $\tilde{\OCat}(\A_{\lambda,\F})$
for two parameters in the same alcove respect the highest weight structures in a suitable sense.
So we have a collection of highest weight categories labelled by alcoves.

\subsection{Standardly stratified structures}
We define the real alcoves (in $\paramq_\R$) as connected components of the complement in
$\paramq_\R$ to the hyperplanes that are obtained from the singular ones by translating them
by the lattice $\paramq_{\Z}$. Note that there is a natural bijection between
the real alcoves and the $p$-alcoves for $p\gg 0$  (we rescale a $p$-alcove
by $1/p$, then there is a unique real alcove whose intersection with a real $p$-alcove
is large in a suitable sense).

Now let $A$ be a real alcove, $\,^p\!A$ be the corresponding $p$-alcove. We impose a
congruence condition on $p$ requiring that $p+1$ is divisible by a certain number,
see Section \ref{SS_alcoves}.
Then pick a face $\Theta$ of $A$ (of any codimension). To $\Theta$ and a generic
one-parameter subgroup $\nu:\C^\times\rightarrow T$ (and, in fact, to some additional
data) we will assign a {\it standardly stratified structure} on $\tilde{\OCat}(\A_{\lambda,\F})$
(that comes from a pre-order on
the set of simples and generalizes the notion of a highest weight structure).
We will give a rigorous definition of this standardly stratified structure in Section \ref{S_SSS}.

Such a structure, in particular, gives rise to a filtration on $\tilde{\OCat}(\A_{\lambda,\F})$
by Serre subcategories. The main result of this paper, Theorem \ref{Thm:mod_O_st_stratif},
\begin{itemize}
\item[(i)]
checks the axioms of a
standardly stratified structure,
\item[(ii)] relates the associated graded to the category
$\mathcal{O}_\nu^T(\A_{\bar{\lambda}})$ for a suitable parameter $\bar{\lambda}$
(informally, the relation is that the associated graded is the reduction to characteristic
$p$ of $\mathcal{O}_\nu^T(\A_{\bar{\lambda}})$),
\item[(iii)] and shows that the reductions to characteristic $p$ of projective and simple
objects of $\mathcal{O}_\nu^T(\A_{\bar{\lambda}})$ become standard and proper
standard objects for the standardly stratified structure.
\end{itemize}

(ii) basically means that  we can  recover characteristic $0$ categories $\mathcal{O}$ from their
characteristic $p$ analogs, $\tilde{\OCat}$.

We also relate the wall-crossing functor corresponding to $\Theta$ to the standardly stratified
structures on the categories corresponding to $A$ (and the alcove opposite to $A$ with respect to
$\Theta$). Namely, we show that the wall-crossing functor is a partial Ringel duality functor
for the standardly stratified structures, Theorem \ref{Thm:WC_part_Ringel}.

\subsection{Applications}
There are two applications of the results outlined in the previous section that we explore
in this paper.

First, we use the characterization of the wall-crossing functors as partial Ringel dualities
to prove that the wall-crossing bijections (i.e., the bijections between the sets of simples
induced by the perverse wall-crossing functors) in characteristic $p\gg 0$ are the same as
in characteristic $0$. Computing the wall-crossing bijections is an important ingredient
in the Bezrukavnikov-Okounkov program of studying the representations of quantizations of
symplectic resolutions in characteristic $p$.

Second, under some additional assumptions on $X$ that hold in all examples we know,
the category $\tilde{\OCat}(\A_{\lambda,\F})$ comes with an additional grading
(induced by the contracting torus action on $X$). We use (ii) of the previous
section to show that the grading carries over to the categories $\OCat_\nu(\A_\lambda)$.
Conjecturally, the grading on $\tilde{\OCat}(\A_{\lambda,\F})$ is Koszul and we deduce
from here that the resulting grading on $\OCat_\nu(\A_\lambda)$ is Koszul.

{\bf Acknowledgements}. I would like to thanks Roman Bezrukavnikov and Andrei Okounkov
for stimulating discussions.
This work has been funded by the  Russian Academic Excellence Project '5-100'
This work was also partially supported by the NSF under grant DMS-1501558.

\section{Preliminaries on quantizations}
\subsection{Symplectic resolutions}
Let $Y$ be a normal Poisson affine variety over $\C$  with an action of $\C^\times$.
The action gives rise to a natural grading on  the algebra $\C[Y]$ of regular functions on $Y$:
$\C[Y]=\bigoplus_{i\in \Z}\C[Y]_i$. We assume that the grading is positive:  $\C[Y]_i=0$
 when $i<0$, and $\C[Y]_0=\C$. Further, we assume  there is a positive integer $d$
such that $\{\C[Y]_i,\C[Y]_j\}\subset \C[Y]_{i+j-d}$ for all $i,j$. By a {\it symplectic resolution
of singularities} of $Y$ one means a pair $(X,\rho)$ of
\begin{itemize}
\item a smooth symplectic algebraic variety $X$ (with form $\omega$)
\item a morphism $\rho:X\rightarrow Y$ of Poisson varieties that is a projective resolution of singularities.
\end{itemize}
Below we assume that $(X,\rho)$ is a symplectic resolution of singularities.
The $\C^\times$-action lifts from $Y$ to $X$ making $\rho$ equivariant, see
Step 1 of the proof of \cite[Proposition A.7]{Namikawa_flop}. Clearly, such a lift is unique.

\subsubsection{Structural results}
Note that  $\rho^*:\C[Y]\rightarrow \C[X]$  is an isomorphism because $Y$ is normal
and $\rho$ is proper and birational. By the Grauert-Riemenschneider theorem, we have
$H^i(X,O_X)=0$ for $i>0$.

The following is \cite[Proposition 2.5]{BPW}.

\begin{Lem}\label{Lem:odd_vanishing}
We have $H^i(X,\C)=0$ for $i$ odd.
\end{Lem}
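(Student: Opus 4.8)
The plan is to prove the vanishing $H^i(X,\C)=0$ for odd $i$ by exhibiting enough extra structure on the cohomology to force it. Since $X$ is a conical symplectic resolution, $X$ admits a $\C^\times$-action contracting it to the cone point, so $X$ is homotopy equivalent to the central fiber $\rho^{-1}(o)$ of the resolution; but the cleanest route is a Hodge-theoretic one. First I would recall that, because $\rho:X\to Y$ is a projective resolution with $H^i(X,O_X)=0$ for $i>0$ (noted just above via Grauert--Riemenschneider), the Hodge filtration on $H^\bullet(X,\C)$ is concentrated appropriately: more precisely, I would argue that $H^i(X,\C)$ carries a pure Hodge structure of type $(p,p)$-only. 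The key input is that $H^i(X,O_X)=0$ for $i>0$ together with the symplectic (hence everywhere-nondegenerate, trivial canonical on the resolution side in the relevant sense) structure; one deduces $H^i(X,\Omega_X^j)=0$ unless certain parity/degree constraints hold.

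The main steps, in order, would be as follows. Step 1: use that $X$ is symplectic, so there is a nondegenerate closed $2$-form $\omega\in H^0(X,\Omega_X^2)$, and via the hard Lefschetz--type argument (cup product with the class of $\omega$) one gets isomorphisms relating $H^i$ in complementary degrees; combined with the contracting $\C^\times$-action this constrains the weights. Step 2: invoke the theory of mixed Hodge structures on a smooth variety $X$ together with the fact that $X$ deformation retracts onto the projective (possibly singular) central fiber, to conclude $H^i(X,\C)$ is pure of weight $i$. Step 3: combine purity with the vanishing $H^i(X,O_X)=0$ for $i>0$ (so the $(0,i)$ Hodge piece vanishes), and with Grauert--Riemenschneider-type or Kodaira-type vanishing applied to $\rho$, to show that all Hodge pieces $H^{q}(X,\Omega_X^p)$ with $p\neq q$ vanish. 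Step 4: since $H^i(X,\C)=\bigoplus_{p+q=i}H^q(X,\Omega_X^p)$ and only $p=q$ terms survive, $H^i(X,\C)=0$ whenever $i$ is odd.

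Alternatively, and perhaps more in the spirit of \cite{BPW}, I would give a Morse-theoretic proof: choose a generic one-parameter subgroup of the contracting torus (or of $T$) with isolated fixed points on $X$, and run Bialynicki-Birula/Morse theory. The fixed points are isolated, and at each fixed point the symplectic form pairs the positive-weight and negative-weight subspaces of the tangent space, so each Morse index is even. Hence $X$ admits a cell decomposition (a stratification by the attracting cells of the BB decomposition) with only even-dimensional cells, which immediately gives $H^{\mathrm{odd}}(X,\C)=0$ (and in fact shows the cohomology is free, concentrated in even degrees). This is likely the shortest complete argument to write down given that the hypotheses already supply a torus action with finitely many fixed points.

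The hard part will be justifying the parity-of-Morse-index claim cleanly: one must check that for the BB decomposition the attracting set of each fixed point is an affine space whose dimension equals the number of positive weights of the torus on the tangent space at that point, and that nondegeneracy of $\omega$ (which is preserved by the torus since the action rescales $\omega$ by a character, and after composing with an appropriate cocharacter that character is trivial or can be normalized) forces the positive-weight and negative-weight tangent directions to be paired, hence equal in number — so every attracting cell is even-dimensional. Handling the case where the torus rescales $\omega$ nontrivially (so weights are paired up to a shift) requires a small argument that the shift does not change the parity count; this is the only genuinely delicate point, and it is exactly where the conical (contracting $\C^\times$) hypothesis is used.
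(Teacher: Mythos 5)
The paper does not actually prove this lemma: it is quoted verbatim as \cite[Proposition 2.5]{BPW}, so there is no internal argument to compare yours against. Judged on its own terms, your first (Hodge-theoretic) route has a real gap: $X$ is not proper, so there is no Hodge decomposition $H^i(X,\C)=\bigoplus_{p+q=i}H^q(X,\Omega^p_X)$ to appeal to in Step 4 (the groups $H^q(X,\Omega^p_X)$ need not even be finite dimensional, and the mixed Hodge structure on $H^i(X,\C)$ is not computed by them), and the claimed vanishing of the off-diagonal pieces does not follow from $H^{>0}(X,O_X)=0$ plus the symplectic form alone. Purity of weight $i$ (via the retraction onto the core) by itself does not force evenness; one also needs the Hodge--Tate property, which is a nontrivial theorem for symplectic resolutions, not a formal consequence of Grauert--Riemenschneider.

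Your Bia{\l}ynicki-Birula argument is the right idea in the setting of this paper (where a Hamiltonian $T$ with $X^T$ finite is a standing assumption), but you have located the delicate point in the wrong place. Parity of the cells is automatic: the attracting sets are complex affine spaces, hence of even real dimension, so no weight-pairing via $\omega$ is needed. The genuine issues are: (i) for a Hamiltonian cocharacter $\nu$ the attracting sets do \emph{not} cover $X$ (for $T^*(G/B)$ their union is the Lagrangian union of conormals to Schubert cells), so you must replace $\nu$ by $\nu(t)\delta(t^N)$ for $N\gg 0$, where $\delta$ is the conical action; one then checks that the fixed locus is still $X^T$ and that limits exist everywhere because $\rho$ is proper and $\delta$ contracts $Y$ to the cone point; (ii) since $X$ is only quasi-projective one must justify that this decomposition is filtrable and computes $H^*(X,\C)$ (this is the ``semiprojective'' situation and does work, but it is a citation or an argument, not a triviality); and (iii) the resulting proof establishes the lemma only under the hypothesis that $X^T$ is finite, which is weaker than the cited \cite[Proposition 2.5]{BPW} (valid for any conical symplectic resolution), though sufficient everywhere this paper uses it.
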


\begin{Cor}\label{Lem:Pic_Hom}
The Chern character map $c_1:\operatorname{Pic}(X)\rightarrow H^2(X,\Z)$ induces an isomorphism
$\mathbb{Q}\otimes_{\Z}\operatorname{Pic}(X)\xrightarrow{\sim} H^2(X,\mathbb{Q})$.
\end{Cor}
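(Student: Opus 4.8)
The plan is to deduce the corollary from Lemma~\ref{Lem:odd_vanishing} together with the vanishing of coherent cohomology $H^i(X,O_X)=0$ for $i>0$ noted just above the lemma. The Chern character (equivalently, the first Chern class $c_1$) gives a homomorphism $\operatorname{Pic}(X)\to H^2(X,\Z)$, and after tensoring with $\Q$ it suffices to show this map is an isomorphism onto $H^2(X,\Q)$. The most efficient route is through the exponential sequence $0\to \Z\to O_X\to O_X^\times\to 1$ (using that $X$ is smooth over $\C$, so we may work with the classical topology), whose long exact sequence in cohomology reads
\[
H^1(X,O_X)\to H^1(X,O_X^\times)\to H^2(X,\Z)\to H^2(X,O_X).
\]
Here $H^1(X,O_X^\times)=\operatorname{Pic}(X)$ and the connecting map is $c_1$. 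By $H^1(X,O_X)=H^2(X,O_X)=0$ (Grauert--Riemenschneider, as cited), the map $c_1:\operatorname{Pic}(X)\to H^2(X,\Z)$ is an isomorphism of abelian groups on the nose.

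Given that, I would finish by tensoring with $\Q$: an isomorphism of abelian groups remains an isomorphism after $-\otimes_\Z\Q$, so $\Q\otimes_\Z\operatorname{Pic}(X)\xrightarrow{\sim}\Q\otimes_\Z H^2(X,\Z)$. It remains to identify $\Q\otimes_\Z H^2(X,\Z)$ with $H^2(X,\Q)$. This is the universal coefficient theorem: there is an exact sequence $0\to H^2(X,\Z)\otimes\Q\to H^2(X,\Q)\to \operatorname{Tor}_1^\Z(H^3(X,\Z),\Q)\to 0$, and $\operatorname{Tor}_1^\Z(-,\Q)=0$ since $\Q$ is flat (torsion-free) over $\Z$; alternatively one can invoke Lemma~\ref{Lem:odd_vanishing} to note $H^3(X,\C)=0$, hence $H^3(X,\Q)=0$ and $H^3(X,\Z)$ is torsion, but flatness of $\Q$ already suffices and is cleaner. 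Composing the two isomorphisms yields the claim, and by construction the composite is induced by the Chern character map.

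One should be slightly careful about whether $\operatorname{Pic}(X)$ here means the algebraic Picard group or the topological one ($H^1$ of the sheaf of nonvanishing \emph{holomorphic} functions); since $X$ is smooth and we are over $\C$, GAGA-type comparison for line bundles on such $X$ (or simply the fact that the statement is about the image in $H^2(X,\Q)$) makes this harmless, but it is the one point worth a remark. The main --- and really only --- obstacle is this bookkeeping about which Picard group and which cohomology theory one uses; the cohomological input (the two coherent vanishing statements plus flatness of $\Q$) is entirely routine once the exponential sequence is set up. In short: exponential sequence $+$ Grauert--Riemenschneider vanishing gives $\operatorname{Pic}(X)\cong H^2(X,\Z)$ integrally, and then $-\otimes_\Z\Q$ together with flatness of $\Q$ upgrades this to the stated isomorphism with $H^2(X,\Q)$.
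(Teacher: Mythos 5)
Your argument is the standard one and is surely what the paper has in mind: the corollary is stated without proof, immediately after the remark that $H^i(X,O_X)=0$ for $i>0$, and the exponential sequence plus that vanishing is the intended route. So in substance the proposal matches the (implicit) proof.

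The one place where your "harmless" is doing real work is the comparison of the algebraic Picard group with the analytic one appearing in the exponential sequence. For a general smooth non-proper complex variety this comparison genuinely fails even rationally: for $X=(\C^\times)^2$ the algebraic Picard group is trivial while $H^1(X^{an},O^\times)\cong H^2(X,\Z)\cong\Z$ (the variety is Stein, so $H^1(X^{an},O)=0$ and the exponential sequence gives an analytic line bundle that is not algebraic). So the exponential-sequence computation by itself only identifies $H^2(X,\Z)$ with the \emph{analytic} Picard group, and one still needs both injectivity and surjectivity of $\Q\otimes\operatorname{Pic}(X)\to\Q\otimes\operatorname{Pic}(X^{an})$; the observation that the target is $H^2(X,\Q)$ does not dispense with this. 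What saves the argument here is the specific geometry: $\rho:X\to Y$ is projective with $Y$ affine and carries a contracting $\C^\times$-action, so $X$ retracts onto the proper central fiber and every (analytic) line bundle admits a $\C^\times$-equivariant structure and is algebraic --- this is exactly how the analogous statement is established in \cite{BPW}. With that point made explicit, the rest of your proof (vanishing of $H^1$ and $H^2$ of $O_X$ by Grauert--Riemenschneider, flatness of $\Q$ over $\Z$) is correct and complete; Lemma \ref{Lem:odd_vanishing} is not actually needed.
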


\subsubsection{Deformations}\label{SSS_deformations}
We will be interested in deformations $\hat{X}/\param'$ of $X$, where $\param'$ is a finite dimensional vector space,
and $\hat{X}$ is a symplectic scheme over $\param'$ with a symplectic form $\hat{\omega}\in
\Omega^2(\hat{X}/\param')$ and also with a $\C^\times$-action subject to the following conditions.

\begin{itemize}
\item We have $X=\{0\}\times_{\param'}\hat{X}$ and $\hat{\omega}$ restricts to $\omega$;
\item the morphism $\hat{X}\rightarrow \param'$ is $\C^\times$-equivariant for the action
on $\param'$ given by $t.p=t^{-d}p$;
\item the restriction of the action to $X$ coincides with the contracting action;
\item $t.\hat{\omega}:=t^{d}\hat{\omega}$.
\end{itemize}
It turns out that there is a universal such deformation $X_{\param}$ over $\param:=H^2(X,\C)$
(any other deformation is obtained via the pull-back with respect to
a unique linear map $\param'\rightarrow \param$).  This result for formal deformations is
due to Kaledin-Verbitsky, \cite{KV}, but then it carries over to the algebraic setting
thanks to the contracting $\C^\times$-action on $X$, see  \cite{Namikawa_flop}.

Let $Y_{\param}$ stand for $\operatorname{Spec}(\C[X_{\param}])$.   The natural morphism
$\tilde{\rho}:X_{\param}\rightarrow Y_{\param}$ is projective and birational.
Since the variety $X$ has no higher cohomology, $Y_{\param}$
is a deformation of $Y$ over $\param$ meaning, in particular, that $\C[Y_{\param}]/(\param)=\C[Y]$.
For $\lambda\in \param$, let $X_\lambda,Y_\lambda$
denote the fibers of $X_{\param},Y_{\param}$ over $\lambda$. Let $\param^{sing}$ denote the
locus of $\lambda\in \param$ such that $\rho_\lambda:X_\lambda\rightarrow Y_\lambda$
is not an isomorphism and set $\param^{reg}:=\param \setminus \param^{sing}$. Then, according to Namikawa, \cite[Main theorem]{Namikawa_Poisson_bir}, $\param^{sing}$ is the union of
codimension $1$ subspaces in $\param$ to be called {\it walls} (or {\it classical walls}).
The elements of $\param^{reg}$ will be called {\it generic}.

\subsubsection{Classification of symplectic resolutions and the Weyl group}\label{SSS_sympl_resol_classif}
Let us describe the possible symplectic resolutions of $Y$ following Namikawa.

If $X^1,X^2$ are two  symplectic resolutions of $Y$, then there are
open subvarieties $\breve{X}^i\subset X^i, i=1,2,$ with $\operatorname{codim}_{X^i} X^i\setminus \breve{X}^i
\geqslant 2$ and $\breve{X}^1\xrightarrow{\sim} \breve{X}^2$,
see, e.g., \cite[Proposition 2.19]{BPW}. This allows to identify the Picard groups, $\operatorname{Pic}(X^1)=\operatorname{Pic}(X^2)$. Let $\param_{\Z}$ be the image of
$\operatorname{Pic}(X)$ in $H^2(X,\C)$.

Set $\param_{\R}:=\R\otimes_{\Z}\param_{\Z}=H^2(X,\R)$. According to \cite{Namikawa_Poisson_bir},
there is a finite group $W$ acting on $\param_{\R}$ as a reflection group, such that the
movable cone $C_{mov}$ of $X$ (that does not depend on the choice of a resolution by the previous
paragraph) is a fundamental chamber for $W$. Further, the set of classical walls introduced in
Section \ref{SSS_deformations} is $W$-stable  and the walls for $C$ are among the classical walls. So the
classical walls
further split $C_{mov}$ into chambers (to be called {\it classical chambers}) and the set of (isomorphism classes of) conical symplectic resolutions of $Y$ is in one-to-one
correspondence with the set of chambers inside $C_{mov}$.

For $\theta\in C_{mov}\setminus \param^{sing}$ we define $X^\theta$ to be the symplectic resolutions corresponding to
the classical chamber containing $\theta$ (to be called below simply the chamber of $\theta$).
For $w\in W$, we set $X^{w\theta}:=X^\theta$. But we twist the identification of $\param$ with
$H^2(X,\C)$ by $w$ so that the ample cone for $X$ in $\param$ now contains $w\theta$.

We note that $W$ acts on $Y_{\param}$ by $\C^\times$-equivariant Poisson automorphisms making $Y_{\param}\rightarrow \param$
$W$-equivariant. The quotient $Y_\param/W$ is a universal $\C^\times$-equivariant
deformation of $Y$, \cite{Namikawa2}. In particular, it is independent of the choice of
a resolution $X$. Note that
the locus in $X_{\param}$, where $X_{\param}\rightarrow Y_{\param}$ is not an isomorphism
has codimension $\geqslant 2$. Let $Y_\param^0$ denote the complement to this locus. This allows to identify  $\Pic(X_{\param})$
with $\Pic(Y_\param^0)$. On the other hand, every line bundle on $X$ uniquely deforms
to $X_{\param}$ giving an identification $\Pic(X)\cong \Pic(X_{\param})$. So for two different
resolutions $X,X'$ we get identifications
$$\Pic(X)\xrightarrow{\sim} \Pic(X_\param)\xrightarrow{\sim} \Pic(Y^0_\param)
\xrightarrow{\sim} \Pic(X'_\param)\xrightarrow{\sim} \Pic(X').$$
It is easy to see that the resulting identification $\Pic(X)\xrightarrow{\sim}
\Pic(X')$ is the same as what we had in the beginning of the section.

Note that $W$ acts on $Y_\param^0$, which gives a $W$-action on $\Pic(X^\theta)$. We identify
$\Pic(X^{\theta})$ with $\Pic(X^{w\theta})$ by  $w$. In particular, for $\chi\in \Pic(X)$
it makes sense to speak about the line bundle $\Str(\chi)$ on  $X^\theta$ for every
generic $\theta$.

We note that $W$ acts on $Y_{\param}$ by $\C^\times$-equivariant Poisson automorphisms making $Y_{\param}\rightarrow \param$
$W$-equivariant. The quotient $Y_\param/W$ is a universal $\C^\times$-equivariant
deformation of $Y$, \cite{Namikawa2}. In particular, it is independent of the choice of
a resolution $X$. Note that
the locus in $X_{\param}$, where $X_{\param}\rightarrow Y_{\param}$ is not an isomorphism
has codimension $\geqslant 2$. Let $Y_\param^0$ denote the complement to this locus. This allows to identify  $\Pic(X_{\param})$
with $\Pic(Y_\param^0)$. On the other hand, every line bundle on $X$ uniquely deforms
to $X_{\param}$ giving an identification $\Pic(X)\cong \Pic(X_{\param})$. So for two different
resolutions $X,X'$ we get identifications
$$\Pic(X)\xrightarrow{\sim} \Pic(X_\param)\xrightarrow{\sim} \Pic(Y^0_\param)
\xrightarrow{\sim} \Pic(X'_\param)\xrightarrow{\sim} \Pic(X').$$
It is easy to see that the resulting identification $\Pic(X)\xrightarrow{\sim}
\Pic(X')$ is the same as what we had before. So $W$ naturally acts on $\operatorname{Pic}(X)$.

%Below we will always assume that all the classical chambers $C$

\subsubsection{Example: cotangent bundles to flag varieties}
Let us proceed to examples.

Take a semisimple Lie algebra $\g$ over $\C$. Let $G$ be the corresponding connected semisimple group of
adjoint type and $B\subset G$ be a Borel subgroup. Consider the flag variety $\mathcal{B}=G/B$.
For $X$ we can take the cotangent bundle $T^*\mathcal{B}$, then $Y$ is the nilpotent cone  $\mathcal{N}
\subset \g\cong \g^*$
and $\rho:X\rightarrow Y$ is the Springer resolution.

We can identify $\param$ with $\mathfrak{h}^*$, where $\mathfrak{h}$ is a Cartan subalgebra
of $\mathfrak{b}$. Then $W$ is the Weyl group of $\g$ and $C_{mov}$ is the positive Weyl chamber. The locus $\param^{sing}$ coincides with the locus of non-regular elements in $\mathfrak{h}^*$ so the classical walls are
$\ker\alpha$, where $\alpha$ runs over the set of (positive) roots.  The universal deformation $X_{\param}$ is the homogeneous vector bundle $G\times_B \mathfrak{b}$ and $Y_{\param}=\g^*\times_{\h^*/W}\h^*$, the morphism $\tilde{\rho}:X_{\param}\rightarrow Y_{\param}$ is Grothendieck's simultaneous resolution.

This example can be generalized in several ways. For $X$,  we can take cotangent bundles to partial flag varieties.
More generally, we can also take (parabolic) Slodowy varieties: preimages of transverse slices to nilpotent orbits in
the cotangent bundles of (partial) flag varieties.

\subsubsection{Example: $\operatorname{Hilb}_n(\C^2)$}\label{SSS_Hilb_n}
Set $Y=(\C^{2})^n/S_n$. This variety admits a symplectic resolution,
$X=\operatorname{Hilb}_n(\C^2)$. The space $\param=H^2(X,\C)$ is one-dimensional
and $W=\Z/2\Z$, see, e.g., \cite[Section 3.6]{orbit}.

There is a classical description of $X,Y$ as Nakajima quiver varieties.
Consider the vector space $V=\operatorname{End}(\C^n)\oplus \C^{n*}$.
It comes equipped with a natural $G:=\operatorname{GL}_n(\C)$-action. The action
extends to a Hamiltonian action on $T^*V=V\oplus V^*$ with moment map $\varphi$, the comoment map
$\g\rightarrow T^*V$ is given by $\varphi^*(x):=x_V$, where $x_V$ is the vector
field on $V$ defined by $x\in \g$.

Now pick a nontrivial character $\theta$ of $G$ so that $\theta=\det^k$, where $k\neq 0$.
Consider the semistable locus $(T^*V)^{\theta-ss}\subset T^*V$. Then we can
define the Hamiltonian reduction $\varphi^{-1}(0)^{\theta-ss}/G$.
It is naturally identified with $\operatorname{Hilb}_n(\C^2)$. Furthermore,
$Y=\varphi^{-1}(0)\quo G$.

We note that the Weyl group has order 2.

This construction of a symplectic resolution can be generalized for $V$ being a framed representation  space of any quiver. In this way we get general smooth Nakajima quiver varieties, all of them are symplectic resolutions.

\subsubsection{Conical slices}\label{SSS_conical_slices}
Recall that, by a result of Kaledin, \cite{Kaledin}, $Y_{\lambda}$ has finitely many symplectic leaves
for all $\lambda\in \param$.

Take $y\in Y_{\param}$. Let $\zeta$ denote the image of $y$ in $\param$. Consider the
completion $\C[Y_\param]^{\wedge_\zeta}$. This is a Poisson algebra over the completion $\C[\param]^{\wedge_\zeta}$. Further, set $Y_{\param}^{\wedge_y}:=\operatorname{Spec}(\C[Y_{\param}^{\wedge_y}])$
and $X_\param^{\wedge_y}:=Y_{\param}^{\wedge_y}\times_{Y_\param}X_\param$.
Note that $\C[Y_\param^{\wedge_y}]$ is naturally identified with $\C[X_\param^{\wedge_y}]$.

The space $\param^*$ naturally embeds into $\C[\param]^{\wedge_\zeta}$.  We have a unique $\C^\times$-action on $\C[\param]^{\wedge_\zeta}$ by topological algebra automorphisms characterized by the property
that $\param^*$ has degree $d$.

Below we always impose the following assumption that holds in all examples we know.

\begin{itemize}
\item[($\diamondsuit$)] There is a $\C^\times$-action on $X_\param^{\wedge_y}$ that
\begin{itemize}
\item makes the morphism $X_\param^{\wedge_y}\rightarrow \param^{\wedge_\zeta}$
$\C^\times$-equivariant,
\item rescales the fiberwise symplectic form on $X_\param^{\wedge_y}$ by $t\mapsto t^d$,
\item and the induced action on $Y_\param^{\wedge_y}$ is contracting.
\end{itemize}
\end{itemize}

We would like to deduce some corollaries. We can decompose $\C[Y_{\param}]^{\wedge_y}$
into the completed tensor product of complete local Poisson algebras $\C[[T_y\mathcal{L}]]
\widehat{\otimes} \underline{A}'_{\param}$, where $\mathcal{L}$ is the symplectic leaf
through $y$ in $Y_\lambda$ and $\underline{A}'_\param$ is the Poisson centralizer of
$\C[[T_y\mathcal{L}]]$ under a suitable embedding $\C[[T_y\mathcal{L}]]
\hookrightarrow \C[Y_\param]^{\wedge_y}$. Such embeddings are conjugate under
Hamiltonian automorphisms  of $\C[Y_\param]^{\wedge_y}$ hence $\underline{A}'_\param$
is well-defined.
%We call $\underline{A}'_\param$ the {\it slice} to $\mathcal{L}$
%in $Y_\param$.

Under assumption ($\diamondsuit$), we can choose the decomposition
$$\C[Y_\param]^{\wedge_y}\cong \C[[T_y\mathcal{L}]]
\widehat{\otimes} \underline{A}'_{\param}$$
to be $\C^\times$-stable. Define $\underline{A}_\param$ to be
the $\C^\times$-finite part of $\underline{A}'_\param$. Then
$\underline{A}_\param$ is a finitely generated graded Poisson algebra.
We set $\underline{Y}_\param:=\operatorname{Spec}(\underline{A}_\param)$
so that $Y_{\param}^{\wedge_y}$ is identified with
$(\underline{Y}_\param\times T_y \mathcal{L})^{\wedge_0}$.

The variety $\underline{Y}$ admits a conical symplectic resolution $\underline{X}$
with a deformation $\underline{X}_\param$ over $\param$. For example, the deformation
$\underline{X}$ is constructed in the following way: we take a homogeoneous
coordinate ring for the projective (over $\underline{Y}^{\wedge_0}$) scheme $\underline{Y}^{\wedge_0}\times_{Y_\zeta^{\wedge_y}}X_\zeta$.
It carries a natural $\C^\times$-action. Then $\underline{X}$ is the Proj of
the $\C^\times$-finite part of this homogeneous coordinate ring. By the construction, we have
$X_\param^{\wedge_y}\cong (\underline{X}_\param\times T_y\mathcal{L})^{\wedge_0}$.

Now we return to the general situation. Set $\underline{\param}:=H^2(\underline{X},\C)$.
Note that $\underline{\param}=H^2(\underline{\pi}^{-1}(0),\C)$ and
$\underline{\pi}^{-1}(0)\hookrightarrow X_\lambda$. The spaces $H^2(X_\param,\C)$ and
$H^2(X,\C)=\param$ are naturally identified. This gives rise to the pullback map
$\param=H^2(X_{\param},\C)\rightarrow H^2(\underline{\pi}^{-1}(0),\C)=\underline{\param}$
to be denoted by $\eta$. We have a similar map between the Picard groups:
for this one needs to notice that $\operatorname{Pic}(\underline{X})=
\operatorname{Pic}(\underline{X}^{\wedge_0})$ thanks to the contracting $\C^\times$-action.
The maps $\eta:\operatorname{Pic}(X)\rightarrow \operatorname{Pic}(\underline{X})$
and $\eta: H^2(X,\C)\rightarrow H^2(\underline{X},\C)$ are intertwined by the 1st
Chern character map.

By the construction, we have $\underline{X}_{\param}=\param\times_{\underline{\param}} \underline{X}_{\underline{\param}}$.

\subsubsection{Rank 1 case}
Here we consider the case when $\zeta$ is generic (in the sense explained below)
in a classical wall.

\begin{Prop}\label{Prop:rank1}
Suppose that $\zeta\in \param$ is such that
the only rational hyperplane in $\param$ that contains $\zeta$ is a classical
wall, say $\Gamma$. Pick $y\in Y_\zeta$ and form the symplectic resolution  $\underline{X}$
as in Section \ref{SSS_conical_slices}. Then the kernel of $\eta: \param\rightarrow
\underline{\param}$ coincides with $\Gamma$.
\end{Prop}
\begin{proof}
We can identify $H^2_{DR}(X_\zeta)$ with $H^2_{DR}(X)$ by means of the Gauss-Manin
connection. Under this identification the class of the symplectic form on $X_\zeta$
is $\zeta$ by the construction of the period map in \cite{KV}. The pullback
of the form from $X_\zeta$ to $\underline{X}^{\wedge_0}$ is rescaled by a torus
action and hence it is zero. We conclude that $\eta(\zeta)=0$. On the other hand,
we have seen in the end of Section \ref{SSS_conical_slices} that $\eta$ is defined over $\Q$.
It follows that $\Gamma$ lies in the kernel. Since  $\eta$ takes ample line bundles to
ample line bundles it cannot be zero. Therefore the kernel coincides with $\Gamma$.
\end{proof}

\begin{Ex}\label{Ex:rank1_cotangent}
Let $X=T^*\mathcal{B}$. For $\lambda$ as above, we have $\underline{X}=T^*\mathbb{P}^1$
hence $\underline{\param}$ is 1-dimensional. If $\alpha$ is the unique simple root
vanishing at $\zeta$, then the map $\eta:\param\rightarrow \underline{\param}$
sends $\zeta$ to $\langle \zeta,\alpha^\vee\rangle$.
\end{Ex}

%We will impose an additional assumption on $Y$: we want the formal slices to all symplectic leaves to
%come with  contracting $\C^\times$-actions that rescale the Poisson brackets and lifts
%to resolutions. We further assume that the slices can be algebrized and the resulting Poisson varieties
%admit symplectic resolutions (that are obtained from the preimages of slices in $X$).

%Let us consider what happens in the examples we consider.  In the case of $Y=\mathcal{N}$, the slices are
%the intersections of the Slodowy slices with $\mathcal{N}$. They are conical
%(with respect to the so called Kazhdan action). The formal slices in Nakajima quiver varieties
%are again quiver varieties, see \cite[Section 6]{Nakajima94} and so are conical.

\subsection{Quantizations}
We will study quantizations of $Y,Y_{\param}, X,X_{\param}$. By a quantization of $Y$, we mean
\begin{itemize}
\item
a filtered
algebra $\A=\bigcup_i \A_{\leqslant i}$ such that $[\A_{\leqslant i},\A_{\leqslant j}]\subset
\A_{\leqslant i+j-d}$ for all $i,j$
\item together with an isomorphism
$\gr\A\cong \C[Y]$ of graded Poisson algebras.
\end{itemize}
Similarly, a quantization $\tilde{\A}$ of $Y_{\param}$
is a filtered $\C[\param]$-algebra (with $\param^*$ in degree $d$) together with an isomorphism
$\gr\tilde{\A}\cong \C[Y_{\param}]$ of graded Poisson $\C[\param]$-algebras. For $\lambda\in \param$,
we set $\A_\lambda:=\C_\lambda\otimes_{\C[\param]}\tilde{\A}$, this is a filtered quantization of
$\C[Y]$.  So a quantization of $\C[Y_{\param}]$ can be viewed as a family of quantizations of
$\C[Y]$ parameterized by $\param$.

By a quantization of $X=X^\theta$, we mean
\begin{itemize}
\item  a sheaf $\A^\theta$ of filtered algebras in the conical
topology on $X$ (in this topology, ``open'' means Zariski open and $\C^\times$-stable) that is complete
and separated with respect to the filtration
\item together with an isomorphism $\gr\A^\theta\cong \Str_{X^\theta}$ (of sheaves of graded Poisson algebras).
\end{itemize}

Similarly, we can talk about quantizations of $X_{\param}^\theta$.

\subsubsection{Classification of quantizations of $X$}
\cite[Theorem 1.8]{BK} (with ramifications given in \cite[Section 2.3]{quant_iso}) shows that the quantizations
$\A^\theta$ of $X$ are parameterized (up to an isomorphism) by the points in $\param=H^2(X,\C)$.
Below we will use the notation $\paramq$ for $\param$ viewed as a parameter space for quantizations.
We $\paramq$ view as an affine space, the associated vector space is
$\param$.

More precisely, there is a {\it canonical} quantization $\A_{\paramq}^\theta$ of $X_{\param}^\theta$ such that the quantization of $X^\theta$ corresponding to $\lambda\in \paramq$ is the specialization of $\A_{\paramq}^\theta$ to $\lambda$.

%The quantization $\A_{\paramq}^\theta$ has the following important property: there is an %anti-automorphism, $\varsigma$
%(to be called the parity anti-automorphism) that preserves the filtration, is the identity on the %associated
%graded, preserves $\param^*\subset \Gamma(\A_{\paramq}^\theta)$ and induces $-1$ on $\param^*$,
%cf. \cite[Section 2.3]{quant_iso}.

It follows from \cite[Section 2.3]{quant_iso} that $\A^\theta_{-\lambda}$ is isomorphic to $(\A_\lambda^\theta)^{opp}$.

\subsubsection{Algebras of global sections}
We set $\A_{\paramq}:=\Gamma(\A_{\paramq}^\theta), \A_\lambda=\Gamma(\A_\lambda^\theta)$. It follows
from \cite[Section 3.3]{BPW} that the algebras $\A_{\paramq}, \A_\lambda$ are independent of the choice of $\theta$.
From $H^i(X^\theta,\Str_{X^\theta})=0$, we deduce that the higher cohomology of both $\A_{\paramq}^\theta$
and $\A^\theta_\lambda$ vanish. In particular, $\A_\lambda$ is the specialization of $\A_{\paramq}$ at $\lambda$.
Also we see  that $\A_{\paramq}$ is a quantization of $\C[X_{\param}]$
and $\A_\lambda$ is a quantization of $\C[X]=\C[Y]$.

From the isomorphism  $\A_\lambda^\theta\cong \A_{-\lambda}^\theta$ we deduce $\A_{-\lambda}\cong \A_\lambda^{opp}$.
Also we have $\A_\lambda\cong \A_{w\lambda}$ for all $\lambda\in \param,w\in W$,
see \cite[Section 3.3]{BPW}.

\subsubsection{Example: central reductions of universal enveloping algebras}
Let us start with the example of $X=T^*\mathcal{B}$. Identify the center of the universal enveloping algebra
$U(\g)$ with $\C[\h^*]^W$ by means of the Harish-Chandra isomorphism. Then the quantization $\A_\lambda$
of $Y=\mathcal{N}$ corresponding to $\lambda\in \h^*$ is the central reduction $U(\g)\otimes_{\C[\h^*]^W}\C_\lambda$.
The  quantization $\A_\lambda^\theta$ is (the microlocalization to $T^*\mathcal{B}$ of) the sheaf $\mathcal{D}^{\lambda-\rho}_{\mathcal{B}}$ of $\lambda-\rho$-twisted differential operators on $\mathcal{B}$. Here, as usual, $\rho$ is half the sum of positive roots.
Similarly, the universal quantization $\A_{\paramq}$ is $U(\g)\otimes_{\C[\h^*]^W}\C[\h^*]$.

Let us elaborate on the construction of $\A_{\paramq}^\theta$. Consider the universal sheaf $\tilde{\mathcal{D}}_{\mathcal{B}}$ of twisted differential operators. It is constructed
as $\varpi_*(\mathcal{D}_{G/U})^T$, where $U$ stands for the unipotent radical of $B$,
$T=B/U$ is the maximal torus and $\varpi:G/U\rightarrow G/B$ is the projection.
The microlocalization of $\tilde{\mathcal{D}}_{\mathcal{B}}$ is the universal quantization of
$T^*\mathcal{B}$. It is a sheaf of $\C[\mathfrak{t}^*]$-algebras with the map
$\C[\mathfrak{t}^*]\rightarrow \Gamma(\tilde{\mathcal{D}}_{\mathcal{B}})$ coming from the
the quantum comoment map for the action of $T$ on $\mathcal{D}_{G/U}$
shifted so that the fiber of $\varpi_*(\mathcal{D}_{G/U})^T$ over $0$ is $\mathcal{D}^{-\rho}_{\mathcal{B}}$.

\subsubsection{Example: quantizations of $\operatorname{Hilb}_n(\C^2),(\C^2)^n/S_n$}
Now let us describe the quantizations of the Hilbert scheme $X:=\operatorname{Hilb}_n(\C^2)$.
Consider the algebra $D(V)$ of linear differential
operators on the space $V$ from Section \ref{SSS_Hilb_n}. The algebra $D(V)$
carries a Hamiltonian action of $G$. We choose the {\it symmetrized}
quantum comoment map $\Phi(x)$ given by the formula $\Phi(x)=\frac{1}{2}(x_V+x_{V^*})$.

We have $\paramq=\C$. The quantization $\A_{\paramq}^{\theta}$ is given by
$\pi_{G*}[D_V/D_V\Phi([\g,\g])|_{(T^*V)^{\theta-ss}}]^G$,
where we write $D_V$ for the microlocal sheaf  of algebras on $T^*V$ that is obtained
from the algebra $D(V)$ by microlocalization and $\pi_G$ is the quotient
morphism $\mu^{-1}(0)^{\theta-ss}\rightarrow \mu^{-1}(0)^{\theta-ss}/G$.
In our case,  the global sections of $\A_\lambda^\theta$ is the quantum Hamiltonian reduction on the level of algebras: $\A_\lambda=[D(V)/D(V)\{\Phi(x)-\langle\lambda,x\rangle| x\in \g\}]^G$, it is a quantization
of $Y=(\C^2)^n/S_n$, see, e.g., \cite[Lemma 4.2.4]{quant_iso}.

There is an alternative way to construct $\A_\lambda$ due to Etingof and
Ginzburg, \cite{EG}, as the so called {\it spherical rational Cherednik algebras}.
The  full  rational Cherednik algebra $H_c$, where $c\in \C$ is a parameter, is
the quotient of the smash-product  algebra
$\C\langle x_1,\ldots,x_n,y_1,\ldots,y_n\rangle\#S_n$ (where the triangular brackets indicate the
free algebra) by the relations
\begin{align*}
& [x_i,x_j]=[y_i,y_j]=0,\\
& [y_i,x_j]=c(ij), i\neq j,\\
& [y_i,x_i]=1-c\sum_{j\neq i}(ij).
\end{align*}

Let $e$ be the averaging idempotent in $\C S_n$. We can view $e$ as an element
of $H_c$. Then we can consider the unital algebra $e H_c e$, this is the so called
spherical rational Cherednik algebra. It is a quantization of $\C[Y]$.

In fact, by \cite[Theorem 1.3.1]{GG}, $\A_\lambda\cong e H_{c}e$, where $c=\lambda-1/2$.

Both constructions can be generalized. The quantum Hamiltonian reduction construction
produces quantizations of arbitrary Nakajima quiver varieties, while rational Cherednik algebras
can be generalized to so called symplectic reflection algebras that give quantizations of
the varieties $\C^{2n}/\Gamma$, where $\Gamma$ is a finite subgroup of $\operatorname{Sp}_{2n}(\C)$.

\subsection{Localization theorems}
We are interested in the categories $\A_\lambda\operatorname{-mod}$ of all finitely generated
$\A_\lambda$-modules and $\operatorname{Coh}(\A^\theta_\lambda)$ of all coherent
sheaves of $\A_\lambda^\theta$-modules, see \cite[Section 2.3]{BL} for a definition of the latter. These categories
are related via the global section functor $\Gamma_\lambda^\theta: \operatorname{Coh}(\A_\lambda^\theta)
\rightarrow \A_\lambda\operatorname{-mod}$ (mapping a coherent sheaf into its global sections)
and its left adjoint, the localization functor $\Loc_\lambda^\theta:=\A_\lambda^\theta\otimes_{\A_\lambda}\bullet$.
These functors have derived versions: $R\Gamma_\lambda^\theta:D^b(\Coh(\A_\lambda^\theta))\rightarrow
D^b(\A_\lambda\operatorname{-mod})$ and $L\Loc_\lambda^\theta: D^-(\A_\lambda\operatorname{-mod})
\rightarrow D^-(\Coh(\A_\lambda^\theta))$, the latter restricts to the bounded derived categories
whenever $\A_\lambda$ has finite homological dimension.

We say that  {\it abelian  localization holds} for $(\lambda,\theta)$ if the functors
$\Gamma_\lambda^\theta,\Loc_\lambda^\theta$ are mutually inverse equivalences
between $\Coh(\A_\lambda^\theta),\A_\lambda\operatorname{-mod}$. Similarly, we
say that {\it derived localization holds} for $(\lambda,\theta)$ if $R\Gamma_\lambda^\theta,
L\Loc_\lambda^\theta$ are mutually inverse equivalences.

\subsubsection{Examples}
In the case when $X=T^*\mathcal{B}$
one can explicitly describe the parameters where abelian and derived localization hold.

\begin{Prop}
Let $X=T^*\mathcal{B}$.  The following claims are true:
\begin{itemize}
\item Derived localization holds for $(\lambda,\theta)$ if and only if $\lambda$ is regular meaning that
$\langle\lambda,\alpha^\vee\rangle\neq 0$ for every coroot $\alpha^\vee$.
\item Let $\theta$ be in the positive Weyl chamber. Then abelian localization holds for $(\lambda,\theta)$
if and only if $\langle\lambda,\alpha^\vee\rangle\not\in \Z_{\leqslant 0}$ for all positive coroots
$\alpha^\vee$.
\end{itemize}
\end{Prop}

These are the classical derived and
abelian Beilinson-Bernstein localization theorems, \cite{BB,BB_derived}.
%(2) for an arbitrary Slodowy variety were proved independently
%by Ginzburg and Dodd-Kremnizer. As was shown in \cite{cacti}, both (1) and (2) in the general case follow
%from the Beilinson-Bernstein theorems.

\subsubsection{Example: quantizations of $\operatorname{Hilb}_n(\C^2)$}
Here we have the following result, \cite{GS1,GS2,KR,BE}.

\begin{Prop}\label{Prop:Gies_loc}
Let $X=\operatorname{Hilb}_n(\C^2)$.
The following claims are true:
\begin{enumerate}
\item The homological dimension of $\A_\lambda$ is infinite, equivalently, derived localization theorem
 fails for $(\lambda,\theta\neq 0)$,  if and only if $c=\lambda-1/2$ lies in
$(-1,0)$ and is a rational number with denominator $\leqslant n$.
\item For $\theta>0$, abelian localization holds for $(\lambda,\theta)$ if and only if
$$(c+\Z_{\geqslant 0})\cap \{-\frac{a}{b}|1\leqslant a<b\leqslant n\}=\varnothing.$$
\end{enumerate}
\end{Prop}

\subsection{Harish-Chandra bimodules}
\subsubsection{Definition}
Let $R$ be a commutative Noetherian ring.
Let $\A,\A'$ be two $\Z_{\geqslant 0}$-filtered $R$-algebras such that $\gr\A,\gr\A'$ are  finitely generated
commutative $R$-algebras. We assume  $\gr\A,\gr\A'$ are isomorphic, we fix an isomorphism
and denote the resulting algebra by $A$.

By a Harish-Chandra (shortly,
HC) bimodule we mean an
$\A$-$\A'$-bimodule $\B$ that can be equipped with a {\it good filtration}, i.e.,  an $\A$-$\A'$-bimodule
filtration  bounded from below subject to the following two properties:
\begin{itemize}
\item the induced left and right $A$-actions on $\gr\B$ coincide,
\item $\gr\B$ is a finitely generated $A$-module.
\end{itemize}

By a homomorphism of Harish-Chandra bimodules we mean a  bimodule homomorphism. The category of HC $\A$-$\A'$-bimodules is denoted by $\HC(\A\text{-}\A')$. We also consider the full subcategory $D^b_{HC}(\A\text{-}\A')$ of the derived category of  $\A$-$\A'$-bimodules with Harish-Chandra homology.

By the associated variety $\VA(\mathcal{B})$ of $\mathcal{B}$ we mean the support of the finitely generated
$A$-module $\gr\B$ in $\operatorname{Spec}(A)$.

We will concentrate on the algebras $\A,\A',$ etc., of the form $\A_\lambda$ (or some other specialization
of $\A_{\paramq}$). For the time being, $R=\C$, while starting Section \ref{S_R_forms} we will also consider the situations when $R$ is a localization of $\Z$ or a positive characteristic field.

For $\chi\in \param$, let $\HC(\A_{\paramq},\chi)$ denote the category of all HC
$\A_{\paramq}$-bimodules $\B$ such that $[\alpha,m]=\langle\alpha,\chi\rangle m$.

\subsubsection{Properties}
Now let $\A,\A',\A''$ be three $R$-algebras whose associated graded are identified with the
same $R$-algebra $A$ as before. Let $\B_1\in \operatorname{HC}(\A\operatorname{-}\A'),
\B_2\in\operatorname{HC}(\A'\operatorname{-}\A'')$. Then $\operatorname{Tor}_i^{\A'}(\B_1,\B_2)\in
\operatorname{HC}(\A\operatorname{-}\A'')$. Indeed, $\operatorname{Tor}_i^{\A'}(\B_1,\B_2)$ comes with a
natural bounded from below filtration such that $\gr \operatorname{Tor}_i^{\A'}(\B_1,\B_2)$ is a subquotient
of $\operatorname{Tor}_i^{A}(\gr \B_1,\gr \B_2)$.

Similarly, if $\B_1\in \operatorname{HC}(\A\operatorname{-}\A'),
\B_2\in\operatorname{HC}(\A\operatorname{-}\A'')$, then $\operatorname{Ext}^i_{\A}(\B_1,\B_2)\in
\operatorname{HC}(\A'\operatorname{-}\A'')$ (and the similar claim holds for the Ext's in the
category of right $\A$-modules).

\subsubsection{Translation bimodules}\label{SSS_trans_bimod}
In order to approach abelian localization in the next section,
we will need translation bimodules introduced
in the present generality in \cite[Section 6.3]{BPW}.

Set $X:=X^\theta$.
Pick $\chi\in \operatorname{Pic}(X)$ (recall that the Picard groups of different symplectic resolutions
of $Y$ are naturally identified, see Section \ref{SSS_sympl_resol_classif}). Let $O(\chi)$ denote the corresponding line bundle on $X$.
Since $H^i(X,O_X)=0$ for $i>0$, the line bundle $O(\chi)$ uniquely deforms
to a right $\A_{\paramq}^\theta$-module. It was shown in \cite[Section 5.1]{BPW} that the deformation carries an $\A_{\paramq}^\theta$-bimodule
structure, where the adjoint action of $\mu\in \param^*$ is by $\langle c_1(\chi),\mu\rangle$.
We will denote the resulting bimodule by $\A_{\paramq,\chi}^\theta$.
%and its specialization to $\lambda$
%by $\A^\theta_{\lambda,\chi}$, this is a $\A^\theta_{\lambda+c_1(\chi)}$-$\A^\theta_\lambda$-bimodule
%(below we will simplify the notation and write $\A^\theta_{\lambda+\chi}$ instead of $\A^\theta_{\lambda+c_1(\chi)}$).

Set $\A_{\paramq,\chi}:=\Gamma(\A_{\paramq,\chi}^\theta)$, this is an $\A_{\paramq}$-bimodule that is independent
of the choice of $\theta$ by \cite[Proposition 6.24]{BPW}. Note that $\A_{\paramq,\chi}\in \HC(\A_{\paramq},\chi)$, \cite[Proposition 6.23]{BPW}.
Set $\A_{\lambda,\chi}:=\A_{\paramq,\chi}\otimes_{\C[\paramq]}\C_\lambda$,
this is an $\A_{\lambda+\chi}$-$\A_\lambda$-bimodule (here and below we abuse the notation and write
$\A_{\lambda+\chi}$ instead of $\A_{\lambda+c_1(\chi)}$). We call $\A_{\lambda,\chi}$ a {\it translation bimodule}.

The following result was obtained in \cite[Proposition 6.26]{BPW}.

\begin{Lem}\label{Lem:translation_coincidence}
Suppose that $H^i(X^\theta, O(\chi))=0$ for all $i>0$.
Then
$\A_{\lambda,\chi}\xrightarrow{\sim} R\Gamma(\A_{\lambda,\chi}^\theta)$.
\end{Lem}

Let us recall some properties of the translation bimodules obtained in \cite{BPW}.

\begin{Lem}[Proposition 6.31 in \cite{BPW}]\label{Lem:trans_functor}
Suppose abelian localization  holds for $(\lambda+\chi,\theta)$. Then we have a functor
isomorphism
$$\Gamma_{\lambda+\chi}^\theta(\A^\theta_{\lambda,\chi}\otimes_{\A^\theta_\lambda}L\Loc_\lambda^\theta(\bullet))\cong
\A_{\lambda,\chi}\otimes^L_{\A_\lambda}\bullet.$$
\end{Lem}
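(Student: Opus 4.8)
The plan is to unwind both sides using the adjunction between $\Loc_\lambda^\theta$ and $\Gamma_\lambda^\theta$ together with the fact that tensoring with the line-bundle-deformation bimodule $\A^\theta_{\lambda,\chi}$ is an exact equivalence on coherent sheaves. First I would recall that on the sheaf level the bimodule $\A^\theta_{\lambda,\chi}$ is, by construction, the canonical $\A^\theta_{\lambda+\chi}$-module deformation of the line bundle $O(\chi)$; in particular it is a locally free (invertible) $\A^\theta_\lambda$-module on the right and the functor $\A^\theta_{\lambda,\chi}\otimes_{\A^\theta_\lambda}\bullet\colon \Coh(\A^\theta_\lambda)\to \Coh(\A^\theta_{\lambda+\chi})$ is an exact equivalence of abelian categories, with quasi-inverse given by tensoring with $\A^\theta_{\lambda+\chi,-\chi}$. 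Consequently this functor needs no derived correction, so $\A^\theta_{\lambda,\chi}\otimes_{\A^\theta_\lambda}L\Loc^\theta_\lambda(\bullet)$ already computes the derived functor $D^-(\A_\lambda\operatorname{-mod})\to D^-(\Coh(\A^\theta_{\lambda+\chi}))$ obtained by first deriving $\Loc^\theta_\lambda$ and then applying the exact equivalence.

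Next I would apply $R\Gamma^\theta_{\lambda+\chi}$. By hypothesis abelian localization holds for $(\lambda+\chi,\theta)$, so $\Gamma^\theta_{\lambda+\chi}$ is an exact equivalence and $R\Gamma^\theta_{\lambda+\chi}=\Gamma^\theta_{\lambda+\chi}$ requires no derived correction either. Thus the left-hand side is the composite of three functors, two of which ($\Gamma^\theta_{\lambda+\chi}$ and $\A^\theta_{\lambda,\chi}\otimes_{\A^\theta_\lambda}-$) are exact equivalences and one of which ($L\Loc^\theta_\lambda$) is a left-derived functor. Since exact functors commute with taking cohomology, it suffices to identify the \emph{underived} composite $\Gamma^\theta_{\lambda+\chi}\bigl(\A^\theta_{\lambda,\chi}\otimes_{\A^\theta_\lambda}\Loc^\theta_\lambda(\bullet)\bigr)$, acting on $\A_\lambda\operatorname{-mod}$, with the functor $M\mapsto \A_{\lambda,\chi}\otimes_{\A_\lambda}M$, and then conclude the derived statement by passing to the derived categories (the left-hand side is the left-derived functor of an exact-equivalence-twisted version of $\Loc$, hence automatically bounded and automatically equal to the left-derived functor of the composite).

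For the underived identification: $\Loc^\theta_\lambda(M)=\A^\theta_\lambda\otimes_{\A_\lambda}M$, so $\A^\theta_{\lambda,\chi}\otimes_{\A^\theta_\lambda}\Loc^\theta_\lambda(M)=\A^\theta_{\lambda,\chi}\otimes_{\A_\lambda}M$. Taking global sections and using that $\Gamma$ commutes with the (right-exact, colimit-preserving) functor $-\otimes_{\A_\lambda}M$ when $M$ is presented by free modules — here one uses $H^{>0}(X,\A^\theta_{\lambda,\chi})=0$, which follows from $H^{>0}(X,O_X)=0$ exactly as for $\A^\theta_\lambda$ itself — gives $\Gamma^\theta_{\lambda+\chi}(\A^\theta_{\lambda,\chi}\otimes_{\A_\lambda}M)\cong \Gamma(\A^\theta_{\lambda,\chi})\otimes_{\A_\lambda}M=\A_{\lambda,\chi}\otimes_{\A_\lambda}M$, using the definition $\A_{\lambda,\chi}=\Gamma(\A^\theta_{\lambda,\chi})$. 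This identification of the two functors is natural in $M$.

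The main obstacle is the bookkeeping of which functors need derived corrections: one must be careful that $\A^\theta_{\lambda,\chi}\otimes_{\A^\theta_\lambda}-$ really is exact on $\Coh(\A^\theta_\lambda)$ (invertibility of the translation bimodule on the sheaf level, which is where $H^{>0}(X,O_X)=0$ and the deformation argument of \cite[Section 5.1]{BPW} enter) and that $R\Gamma^\theta_{\lambda+\chi}$ collapses to $\Gamma^\theta_{\lambda+\chi}$ under the abelian localization hypothesis. Once these two exactness statements are in hand, the isomorphism of functors on the bounded-above derived categories follows formally by deriving the naturally isomorphic underived functors and invoking that a left-derived functor is unchanged under post- and pre-composition with exact equivalences; the restriction to $D^b$ when $\A_\lambda$ has finite homological dimension is then automatic.
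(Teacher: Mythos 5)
Your argument is correct and is essentially the standard one: the paper gives no proof of this lemma (it is quoted from \cite[Proposition 6.31]{BPW}), and the proof there proceeds exactly as you do, by observing that $\A^\theta_{\lambda,\chi}\otimes_{\A^\theta_\lambda}\bullet$ is an exact equivalence of sheaf categories and that abelian localization at $(\lambda+\chi,\theta)$ makes $\Gamma^\theta_{\lambda+\chi}$ exact, so the whole composite is the left derived functor of the right-exact, colimit-preserving underived composite, which sends $\A_\lambda$ to $\Gamma(\A^\theta_{\lambda,\chi})=\A_{\lambda,\chi}$. One small imprecision: the relevant cohomology vanishing is $H^{>0}(X,O(\chi))=0$ rather than $H^{>0}(X,O_X)=0$ (since $\gr\A^\theta_{\lambda,\chi}\cong O(\chi)$), but this does not affect your argument because the exactness of $\Gamma^\theta_{\lambda+\chi}$ supplied by the abelian localization hypothesis already lets you compute $\Gamma$ of $\A^\theta_{\lambda,\chi}\otimes_{\A_\lambda}M$ from a finite-rank free presentation of $M$.
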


\begin{Cor}\label{Cor:trans_equiv}
Suppose that abelian localization holds for $(\lambda,\theta),(\lambda+\chi,\theta)$.
Then the bimodules $\A_{\lambda,\chi}, \A_{\lambda+\chi,-\chi}$ are mutually inverse
Morita equivalences.
\end{Cor}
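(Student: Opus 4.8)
The plan is to deduce the statement from Lemma \ref{Lem:trans_functor} together with formal properties of the translation bimodules. First I would record the basic "cocycle" identity $\A_{\paramq,\chi}\otimes_{\A_{\paramq}}\A_{\paramq,\chi'}\cong \A_{\paramq,\chi+\chi'}$ of $\A_{\paramq}$-bimodules (and its specialization $\A_{\lambda+\chi',\chi}\otimes_{\A_{\lambda}}\A_{\lambda,\chi'}\cong \A_{\lambda,\chi+\chi'}$), which holds because on the sheaf level $\A^\theta_{\paramq,\chi}$ is the canonical deformation of the line bundle $O(\chi)$, tensoring line bundles is additive in $\operatorname{Pic}(X)$, and the higher cohomology of all the sheaves involved vanishes so that $\Gamma$ is exact and monoidal on this class of bimodules. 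Taking $\chi'=-\chi$ gives $\A_{\lambda,\chi}\otimes_{\A_\lambda}\A_{\lambda+\chi,-\chi}\cong \A_{\lambda+\chi,0}=\A_{\lambda+\chi}$ and symmetrically $\A_{\lambda+\chi,-\chi}\otimes_{\A_{\lambda+\chi}}\A_{\lambda,\chi}\cong \A_\lambda$; so the only real content is that these isomorphisms of bimodules also identify the \emph{derived} tensor products, i.e.\ that $\A_{\lambda,\chi}$ has no higher $\Tor$ against $\A_\lambda$-modules.

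To get the vanishing of higher Tor, I would use abelian localization. Since abelian localization holds for $(\lambda,\theta)$, every $\A_\lambda$-module $M$ is $\Gamma^\theta_\lambda(\Loc^\theta_\lambda M)$ with $\Loc^\theta_\lambda M$ a coherent $\A^\theta_\lambda$-module, and $L\Loc^\theta_\lambda M = \Loc^\theta_\lambda M$ in the derived category. By Lemma \ref{Lem:trans_functor} applied with $\lambda+\chi$ in place of $\lambda+\chi$ (using that abelian localization holds for $(\lambda+\chi,\theta)$),
\[
\A_{\lambda,\chi}\otimes^L_{\A_\lambda} M \;\cong\; \Gamma^\theta_{\lambda+\chi}\!\left(\A^\theta_{\lambda,\chi}\otimes_{\A^\theta_\lambda}\Loc^\theta_\lambda M\right).
\]
The sheaf $\A^\theta_{\lambda,\chi}$ is, locally on $X$, a deformation of the invertible $O_X$-module $O(\chi)$, hence locally free of rank one over $\A^\theta_\lambda$ on the right; therefore $\A^\theta_{\lambda,\chi}\otimes_{\A^\theta_\lambda}\Loc^\theta_\lambda M$ is concentrated in degree $0$, is a coherent $\A^\theta_{\lambda+\chi}$-module, and (again because higher cohomology of coherent $\A^\theta$-modules vanishes under abelian localization) has no higher cohomology. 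Thus the right-hand side is concentrated in degree $0$ and equals $\A_{\lambda,\chi}\otimes_{\A_\lambda}M$, proving $\Tor^{\A_\lambda}_{>0}(\A_{\lambda,\chi},M)=0$ for all $M$; symmetrically for $\A_{\lambda+\chi,-\chi}$.

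Combining the two ingredients: the functors $\A_{\lambda,\chi}\otimes_{\A_\lambda}-$ and $\A_{\lambda+\chi,-\chi}\otimes_{\A_{\lambda+\chi}}-$ are exact, and their composites in either order are naturally isomorphic to the identity by the cocycle identity of the first paragraph (now valid on underived tensor products since all higher Tor vanish). Hence both functors are equivalences of module categories, i.e.\ $\A_{\lambda,\chi}$ and $\A_{\lambda+\chi,-\chi}$ are mutually inverse Morita equivalences. I expect the main obstacle to be the careful bookkeeping in the first paragraph — establishing the monoidality of $\chi\mapsto \A^\theta_{\paramq,\chi}$ and checking that $\Gamma$ turns the sheaf-level tensor isomorphisms into the stated bimodule isomorphisms compatibly with the $\param^*$-actions (the adjoint action of $\mu$ acting by $\langle c_1(\chi),\mu\rangle$ must add correctly); the Tor-vanishing step is then essentially a direct application of Lemma \ref{Lem:trans_functor}.
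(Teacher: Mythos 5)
Your proposal is essentially the intended derivation: the corollary is stated in the paper without a written proof precisely because it is meant to follow from Lemma \ref{Lem:trans_functor} in the way you describe, and your Tor-vanishing step (localize, tensor with the locally free rank-one sheaf bimodule, take sections) is correct. The one place where you overreach is the first paragraph. The assertion that $\Gamma$ is ``exact and monoidal on this class of bimodules'', so that $\A_{\lambda+\chi',\chi}\otimes_{\A_{\lambda}}\A_{\lambda,\chi'}\cong\A_{\lambda,\chi+\chi'}$ holds unconditionally, is false in that generality: vanishing of the higher cohomology of the individual sheaves does not make the natural map $\Gamma(\mathcal{M})\otimes\Gamma(\mathcal{N})\rightarrow\Gamma(\mathcal{M}\otimes\mathcal{N})$ an isomorphism, and the composition identity genuinely fails at singular parameters --- this is exactly why Lemma \ref{Lem:transl_trans} carries localization hypotheses and why wall-crossing functors are nontrivial derived equivalences rather than abelian ones. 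The two instances you actually need are fine, but they should be obtained by the same mechanism as your second paragraph rather than asserted: apply Lemma \ref{Lem:trans_functor} for the twist $-\chi$ based at $\lambda+\chi$ (which requires abelian localization at $(\lambda,\theta)$) to the left module $M=\A_{\lambda,\chi}=\Gamma^\theta_{\lambda+\chi}(\A^\theta_{\lambda,\chi})$; since abelian localization at $(\lambda+\chi,\theta)$ gives $L\Loc^\theta_{\lambda+\chi}\circ\Gamma^\theta_{\lambda+\chi}=\operatorname{id}$, this yields
$\A_{\lambda+\chi,-\chi}\otimes^L_{\A_{\lambda+\chi}}\A_{\lambda,\chi}\cong\Gamma^\theta_{\lambda}\bigl(\A^\theta_{\lambda+\chi,-\chi}\otimes_{\A^\theta_{\lambda+\chi}}\A^\theta_{\lambda,\chi}\bigr)\cong\Gamma^\theta_{\lambda}(\A^\theta_{\lambda})=\A_\lambda$,
concentrated in degree zero, and symmetrically for the other composite. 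With that rerouting the argument is complete.
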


\subsubsection{Restriction functors}\label{SSS_HC_restr}
In \cite[Section 3.3]{Perv} we have constructed the restriction functors for HC bimodules over $\A_\lambda$ (or $\A_{\paramq}$) under the assumption that $Y$ has conical slices. We can generalize
this construction using the assumption ($\diamondsuit$) from Section \ref{SSS_conical_slices}.

We use the notation and conventions of Section \ref{SSS_conical_slices}.
Pick $y\in Y_{\param}$ and let $\lambda$ denote its image in $\param$
and $\mathcal{L}$ denote the symplectic leaf of $y$.
Consider the algebra $\underline{\A}_{\underline{\paramq}}$, the analog of $\A_{\paramq}$
for $\underline{X}$, and set
$\underline{\A}_\paramq:=\C[\paramq]\otimes_{\C[\underline{\paramq}]}\underline{\A}_{\underline{\paramq}}$.
Further let $\mathbb{A}$ denote the Weyl algebra of the symplectic vector space
$T_y\mathcal{L}$. Form the filtered algebra $\mathbb{A}\otimes \underline{\A}_{\paramq}$,
take its Rees algebra $R_\hbar(\mathbb{A}\otimes \underline{\A}_{\paramq})$
and complete it at zero. Denote this completion by $R_\hbar(\mathbb{A}\otimes \underline{\A}_{\paramq})^{\wedge_0}$.

On the other hand, we can take the Rees algebra $R_\hbar(\A_{\paramq})$ and complete it
at $y$ getting the algebra $R_\hbar(\A_{\paramq})^{\wedge_y}$. It was shown in
\cite[Section 3.3]{Perv} that we have a $\C[[\paramq,\hbar]]$-linear isomorphism
\begin{equation}\label{eq:completed_iso} R_\hbar(\A_{\paramq})^{\wedge_y}\cong R_\hbar(\mathbb{A}\otimes \underline{\A}_{\paramq})^{\wedge_0}
\end{equation}
that lifts the isomorphism $\C[Y_\param]^{\wedge_y}\cong \C[T_y \mathcal{L}\times \underline{Y}_\param]^{\wedge_0}$ from Section \ref{SSS_conical_slices}.

Similarly to \cite[Section 3.3]{Perv}, (\ref{eq:completed_iso}) give rise to the restriction
functor $\bullet_{\dagger,y}:\HC(\A_{\paramq},\chi)\rightarrow \HC(\underline{\A}_{\paramq},\chi)$.
The functor has the following properties:
\begin{itemize}
\item[(i)] It is exact and $\C[\paramq]$-linear.
\item[(ii)] We have $\B_{\dagger,y}=0$ if and only if $y\not\in \VA(\B)$.
\item[(iii)] For $\B\in \HC(\A_{\lambda+\chi}\operatorname{-}\A_\lambda)$ and $y\in Y$, we have $\dim \B_{\dagger,y}<\infty$
if and only of $\mathcal{L}$ is open in $\VA(\B)$.
\item[(iv)] The functor $\bullet_{\dagger,y}$ is monoidal.
\item[(v)] Assume that $H^i(X^\theta, \Str(\chi))=0$ for all $i>0$.
Then $(\A_{\paramq,\chi})_{\dagger,y}=\underline{\A}_{\paramq,\chi}$.
\end{itemize}

We note that the bimodule $\underline{\A}_{\paramq,\chi}$ is obtained
from $\C[\paramq]\otimes_{\C[\underline{\paramq}]}\underline{\A}_{\underline{\paramq},\eta(\chi)}$
by shifting the left $\C[\paramq]$-action.

%\begin{Ex}\label{Ex:W-alg}
%Let $Y$ be the nilpotent cone in $\g^*$. For simplicity, take $y\in Y$.
%From $y$ we can form the finite W-algebra $\mathcal{W}$ whose center is still
%$\C[\h^*]^W$. Then, by a construction of the W-algebra,
%$\underline{\A}_{\paramq}=\C[\h^*]\otimes_{\C[\h^*]^W}\mathcal{W}$.
%\end{Ex}

\subsection{Translation bimodules and abelian localization}
\subsubsection{Tensor products of translation bimodules}
We want to compare the bimodules
$\A_{\lambda,\chi+\chi'}$ and $\A_{\lambda+\chi,\chi'}\otimes^L_{\A_{\lambda+\chi}}\A_{\lambda,\chi}$.
Note that we have a natural homomorphism
\begin{equation}\label{eq:transl_homom_univ}
\A_{\paramq,\chi'}\otimes^L_{\A_{\paramq}}\A_{\paramq,\chi}
\rightarrow \A_{\paramq,\chi+\chi'}.
\end{equation}
It is induced by the isomorphism
\begin{equation}\label{eq:transl_homom_local}
\A^\theta_{\paramq,\chi'}\otimes_{\A^\theta_{\paramq}}\A^\theta_{\paramq,\chi}
\rightarrow \A^\theta_{\paramq,\chi+\chi'}.
\end{equation}
(\ref{eq:transl_homom_univ}) specializes to
\begin{equation}\label{eq:transl_homom_spec}
\A_{\lambda+\chi,\chi'}\otimes^L_{\A_{\lambda+\chi}}\A_{\lambda,\chi}
\rightarrow \A_{\lambda,\chi+\chi'}.
\end{equation}

Below we will need a characterization of (\ref{eq:transl_homom_spec}).

\begin{Lem}\label{Lem:homom_spec}
Suppose that there is a resolution $X=X^\theta$ such that $H^i(X, O(\chi+\chi'))=0$
for all $i>0$. Then
(\ref{eq:transl_homom_spec}) is the unique homomorphism whose microlocalization
to $Y^{reg}$ coincides with the microlocalization of the specialization of
(\ref{eq:transl_homom_local}).
\end{Lem}
\begin{proof}
Clearly, (\ref{eq:transl_homom_spec}) has the required property. We need to show that
it characterizes it uniquely. This will follow if we check that
$\A_{\lambda,\chi+\chi'}\hookrightarrow \Gamma(\A_{\lambda,\chi+\chi'}|_{Y^{reg}})$.

 By Lemma \ref{Lem:translation_coincidence}, we have that
$\A_{\lambda,\chi+\chi'}\xrightarrow{\sim} R\Gamma(\A_{\lambda,\chi+\chi'}^\theta)$.
By the assumption of the lemma, $R\Gamma(\A_{\lambda,\chi+\chi'}^\theta)=
\Gamma(\A_{\lambda,\chi+\chi'}^\theta)$.
And it is clear that $\Gamma(\A_{\lambda,\chi+\chi'}^\theta)
\hookrightarrow \Gamma(\A_{\lambda,\chi+\chi'}^\theta|_{Y^{reg}})$.
\end{proof}

%\subsubsection{Morita equivalence locus}

\subsubsection{Main result}
The following is the main result relating translation bimodules to abelian localization.

\begin{Prop}\label{Prop:loc_transl}
Let $\chi$  in the chamber of $\theta$ be such that
$H^i(X^\theta, \mathcal{O}(m\chi))=0$  for all $m>0$ and all $i>0$. Suppose, further,
that the bimodules $\A_{\lambda+m\chi,\chi},\A_{\lambda+(m+1)\chi,-\chi}$
are mutually inverse Morita equivalences for each $m\geqslant 0$. Then abelian localization holds for $(\lambda,\theta)$.
\end{Prop}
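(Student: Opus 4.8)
The plan is to reduce abelian localization for $(\lambda,\theta)$ to a statement about the derived functors $R\Gamma_\lambda^\theta$ and $L\Loc_\lambda^\theta$ together with exactness on both sides, and to feed in the hypothesis about Morita equivalences through the translation bimodules $\A_{\lambda+m\chi,\chi}$ by composing a large chain of translations. The first step is to establish that derived localization holds for $(\lambda,\theta)$: since $\chi$ lies in the chamber of $\theta$, the line bundle $O(m\chi)$ is ample for $m\gg 0$, so by a standard Serre-type vanishing/ampleness argument (as in \cite[Section 5]{BPW}) abelian — hence derived — localization holds for $(\lambda+N\chi,\theta)$ for all sufficiently large $N$. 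Now I would compare $\A_\lambda\operatorname{-mod}$ with $\A_{\lambda+N\chi}\operatorname{-mod}$ via the translation bimodules: the hypothesis says $\A_{\lambda+m\chi,\chi}$ and $\A_{\lambda+(m+1)\chi,-\chi}$ are mutually inverse Morita equivalences for each $m\geqslant 0$, so their composition $\A_{\lambda,N\chi}:=\A_{\lambda+(N-1)\chi,\chi}\otimes_{\A_{\lambda+(N-1)\chi}}\cdots\otimes_{\A_{\lambda+\chi}}\A_{\lambda,\chi}$ (with its inverse running the other way) gives a Morita equivalence $\A_\lambda\operatorname{-mod}\xrightarrow{\sim}\A_{\lambda+N\chi}\operatorname{-mod}$ for every $N$.

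The heart of the argument is then to match this algebraic Morita equivalence with the corresponding sheaf-level translation and the localization functors. Using Lemma \ref{Lem:trans_functor} (Proposition 6.31 in \cite{BPW}) iteratively, I would show that for $N$ large enough
\[
\Gamma_{\lambda+N\chi}^\theta\bigl(\A^\theta_{\lambda,N\chi}\otimes_{\A^\theta_\lambda} L\Loc_\lambda^\theta(\bullet)\bigr)\cong \A_{\lambda,N\chi}\otimes^L_{\A_\lambda}\bullet,
\]
where $\A^\theta_{\lambda,N\chi}$ denotes the sheaf-theoretic composition of the twists by $O(\chi)$. Since $\A^\theta_{\lambda,N\chi}\otimes_{\A^\theta_\lambda}-$ is an equivalence $\Coh(\A^\theta_\lambda)\xrightarrow{\sim}\Coh(\A^\theta_{\lambda+N\chi})$ (tensoring with a line bundle, deformed), and since abelian localization holds for $(\lambda+N\chi,\theta)$ so that $\Gamma_{\lambda+N\chi}^\theta$ is exact and an equivalence, the left-hand side computes an exact equivalence on $D^b(\Coh(\A^\theta_\lambda))$. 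On the right-hand side, $\A_{\lambda,N\chi}\otimes^L_{\A_\lambda}-$ is the derived Morita equivalence we built from the hypothesis, which is exact (a genuine Morita equivalence, not just a derived one). Comparing, one deduces first that $L\Loc_\lambda^\theta$ has cohomology only in degree $0$ on every module — i.e. $\Loc_\lambda^\theta$ is exact — and is fully faithful, and then that $R\Gamma_\lambda^\theta\circ L\Loc_\lambda^\theta\cong\operatorname{id}$; together with the adjunction $(\Loc_\lambda^\theta,\Gamma_\lambda^\theta)$ and the fact that $\Gamma_\lambda^\theta$ kills no nonzero coherent sheaf (which follows because the composite through $(\lambda+N\chi)$ detects everything), this forces $\Gamma_\lambda^\theta$ and $\Loc_\lambda^\theta$ to be mutually inverse equivalences, which is exactly abelian localization for $(\lambda,\theta)$.

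The step I expect to be the main obstacle is the careful bookkeeping in the iteration: one must check that the sheaf-level twist $\A^\theta_{\lambda+m\chi,\chi}\otimes_{\A^\theta_{\lambda+m\chi}}-$ intertwines correctly with $L\Loc$ at each stage, that the composed bimodule $\A_{\lambda,N\chi}$ really is the global sections of the composed sheaf bimodule (this uses vanishing of higher cohomology for $\A^\theta_{\paramq,\chi}$ and base change, as in \cite[Section 6.3]{BPW}), and that applying Lemma \ref{Lem:trans_functor} repeatedly is legitimate — each application requires abelian localization at the next intermediate parameter, which we do \emph{not} have a priori. The way around this is to telescope: rather than stepping one $\chi$ at a time through intermediate abelian-localization statements, I would directly use that $\A_{\lambda,N\chi}\otimes^L_{\A_\lambda}-$ is exact (from the Morita hypothesis) to run Lemma \ref{Lem:trans_functor} only once, between $\lambda$ and $\lambda+N\chi$, with $N$ chosen large enough that abelian localization holds at $\lambda+N\chi$. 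This is essentially the argument of \cite[Proposition 5.13]{BPW}, and the remaining work is to verify that the exactness of the algebraic translation propagates to exactness of $L\Loc_\lambda^\theta$.
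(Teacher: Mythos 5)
The paper itself gives no proof of this statement --- it is quoted from \cite{BPW} --- so I am comparing your proposal against the argument there and against the way the result is used in this paper. Your overall strategy (push up to $\lambda+N\chi$ via the hypothesized Morita equivalences, exploit positivity of $\chi$ there, transport back) is the right one, and your last paragraph correctly flags where the bookkeeping is delicate. But your opening step is a genuine gap, and in fact a circularity. You assert that ``by a standard Serre-type vanishing/ampleness argument, abelian localization holds for $(\lambda+N\chi,\theta)$ for all sufficiently large $N$.'' What the quantum analogue of Serre's theorem in \cite{BPW} actually provides is: for each \emph{fixed} coherent $\A^\theta_\lambda$-module $M$, the twist of $M$ by $O(N\chi)$ is $\Gamma$-acyclic and globally generated for $N\gg 0$ \emph{depending on $M$}. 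Abelian localization at a single fixed parameter $\lambda+N\chi$ requires acyclicity of \emph{all} coherent modules at that one parameter, which does not follow from this. Indeed, both in \cite{BPW} and in the present paper (see the proof of Proposition \ref{Prop:generic}(1)) the statement ``abelian localization holds for sufficiently dominant parameters'' is \emph{deduced from} Proposition \ref{Prop:loc_transl} together with the generic Morita property (Lemma \ref{Lem:Morita_open}); using it as an input here is arguing in a circle, with no base case. Your telescoped application of Lemma \ref{Lem:trans_functor} between $\lambda$ and $\lambda+N\chi$ requires abelian localization at $(\lambda+N\chi,\theta)$ as a hypothesis, so it inherits the same problem; and the identification of the composed algebraic bimodule $\A_{\lambda+(N-1)\chi,\chi}\otimes\cdots\otimes\A_{\lambda,\chi}$ with $\A_{\lambda,N\chi}=\Gamma(\A^\theta_{\lambda,N\chi})$ is precisely the content of Lemma \ref{Lem:transl_trans}, whose proof again uses abelian localization at the intermediate parameters, so it cannot be assumed either.

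The repair, which is how \cite{BPW} actually argues, is to never fix a single $N$ at which localization is already known. One verifies the two halves of abelian localization object by object: given a short exact sequence of coherent sheaves (resp.\ a given module $M$), choose $N$ large enough --- depending on the objects at hand --- that the relevant twists by $O(N\chi)$ are acyclic and globally generated; use the natural maps $\A_{\lambda+m\chi,\chi}\otimes_{\A_{\lambda+m\chi}}\Gamma(\mathcal{N})\rightarrow\Gamma(\A^\theta_{\lambda+m\chi,\chi}\otimes\mathcal{N})$, which exist with no hypotheses, one step at a time; and invoke the Morita hypothesis at each step $m=0,\dots,N-1$ (this is exactly why the hypothesis is stated for \emph{every} $m\geqslant 0$ rather than for one large shift) to show these maps are isomorphisms on the objects in question and that global generation of the top twist forces the counit $\Loc^\theta_\lambda\Gamma^\theta_\lambda(M)\rightarrow M$ to be surjective, hence bijective. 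If you recast your argument in this module-dependent form, discarding the appeal to localization at a fixed $\lambda+N\chi$, the remainder of your outline goes through.
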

\begin{proof}
The proof is similar to that of \cite[Lemma 4.4]{BL}.

Thanks to \cite[Proposition 5.13]{BPW}, what we need to show is that, for all $m>1$, the homomorphism
\begin{equation}\label{eq:bimod_iso_Morita}
\A_{\lambda+(m-1)\chi,\chi}\otimes_{\A_{\lambda+(m-1)\chi}} \A_{\lambda,(m-1)\chi}\rightarrow \A_{\lambda,m\chi}
\end{equation}
is an isomorphism.  We will construct its inverse.
Consider the homomorphism
$$\A_{\lambda+m\chi,-\chi}\otimes_{\A_{\lambda+m\chi}} \A_{\lambda,m\chi}\rightarrow \A_{\lambda,(m-1)\chi}.$$
Tensoring both sides with $\A_{\lambda+(m-1)\chi,\chi}$ (that is inverse to the first factor
in the left hand side), we get
\begin{equation}\label{eq:bimod_iso_Morita1}\A_{\lambda,m\chi}\rightarrow \A_{\lambda+(m-1)\chi,\chi}\otimes_{\A_{\lambda+(m-1)\chi}}\A_{\lambda,(m-1)\chi}.
\end{equation}
We claim that (\ref{eq:bimod_iso_Morita}) and (\ref{eq:bimod_iso_Morita1})
are mutually inverse. Note that, by the construction, these homomorphisms
are mutually inverse after microlocalizing to $Y^{reg}$. It follows from
Lemma \ref{Lem:homom_spec} that they are mutually inverse.
\end{proof}

\subsubsection{Further results}
First, we have the following lemma.

\begin{Lem}\label{Lem:Morita_open}
Let $\chi\in \Pic(X)$. Then the locus of $\lambda\in \paramq$, where the homomorphisms
$$\A_{\lambda,\chi}\otimes_{\A_\lambda}\A_{\lambda+\chi,-\chi}\rightarrow \A_{\lambda+\chi},
\A_{\lambda+\chi,-\chi}\otimes_{\A_{\lambda+\chi}}\A_{\lambda,\chi}\rightarrow \A_{\lambda}$$
are isomorphisms is non-empty Zariski open  in $\paramq$.
\end{Lem}
\begin{proof}
This follows as in the proof of (2) of \cite[Proposition 4.5]{BL} using
\cite[Proposition 2.6]{CWR}.
\end{proof}

From (iv),(v) from Section \ref{SSS_HC_restr} and Proposition \ref{Prop:loc_transl}
we deduce the following.

\begin{Cor}\label{Cor:loc_slice}
If abelian localization holds for $\A_\lambda^\theta$, then it also
holds for $\underline{\A}_\lambda^\theta$.
\end{Cor}

Let us mention one more important property of translation bimodules.

\begin{Lem}\label{Lem:transl_trans}
Let $\lambda\in \paramq$ and $\chi_1,\chi_2,\chi_3$ be such that abelian localization holds
for $(\lambda,\theta),(\lambda+\chi_1,\theta)$ and for $(\lambda'-\chi_3,\theta')$ and $(\lambda',\theta')$,
where $\lambda'=\lambda+\chi$ with $\chi=\chi_1+\chi_2+\chi_3$. Then
$$\A_{\lambda,\chi}=\A_{\lambda+\chi_1+\chi_2,\chi_3}\otimes_{\A_{\lambda+\chi_1+\chi_2}}
\A_{\lambda+\chi_1,\chi_2}\otimes_{\A_{\lambda+\chi_1}}\A_{\lambda,\chi_1}.$$
\end{Lem}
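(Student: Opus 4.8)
The plan is to reduce the claimed identity $\A_{\lambda,\lambda'-\lambda}=\A_{\lambda+\chi_1+\chi_2,\chi_3}\otimes_{\A_{\lambda+\chi_1+\chi_2}}\A_{\lambda+\chi_1,\chi_2}\otimes_{\A_{\lambda+\chi_1}}\A_{\lambda,\chi_1}$ to a statement about coherent sheaves on $X^\theta$, where the analogous factorization is essentially tautological, and then transport it across the localization equivalences.

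First I would record the sheaf-level identity. On $X^\theta$ we have line bundles $O(\chi_i)$, and $O(\chi_1+\chi_2+\chi_3)\cong O(\chi_1)\otimes_{O_X}O(\chi_2)\otimes_{O_X}O(\chi_3)$. By the construction of the bimodules $\A^\theta_{\paramq,\chi}$ as the unique $\A_{\paramq}^\theta$-bimodule deformations of the $O(\chi)$ (recalled in Section \ref{SSS_trans_bimod}), together with the fact that $O(\chi_i)$ is a line bundle so that tensoring over $\A^\theta_\mu$ is exact and compatible with specialization, one gets an isomorphism of $\A^\theta_{\lambda',?}$-type bimodules
\begin{equation*}
\A^\theta_{\lambda,\lambda'-\lambda}\cong \A^\theta_{\lambda+\chi_1+\chi_2,\chi_3}\otimes_{\A^\theta_{\lambda+\chi_1+\chi_2}}\A^\theta_{\lambda+\chi_1,\chi_2}\otimes_{\A^\theta_{\lambda+\chi_1}}\A^\theta_{\lambda,\chi_1}.
\end{equation*}
This is because both sides are $\A^\theta_{\paramq}$-bimodule deformations of $O(\chi_1+\chi_2+\chi_3)$ with the same weight of the adjoint $\param^*$-action, namely $\langle c_1(\chi_1+\chi_2+\chi_3),\bullet\rangle$, so uniqueness of such deformations (the cited Banerjee--Kaledin/\,\cite{BK}-type results, as used throughout \cite{BPW}) forces them to agree; I would cite \cite[Section 5.1]{BPW} for this bookkeeping.

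Next I would pass to global sections. The hypotheses say abelian localization holds for $(\lambda,\theta)$, $(\lambda+\chi_1,\theta)$, and for $(\lambda',\theta')$ and $(\lambda'-\chi_3,\theta')$. The first two, plus Corollary \ref{Cor:trans_equiv}, give that $\A_{\lambda,\chi_1}$ is a Morita equivalence $\A_\lambda\operatorname{-mod}\to\A_{\lambda+\chi_1}\operatorname{-mod}$, realized on the sheaf side by $\A^\theta_{\lambda,\chi_1}\otimes_{\A^\theta_\lambda}\bullet$ together with $\Gamma^\theta$ and $\Loc^\theta$; the analogous statement with $\theta$ replaced by $\theta'$ handles the outer factor $\A_{\lambda+\chi_1+\chi_2,\chi_3}$. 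Using Lemma \ref{Lem:trans_functor} to commute global sections past the sheaf-level tensor products (this is exactly the content needed: $\Gamma^\theta(\A^\theta_{\mu,\chi}\otimes_{\A^\theta_\mu}L\Loc^\theta_\mu(\bullet))\cong \A_{\mu,\chi}\otimes^L_{\A_\mu}\bullet$, and since the $O(\chi_i)$ are line bundles all these tensor products are underived and all higher cohomology vanishes as in Section 2.4.2), I would apply $\Gamma$ to the sheaf isomorphism above and identify the result with the claimed bimodule equation. One mild point: the middle factor $\A_{\lambda+\chi_1,\chi_2}$ need not itself be a Morita equivalence, so I cannot simply compose three equivalences; instead I treat the whole composite $\A^\theta_{\lambda,\lambda'-\lambda}$, which is a line-bundle deformation, apply $\Gamma$ once to it, and separately apply $\Gamma$ to the iterated sheaf tensor product, using that the outer two bundles give equivalences and localization holds at the relevant parameters so that $\Gamma$ and $\Loc$ cancel where they need to.

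The main obstacle I anticipate is the bookkeeping in the second paragraph: making precise that $\Gamma^\theta$ carries the threefold sheaf tensor product to the threefold bimodule tensor product, given that abelian localization is assumed only at the four parameters listed and not, e.g., at $\lambda+\chi_1+\chi_2$. The way around this is to chunk the tensor product as $\big(\A^\theta_{\lambda+\chi_1+\chi_2,\chi_3}\otimes_{\A^\theta_{\lambda+\chi_1+\chi_2}}\A^\theta_{\lambda+\chi_1,\chi_2}\big)\otimes_{\A^\theta_{\lambda+\chi_1}}\A^\theta_{\lambda,\chi_1}$, observe the parenthesized sheaf is the line-bundle deformation $\A^\theta_{\lambda+\chi_1,\chi_2+\chi_3}$ (same uniqueness argument), and then use $\theta'$-localization at $\lambda'-\chi_3$ and $\lambda'$ together with $\theta$-localization at $\lambda,\lambda+\chi_1$ to reduce to two applications of Lemma \ref{Lem:trans_functor}. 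Once the sheaf-level factorization and the commutation of $\Gamma$ with these tensor products are in hand, the proof closes immediately by specializing the universal bimodule identity at $\lambda$.
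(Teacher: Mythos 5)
Your proposal is correct in substance but proceeds along a genuinely different route from the paper's. The paper's proof stays entirely on the level of algebras: it reduces to the two extreme cases $\chi_1=0$ and $\chi_3=0$, writes down the natural multiplication homomorphism $\A_{\lambda+\chi_1,\chi_2}\otimes_{\A_{\lambda+\chi_1}}\A_{\lambda,\chi_1}\rightarrow \A_{\lambda,\lambda'-\lambda}$, and inverts it by tensoring a second natural homomorphism with the inverse Morita bimodule supplied by Corollary \ref{Cor:trans_equiv}; no sheaves appear. You instead establish the factorization at the sheaf level, where it is essentially tautological for quantized line bundles, and descend through $\Gamma$ and $\Loc$. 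Both arguments consume the hypotheses in the same way (the two pairs where abelian localization holds make the outer two bimodules Morita equivalences), and your ``chunking'' into $\bigl(\A_{\lambda+\chi_1+\chi_2,\chi_3}\otimes\A_{\lambda+\chi_1,\chi_2}\bigr)\otimes\A_{\lambda,\chi_1}$ is precisely the paper's reduction to the two special cases. The one point you must make fully explicit is the issue you half-flag at the end: there is no single resolution on which $\Gamma$ can be pushed through the threefold tensor product, since Lemma \ref{Lem:trans_functor} applied to the middle or outer factor on $X^\theta$ would require abelian localization at $(\lambda',\theta)$, which is not assumed. The repair is to treat the outer chunk on $X^{\theta'}$ (using localization at $(\lambda'-\chi_3,\theta')$ and $(\lambda',\theta')$ to identify $\Gamma(\A^{\theta'}_{\lambda'-\chi_3,\chi_3}\otimes_{\A^{\theta'}_{\lambda'-\chi_3}}\A^{\theta'}_{\lambda+\chi_1,\chi_2})$ with $\A_{\lambda'-\chi_3,\chi_3}\otimes_{\A_{\lambda'-\chi_3}}\A_{\lambda+\chi_1,\chi_2}$), the inner chunk on $X^\theta$, and to invoke the independence of the global sections $\Gamma(\A^{?}_{\mu,\chi})$ on the choice of resolution (\cite[Proposition 6.24]{BPW}) to glue the two computations; one also uses that a Morita bimodule is projective on either side so that $\Gamma$ commutes with tensoring by it. What your approach buys is that the sheaf-level identity is manifestly associative and needs no separate verification that the composite maps are mutually inverse; what the paper's approach buys is that it never has to juggle two resolutions or worry about higher cohomology at all.
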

\begin{proof}
We will consider the case when $\chi_3=0$ (the case when $\chi_1=0$ is similar and together they imply
the general case). We have a natural bimodule homomorphism
\begin{equation}\label{eq:bimod_homom}\A_{\lambda'-\chi_2,\chi_2}\otimes_{\A_{\lambda+\chi_1}}\A_{\lambda,\chi_1}\rightarrow \A_{\lambda,\chi}.\end{equation}
But $\A_{\lambda'-\chi_2,\chi_2}$ is a Morita equivalence bimodule  by Corollary \ref{Cor:trans_equiv}
and its inverse is $\A_{\lambda',-\chi_2}$.
From here, using Lemma \ref{Lem:homom_spec}, we see that we get an inverse of (\ref{eq:bimod_homom}) from the natural
homomorphism
$$\A_{\lambda',-\chi_2}\otimes_{\A_{\lambda'}}\A_{\lambda,\chi}\rightarrow \A_{\lambda,\chi_1}$$
by tensoring with $\A_{\lambda'-\chi_2,\chi_2}$.
\end{proof}

\subsection{Wall-crossing functors}
Here we will recall wall-crossing functors. These functors are a classical tool in the representation
theory of semisimple Lie algebras. In the generality we need they were constructed in
\cite[Section 6.4]{BPW} and further studied in \cite{Perv}, where it was shown that some
of these functors are perverse equivalences.

\subsubsection{Definition}
Let $\lambda\in \param, \chi\in \operatorname{Pic}(X)$, where $X=X^\theta$. Suppose that
abelian localization holds for  $(\lambda+\chi,\theta)$, while derived localization holds
for $(\lambda,\theta)$. Consider the functor $\WC_{\lambda+\chi\leftarrow\lambda}:=
\A_{\lambda,\chi}\otimes^L_{\A_\lambda}\bullet$, as we have seen in Section
\ref{SSS_trans_bimod}, this is a derived equivalence.
Note that if abelian equivalence holds for $(\lambda,\theta)$, then $\WC_{\lambda+\chi\leftarrow \lambda}$
is an abelian equivalence.

%Note that    $\WC_{\lambda+\chi\leftarrow \lambda}$  restricts to
%$D^b_{\mathcal{O}_\nu}(\A_\lambda\operatorname{-mod})\xrightarrow{\sim} %D^b_{\mathcal{O}_\nu}(\A_{\lambda+\chi}\operatorname{-mod})$.

\subsubsection{Wall-crossing between simple algebras}
Now let us examine the behavior of $\WC_{\lambda+\chi\leftarrow \lambda}$, when the algebras
$\A_\lambda,\A_{\lambda+\chi}$ are simple.%, compare with \cite{BL}.

\begin{Lem}\label{Lem:WC_simple}
If the algebras $\A_\lambda,\A_{\lambda+\chi}$ are simple, then $\A_{\lambda,\chi}\otimes^L_{\A_\lambda}\bullet$
is an abelian equivalence.
\end{Lem}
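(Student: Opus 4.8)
The statement to prove is Lemma \ref{Lem:WC_simple}: if $\A_\lambda$ and $\A_{\lambda+\chi}$ are both simple algebras, then the wall-crossing functor $\WC_{\lambda+\chi\leftarrow\lambda} = \A_{\lambda,\chi}\otimes^L_{\A_\lambda}\bullet$ is an abelian (not just derived) equivalence. The starting point is that $\WC_{\lambda+\chi\leftarrow\lambda}$ is \emph{a priori} only a derived equivalence, which we already know from Section \ref{SSS_trans_bimod}; equivalently, the translation bimodule $\A_{\lambda,\chi}$ together with $\A_{\lambda+\chi,-\chi}$ realize mutually inverse equivalences on the level of \emph{derived} categories. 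To upgrade this to an abelian equivalence it suffices to show that $\A_{\lambda,\chi}$ is flat as a right $\A_\lambda$-module (equivalently, the higher $\Tor_i^{\A_\lambda}(\A_{\lambda,\chi},-)$ vanish) and that $\A_{\lambda,\chi}\otimes_{\A_\lambda}-$ and $\A_{\lambda+\chi,-\chi}\otimes_{\A_{\lambda+\chi}}-$ are honest inverse equivalences on the abelian categories, i.e.\ that $\A_{\lambda+\chi,-\chi}\otimes_{\A_{\lambda+\chi}}\A_{\lambda,\chi}\cong \A_\lambda$ and symmetrically. By Corollary \ref{Cor:trans_equiv} this is exactly the statement that abelian localization holds for both $(\lambda,\theta)$ and $(\lambda+\chi,\theta)$ — but we are \emph{not} assuming that, so the argument must be internal to the bimodule.

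\smallskip

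\textbf{Step 1: the composition bimodule is HC and has full associated variety.} First I would form $\mathcal{B}:=\A_{\lambda+\chi,-\chi}\otimes^L_{\A_{\lambda+\chi}}\A_{\lambda,\chi}$, an object of $D^b_{HC}(\A_\lambda\text{-}\A_\lambda)$, and note that since $\WC_{\lambda+\chi\leftarrow\lambda}$ is a derived equivalence with inverse $\A_{\lambda+\chi,-\chi}\otimes^L_{\A_{\lambda+\chi}}\bullet$, we have $\mathcal{B}\cong \A_\lambda$ in $D^b(\A_\lambda\text{-}\A_\lambda)$. The point is to show the same holds \emph{before} taking cohomology, i.e.\ that $\Tor^{\A_{\lambda+\chi}}_{>0}(\A_{\lambda+\chi,-\chi},\A_{\lambda,\chi})=0$ and $\A_{\lambda+\chi,-\chi}\otimes_{\A_{\lambda+\chi}}\A_{\lambda,\chi}\cong\A_\lambda$ as bimodules. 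Using the associated-graded estimate for $\Tor$ of HC bimodules recalled in the Properties subsection — $\gr\Tor_i^{\A'}(\B_1,\B_2)$ is a subquotient of $\Tor_i^{A}(\gr\B_1,\gr\B_2)$ with $A=\C[X]$ — one reduces the vanishing to a commutative statement about the line bundles $O(\chi),O(-\chi)$ on $X$: $O(-\chi)\otimes^L_{\Str_X}O(\chi)\cong\Str_X$ with no higher Tor, which holds because line bundles are locally free. This shows $\gr\mathcal{B}$ is concentrated in homological degree $0$ away from a proper closed subset, hence the higher cohomology bimodules $H^{-i}(\mathcal{B})$, $i>0$, have associated variety of positive codimension in $X$.

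\smallskip

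\textbf{Step 2: use simplicity to kill the lower cohomology.} This is where the simplicity hypothesis enters and where I expect the main obstacle to lie. The higher cohomology $\mathcal{H}:=H^{-i}(\mathcal{B})$ for $i>0$ is a HC $\A_\lambda$-$\A_\lambda$-bimodule whose associated variety is a proper $\C^\times$-stable, Poisson subvariety of $X=Y$ — in particular it is contained in the singular locus $Y\setminus Y^{reg}$, so it is a \emph{proper} two-sided ideal's worth of data. Concretely, $\mathcal{H}$ is annihilated on the left and on the right by ideals $I_L, I_R\subset\A_\lambda$ with $\gr I_L,\gr I_R$ cutting out something of positive codimension; since $\A_\lambda$ is simple, $I_L=I_R=0$ forces... no — rather, the correct phrasing is: a nonzero HC bimodule over a simple algebra $\A_\lambda$ cannot have proper associated variety, because its left annihilator is a proper two-sided ideal, hence zero, hence $\gr$ of the left annihilator is zero, hence (by a standard argument comparing $\VA(\mathcal{H})$ with the vanishing locus of $\gr(\operatorname{Ann}_L\mathcal{H})$) the associated variety is all of $Y$, contradicting positive codimension. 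Therefore $\mathcal{H}=0$ for all $i>0$, i.e.\ $\mathcal{B}\cong\A_\lambda$ in degree $0$ and $\A_{\lambda,\chi}$ is flat over $\A_\lambda$ with $\A_{\lambda+\chi,-\chi}\otimes_{\A_{\lambda+\chi}}\A_{\lambda,\chi}\cong\A_\lambda$. Symmetrically, swapping the roles of $\lambda$ and $\lambda+\chi$ (and $\chi$ and $-\chi$) gives $\A_{\lambda,\chi}\otimes_{\A_\lambda}\A_{\lambda+\chi,-\chi}\cong\A_{\lambda+\chi}$. By Corollary \ref{Cor:trans_equiv}'s reasoning (or directly), $\A_{\lambda,\chi}\otimes_{\A_\lambda}\bullet$ is then an exact functor with exact inverse, hence an equivalence of abelian categories $\A_\lambda\text{-mod}\xrightarrow{\sim}\A_{\lambda+\chi}\text{-mod}$, which is precisely the claim.

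\smallskip

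\textbf{The hard part.} The genuinely delicate point is Step 2: making rigorous the passage from ``$\mathcal{H}$ is a HC bimodule with proper associated variety'' to ``$\mathcal{H}=0$'' when $\A_\lambda$ is simple. One must be careful that the left and right annihilators of a HC bimodule are indeed two-sided ideals of $\A_\lambda$ (this uses the Poisson/HC structure — the left $\A_\lambda$-module structure on $\gr\mathcal{H}$ agrees with the right one, so the annihilators, after passing to $\gr$, coincide and are Poisson, hence their $\C^\times$-stable vanishing locus is a union of symplectic leaves), and that $\operatorname{Ann}_L\mathcal{H}=0$ together with $\gr\operatorname{Ann}_L\mathcal{H}\subseteq\sqrt{\operatorname{Ann}_{\C[Y]}\gr\mathcal{H}}$ forces $\VA(\mathcal{H})=Y$. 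If one wants to avoid this, an alternative is to invoke the restriction functors $\bullet_{\dagger,y}$ of Section \ref{SSS_HC_restr}: $\mathcal{H}_{\dagger,y}=0$ for $y$ in the open leaf since $\VA(\mathcal{H})$ misses it, and for $y$ on a smaller leaf one runs the same simplicity argument on the slice algebra $\underline{\A}_\lambda$ by induction on the dimension of the leaf — but then one needs to know $\underline{\A}_\lambda$ is simple too, which is not automatic, so the direct annihilator argument is cleaner and is the route I would take.
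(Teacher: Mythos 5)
Your proposal is correct and its engine is exactly the paper's: reduce everything to the statement that the kernels and cokernels of the natural multiplication maps $\A_{\lambda,\chi}\otimes_{\A_\lambda}\A_{\lambda+\chi,-\chi}\to\A_{\lambda+\chi}$ and $\A_{\lambda+\chi,-\chi}\otimes_{\A_{\lambda+\chi}}\A_{\lambda,\chi}\to\A_\lambda$ are HC bimodules with proper associated variety, and then kill them using the fact that a simple algebra admits no nonzero HC bimodules with proper associated variety (the paper simply cites \cite[Lemma 4.4]{B_ineq} for this, whereas you sketch the annihilator argument behind it). The one substantive difference is how you detect ``proper associated variety'': you use the estimate $\gr\Tor_i$ is a subquotient of $\Tor_i^{\C[Y]}(\gr\B_1,\gr\B_2)$ together with local freeness of $O(\pm\chi)$ over the locus where $\rho$ is an isomorphism, while the paper applies the restriction functor $\bullet_{\dagger,y}$ at a generic point $y\in Y$ and observes that both bimodules restrict to one-dimensional spaces and the multiplication maps to the identity; these are interchangeable, and you correctly identify the restriction-functor route as the alternative. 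One caveat: your opening claim in Step 1 that $\mathcal{B}\cong\A_\lambda$ already in $D^b$ because the inverse of $\WC_{\lambda+\chi\leftarrow\lambda}$ is $\A_{\lambda+\chi,-\chi}\otimes^L_{\A_{\lambda+\chi}}\bullet$ is not justified at that stage --- Lemma \ref{Lem:trans_functor} identifies that bimodule functor with the geometric inverse only when abelian localization holds for $(\lambda,\theta)$, which is not assumed --- and if it were known it would render the rest of Steps 1--2 vacuous. Fortunately nothing downstream uses it: what you actually prove (vanishing of the higher Tor's and of the kernel/cokernel of the degree-zero map) is precisely what establishes the mutually inverse Morita equivalence, from which projectivity of $\A_{\lambda,\chi}$ and hence $t$-exactness of $\WC_{\lambda+\chi\leftarrow\lambda}$ follow.
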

\begin{proof}
%First, let us show this under the assumption that $\chi$ is ample for some $X^\theta$ and $H^1(X^\theta, %\mathcal{O}(\chi))=0$.
Note that we have natural homomorphisms $$\A_{\lambda,\chi}\otimes_{\A_\lambda} \A_{\lambda+\chi,-\chi}
\rightarrow \A_{\lambda+\chi} \text{ and } \A_{\lambda+\chi,-\chi}\otimes_{\A_{\lambda+\chi}}\A_{\lambda,\chi}
\rightarrow \A_\lambda.$$ Pick a generic point $y\in Y$ and consider the corresponding restriction functor
between the categories of Harish-Chandra bimodules, see Section \ref{SSS_HC_restr}.
The target category for this functor is $\mathsf{Vect}$ because of the choice of $y$.
Moreover, the images of $\A_{\lambda+\chi,-\chi},\A_{\lambda,\chi}$ are one-dimensional.
The functor sends the bimodule homomorphisms above to the identity maps. So their kernels and cokernels
vanish, equivalently they have proper associated varieties. This already shows that the homomorphisms
are surjective. But the kernels are also Harish-Chandra bimodules. Since the algebras $\A_\lambda,\A_{\lambda+\chi}$
are simple they have no Harish-Chandra bimodules with proper associated varieties, this follows,
for example, from \cite[Lemma 4.4]{B_ineq}.
\end{proof}

\subsubsection{Perversity}\label{SS_Perv}
Let us recall the general definition of a perverse equivalence due to Chuang and Rouquier.
Let $\mathcal{T}^1,\mathcal{T}^2$ be  triangulated categories equipped with  $t$-structures
that are homologically finite (each object in $\mathcal{T}^i$ has only finitely many nonzero
homology groups). Let $\mathcal{C}^1,\mathcal{C}^2$ denote the hearts of $\mathcal{T}^1,\mathcal{T}^2$, respectively.

Fix  filtrations $\mathcal{C}^i=\mathcal{C}_0^i\supset \mathcal{C}_1^i\supset\ldots \supset\mathcal{C}_k^i=\{0\}$ by Serre subcategories. We  are going to define a perverse equivalence with respect to these filtrations. By definition, this is a triangulated equivalence $\mathcal{F}:\mathcal{T}^1\xrightarrow{\sim} \mathcal{T}^2$ subject to the following conditions:
\begin{itemize}
\item[(P1)] For any $j$, the equivalence $\mathcal{F}$ restricts to an equivalence
$\mathcal{T}^1_{\mathcal{C}_j^1}\rightarrow \mathcal{T}^2_{\mathcal{C}_j^2}$, where
we write $\mathcal{T}^i_{\mathcal{C}_j^i}, i=1,2,$ for the category of all objects
in $\mathcal{T}^i$ with homology (computed with respect to the t-structures of interest)
in $\mathcal{C}_j^i$.
\item[(P2)] For $M\in \Cat_j^1$, we have $H_\ell(\mathcal{F}M)=0$ for $\ell<j$
and $H_\ell(\mathcal{F}M)\in \Cat^2_{j+1}$ for $\ell>j$.
\item[(P3)] The  functor $M\mapsto H_j(\mathcal{F}M)$ induces an equivalence $\Cat^1_j/\Cat^1_{j+1}\xrightarrow{\sim}
\Cat^2_j/\Cat^2_{j+1}$  of abelian categories.
\end{itemize}

We note that thanks for (P3), $\mathcal{F}$ induces a bijection $\varphi: \operatorname{Irr}(\Cat^1)
\xrightarrow{\sim} \operatorname{Irr}(\Cat^2)$.

It turns out that the wall-crossing functor $\WC_{\lambda+\chi\leftarrow \lambda}$ is perverse
under certain additional assumptions, \cite[Section 3.1]{Perv}. Namely, suppose that abelian localization holds
for $(\lambda,\theta), (\lambda+\chi,\theta')$ and derived localization holds
for $(\lambda,\theta')$, where $\theta,\theta'$ lie in chambers that are opposite
with respect to a common face, denote it by $\Gamma$. Then $\WC_{\lambda+\chi\leftarrow \lambda}$ is perverse, \cite[Theorem 3.1]{Perv}.
Under some more assumptions one can describe the filtrations in terms of annihilators by certain ideals.
Namely, let  $\param_0$ be the subspace of $\param$ spanned by $\Gamma$.
Let us assume that abelian localization holds for $(\hat{\lambda},\theta)$ and $(\hat{\lambda}+\chi,\theta')$
and derived localization holds for $(\hat{\lambda},\theta')$ for a Weil generic $\hat{\lambda}\in \paramq_0:=\lambda+\param_0$. Then it was shown in
\cite[Theorem 3.1]{Perv} that there are chains of ideals $\A_{\paramq_0}=\I^{k+1}_{\paramq_0}\supset \I^{k}_{\paramq_0}\supset\ldots
\supset \I^0_{\paramq_0}=\{0\}$ and $\A_{\paramq_0+\chi}=\I^{k+1}_{\paramq_0+\chi}\supset \I^{k}_{\paramq_0+\chi}\supset\ldots
\supset \I^0_{\paramq_0+\chi}=\{0\}$, where $k=\frac{1}{2}\dim X$ with the following  properties:
\begin{itemize}
\item[(a)] For a Weil generic $\lambda'\in \paramq_0$, the specialization $\I^j_{\lambda'}$ is the minimal
ideal $\I\subset \A_{\lambda'}$ with $\operatorname{GK-}\dim (\A_{\lambda'}/\I)\leqslant 2(k-j)$,
where we write $\operatorname{GK-}\dim (\A_{\lambda'}/\I)$ for the GK dimension of
$\A_{\lambda'}/\I$. The similar characterization
is true for $\I^j_{\lambda'+\chi}\subset \A_{\lambda'+\chi}$.
\item[(b)] For a Zariski generic $\lambda'\in\paramq_0$ and $\B:=\A_{\lambda',\chi}$, we have
\begin{itemize}
\item[(b1)] For all $i,j$, we have $\I_{\lambda'+\chi}^j\operatorname{Tor}^{\A_{\lambda'}}_i(\B, \A_{\lambda'}/\I_{\lambda'}^j)=0$.
\item[(b2)] For all $i,j$, we have $\operatorname{Tor}^{\A_{\lambda'+\chi}}_i(\A_{\lambda'+\chi}/\I_{\lambda'+\chi}^j,
\B)\I_{\lambda'}^j=0$.
\item[(b3)] We have   $\operatorname{Tor}^{\A_{\lambda'}}_i(\B, \A_{\lambda'}/\I_{\lambda'}^j)=0$
  for $i<j$.
%\item[(c)] We have $\operatorname{Tor}^{H_c}_{n+1-j}(\B_c, %H_c/\I_{j,c})=\operatorname{Tor}^{H'_c}_{n+1-j}(H'_c/\I'_{j,c},\B_c)$.
%Denote this bimodule by $\B_{j,c}$.
\item[(b4)] We have $\I_{\lambda'+\chi}^{j-1}\operatorname{Tor}^{\A_{\lambda'}}_i(\B, \A_{\lambda'}/\I^j_{\lambda'})=
\operatorname{Tor}^{\A_{\lambda'+\chi}}_i(\A_{\lambda'+\chi}/\I_{\lambda'+\chi}^j,\B)\I_{\lambda'}^{j-1}=0$
for $i>j$.
\item[(b5)]  Set $\B_{j}:=\operatorname{Tor}^{\A_{\lambda'}}_{j}(\B, \A_{\lambda'}/\I^j_{\lambda'})$.
The kernel and the cokernel of the natural homomorphism $$\B_{j}\otimes_{\A_{\lambda'}}
\operatorname{Hom}_{\A_{\lambda'+\chi}}(\B_j, \A_{\lambda'+\chi}/\I_{\lambda'+\chi}^j)\rightarrow \A_{\lambda'+\chi}/\I_{\lambda'+\chi}^j$$
are annihilated by $\I_{\lambda'+\chi}^{j-1}$ on the left and on the right.
\item[(b6)] The kernel and the cokernel of the natural homomorphism
$$\operatorname{Hom}_{\A_{\lambda'}}(\B_j, \A_{\lambda'}/\I_{\lambda'}^j)\otimes_{\A_{\lambda'+\chi}}
\B_{j}\rightarrow \A_{\lambda'}/\I_{\lambda'}^j.$$ %as well as
%$$\operatorname{Tor}^{H_{c}}(\B_{c,j}, H_{c}/\I_{j,c})\otimes_{H'_c}\B_{c,j}
%\rightarrow H_{c}/\I_{j,c}$$
are annihilated on the left and on the right by $\I_{\lambda'}^{j-1}$.
\end{itemize}
%the functor $\A_{\lambda',\chi}\otimes^L_{\A_{\lambda'}}\bullet$
%is a perverse equivalence with $\mathcal{C}_j^1=\A_{\lambda'}/\I_{\lambda'}^j\operatorname{-mod}$
%and $\mathcal{C}_j^2=\A_{\lambda'+\chi}/\I_{\lambda'+\chi}^j\operatorname{-mod}$.
\end{itemize}
It was shown in \cite[Section 3.4]{Perv} that (a) implies that, for a Zariski
generic $\lambda'$, the subcategories   $\A_{\lambda'}/\I_{\lambda'}^j\operatorname{-mod}
\subset \A_{\lambda'}\operatorname{-mod}$ and
$\A_{\lambda'+\chi}/\I_{\lambda'+\chi}^j\operatorname{-mod}
\subset \A_{\lambda'+\chi}\operatorname{-mod}$ are Serre.
Further, it was shown there that the functor $\A_{\lambda',\chi}\otimes^L_{\A_{\lambda'}}\bullet$
is perverse with respect to the filtrations by these subcategories once the conditions of (b) hold.
Namely, (b1) and (b2) imply (P1), then (b3) implies the first condition in (P2), and
(b4)-(b6) imply the second condition in (P2) and (P3).
Conversely, it is straightforward to see that if
$\A_{\lambda',\chi}\otimes^L_{\A_{\lambda'}}\bullet$ is perverse with respect to
the filtrations above, then (b1)-(b6) hold.

\begin{Rem}
Using techniques of Section \ref{SS_reg_param} below we can show that the locus of $\lambda'$ in (b)
is given by removing finitely many hyperplanes from $\paramq_0$.
\end{Rem}

\begin{Rem}\label{Rem:WC_perv_ideal}
Note that the filtrations making a derived equivalence perverse are uniquely recovered from that equivalence
(provided they  exist). Recall that $\lambda$ and $\lambda+\chi$ are such that abelian localization
holds for $(\lambda,\theta),(\lambda+\chi,\theta')$. In particular, $\mathfrak{WC}_{\lambda+\chi\leftarrow \lambda}$ is perverse.
In particular, thanks to Lemma \ref{Lem:transl_trans} and (b), the filtrations making $\WC_{\lambda+\chi\leftarrow \lambda}$ perverse are always given by annihilation by a suitable chain of two-sided ideals (that are obtained
from $\I^j_{\lambda'},\I^j_{\lambda'+\chi}$ via Morita equivalences
$\A_{\lambda'}\operatorname{-bimod}\xrightarrow{\sim}
\A_{\lambda}\operatorname{-bimod}$ and
$\A_{\lambda'}\operatorname{-bimod}\xrightarrow{\sim}
\A_{\lambda}\operatorname{-bimod}$ for a suitable element $\lambda'$).
In particular, these chains
of ideals in $\A_{\lambda},\A_{\lambda+\chi}$ are determined uniquely.
\end{Rem}

%In Section \ref{S_params} we will get a more explicit sufficient condition on parameters $\lambda$
%for $\WC_{\lambda+\chi\leftarrow \lambda}$ being perverse with respect to the fi

\section{Preliminaries on categories $\mathcal{O}$}
\subsection{Highest weight and standardly stratified structures}\label{SS_HW_SSS}
Let $\K$ be a field. Let $\Cat$ be a $\K$-linear abelian category equivalent to the category of
finite  dimensional modules over a split unital associative finite dimensional $\K$-algebra. We will write
$\Tcal$ for an indexing set for the simple objects of  $\Cat$. Let us write $L(\tau)$
for the simple object indexed by $\tau\in \Tcal$ and $P(\tau)$ for the projective cover of
$L(\tau)$.

\subsubsection{Highest weight categories}\label{SSS_HW}
The additional structure of a highest weight category on $\mathcal{C}$ is a partial  order
$\leqslant$ on $\Tcal$ that should satisfy  axioms (HW1),(HW2) to be explained below.
To state the axioms we need some notation.

The partial order $\leqslant$ defines a filtration $\Cat_{\leqslant \tau}$ on $\Cat$ by Serre subcategories indexed by $\Tcal$:  the subcategory $\Cat_{\leqslant \tau}$ is, by definition, the Serre span
of the simples $L(\tau')$ with $\tau'\leqslant \tau$. Define $\Cat_{<\tau}$ analogously and let
$\Cat_\tau$ denote the quotient $\Cat_{\leqslant \tau}/\Cat_{<\tau}$.  The first axiom of a highest weight
category is as follows:
\begin{itemize}
\item[(HW1)] $\Cat_\tau$ is equivalent to the category of finite dimensional vector spaces for all $\tau$.
\end{itemize}
To formulate the second axiom we need some  more notation.
Let $\pi_\tau$ denote the quotient
functor $\Cat_{\leqslant \tau}\twoheadrightarrow \Cat_{\tau}$. Let us write $\Delta_\tau:\Cat_\tau\rightarrow \Cat_{\leqslant \tau}$ for the left adjoint functor of $\pi_\tau$.
Let $P_\tau$ denote the indecomposable object in $\Cat_\tau$ and set $\Delta(\tau)=\Delta_\tau(P_\tau)$.
The object $\Delta(\tau)$ is called {\it standard}. The second axiom of a highest weight category is as
follows:
\begin{itemize}
\item[(HW2)] $P(\tau)$ surjects onto $\Delta(\tau)$ in such a way that the kernel is filtered by
$\Delta(\tau')$'s with $\tau'>\tau$.
\end{itemize}

We note that $\Cat^{opp}$ is also a highest weight category with respect to the same order.
The standard objects in $\Cat^{opp}$ are called costandard objects and are denoted by $\nabla(\tau)$.
The have the following property: $\dim \Ext^i_{\Cat}(\Delta(\tau),\nabla(\tau'))=\delta_{i,0}\delta_{\tau,\tau'}$.

Below we will need the following lemma.

\begin{Lem}\label{Lem:cost_inclusion}
For $M\in \Cat, \tau\in \mathcal{T}$,  the following two conditions are equivalent:
\begin{enumerate}
\item $\dim\Hom(\Delta(\tau'),M)=\delta_{\tau,\tau'}$,
\item $M\hookrightarrow \nabla(\tau)$.
\end{enumerate}
\end{Lem}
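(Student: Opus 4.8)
The plan is to prove the two implications separately, using the standard homological characterization of costandard objects in a highest weight category, namely that $\dim\Ext^i_\Cat(\Delta(\tau'),\nabla(\tau))=\delta_{i,0}\delta_{\tau,\tau'}$, which is recalled just above the lemma.

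First I would show $(2)\Rightarrow(1)$. Suppose $M\hookrightarrow\nabla(\tau)$, with quotient $N$, so we have a short exact sequence $0\to M\to\nabla(\tau)\to N\to 0$. Applying $\Hom(\Delta(\tau'),-)$ and using $\Ext^1_\Cat(\Delta(\tau'),\nabla(\tau))=0$ together with $\Hom(\Delta(\tau'),\nabla(\tau))=\delta_{\tau,\tau'}\K$, I get $\Hom(\Delta(\tau'),M)\hookrightarrow\delta_{\tau,\tau'}\K$ from the long exact sequence; so the dimension is at most $\delta_{\tau,\tau'}$. To pin it down I would argue it is exactly $\delta_{\tau,\tau'}$: for $\tau'\neq\tau$ the bound already gives $0$; for $\tau'=\tau$ I would note that the composition $\Hom(\Delta(\tau),M)\to\Hom(\Delta(\tau),\nabla(\tau))=\K$ is an isomorphism provided the connecting map $\Hom(\Delta(\tau),N)\to\Ext^1(\Delta(\tau),M)$ does not interfere --- more cleanly, I would instead observe that the socle of $\nabla(\tau)$ is $L(\tau)$, so $M\neq 0$ contains $L(\tau)$, hence $[M:L(\tau)]\geq 1$, and since $[\nabla(\tau):L(\tau)]=1$ the quotient $N$ does not involve $L(\tau)$. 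Then a standard argument (e.g. counting multiplicities via $[\nabla(\tau):L(\sigma)]=\dim\Hom(\Delta(\sigma),\nabla(\tau))$-type reasoning, or using the BGG reciprocity / the fact that $\Hom(\Delta(\tau),-)$ is exact on objects with a $\nabla$-style filtration of the relevant support) forces $\dim\Hom(\Delta(\tau),M)=1$. Actually the slickest route: since $\Ext^1(\Delta(\tau),\nabla(\tau))=0$ and the map $\Hom(\Delta(\tau),\nabla(\tau))\to\Hom(\Delta(\tau),N)$ — no; the honest slick route is that $N$ embeds in $\nabla$-filtered objects... I would settle this by the socle argument: $L(\tau)\subseteq M$, the induced map $\Hom(\Delta(\tau),L(\tau))\to\Hom(\Delta(\tau),M)\to\Hom(\Delta(\tau),\nabla(\tau))$ has nonzero composite (the first map is onto the top-to-$L(\tau)$ pairing), so $\Hom(\Delta(\tau),M)\to\K$ is onto, hence an isomorphism by the upper bound.

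Next, for $(1)\Rightarrow(2)$, I would use the universal property of $\nabla(\tau)$ as the injective hull of $L(\tau)$ in $\Cat_{\leq\tau}$, or more precisely the characterization $\nabla(\tau)=\text{the maximal object } Z \text{ with }\operatorname{soc}Z=L(\tau)\text{ and }[Z:L(\sigma)]=0\text{ unless }\sigma<\tau$. Given $(1)$: $\dim\Hom(\Delta(\tau'),M)=\delta_{\tau,\tau'}$. First, $\Hom(\Delta(\tau),M)=\K\neq 0$ means $\Hom(L(\tau),M)\neq 0$ (since $\Delta(\tau)\twoheadrightarrow L(\tau)$ and any nonzero map to $M$ must have image with top $L(\tau)$... one needs that a nonzero $\Delta(\tau)\to M$ hits a copy of $L(\tau)$ in the socle: the image is a quotient of $\Delta(\tau)$, whose socle contains $L(\tau')$ only for $\tau'\leq\tau$ appropriately). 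More efficiently: condition $(1)$ says, in the Grothendieck-group/adjunction language, that $M$ has the same ``$\Delta$-pairing vector'' as $\nabla(\tau)$. I would then show $M$ has simple socle $L(\tau)$: if $L(\sigma)\hookrightarrow M$ then $\Hom(\Delta(\sigma),M)\supseteq\Hom(\Delta(\sigma),L(\sigma))\neq 0$ (as $\Delta(\sigma)\twoheadrightarrow L(\sigma)$), forcing $\sigma=\tau$ and the socle to be exactly $L(\tau)$ with multiplicity one. Having simple socle $L(\tau)$, $M$ embeds into the injective hull $I(\tau)$ of $L(\tau)$. Now I claim $M$ lands inside $\nabla(\tau)\subseteq I(\tau)$: it suffices to show $[M:L(\sigma)]\neq 0\Rightarrow\sigma\leq\tau$, equivalently $M\in\Cat_{\leq\tau}$; and within $\Cat_{\leq\tau}$, $\nabla(\tau)$ is the injective hull of $L(\tau)$, so any $M\in\Cat_{\leq\tau}$ with socle $L(\tau)$ embeds into $\nabla(\tau)$. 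To get $M\in\Cat_{\leq\tau}$, suppose for contradiction some $L(\sigma)$ with $\sigma\not\leq\tau$ is a composition factor; take $\sigma$ maximal such. Then there is a subquotient presentation exhibiting a nonzero map from $P(\sigma)$, hence, via (HW2) and the order-maximality of $\sigma$, a nonzero map $\Delta(\sigma)\to$ (a subquotient of $M$); promoting this to $\Hom(\Delta(\sigma),M)\neq 0$ (using that $\sigma$ is maximal among factors, so the relevant $\Cat_{<\sigma}$-truncation doesn't kill it) contradicts $(1)$.

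The main obstacle I anticipate is exactly this last point: turning ``$L(\sigma)$ is a composition factor of $M$ with $\sigma$ maximal'' into ``$\Hom(\Delta(\sigma),M)\neq 0$.'' The clean way is: for $\sigma$ maximal among the composition factors of $M$, the subobject $M'\subseteq M$ generated appropriately, or rather the image of $\Cat_{\leq\sigma}$-truncation, has $L(\sigma)$ in its socle or top in a controlled way; concretely, let $M_{\leq\sigma}$ be the largest subobject of $M$ lying in $\Cat_{\leq\sigma}$ — since $\sigma$ is maximal, $M/M_{\leq\sigma}$ has no factor $\leq\sigma$... hmm, that's not quite it either. The correct standard fact I will invoke: in a highest weight category, $\Hom(\Delta(\sigma),M)\neq 0$ iff $M$ has a subquotient which is a nonsplit-or-split extension with $\Delta(\sigma)$-quotient and $\Cat_{<\sigma}$-kernel; and a composition factor $L(\sigma)$ that is maximal in $M$ forces the existence of a subobject $M''\subseteq M$ with $M''\twoheadrightarrow L(\sigma)$, $\ker\in\Cat_{<\sigma}$, whence $\Delta(\sigma)\to M''\hookrightarrow M$ is nonzero by the projectivity of $\Delta(\sigma)$ in $\Cat_{\leq\sigma}$. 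I will spell this out carefully, as it is the crux; everything else is a diagram chase with the $\Ext$-orthogonality of standards and costandards.
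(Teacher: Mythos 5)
Your proof is correct and follows essentially the same route as the paper's: both directions ultimately reduce to working in $\Cat_{\leqslant\tau}$, where $\Delta(\tau)$ is projective and $\nabla(\tau)$ is the injective hull of $L(\tau)$; your variant shows $M$ has simple socle $L(\tau)$ and embeds it into that injective hull, while the paper instead produces a nonzero map $M\rightarrow\nabla(\tau)$ by injectivity and kills its kernel $K$ via $\Hom(\Delta(\tau'),K)=0$ for all $\tau'$ --- a cosmetic difference. One imprecision in your crux step: for $\sigma$ maximal among the composition factors of $M$, a minimal subobject $M''$ with $[M'':L(\sigma)]\neq 0$ does surject onto $L(\sigma)$, but its kernel need not lie in $\Cat_{<\sigma}$, since it can contain factors $L(\mu)$ with $\mu$ incomparable to $\sigma$, so you cannot literally invoke projectivity of $\Delta(\sigma)$ in $\Cat_{\leqslant\sigma}$. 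The lift $\Delta(\sigma)\rightarrow M''$ still exists because $\Ext^1(\Delta(\sigma),L(\mu))=0$ unless $\mu>\sigma$, and maximality of $\sigma$ excludes such $\mu$ from the kernel; equivalently, use $\dim\Hom(P(\sigma),M)=[M:L(\sigma)]$ together with $\Hom(\Delta(\mu),M)=0$ for $\mu>\sigma$ and the standard $\Delta$-filtration of $\ker(P(\sigma)\twoheadrightarrow\Delta(\sigma))$.
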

\begin{proof}
It is clear that (2)$\Rightarrow$(1). Let us prove (1)$\Rightarrow$(2).
Note that $M\in \Cat_{\leqslant \tau}$, so we can replace $\Cat$ with
$\Cat_{\leqslant \tau}$ and assume that $\tau$
is maximal. In this case, $\nabla(\tau)$ is injective, $\Delta(\tau)$
is projective. So $L(\tau)$ occurs in $M$ with multiplicity 1 that
gives a nonzero homomorphism $M\rightarrow \nabla(\tau)$. We need
to show that it is injective. Let $K$ be the kernel. Then $L(\tau)$
is not  a composition factor of $K$. It follows that $\Hom(\Delta(\tau'),K)=0$
for all $\tau'$. Hence $K=0$.
\end{proof}

\begin{Rem}\label{Rem:gen_hw}
There are various generalizations of the notion of a highest weight category.
We will need the situation when $\Cat$ has finite length property and also
enough projectives but not necessarily finitely many simples. Instead of requiring
that there are finitely many simples we can impose more general conditions:
\begin{itemize}
\item the length of all increasing chains of
elements in the poset $(\operatorname{Irr}(\Cat),\leqslant)$ is bounded from above
by some constant, say $d$,
\item and  the  quotient functors $\Cat_{\leqslant \tau}\twoheadrightarrow \Cat_\tau$
have left adjoints.
\end{itemize}
Then we say that $\Cat$ is a highest weight category if (HW1),(HW2) hold.
\end{Rem}

\subsubsection{Standardly stratified categories}\label{SSS_SSS}
Let us define standardly stratified categories following \cite[Section 2]{LW}.

Let $\Cat$ be the same as in the first paragraph of Section \ref{SS_HW_SSS}.
The additional structure of a standardly stratified category on $\mathcal{C}$ is a partial  {\it pre-order}
$\preceq$ on $\Tcal$ that should satisfy  axioms (SS1),(SS2) to be explained below. Let $\Xi$ denote
the set of equivalence classes of $\preceq$. Then, for $\xi\in \Xi$, we can define the Serre subcategories
$\Cat_{\prec\xi}\subset \Cat_{\preceq \xi}$ similarly to Section \ref{SSS_HW}, their quotient $\Cat_\xi$ together with
the quotient functor $\pi_\xi:\Cat_{\leqslant \xi}\rightarrow \Cat_\xi$, and its left adjoint $\Delta_\xi$.
Further, for $\tau\in \xi$, we write $L_\xi(\tau)$ for $\pi_\xi(L(\tau))$ and $P_\xi(\tau)$ for the projective
cover of $L_\xi(\tau)$ in $\Cat_\xi$. We define the standard (resp. proper standard) objects by
$\Delta_{\preceq}(\tau):=\Delta_\xi(P_\xi(\tau))$, resp., $\bar{\Delta}_{\preceq}(\tau):=\Delta_\xi(L_\xi(\tau))$.

The axioms of a standardly stratified category as defined in \cite{LW} are as follows.

\begin{itemize}
\item[(SS1)] The functor $\Delta_\xi$ is exact.
\item[(SS2)] The projective object $P(\tau)$ admits an epimorphism onto $\Delta_{\preceq}(\tau)$
with kernel filtered by $\Delta_{\preceq}(\tau')$'s with $\tau'\succ \tau$.
\end{itemize}

We will be mostly interested in standardly stratified structures on highest weight categories subject to suitable compatibility conditions. Namely, let
$\leqslant$ be a partial order on $\mathcal{T}$ defining a highest weight structure on $\mathcal{C}$. We say that a pre-order
$\preceq$ on $\mathcal{T}$ is compatible with $\leqslant$ if
 $\tau\prec \tau'\Rightarrow \tau\leqslant \tau'\Rightarrow \tau\preceq\tau'$.

\begin{Lem}\label{Lem:comp_SS_structure}
Let $\preceq$ be a pre-order compatible with $\leqslant$. Then it defines a standardly stratified structure
on $\Cat$ if and only if both $\pi_\xi^!$ and  $\pi^*_\xi$ (the right adjoint of $\pi_\xi$) are exact.
\end{Lem}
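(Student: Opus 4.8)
The statement is an ``if and only if'' characterizing when a compatible pre-order $\preceq$ gives a standardly stratified structure, in terms of exactness of $\pi_\xi^!$ and $\pi_\xi^*$. The plan is to translate the two axioms (SS1), (SS2) into statements about these adjoints, using the compatibility condition $\tau\prec\tau'\Rightarrow\tau\leqslant\tau'\Rightarrow\tau\preceq\tau'$ to control how the highest-weight filtration sits inside the stratification.

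\emph{The ``only if'' direction.} Suppose $\preceq$ defines a standardly stratified structure. Axiom (SS1) says $\Delta_\xi$ (the left adjoint of $\pi_\xi$) is exact. I would first argue that exactness of $\Delta_\xi$ forces $\pi_\xi^!$ (the right adjoint of $\pi_\xi$, which I'll write $\pi_\xi^!$, or rather here I should use the notation of the lemma: $\pi_\xi^*$ and $\pi_\xi^!$) to be exact. The key input is that $\Cat_{\preceq\xi}$ is a highest weight category in its own right — this uses the compatibility: the order $\leqslant$ restricts to a highest weight order on each $\Cat_{\preceq\xi}$, and in that category the objects $\Delta_{\preceq}(\tau) = \Delta_\xi(P_\xi(\tau))$ for $\tau\in\xi$, together with the pulled-back standards of $\Cat_{\prec\xi}$, generate the standard filtration. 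Then one invokes the standard fact (essentially Ringel duality / the homological characterization of highest weight categories) that $\Delta_\xi$ exact $\iff$ the left adjoint of the quotient preserves projectivity appropriately $\iff$ the quotient functor $\pi_\xi$ has exact right adjoint on the relevant subcategories. Concretely: $\pi_\xi^!$ sends injectives of $\Cat_\xi$ to objects with a $\nabla_{\preceq}$-filtration (proper costandard filtration) in $\Cat_{\preceq\xi}$, and this filtered property combined with (SS1) gives exactness. The exactness of $\pi_\xi^*$ is the statement dual to (SS1) applied to $\Cat^{opp}$ — but one must check that $\preceq$ is also a standardly stratified pre-order on $\Cat^{opp}$, which follows from (SS2) together with the BGG-type reciprocity available in this compatible setting.

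\emph{The ``if'' direction.} Suppose both $\pi_\xi^*$ and $\pi_\xi^!$ are exact. I would verify (SS1) and (SS2) directly. For (SS1): since $\pi_\xi$ is a quotient functor it is exact; it has a left adjoint $\Delta_\xi=\pi_\xi^*$ — wait, the convention: $\pi_\xi$ is the quotient $\Cat_{\preceq\xi}\to\Cat_\xi$, its left adjoint is $\Delta_\xi$ and its right adjoint is what the lemma calls $\pi_\xi^!$; the notation $\pi_\xi^*$ in the lemma must mean the left adjoint $\Delta_\xi$. So the hypothesis is precisely that $\Delta_\xi=\pi_\xi^*$ is exact, which is exactly (SS1) — so (SS1) is immediate, and the content of the hypothesis is the extra exactness of $\pi_\xi^!$. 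Then (SS2) is the real work: I would use that $\pi_\xi^!$ exact implies every injective in $\Cat_\xi$ lifts to an object with a proper costandard filtration in $\Cat_{\preceq\xi}$, dualize to get that $\Delta_{\preceq}(\tau')$-filtrations behave well, and then run the usual argument that the projective cover $P(\tau)$ in $\Cat$ maps onto $\Delta_{\preceq}(\tau)=\Delta_\xi(P_\xi(\tau))$ with kernel filtered by $\Delta_{\preceq}(\tau')$ with $\tau'\succ\tau$ — here compatibility with $\leqslant$ is used to order the subquotients, since $P(\tau)$ already has a $\Delta$-filtration (from the highest weight structure) and one coarsens it along the pre-order.

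\textbf{Main obstacle.} The crux is proving, in the ``if'' direction, that the coarsened $\Delta$-filtration of $P(\tau)$ actually lands in the subcategory $\Cat_{\preceq\xi}$ at the right stage and that applying $\Delta_\xi$ to $P_\xi(\tau)$ recovers the bottom piece — i.e.\ identifying $\Delta_{\preceq}(\tau)$ with a quotient of $P(\tau)$ rather than merely a subquotient. This requires knowing $\pi_\xi$ maps $P(\tau)$ to $P_\xi(\tau)$ (true since $\pi_\xi$ is a quotient functor and $\tau\in\xi$) and that the adjunction counit $\Delta_\xi\pi_\xi(P(\tau))\to \pi_\xi(P(\tau))$-lift is the desired surjection with the right kernel — and the exactness of $\pi_\xi^!$ is what makes the $\Delta_{\preceq}$-filtration of that kernel exist. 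I expect the delicate point to be bookkeeping the interaction of the two filtrations (the fine highest-weight one indexed by $\Tcal$ and the coarse one indexed by $\Xi$) rather than any deep new idea; the compatibility hypothesis is precisely what is needed to make this bookkeeping go through.
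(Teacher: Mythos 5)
The paper does not prove this lemma directly: its entire proof is the observation that $\Cat$, being highest weight, satisfies the hypotheses of \cite[Lemma 3.3]{CWR}, and the statement is then quoted from there. So you are attempting from scratch an argument that the paper outsources, and your sketch, while architecturally reasonable, has two concrete problems.

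First, a misreading of the statement. You talk yourself into ``the notation $\pi_\xi^*$ in the lemma must mean the left adjoint $\Delta_\xi$,'' but the lemma's own parenthetical says $\pi_\xi^*$ is the \emph{right} adjoint, and elsewhere in the paper $\pi^!$ consistently denotes the left adjoint (e.g.\ ``$L\pi_\beta^!$ for the left adjoint functor'' in Corollary \ref{Cor:hom_vanishing}). So $\pi_\xi^!=\Delta_\xi$ and its exactness is literally (SS1); the genuinely new hypothesis is exactness of the right adjoint $\pi_\xi^*$. Your argument survives this relabelling in outline, but it means that in your ``if'' direction the sentence ``the exactness of $\pi_\xi^!$ is what makes the $\Delta_{\preceq}$-filtration of that kernel exist'' is attributing the work to the wrong functor: exactness of $\Delta_\xi$ is (SS1) and cannot by itself yield (SS2). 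You never explain where exactness of the \emph{right} adjoint actually enters the construction of the coarsened filtration — roughly, it is what guarantees that each $\xi'$-chunk of the $\Delta$-filtration of $P(\tau)$ is $\Delta_{\xi'}$ applied to a \emph{projective} of $\Cat_{\xi'}$ rather than an arbitrary $\Delta$-filtered object — and this is precisely the content of the cited lemma, so omitting it leaves the main implication unproved. Second, in the ``only if'' direction you reduce exactness of $\pi_\xi^*$ to the assertion that $\Cat^{opp}$ is standardly stratified for the same pre-order, justified only by an appeal to ``BGG-type reciprocity.'' That opposite-category statement is itself a theorem (it is established in \cite{LW}/\cite{CWR}, not a formality), so as written this direction is also an assertion rather than a proof. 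To repair the write-up you should either supply these two arguments or, as the paper does, verify the hypotheses of \cite[Lemma 3.3]{CWR} and cite it.
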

\begin{proof}
$\Cat$ satisfies conditions of \cite[Lemma 3.3]{CWR} because it is highest weight. Our claim
follows from \cite[Lemma 3.3]{CWR}.
\end{proof}

\begin{Rem}\label{Rem:gen_SS}
Remark \ref{Rem:gen_hw} can be generalized to the setting of standardly stratified categories in
a straightforward way.
\end{Rem}

\subsubsection{Partial Ringel dualities}
Let $\Cat$ be a highest weight category with respect to a partial order $\leqslant$ and let
$\preceq$ be a compatible partial pre-order giving a standardly stratified structure.

Now let $\Cat'$ be another category with $\operatorname{Irr}(\Cat')\xrightarrow{\sim}
\mathcal{T}$. Define a new partial order on $\mathcal{T}$ by $\tau\leqslant'\tau'$ if
\begin{itemize}
\item
either $\tau\prec\tau'$
\item or $\tau\sim\tau'$ and $\tau\geqslant \tau'$.
\end{itemize}
The axioms of a partial order
are straightforward to verify. Suppose that
$\Cat'$ is highest weight with respect to $\leqslant'$ and standardly stratified
with respect to $\preceq$.

By a partial Ringel duality functor we mean a derived equivalence $\psi:
D^b(\Cat)\rightarrow D^b(\Cat')$ that maps $\Delta(\tau)$ to
$\Delta_{\preceq}(\underline{\nabla}(\tau))$, where $\underline{\nabla}(\tau)$
is the costandard object in the subquotient category $\Cat'_\xi$. Note that
such a functor is defined uniquely up to pre- or post-composing with an abelian
equivalence that is the identity on $K_0$. Also $\Cat'$ is defined uniquely
up to an abelian equivalence which is the identity on the set of simples.

Note that if $\preceq$ is the trivial pre-order, then we recover the usual notion of
Ringel duality. The usual Ringel dual always exists. In general, the existence is unclear,
see  \cite[Section 4.5]{Perv}.

\begin{Rem}\label{Rem:partial_Ringel_gen}
We can generalize partial Ringel dualities to categories of the kind considered in
Remarks \ref{Rem:gen_hw}, \ref{Rem:gen_SS}.
\end{Rem}

\subsection{Categories $\mathcal{O}$: general setting}\label{SS_O_gen}
Let $\A$ be a Noetherian  associative algebra
equipped with a rational Hamiltonian $\C^\times$-action $\nu$.
Let $h\in \A$ be the image of $1$ under the comoment map and let $\A^i$ denote the $i$th graded component
so that $\A^i:=\{a| [h,a]=ia\}$.  We set $\A^{\geqslant 0}:=\bigoplus_{i\geqslant 0}\A^i,
\A^{> 0}:=\bigoplus_{i>0}\A^i, \Ca_\nu(\A):=\A^{\geqslant 0}/(\A^{\geqslant 0}\cap \A\A^{>0})$. Note that $\A^{\geqslant 0}\cap \A\A^{>0}$ is a two-sided ideal in $\A^{\geqslant 0}$
so $\Ca_\nu(\A)$ is an algebra. We have a natural isomorphism
\begin{equation}\label{eq:Cartan_iso}\Ca_{\nu}(\A)\cong \A^0/\bigoplus_{i>0}\A^{-i}\A^i.\end{equation}
Let us note that the definitions of $\A^{>0},\A^{\geqslant 0}, \Ca_\nu(\A)$ make sense even if
the action $\nu$ is not Hamiltonian.

By the category $\OCat_\nu(\A)$ we mean the full subcategory of the category $\A\operatorname{-mod}$
of the finitely generated $\A$-modules consisting of all modules such that $\A^{>0}$ acts locally
nilpotently. We have the {\it Verma module} functor $\Delta_\nu:\Ca_\nu(\A)\operatorname{-mod}\rightarrow\OCat_\nu(\A)$
given by $$\Delta_\nu(N):=\A\otimes_{\A^{\geqslant 0}}N=(\A/\A\A^{>0})\otimes_{\Ca_{\nu}(\A)}N.$$
Note that if $h$ acts on $N$ with eigenvalue $\alpha$ (so that $N$ is a single generalized $h$-eigenspace), then $h$ acts locally finitely on $\Delta_\nu(N)$ with eigenvalues in $\alpha+\Z_{\leqslant 0}$. The space $N$ is embedded into $\Delta_\nu(N)$ as the $\alpha$-eigenspace for
$h$. The module $\Delta_\nu(N)$ has the maximal submodule that does not intersect $N$. The quotient is denoted by $L_\nu(N)$. The map $N\mapsto L_\nu(N)$ is easily
seen to be a bijection $\operatorname{Irr}(\Ca_{\nu}(\A))\xrightarrow{\sim} \operatorname{Irr}(\mathcal{O}_\nu(\A))$.

Assume now that $\dim\Ca_\nu(\A)<\infty$. Let $N_1,\ldots,N_r$ be the full list of the simple
$\Ca_\nu(\A)$-modules. Let $\alpha_1,\ldots,\alpha_r$ be the eigenvalues of $h$ on these modules
(note that $h$ maps into the center of $\Ca_\nu(\A)$ and so acts by a scalar on every irreducible
module). We define a partial order $\leqslant$ on
the set $N_1,\ldots,N_r$ by setting $N_i\leqslant N_j$ if $\alpha_j-\alpha_i\in \Z_{>0}$ or $i=j$.
Using this partial order it is easy to show that all the generalized eigenspaces for $h$
are finite dimensional and all modules in the category $\mathcal{O}$ have finite length.
Moreover, we say that $N_i,N_j$ (or the corresponding simples $L_\nu(N_i),L_\nu(N_j)$) lie in the
same {\it $h$-block} if $\alpha_i-\alpha_j\in \Z$. Note that the simples of $\mathcal{O}$ from
different $h$-blocks lie in different blocks so we can decompose $\mathcal{O}$ according to $h$-blocks:
$\mathcal{O}_\nu(\A)=\sum_{\beta\in \C/\Z} \mathcal{O}^{\beta+\Z}_\nu(\A)$, where
$\mathcal{O}^{\beta+\Z}_\nu(\A)$ is the Serre span of the simples in the $h$-block corresponding to
$\beta$.

We will also need a graded version $\mathcal{O}^{gr}_\nu(\A)$ of the category $\mathcal{O}$.
By definition, it consists of the modules in $\mathcal{O}_\nu(\A)$ together with a grading
(compatible with the grading on $\A$ coming from $\nu$). The homomorphisms are grading preserving.
Note that the irreducibles in  $\mathcal{O}^{gr}_\nu(\A)$ are labelled by the graded irreducible
$\Ca_\nu(\A)$-modules. In the situation when $\dim \Ca_{\nu}(\A)<\infty$, the category
$\mathcal{O}^{gr}_\nu(\A)$ splits into the direct sum $\bigoplus_{\alpha\in \C}\mathcal{O}^{\alpha}_\nu(\A)$,
where  $\mathcal{O}^{\alpha}_\nu(\A)$ consists of all modules $M$ such that $h$ acts on the graded component
$M(i)$ with single eigenvalue $\alpha+i$. Note that $\mathcal{O}^\alpha_\nu(\A)\cong \mathcal{O}^{\alpha+\Z}_\nu(\A)$.

\subsection{Categories $\mathcal{O}$: setting of quantized symplectic resolutions}
Now we are going to concentrate on the case when $\A=\A_\lambda$ is the algebra of global sections of
a quantization $\A_\lambda^\theta$. Suppose that $X$ is equipped with a Hamiltonian
action of a torus $T$ such that $X^T$ is finite. Choose a one-parameter subgroup
$\nu:\C^\times\rightarrow T$ and assume that $X^{\nu(\C^\times)}$ is a finite set.
In this case, $X^{\nu(\C^\times)}=X^T$.

\begin{Lem}
The algebra $\Ca_\nu(\A_{\paramq})$ is finitely generated over $\C[\paramq]$. In particular,
$\Ca_\nu(\A_\lambda)=\Ca_\nu(\A_{\paramq})\otimes_{\C[\paramq]}\C_\lambda$ is finite dimensional.
\end{Lem}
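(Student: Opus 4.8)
The plan is to reduce the statement to a graded-algebra computation for the universal quantization $\A_{\paramq}$, using the fact that $\A_{\paramq}$ is a quantization of $\C[X_{\param}]$ and that the torus $T$ acts on $X_{\param}$ with finitely many fixed points. The key point is that the Cartan quotient $\Ca_\nu(\A_{\paramq})$ is built from the nonnegative part $\A_{\paramq}^{\geqslant 0}$ of the $\nu$-grading by quotienting out $\A_{\paramq}^{\geqslant 0}\cap \A_{\paramq}\A_{\paramq}^{>0}$, and that the analogous construction at the level of the commutative algebra $\C[X_{\param}]$ gives $\C[X_{\param}^T]$, which is a finite $\C[\param]$-module because $X^T$ is finite (hence $X_{\param}^T\to\param$ is finite).

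The first step is to set up a good filtration on $\Ca_\nu(\A_{\paramq})$. Because the $T$-action is Hamiltonian and compatible with the contracting $\C^\times$-action (and the filtration of $\A_{\paramq}$), the $\nu$-weight decomposition $\A_{\paramq}=\bigoplus_i \A_{\paramq}^i$ is compatible with the filtration, so $\A_{\paramq}^{\geqslant 0}$ and $\A_{\paramq}^{>0}$ are filtered subalgebras/ideals and $\gr(\A_{\paramq}^{\geqslant 0})=\C[X_{\param}]^{\geqslant 0}$, $\gr(\A_{\paramq}^{>0})=\C[X_{\param}]^{>0}$ with respect to the grading on $\C[X_{\param}]$ induced by $\nu$. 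The quotient $\Ca_\nu(\A_{\paramq})$ inherits a filtration, and $\gr\Ca_\nu(\A_{\paramq})$ is a quotient of $\C[X_{\param}]^{\geqslant 0}/(\C[X_{\param}]^{\geqslant 0}\cap \C[X_{\param}]\C[X_{\param}]^{>0})$. One then identifies the latter with $\C[X_{\param}^T]$: the ideal $\C[X_{\param}]\C[X_{\param}]^{>0}+\C[X_{\param}]\C[X_{\param}]^{<0}$ cuts out $X_{\param}^T$ set-theoretically since $X^{\nu(\C^\times)}=X^T$, and intersecting with $\C[X_{\param}]^{\geqslant 0}$ and then quotienting recovers functions on the fixed locus (one may need to pass through the normalization/reducedness of $X_{\param}^T$, or simply argue with the support, which is all that is needed for finiteness). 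Crucially, $X_{\param}^T$ is finite over $\param$: the fibre over $0$ is $X^T$, which is finite, and $X_{\param}\to\param$ is of finite type with the contracting $\C^\times$-action, so finiteness of the special fibre propagates.

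The second step is to deduce finite generation of $\Ca_\nu(\A_{\paramq})$ over $\C[\param]$: its associated graded is a subquotient of a finite $\C[\param]$-module, hence a finite $\C[\param]$-module, and a filtered module whose associated graded is finite over a Noetherian ring is itself finite. (Here we use that $\C[\param]$ is central in $\A_{\paramq}$ and that the filtration is bounded below and exhaustive, which is automatic from the construction of $\A_{\paramq}$.) Finally, specializing at $\lambda\in\paramq$ gives $\Ca_\nu(\A_\lambda)=\Ca_\nu(\A_{\paramq})\otimes_{\C[\paramq]}\C_\lambda$ — this identity follows since forming $\Ca_\nu$ commutes with the (right-exact) base change $\otimes_{\C[\paramq]}\C_\lambda$ applied to the presentation $\A_{\paramq}^{\geqslant 0}\to\Ca_\nu(\A_{\paramq})\to 0$ together with $\A_\lambda^{\geqslant 0}=\A_{\paramq}^{\geqslant 0}\otimes_{\C[\paramq]}\C_\lambda$ — and a finite $\C[\param]$-module becomes finite-dimensional after specialization at a point.

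The main obstacle I expect is the identification of $\gr\Ca_\nu(\A_{\paramq})$ with (a subquotient of) $\C[X_{\param}^T]$: one must be careful that $\gr(\A_{\paramq}^{\geqslant 0}\cap \A_{\paramq}\A_{\paramq}^{>0})$ contains $\C[X_{\param}]^{\geqslant 0}\cap \C[X_{\param}]\C[X_{\param}]^{>0}$, i.e. that taking the associated graded is compatible with forming this intersection-of-ideal construction. This is the sort of statement that is true but requires a short filtration argument (lifting generators of the graded ideal to filtered elements); since we only need an \emph{upper} bound on $\gr\Ca_\nu(\A_{\paramq})$ for the finiteness conclusion, it suffices to show $\gr\Ca_\nu(\A_{\paramq})$ is a quotient of $\C[X_{\param}]^{\geqslant 0}/(\C[X_{\param}]^{\geqslant 0}\cap \C[X_{\param}]\C[X_{\param}]^{>0})$, which is the easy direction and follows directly from $\gr$ being a functor on filtered algebras.
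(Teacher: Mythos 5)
Your proposal follows the same strategy as the paper's proof: pass to the associated graded, so that $\gr\Ca_\nu(\A_{\paramq})$ is a quotient of the commutative Cartan quotient $\Ca_\nu(\C[Y_{\param}])$, and then deduce finite generation over $\C[\param]$ from finiteness of the relevant fixed locus together with the conical structure. The one inaccurate step is the identification of that commutative Cartan quotient with $\C[X_{\param}^T]$: since $\C[X_{\param}]=\C[Y_{\param}]$ is the ring of functions on the \emph{affine} variety $Y_{\param}$, the ideal you describe cuts out the $\nu$-fixed locus of $Y_{\param}$, i.e.\ $\tilde{\rho}(X_{\param}^T)$, whose fibre over $0\in\param$ is the single cone point $0\in Y$ rather than the finite set $X^T$; so there is no natural identification of the Cartan quotient with $\C[X_{\param}^T]$. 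Your fallback --- ``argue with the support, which is all that is needed for finiteness'' --- is exactly right and is what the paper does: $\Ca_\nu(\C[Y_{\param}])/(\param)=\Ca_\nu(\C[Y])$ is a finitely generated algebra supported at $0\in Y$, hence finite dimensional, and finite generation of $\Ca_\nu(\C[Y_{\param}])$ over $\C[\param]$ then follows by graded Nakayama with respect to the conical grading (your ``finiteness of the special fibre propagates''). With the detour through $X_{\param}^T$ replaced by this support statement on $Y_{\param}$, your argument is correct and essentially coincides with the paper's.
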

\begin{proof}
The algebra $\Ca_\nu(\A_{\paramq})$ carries a natural filtration and $\Ca_\nu(\C[Y_{\param}])
\twoheadrightarrow \gr\Ca_\nu(\A_{\paramq})$. So it is enough to show that $\Ca_\nu(\C[Y_{\param}])$
is finitely generated over $\C[\param]$. Note that  $\Ca_\nu(\C[Y_{\param}])/(\param)=\Ca_\nu(\C[Y])$
is finitely generated as an algebra because it is a quotient of $\C[Y]^T$. Since $T$ has finitely many
fixed points in $Y$, the algebra $\C[Y]$ is finite dimensional. It follows
that  $\Ca_\nu(\C[Y_{\param}])$ is finitely generated over $\C[\param]$.
\end{proof}

For a Zariski generic $\lambda\in \paramq$ one can give a more precise description of $\Ca_\nu(\A_\lambda)$, see \cite[Proposition 5.3]{CWR} or \cite[Section 5.1]{BLPW} for a somewhat weaker
result. Namely, we can define the Cartan subquotient $\Ca_{\nu}(\A_\lambda^\theta)$
that will be a sheaf on $X^{\nu(\C^\times)}$, see \cite[Section 5.2]{CWR}. Since $\nu$
is generic and $T$ has finitely many fixed points in $X$,
$\Ca_{\nu}(\A_\lambda^\theta)$ is an algebra naturally identified with $\C[X^{\nu(\C^\times)}]$.
By the construction, there is a natural homomorphism $\Ca_{\nu}(\A_\lambda)\rightarrow
\Ca_\nu(\A_\lambda^\theta)=\C[X^{\nu(\C^\times)}]$, see \cite[Section 5.3]{CWR}.
Similarly, we have the algebra $\Ca_\nu(\A_{\paramq}^\theta)$ that is naturally isomorphic
to $\C[\paramq][X^T]$. We have a $\C[\paramq]$-linear isomorphism $\Ca_\nu(\A_{\paramq})
\rightarrow \Ca_\nu(\A^\theta_{\paramq})$.

\begin{Lem}[Proposition 5.3 in \cite{CWR}]
For a Zariski generic $\lambda\in \paramq$, the homomorphism
$\Ca_{\nu}(\A_\lambda)\rightarrow \C[X^T]$ is an isomorphism.
\end{Lem}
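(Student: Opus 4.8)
The plan is to prove this by a degeneration/specialization argument, reducing the statement about $\A_\lambda$ to a statement about the commutative graded algebra $\C[Y_{\param}]$ and the sheaf-theoretic Cartan subquotient $\Ca_\nu(\A_\lambda^\theta) = \C[X^T]$. The natural homomorphism $\varphi_\lambda\colon \Ca_\nu(\A_\lambda) \to \C[X^T]$ is always surjective: indeed, $\Ca_\nu(\A_\lambda^\theta)$ is a sheaf-theoretic quotient construction that receives, locally near each fixed point, a surjection from the algebra-level construction, and passing to global sections (using the vanishing of higher cohomology of $\A_\lambda^\theta$ and its Cartan subquotients established earlier, cf. \cite[Section 5]{CWR}) keeps surjectivity. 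So the content is injectivity for Zariski generic $\lambda$. First I would assemble the family version: by the preceding Lemma, $\Ca_\nu(\A_{\paramq})$ is a finitely generated $\C[\paramq]$-algebra, and one has a $\C[\paramq]$-linear homomorphism $\varphi\colon \Ca_\nu(\A_{\paramq}) \to \C[X^T] \otimes \C[\paramq]$ whose specialization at $\lambda$ is $\varphi_\lambda$. Its kernel $\mathcal{K}$ is a finitely generated $\C[\paramq]$-module, so its support $\Supp(\mathcal{K}) \subseteq \paramq$ is Zariski closed; the claim is equivalent to $\Supp(\mathcal{K}) \neq \paramq$, i.e. $\mathcal{K}$ is a torsion $\C[\paramq]$-module.

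Next I would compute the ranks. The target $\C[X^T]\otimes\C[\paramq]$ is free of rank $\#X^T$ over $\C[\paramq]$. For the source, I would use the associated graded: $\Ca_\nu(\C[Y_{\param}]) \twoheadrightarrow \gr \Ca_\nu(\A_{\paramq})$, and $\Ca_\nu(\C[Y_{\param}])$ is the Cartan subquotient of the graded Poisson algebra $\C[Y_{\param}] = \C[X_{\param}]$ with respect to the $\nu$-grading. Here I want to invoke the geometric identification: for the $\C^\times$-action $\nu$ on the smooth symplectic $X_{\param}$, the fixed locus $X_{\param}^{\nu}$ maps to $Y_{\param}$, and the attracting-set/Morse-theory picture (BB-decomposition relative to $\paramq$, plus finiteness of $X^T$ which forces $X_{\param}^\nu = X^T \times \paramq$ as a scheme over $\paramq$) gives that $\Spec \Ca_\nu(\C[Y_{\param}])$ is supported on $X_{\param}^\nu$ with the reduced structure being $X^T \times \paramq$. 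Combined with the generic-smoothness of the quantization, this forces $\dim_{\C(\paramq)} \C(\paramq)\otimes \Ca_\nu(\A_{\paramq}) \le \#X^T$. Since $\varphi$ is surjective, the generic rank of the source is exactly $\#X^T$ and $\varphi$ is generically an isomorphism, i.e. $\mathcal{K}$ is torsion. This is the argument modulo the input \cite[Proposition 5.3]{CWR}, which the paper is in fact quoting, so at the level of this excerpt I may simply cite it; but the self-contained route is the one above.

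The main obstacle — and the place where one genuinely needs \cite{CWR} rather than soft commutative-algebra — is controlling $\Ca_\nu(\A_\lambda)$ from \emph{below}, i.e. showing that it does not become strictly larger than $\C[X^T]$ even generically. Surjectivity of $\varphi_\lambda$ only bounds $\dim \C[X^T] \le \dim \Ca_\nu(\A_\lambda)$; the reverse bound requires knowing that the Verma modules $\Delta_\nu(N)$ for $N \in \operatorname{Irr}(\Ca_\nu(\A_\lambda))$ have, generically in $\lambda$, characters matching the fixed-point count — equivalently, that the algebra $\Ca_\nu(\A_\lambda)$ has no "extra" nilpotents or extra primitive idempotents for generic $\lambda$. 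The cleanest way I know is exactly the sheaf-theoretic one: identify $\Ca_\nu(\A_\lambda^\theta)$ with $\C[X^T]$ over all of $\paramq$ (this is \cite[Section 5.2]{CWR}), and then show the comparison map on global sections is an isomorphism by checking it becomes one after a finite localization of $\C[\paramq]$, using that both sides are finite over $\C[\paramq]$ and agree at one well-chosen generic point where abelian localization holds (so that $\A_\lambda\operatorname{-mod} \simeq \Coh(\A_\lambda^\theta)$ and the two Cartan subquotients are forced to coincide). So in summary: reduce to a torsion statement over $\C[\paramq]$; get the upper bound from the commutative graded degeneration; get the matching lower bound, hence genericity, from the localization theorem identifying the module-level and sheaf-level category $\OCat$, which is precisely where the real work sits and which I would cite from \cite{CWR}.
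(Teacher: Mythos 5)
This statement is quoted in the paper directly from \cite[Proposition 5.3]{CWR} with no proof, but the paper does give a self-contained treatment of a refined version later: Zariski-openness of the good locus is proved in part (2) of Proposition \ref{Prop:generic} (both sides of $\Ca_\nu(\A_{\paramq})\rightarrow\C[\paramq][X^T]$ are finitely generated over $\C[\paramq]$, so the isomorphism locus is open), and non-emptiness is part (ii) of Proposition \ref{Prop:cat_O_simpl}. Your reduction to a torsion/support statement over $\C[\paramq]$ is exactly the paper's openness argument, and that half of your proposal is fine.

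The gap is in the other half: producing at least one parameter where the map is an isomorphism, equivalently pinning down the generic rank of $\Ca_\nu(\A_\lambda)$ from below \emph{and} above. Your opening claim that $\varphi_\lambda$ is always surjective is false: already for $X=T^*\mathbb{P}^1$ at the singular parameter, $\Ca_\nu(\A_\lambda)\cong\C[\epsilon]/(\epsilon^2)$ maps to $\C^2$ through the diagonal. Generic surjectivity is true but is part of what must be proved (e.g.\ by showing the evaluation characters at distinct fixed points are generically distinct), and you use it as the lower bound in your rank count. On the upper-bound side, the surjection $\Ca_\nu(\C[Y_{\param}])\twoheadrightarrow\gr\Ca_\nu(\A_{\paramq})$ only helps if you also prove that the generic fiber of $\Ca_\nu(\C[Y_{\param}])$ is reduced of dimension $\#X^T$, which you assert via a ``BB-decomposition relative to $\paramq$'' but do not establish. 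Your alternative route --- that abelian localization at one point ``forces the two Cartan subquotients to coincide'' --- is the crux and is not justified: the Cartan subquotient is defined from the algebra, not from the abelian category, so an equivalence $\A_\lambda\operatorname{-mod}\simeq\Coh(\A^\theta_\lambda)$ does not by itself identify $\Ca_\nu(\A_\lambda)$ with $\Gamma(\Ca_\nu(\A_\lambda^\theta))$. The paper closes this gap quite differently: at very generic $\lambda$ all the algebras $\A_{\lambda+\chi}$ are simple, so wall-crossing functors are abelian equivalences and abelian localization holds; the long wall-crossing is a $t$-exact Ringel duality and $[\Delta]=[\nabla]$, whence $\mathcal{O}_\nu(\A_\lambda)$ is semisimple; from semisimplicity of $\mathcal{O}$ one deduces that $\Ca_\nu(\A_\lambda)$ is a semisimple algebra whose simples biject with $X^T$ (using $\Delta_{\nu,\lambda}(\mathsf{x})\cong\hat\Delta_{\nu,\lambda}(\mathsf{x})=\Gamma(\Delta^\theta_{\nu,\lambda}(\mathsf{x}))$), and the isomorphism follows. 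That representation-theoretic input is what replaces your unproved ``no extra nilpotents or idempotents'' step.
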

Below we will give a more precise description of this Zariski open subset.

Following \cite[Section 3.3]{BLPW} (see also \cite[Section 4.3]{CWR}), one can define the subcategory $\mathcal{O}_\nu(\A_\lambda^\theta)\subset
\Coh(\A_\lambda^\theta)$. By definition, it consists of the
coherent sheaves of $\A_\lambda^\theta$-modules supported on the contracting locus for $\nu$ that admit
a $\nu(\C^\times)$-equivariant structure (in a weak sense, i.e., without a quantum comoment map).
It was shown in \cite[Corollary 3.19]{BLPW} that
the functors $L_i\Loc_\lambda^\theta, R^i\Gamma_\lambda^\theta$ map between categories $\mathcal{O}$. In particular, if abelian localization holds for
$(\lambda,\theta)$, then the categories $\mathcal{O}_\nu(\A_\lambda), \mathcal{O}_\nu(\A_\lambda^\theta)$
are equivalent (via $\Loc$ and $\Gamma$).

Note that similarly to Section \ref{SS_O_gen} we can consider the graded versions $\mathcal{O}_\nu^{gr}(\A_\lambda),
\mathcal{O}^{gr}_\nu(\A_\lambda^\theta)$.

Also we can consider the $T$-equivariant versions
$\mathcal{O}^T_{\nu}(\A_{\lambda}), \mathcal{O}^T_\nu(\A_\lambda^\theta)$ of all $T$-equivariant objects
in $\mathcal{O}_\nu(\A_\lambda), \mathcal{O}_\nu(\A_\lambda)$ (when $T$ is one-dimensional, we recover
the categories $\mathcal{O}_\nu^{gr}(\A_\lambda)$, $\mathcal{O}_\nu^{gr}(\A_\lambda^\theta)$).

\subsection{Verma modules and their duals}
\subsubsection{Objects $\Delta_{\nu,\lambda}(\mathsf{x})$}
Let $\mathsf{x}\in X^T$. We can view $\C_\mathsf{x}$, the simple $\C[X^T]$-module corresponding to $\mathsf{x}$, as a module over $\Ca_\nu(\A_\lambda)$ via the homomorphism
$\Ca_\nu(\A_\lambda)\rightarrow \C[X^T]$. We denote the module $\Delta_\nu(\C_\mathsf{x})$ by  $\Delta_{\nu,\lambda}(\mathsf{x})$. Note that we have
\begin{equation}\label{eq:Verma_universal}
\Hom_{\A_\lambda}(\Delta_{\nu,\lambda}(\mathsf{x}),N)=
\Hom_{\Ca_\nu(\A_\lambda)}(\C_{\mathsf{x}},N^{\A^{>0}_\lambda}).
\end{equation}
The module $\Delta_{\nu,\lambda}$ has unique simple quotient module
denoted by $L_{\nu,\lambda}(\mathsf{x})$.

The modules $\Delta_{\nu,\lambda}(\mathsf{x})$ come in a family over $\paramq$: we can use the homomorphism
$\Ca_\nu(\A_{\paramq})\rightarrow \C[\paramq][X^T]$ to form the module $\Delta_{\nu,\paramq}(\mathsf{x})$
whose specialization to $\lambda$ is $\Delta_{\nu,\lambda}(\mathsf{x})$. Similarly, we can consider a homogenized
version. Namely, form the Rees algebra $\A_{\param,\hbar}$ of $\A_{\paramq}$. We still have a homomorphism
$\Ca_\nu(\A_{\param,\hbar})\rightarrow \C[\param,\hbar][X^T]$ and so can form the
$\A_{\param,\hbar}$-module $\Delta_{\nu,\param,\hbar}(\mathsf{x})$ that specializes to $\Delta_{\nu,\paramq}(\mathsf{x})$ when $\hbar=1$.

Recall that we write $\param^{sing}$ for the locus in $\param$, where $X_\lambda$
is not affine, this is a union of hyperplanes, see Section \ref{SSS_deformations}. We write
$\param^{reg}$ for $\param\setminus \param^{sing}$.
Note that, for $\lambda\in \param^{reg}$, we get $\Delta_{\nu,\lambda,0}(\mathsf{x})=\C[(X_\lambda)^+_\mathsf{x}]$,
where $(X_\lambda)^+_\mathsf{x}:=\{x\in X_\lambda| \lim_{t\rightarrow 0}\nu(t)x=\mathsf{x}\}$. The variety $(X_\lambda)^+_{\mathsf{x}}$ is a smooth lagrangian subvariety
in $X_\lambda$ that is isomorphic to an affine space.

Now let $\kappa\in \mathfrak{X}(T)$. We can form the graded versions $\Delta_{\nu,\lambda}(\mathsf{x},\kappa),
\Delta_{\nu,\paramq}(\mathsf{x},\kappa)$ etc. by putting $\C_\mathsf{x}$ in degree $\kappa$.

\subsubsection{Flatness}
Here we are going to give a description of the locus in $\param\times \C$ where
$\Delta_{\nu,\param,\hbar}(\mathsf{x})$ is flat.

The proof the following lemma repeats that of \cite{BL}[Lemma 3.5].

\begin{Lem}\label{Lem:gen_freeness}
Let $Y$ be a closed subvariety in $\param\times \C$. Let $M$ be a finitely
generated $\C[Y]\otimes_{\C[\param,\hbar]}\A_{\param,\hbar}$-module. Then
there is an open dense affine subvariety $Y^0\subset Y$ such that
$\C[Y^0]\otimes_{\C[Y]}M$ is a free $\C[Y^0]$-module.
\end{Lem}

\begin{Cor}\label{Cor:Verma_flat_stratif}
There is a partition $\param\times \C=\bigsqcup_{i=1}^k Y^i$
into the union of locally closed affine subvarieties such that
\begin{enumerate}
\item
$\C[Y^i]\otimes_{\C[\param,\hbar]}\Delta_{\nu,\param,\hbar}(\mathsf{x})$
is free over $\C[Y^i]$,
\item and, for all $j$, the union $\bigsqcup_{i=1}^j Y^i$ is open.
\end{enumerate}
\end{Cor}

Now note that $\Delta_{\nu,\param,\hbar}(\mathsf{x})$ is $\C^\times$-equivariant
with respect to the Hamiltonian torus action and all eigenspaces are
finitely generated $\C[\param,\hbar]$-modules. Let  $\Delta_{\nu,\param,\hbar}(\mathsf{x})^j$
denote the eigenspace of weight $j$. Note that, for  $z_1,z_2\in Y^i$
we have $\dim \Delta_{\nu,\param,\hbar}(\mathsf{x})^j_{z_1}\cong
\Delta_{\nu,\param,\hbar}(\mathsf{x})^j_{z_2}$, an isomorphism of vector spaces.
We get the following corollary.

\begin{Cor}\label{Cor:Verma_flat_open}
The locus in $\param\times \C$, where the fibers of all $\C[\param,\hbar]$-modules
$\Delta_{\nu,\param,\hbar}(\mathsf{x})^j$ have the minimal possible dimension
is Zariski open in $\param\times \C$.
\end{Cor}

Denote this locus by $\tilde{Y}_{\mathsf{x}}$.

\begin{Prop}\label{Prop:Verma_flat_description}
We have $\tilde{Y}_{\mathsf{x}}\cap (\param\times \{0\})\supset\param^{reg}$.
\end{Prop}
\begin{proof}
Note that the $T$-fixed points in $X_\zeta$ are in a natural bijection with the
$T$-fixed points in $X$.
Let $\mathsf{x}'$ be the point  in the $T$-fixed locus in $X_\zeta$
corresponding to $\mathsf{x}$ for $\zeta\in \param^{reg}$.
Form the completion $\A_{\param,\hbar}^{\wedge_{\mathsf{x}'}}$. This completion
is $T\times\C^\times$-equivariantly identified with $\C[\param]\widehat{\otimes}_\C \Weyl_\hbar(T_{\mathsf{x}}X)^{\wedge_0}$, where we write
$\Weyl_\hbar(T_{\mathsf{x}}X)$ for the homogenized Weyl algebra of the
symplectic vector space $T_{\mathsf{x}}X$. From here it is easy to see that
$$\A_{\param,\hbar}^{\wedge_{\mathsf{x}'}}/\A_{\param,\hbar}^{\wedge_{\mathsf{x}'}}
\left(\A_{\param,\hbar}^{\wedge_{\mathsf{x}'}}\right)^{>0}$$
is free over $\C[\param,\hbar]$. Moreover, the $T$-eigenspaces
are free over $\C[\param,\hbar]$.

On the other hand, we claim that the $T$-finite
part, $M$, of this module is identified with
\begin{equation}\label{eq:Verma_completed1}
\C[\param,\hbar]^{\wedge_{(\zeta,0)}}\otimes_{\C[\param,\hbar]}\Delta_{\nu,\param,\hbar}(\mathsf{x}).
\end{equation}
Indeed, thanks to the direct analog of (\ref{eq:Verma_universal}), we have a homomorphism
$$\C[\param,\hbar]^{\wedge_{(\zeta,0)}}\otimes_{\C[\param,\hbar]}\Delta_{\nu,\param,\hbar}(\mathsf{x})
\rightarrow M.$$
Its specialization to the closed point is an isomorphism and the target is free over $\C[\param,\hbar]^{\wedge_{(\zeta,0)}}$. It follows that this homomorphism is an isomorphism.

So (\ref{eq:Verma_completed1}) is free over $\C[\param,\hbar]^{\wedge_{(\zeta,0)}}$.
The claim of the proposition follows.
\end{proof}

\begin{defi}\label{defi:fl_locus}
Let $(\param\times\C)^{fl}$ denote the intersection of the open subsets
$\tilde{Y}_{\mathsf{x}}$ for all $\mathsf{x}\in X^T$. Set
$\paramq^{fl}:=(\param\times \C)^{fl}\cap (\param\times \{1\})$.
\end{defi}

We write $(T_{\mathsf{x}}X)^{>0}$ for the $\nu$-contacting locus in $T_{\mathsf{x}}X$.

\begin{Cor}\label{Cor:Verma_character}
The following two conditions are equivalent:
\begin{itemize}
\item  $\lambda\in \paramq^{fl}$,
\item  for all $\mathsf{x}\in X^T$,  we have a $T$-equivariant isomorphism
$\Delta_{\nu,\lambda}(\mathsf{x})\cong \C[(T_{\mathsf{x}}X)^{>0}]$.
\end{itemize}
\end{Cor}

\subsubsection{Contravariant duality}
Let us discuss contravariant duality for categories $\mathcal{O}$. Take $M\in \mathcal{O}_\nu(\A_\lambda)$.
This module decomposes as $\bigoplus_{\alpha\in \C}M_\alpha$, where $M_\alpha$ is the generalized eigenspace
for $h=d_1\nu$ with eigenvalue $\alpha$. Recall that all $M_\alpha$ are finite dimensional,
see, e.g., \cite[Lemma 3.13]{BLPW}. Then we can consider the restricted dual $M^\vee$, this is
a right $\A_\lambda$-module. One can show that it lies in a category $\mathcal{O}$ for $\A_\lambda^{opp}$,
more precisely, in $\OCat_{-\nu}(\A_\lambda^{opp})$. Using the isomorphism $\A_\lambda^{opp}\cong
\A_{-\lambda}$, we get $M^\vee\in \mathcal{O}_{-\nu}(\A_{-\lambda})$. The quasi-inverse functor of $M\mapsto
M^\vee$ is constructed completely analogously and we again denote it by $\bullet^\vee$.

The following result was established in \cite{BLPW_appendix}.

\begin{Lem}\label{Lem:Ext_Tor}
For $M_1\in \mathcal{O}_\nu(\A_\lambda), M_2\in \mathcal{O}_{-\nu}(\A_\lambda^{opp})$, we get
$$\operatorname{Tor}_{\A_\lambda}^i(M_1,M_2)^*\cong \operatorname{Ext}_{\A_\lambda}^i(M_1,M_2^\vee).$$
\end{Lem}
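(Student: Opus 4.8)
The plan is to reduce the statement to a standard graded-duality argument. The key point is that for $M \in \mathcal{O}_\nu(\A_\lambda)$ the generalized $h$-eigenspaces $M_\alpha$ are finite dimensional and the weights are bounded above, while for $M_2 \in \mathcal{O}_{-\nu}(\A_\lambda^{opp})$ the corresponding weights are bounded below; hence in each $h$-degree the relevant Hom and tensor spaces are finite dimensional, so ordinary linear-algebra duality $V \mapsto V^*$ is honest and involutive. First I would fix a projective resolution $P_\bullet \twoheadrightarrow M_1$ inside $\OCat_\nu(\A_\lambda)$ by modules that are $\nu(\C^\times)$-equivariant with each $h$-eigenspace finite dimensional and weights bounded above — for instance, summands of modules induced up from $\Ca_\nu(\A_\lambda)$, which have exactly this shape. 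Then $\Tor^i_{\A_\lambda}(M_1, M_2)$ is computed by $P_\bullet \otimes_{\A_\lambda} M_2$, and $\Ext^i(M_1, M_2^\vee)$ by $\Hom_{\A_\lambda}(P_\bullet, M_2^\vee)$.

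The heart of the argument is the natural isomorphism of complexes
\[
\Hom_{\A_\lambda}(P_\bullet, M_2^\vee) \cong (P_\bullet \otimes_{\A_\lambda} M_2)^*,
\]
where $(-)^*$ denotes the restricted (graded) dual with respect to the $h$-grading. For a single projective $P$ of the type above, this comes from the adjunction $\Hom_{\A_\lambda}(P, \Hom_{\C}(M_2, \C)) \cong \Hom_{\C}(P \otimes_{\A_\lambda} M_2, \C)$; the content is checking that the graded-finiteness of $P$ and the weight-boundedness of $M_2$ (bounded below, with finite-dimensional eigenspaces) force the full $\C$-dual on the right to agree with the restricted dual $M_2^\vee$ on the left, in each $h$-degree. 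Concretely, in a fixed $h$-weight the tensor product $P \otimes_{\A_\lambda} M_2$ is finite dimensional because only finitely many pairs of weights contribute, so dualizing degree-by-degree is unambiguous and commutes with everything. Since $(-)^*$ is exact on the category of graded vector spaces with finite-dimensional components, taking cohomology of the displayed isomorphism of complexes yields
\[
\Ext^i_{\A_\lambda}(M_1, M_2^\vee) \cong \Tor_i^{\A_\lambda}(M_1, M_2)^*,
\]
which is the claim.

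The main obstacle I anticipate is purely bookkeeping with the gradings: one must be careful that the $\nu(\C^\times)$-action (equivalently the $h$-grading) is tracked through the resolution, the tensor product, the Hom, and the isomorphism $\A_\lambda^{opp} \cong \A_{-\lambda}$ used to define $\bullet^\vee$, so that at no stage does an infinite product sneak in or a sign/shift in the weight get lost. In particular one should verify that each term of the chosen resolution really does have finite-dimensional $h$-eigenspaces with weights bounded above — this is where one uses $\dim \Ca_\nu(\A_\lambda) < \infty$ and the structure of induced modules $\Delta_\nu(N)$ recalled in Section~\ref{SS_O_gen} (their $h$-eigenvalues lie in $\alpha + \Z_{\leqslant 0}$). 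Once the finiteness in each degree is in place, exactness of $(-)^*$ and the adjunction do the rest, and no genuinely new input beyond \cite{BLPW_appendix}'s setup is needed.
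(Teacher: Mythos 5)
The paper does not prove this lemma; it cites \cite{BLPW_appendix}. Your overall strategy (tensor--Hom adjunction plus degreewise finiteness, so that the restricted dual of the Tor-complex is the Hom-complex) is the right one, but there is a genuine gap in the choice of resolution. You propose to resolve $M_1$ by ``summands of modules induced up from $\Ca_\nu(\A_\lambda)$,'' i.e.\ by Verma-type objects $\A_\lambda\otimes_{\A_\lambda^{\geqslant 0}}N$. These are not projective as $\A_\lambda$-modules (they are induced from a proper quotient of $\A_\lambda^{\geqslant 0}$), so the complexes $P_\bullet\otimes_{\A_\lambda}M_2$ and $\Hom_{\A_\lambda}(P_\bullet,M_2^\vee)$ built from them compute neither $\Tor^{\A_\lambda}_i(M_1,M_2)$ nor $\Ext^i_{\A_\lambda}(M_1,M_2^\vee)$. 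One cannot repair this by asking for honest $\A_\lambda$-projectives with finite-dimensional $h$-eigenspaces and weights bounded above: a nonzero summand of a free module never has that shape (already $\A_\lambda^0$ is infinite dimensional), so the two requirements you impose on $P_\bullet$ are incompatible. Using projectives of the abelian category $\OCat_\nu(\A_\lambda)$ instead is also not an option here: their existence at this stage is not yet available, and even granted it, one would only get $\Ext$ in $\OCat$, which agrees with $\Ext_{\A_\lambda}$ only after Corollary \ref{Cor:Ext_vanishing} --- whose proof depends on this very lemma.

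The correct resolution is by finitely generated \emph{free graded} modules $P_i=\A_\lambda\otimes V_i$ with each $V_i$ a finite-dimensional graded vector space. This exists because $\A_\lambda$ is Noetherian and the differentials are $h$-graded: $M_1$ is generated by finitely many $h$-homogeneous vectors, and each syzygy module is again a finitely generated graded submodule, hence generated by finitely many homogeneous elements. With $V_i$ finite dimensional one has $M_2\otimes_{\A_\lambda}P_i\cong M_2\otimes V_i$ (finite-dimensional weight spaces, weights of $M_2$ bounded below) and $\Hom_{\A_\lambda}(P_i,M_2^\vee)\cong V_i^*\otimes M_2^\vee\cong(M_2\otimes V_i)^\vee$, where $\vee$ is the restricted dual; no infinite products appear precisely because $\dim V_i<\infty$. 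Exactness of the restricted dual then gives $\Ext^i_{\A_\lambda}(M_1,M_2^\vee)\cong\Tor_i^{\A_\lambda}(M_1,M_2)^\vee$, and the restricted dual coincides with the full dual since the Tor's are finite dimensional. So your adjunction and bookkeeping steps are fine; the missing idea is that the finiteness must be put into the $V_i$'s of a free resolution rather than into the weight spaces of the resolving modules themselves.
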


Note that $\Ca_{-\nu}(\A_\lambda^{opp})$ is naturally identified with $\Ca_{\nu}(\A_\lambda)^{opp}$.
So for $N\in \Ca_{\nu}(\A_\lambda)$ let us set $\nabla_{\nu}(N)=\Delta_{-\nu}^r(N^*)^\vee$,
where we write $\Delta^r_{-\nu}$ for the Verma module functor for the category $\mathcal{O}_{-\nu}(\A_\lambda^{opp})$.
Alternatively, one can define $\nabla_\nu(N)$ as
$\Hom_{\Ca_{\nu}(\A_\lambda)}(\A_\lambda/\A_{\lambda}^{<0}\A_\lambda,N)$, see \cite[Section 4.2]{CWR}
(here we take the restricted Hom, i.e., the direct sum of Hom's from the graded components).

We write $\nabla_{\nu,\lambda}(\mathsf{x})$ for  $\Delta_{\nu,-\lambda}(\mathsf{x})^\vee$.

%We can also consider the dual version of $\hat{\Delta}_\nu(\mathsf{x})$: the module
%$\hat{\nabla}_\nu(\mathsf{x}):=\hat{\Delta}_{-\nu}(\mathsf{x})^\vee$.

%\subsubsection{Objects $\hat{\Delta}_{\nu,\lambda}$}

\subsection{Highest weight structures}
Now let us discuss highest weight structures on $\mathcal{O}_\nu(\A_\lambda),$ $\mathcal{O}_\nu(\A_\lambda^\theta)$.
\subsubsection{Ext vanishing}\label{SSS_Ext_vanish}
A proof of the following result can be found in \cite[Lemma 4.8]{Perv}.

\begin{Lem}\label{Lem:Ext_vanishing}
For a Zariski generic parameter $\lambda\in \paramq$ and $\mathsf{x},\mathsf{x}'\in X^{\nu(\C^\times)}$, we have
$\Ca_\nu(\A_\lambda)\xrightarrow{\sim}\C[X^{\nu(\C^\times)}]$ and
$$\dim \operatorname{Ext}^i_{\A_\lambda}(\Delta_\nu(\mathsf{x}),\nabla_\nu(\mathsf{x}'))=\delta_{i,0}\delta_{\mathsf{x},\mathsf{x}'}.$$
\end{Lem}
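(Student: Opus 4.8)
The plan is to reduce the statement to a statement about a quantization where the localization theorems can be applied, and then to a statement in the classical (i.e.\ $\hbar = 0$) limit over the deformation parameter space. The key observation is that the assertion $\Ca_\nu(\A_\lambda) \xrightarrow{\sim} \C[X^{\nu(\C^\times)}]$ already holds on a nonempty Zariski open set of $\lambda$ by Proposition 5.3 in \cite{CWR} (quoted above), so we may intersect with that locus from the start; similarly, by Lemma \ref{Lem:Delta_isom} there is a nonempty Zariski open locus on which $\Delta_{\nu,\lambda}(\mathsf{x}) \cong \hat{\Delta}_{\nu,\lambda}(\mathsf{x})$ for every $\mathsf{x} \in X^T$ (finitely many conditions). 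Likewise, by Lemma \ref{Lem:Morita_open} and Proposition \ref{Prop:loc_transl}, abelian localization holds for $(\lambda, \theta)$ on a nonempty Zariski open subset of $\paramq$ (for $\theta$ in a fixed chamber), and hence $\mathcal{O}_\nu(\A_\lambda) \simeq \mathcal{O}_\nu(\A_\lambda^\theta)$. So after replacing $\paramq$ by the intersection of these open sets, we are free to work geometrically with $\A_\lambda^\theta$ and to identify standard objects with $\hat{\Delta}$'s.

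First I would set up the computation of $\operatorname{Ext}^i_{\A_\lambda}(\Delta_\nu(\mathsf{x}), \nabla_\nu(\mathsf{x}'))$ via Lemma \ref{Lem:Ext_Tor}: it suffices to compute $\operatorname{Tor}^{\A_\lambda}_i(\Delta_\nu(\mathsf{x}), \Delta^r_{-\nu}(\mathsf{x}'^*))$, i.e.\ a Tor between a left Verma and a right Verma. I would pass to the sheaf level, using abelian localization, so that this Tor is computed by $L$-restricting the coherent sheaves $\Loc(\Delta_{\nu,\lambda}(\mathsf{x}))$ and $\Loc(\Delta^r_{-\nu,\lambda}(\mathsf{x}'^*))$ and taking their (derived) tensor product over $\A_\lambda^\theta$, then taking global sections. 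By Lemma \ref{Lem:hat_Delta_properties}(1), on the open locus these localizations are, $T \times \C^\times$-equivariantly and as $\C[X_\lambda]$-modules, the structure sheaves of the attracting cells $(X_\lambda)^+_{\mathsf{x}}$ (for $\nu$) and the repelling cells $(X_\lambda)^-_{\mathsf{x}'}$ (for $-\nu$, equivalently repelling for $\nu$), each a smooth Lagrangian affine space. The point is that two such cells $(X_\lambda)^+_{\mathsf{x}}$ and $(X_\lambda)^-_{\mathsf{x}'}$ intersect transversally: when $\mathsf{x} \neq \mathsf{x}'$ they are disjoint (an attracting cell for one fixed point and a repelling cell for a different one cannot share a point, since a shared point would flow to both $\mathsf{x}$ and $\mathsf{x}'$), and when $\mathsf{x} = \mathsf{x}'$ they meet transversally in the single point $\mathsf{x}$ (the tangent spaces are complementary Lagrangians, being the positive and negative weight spaces of $\nu$ on $T_{\mathsf{x}} X$). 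So the derived tensor product of the two structure sheaves is either zero or the skyscraper $\C_{\mathsf{x}}$, concentrated in homological degree $0$; this is where the $\delta_{i,0}\delta_{\mathsf{x},\mathsf{x}'}$ comes from.

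The honest way to make this rigorous — and what I expect to be the main obstacle — is the transition between the quantum Tor over $\A_\lambda^\theta$ and the classical Tor over $\Str_{X_\lambda}$: one needs a spectral sequence or filtration argument showing that $\operatorname{Tor}^{\A_\lambda^\theta}_i$ of two modules with good filtrations is controlled by $\operatorname{Tor}^{\Str}_i$ of the associated graded sheaves, together with a degeneration/dimension count forcing the spectral sequence to collapse. Here the transversality of the cells (hence the Tor-independence of $\Str_{(X_\lambda)^+_{\mathsf{x}}}$ and $\Str_{(X_\lambda)^-_{\mathsf{x}'}}$ over $\Str_{X_\lambda}$, via the Koszul resolution of a regular sequence) gives the classical $\operatorname{Tor}^{\Str}_i$ the desired shape, and then a graded-Nakayama / flatness argument over $\C[\paramq]$ (or the Rees parameter $\hbar$) — exactly in the spirit of the proof of Lemma \ref{Lem:Delta_isom} and of \cite[Prop.\ 2.7, 4.5]{BL} — transfers the vanishing and the rank-one statement from the generic classical fiber to the quantum algebra for Zariski generic $\lambda$. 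Finally, I would observe that "Zariski generic" is preserved under finite intersection, so combining the loci above yields a single nonempty Zariski open set of $\lambda$ on which all assertions hold; alternatively, one simply cites \cite[Lemma 4.8]{Perv} for the clean statement, the argument above being the geometric content behind it.
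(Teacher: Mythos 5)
The paper does not actually prove this lemma in the text: it is quoted from \cite[Lemma 4.8]{Perv}. Your outline is essentially the argument underlying that reference, and its ingredients all appear elsewhere in the paper (the degeneration over $\C[\param,\hbar]$ in Lemmas \ref{Lem:hat_Delta_properties}, \ref{Lem:Delta_isom}, \ref{Lem:classes_equal}, and the semicontinuity argument for the family Tor in the proof of Proposition \ref{Prop:generic}(2)). Two steps need tightening, though. First, the disjointness of $(X_\lambda)^+_{\mathsf{x}}$ and $(X_\lambda)^-_{\mathsf{x}'}$ for $\mathsf{x}\neq\mathsf{x}'$ does not follow from ``a shared point would flow to both'': a point can perfectly well have limit $\mathsf{x}$ as $t\to 0$ and $\mathsf{x}'$ as $t\to\infty$. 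What saves the claim is that for $\lambda\in\param\setminus\param^{sing}$ the fiber $X_\lambda\cong Y_\lambda$ is \emph{affine}, hence contains no complete curves and no nonconstant orbit with both limits; on the central fiber $X$ the cells genuinely do meet, so it is essential that your degeneration lands on a generic fiber of $X_{\param}$ and not on $X$ itself. Second, ``abelian localization holds on a nonempty Zariski open set'' does not follow from Lemma \ref{Lem:Morita_open} together with Proposition \ref{Prop:loc_transl} alone: the latter requires the Morita property at $\lambda+m\chi$ for \emph{all} $m\geqslant 0$, an infinite intersection of open sets, and making this locus open is exactly the content of Proposition \ref{Prop:generic}(1), which uses assumption (S) and the chamber combinatorics. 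This detour is avoidable: run the whole computation at the algebra level with $\Tor_i^{\A_{\param,\hbar}}(\Delta_{\nu,\param,\hbar}(\mathsf{x}),\Delta^r_{-\nu,\param,\hbar}(\mathsf{x}'))$, which is finitely generated over $\C[\param,\hbar]$ because the contracting and repelling loci in $Y$ meet only at $0$; semicontinuity of fiber rank then gives openness, and the fiber at $(\lambda_0,0)$ with $\lambda_0$ generic is computed by your transversality/Koszul argument (with the lower bound $\dim\Tor_0\geqslant 1$ for $\mathsf{x}=\mathsf{x}'$ supplied by the evident nonzero pairing).

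It is also worth noting that the paper's own in-text substitute for this lemma takes a genuinely different route to nonemptiness: Proposition \ref{Prop:cat_O_simpl}(iii) obtains the Tor vanishing at \emph{very generic} parameters from semisimplicity of $\OCat_\nu(\A_\lambda)$ (simplicity of the algebras, wall-crossing being an abelian equivalence, $t$-exactness of the Ringel duality), rather than from a classical-limit computation. Your degeneration argument is more geometric and does not need (S); the paper's argument delivers the stronger conclusions of Proposition \ref{Prop:cat_O_simpl} along the way.
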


A number of interesting corollaries of this result was deduced in \cite[Section 5.2]{BLPW}.

\begin{Cor}\label{Cor:Ext_vanishing}
For $\lambda$ as in Lemma \ref{Lem:Ext_vanishing}, the natural functor
$D^b(\mathcal{O}_\nu(\A_\lambda))\rightarrow D^b(\A_\lambda\operatorname{-mod})$
is a fully faithful embedding.
\end{Cor}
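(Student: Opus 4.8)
The plan is to deduce Corollary \ref{Cor:Ext_vanishing} from Lemma \ref{Lem:Ext_vanishing} by showing that the $\mathrm{Ext}$-groups computed inside the subcategory $\mathcal{O}_\nu(\A_\lambda)$ agree with those computed in the ambient category $\A_\lambda\operatorname{-mod}$. Recall that for a Serre subcategory $\OCat\subset\A_\lambda\operatorname{-mod}$ closed under extensions, the functor $D^b(\OCat)\to D^b(\A_\lambda\operatorname{-mod})$ is fully faithful if and only if for all $M,N\in\OCat$ and all $i$ the natural map $\Ext^i_{\OCat}(M,N)\to\Ext^i_{\A_\lambda}(M,N)$ is an isomorphism; equivalently, every Yoneda extension of $M$ by $N$ in $\A_\lambda\operatorname{-mod}$ can be represented by a sequence with all terms in $\OCat$ (see the standard criterion, e.g.\ Keller or the discussion of homological epimorphisms). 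So the first step is to reduce to this statement about $\Ext$ groups.

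Next I would exploit the highest weight structure that Lemma \ref{Lem:Ext_vanishing} furnishes on $\OCat_\nu(\A_\lambda)$ for Zariski generic $\lambda$: the vanishing $\dim\Ext^i_{\A_\lambda}(\Delta_\nu(\mathsf{x}),\nabla_\nu(\mathsf{x}'))=\delta_{i,0}\delta_{\mathsf{x},\mathsf{x}'}$, together with the existence of the families $\Delta_{\nu,\lambda}(\mathsf{x}),\nabla_{\nu,\lambda}(\mathsf{x})$ and the identification $\Ca_\nu(\A_\lambda)\cong\C[X^T]$, shows that $\OCat_\nu(\A_\lambda)$ is highest weight in the sense of Section \ref{SSS_HW} (this is the content of \cite[Section 5.2]{BLPW}, which one may simply cite). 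In a highest weight category, $\Ext^i_{\OCat}(M,N)$ for $M,N\in\OCat$ can be computed from a $\Delta$-filtered resolution of $M$ and a $\nabla$-filtered coresolution of $N$; and by Lemma \ref{Lem:Ext_Tor} (rephrased via contravariant duality) the relevant $\Ext^i_{\A_\lambda}(\Delta_\nu(\mathsf{x}),\nabla_\nu(\mathsf{x}'))$ are already known. The key point is therefore to show that a projective object of $\OCat_\nu(\A_\lambda)$ — which has a $\Delta$-filtration — has no higher self-$\Ext$ or higher $\Ext$ into $\nabla$'s when the $\Ext$ is taken in $\A_\lambda\operatorname{-mod}$ rather than in $\OCat_\nu(\A_\lambda)$; this forces $\Ext^{>0}_{\A_\lambda}(P,N)=0$ for $P$ projective in $\OCat$ and $N\in\OCat$, which is exactly the criterion for full faithfulness.

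The main obstacle is bridging the gap between $\Ext$ in the small category and $\Ext$ in the big one: a priori a projective object $P$ of $\OCat_\nu(\A_\lambda)$ need not be projective (or even have finite projective dimension) as an $\A_\lambda$-module, so $\Ext^{>0}_{\A_\lambda}(P,N)$ could be nonzero for reasons invisible to $\OCat$. The way around this is the ``truncation/standardization'' argument of \cite[Section 5.2]{BLPW}: one uses that $P$ has a filtration by $\Delta_\nu(\mathsf{x})$'s, that $\Ext^i_{\A_\lambda}(\Delta_\nu(\mathsf{x}),\nabla_\nu(\mathsf{x}'))$ vanishes for $i>0$ by Lemma \ref{Lem:Ext_vanishing}, and that every $N\in\OCat$ has a finite filtration whose subquotients embed into $\nabla_\nu(\mathsf{x}')$'s, after which a spectral sequence / dévissage collapses to give $\Ext^{>0}_{\A_\lambda}(P,N)=0$. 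Concretely I would: (i) state the fully-faithfulness criterion; (ii) invoke the highest weight structure on $\OCat_\nu(\A_\lambda)$ (citing \cite[Section 5.2]{BLPW}) and the standard/costandard families $\Delta_{\nu,\lambda}(\mathsf{x}),\nabla_{\nu,\lambda}(\mathsf{x})$; (iii) use Lemma \ref{Lem:Ext_vanishing} to get $\Ext^{>0}_{\A_\lambda}(\Delta_\nu(\mathsf{x}),\nabla_\nu(\mathsf{x}'))=0$; (iv) run the dévissage over $\Delta$-filtrations of projectives and $\nabla$-filtrations (more precisely $\nabla$-embedding filtrations) of arbitrary objects to conclude $\Ext^{>0}_{\A_\lambda}(P,N)=0$ for $P\in\OCat$ projective and $N\in\OCat$; (v) conclude full faithfulness. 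The delicate technical point worth spelling out is step (iv) — controlling the filtration of a general $N\in\OCat$ by objects with vanishing higher $\Ext$ against projectives — and I would handle it by induction on the length of $N$ using that $\OCat_\nu(\A_\lambda)$ has finite length and the partial order $\leqslant_\nu$ is interval-finite on each block.
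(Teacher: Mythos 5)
Your proposal is correct and is essentially the argument the paper relies on: the paper gives no proof of this corollary, simply attributing it to \cite[Section 5.2]{BLPW}, and the reduction you describe — full faithfulness follows once $\Ext^{>0}_{\A_\lambda}(P,N)=0$ for $P$ projective in $\OCat_\nu(\A_\lambda)$ and $N\in\OCat_\nu(\A_\lambda)$, which one gets by d\'evissage over a $\Delta$-filtration of $P$, the vanishing of Lemma \ref{Lem:Ext_vanishing}, and descending induction over the order (via $L(\mathsf{x})\hookrightarrow\nabla(\mathsf{x})$) together with induction on the length of $N$ — is exactly that argument. The one point you flag as delicate, step (iv), does close up in the way you indicate, so there is no gap.
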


Also Lemma \ref{Lem:Ext_vanishing} implies that the category $\mathcal{O}_\nu(\A_\lambda)$ is highest weight, where the order is as in Section \ref{SS_O_gen}.
In more detail, it is as follows. To $\lambda\in \paramq$ and $\mathsf{x}\in X^{\nu(\C^\times)}$
we can assign the scalar $\mathsf{c}_{\nu,\lambda}(\mathsf{x})$ equal to the image of $h\in \Ca_\nu(\A_\lambda)$ under the projection
$\Ca_{\nu}(\A_\lambda)\rightarrow \C$ of evaluation at $\mathsf{x}$. Then the order $\leqslant_\lambda$ on $X^{\nu(\C^\times)}$
is introduced as follows: $\mathsf{x}\leqslant_\lambda \mathsf{x}'$ if $\mathsf{x}=\mathsf{x}'$ or $\mathsf{c}_{\nu,\lambda}(\mathsf{x}')-
\mathsf{c}_{\nu,\lambda}(\mathsf{x})\in \Z_{>0}$.

\begin{Cor}\label{Cor:local_hw}
The category $\OCat_\nu(\A_\lambda^\theta)$ has a highest weight structure for all $\lambda$
via an equivalence $\OCat_\nu(\A^\theta_\lambda)\cong \OCat_\nu(\A_{\lambda+n\chi})$
for $\chi$ in the chamber of $\theta$ and sufficiently large $n$.
\end{Cor}

Below we will see that this highest weight structure is independent of the choice of $n$
(as long as $n$ is large enough), see \cite[Section 5.3]{BLPW}.

\subsubsection{Identification of $K_0$ with $\C X^T\times \mathfrak{X}(T)$}
Now we describe the $K_0$ groups of the categories $\mathcal{O}_{\nu}(\A^\theta)$
and their $T$-equivariant versions (under some assumptions on $\lambda$).

Suppose that  $\lambda\in \paramq^{fl}$ (see Definition \ref{defi:fl_locus}) satisfies the conditions of Lemma \ref{Lem:Ext_vanishing}. Since $\OCat_\nu(\A_\lambda)$ is a highest weight
category with standards $\Delta_{\nu,\lambda}(\mathsf{x})$,
see Section \ref{SSS_Ext_vanish}, the  classes $[\Delta_{\nu,\lambda}(\mathsf{x})]$
form a basis in $K_0(\OCat_\nu(\A_\lambda))$. This gives an identification of
$K_0(\OCat_\nu(\A_\lambda))$ with $\C X^T$. We can also identify $K_0(\OCat^T_\nu(\A_\lambda))$
with $\C X^T\times \mathfrak{X}(T)$ by sending the class of $\hat{\Delta}_{\nu,\lambda}(\mathsf{x},\kappa)$
to $(\mathsf{x},\kappa)$.

\begin{comment}
\subsubsection{Classes of costandard objects}\label{SSS_cost_classes}
We  assume that $\lambda\in \paramq^{fl}$ and that abelian localization holds for $\lambda$
and some choice of a resolution $X$. Then $[\Delta_{\nu,\lambda}(\mathsf{x})]=[\nabla_{\nu,\lambda}(\mathsf{x})]$ for all $\mathsf{x}$,
see \cite[Corollary 6.4]{BLPW}.

Similarly,  $[\Delta_{\nu,\lambda}(\mathsf{x},\kappa)]=[\nabla_{\nu,\lambda}(\mathsf{x},\kappa)]$.
Also the classes of standard objects in $\mathcal{O}_\nu(\A_\lambda^\theta)$
coincide with the classes of costandard objects for all $\lambda$.
\end{comment}

\subsection{Examples}
\subsubsection{Cotangent bundles to flag varieties}
Let us consider the special case when $X=T^*(G/B)$.
Let $T$ denote a maximal torus in $B$, it acts on $X$ in a Hamiltonian
fashion. Let $\nu:\C^\times\rightarrow T$ be a one-parameter subgroup. The set $X^{\nu(\C^\times)}$
is finite if and only if $\nu$ is regular, i.e.,  its centralizer in $G$ is $T$. In this case,
the fixed points are labelled by $W$, where $W=N_G(T)/T$ is the Weyl group of $\g$.

We recover the usual BGG category $\mathcal{O}$ (in the version, where it consists
of finitely generated $\U_\lambda$-modules with locally finite action of $\mathfrak{b}$). Conditions of
Lemma \ref{Lem:Ext_vanishing} hold under the assumption that $\lambda$ is regular. Here $\Ca_\nu(\A_\lambda)$
is naturally identified with $\C[\h]/\{f\in \C[\h]| f(w\lambda)=0, \forall w\}$.  Further, we have $\mathsf{c}_{\nu,\lambda}(\mathsf{x})=\langle w\lambda, \nu\rangle$ for $\mathsf{x}\in X^{\nu(\C^\times)}$ corresponding to $w\in W$.

\subsubsection{Example: rational Cherednik algebras of type A}
Consider the case when $X=\operatorname{Hilb}_n(\C^2)$. Let $\h$ denote
the span of $y_1,\ldots,y_n$ and $\h^*$ denote the span of $x_1,\ldots,x_n$. Then
$S(\h^*),S(\h)$ are included into $H_c$ as subalgebras. Moreover,
$\C[Y]=S(\h\oplus \h^*)^{S_n}$.

The category $\mathcal{O}_c$ was introduced in \cite{GGOR} as the category of
all $H_c$-modules that are finitely generated over $S(\h^*)$ and have locally
nilpotent action of $\h$.  We can introduce the Verma modules $\Delta_c(\mu)=H_c
\otimes_{S(\h)\#S_n}\mu$, where $\mu$ runs over the irreducible $S_n$-modules
(that are naturally labelled by partitions of $n$).
For  $c\not\in \{-\frac{a}{b}| 1\leqslant a<b\leqslant n\}$, the functor $M\mapsto eM:
H_c\operatorname{-mod}\rightarrow e H_c e\operatorname{-mod}$
is an equivalence of categories, see, e.g., \cite[Corollary 4.2]{BE}.

On $X$ we have a one-parameter Hamiltonian torus acting. We  choose $\nu$ so that $\nu(t).x_i=t^{-1}x_i,\nu(t).y_i=ty_i$.
The fixed points in $X$ are also naturally parameterized by the partitions of $n$.

The simples in $\mathcal{O}_\nu(\A_\lambda)$ (where $\lambda=c+\frac{1}{2}$)
are the  modules of the form $eL_{c}(\mu)$. Let $\mathsf{x}_\mu$ denote the fixed point in $X^T$ such that
$eL_c(\mu)$ corresponds to $\mathsf{x}_\mu$. In fact, the labeling $\mu\mapsto x_\mu$ is the standard
one, but we will not need this.

Set $\mathsf{x}:=\mathsf{x}_\mu$. Let us compute the number $\mathsf{c}_{\nu,\lambda}(\mathsf{x})$. We introduce two important statistics of a Young diagram $\mu$, namely, $\operatorname{cont}(\mu)$ and $ n(\mu)$. Recall that by the content of a box in a Young diagram
we mean the difference of its horizontal and vertical coordinates. The content of $\mu$ denoted by
$\operatorname{cont}(\mu)$ is the sum of contents of all boxes, for example, $\operatorname{cont}((n))=
0+1+2+\ldots+(n-1)$ (here and below by $(n)$ me mean a Young diagram with a single row of $n$ boxes). If $\mu=(\mu_1,\ldots,\mu_k)$, then we write $n(\mu):=\sum_{i=1}^k (i-1)\mu_i$.
This is the minimal degree in which $\mu$ occurs in $S(\h)$.

\begin{Lem}\label{Lem:RCA_highest_wt}
Suppose that $c$ is Zariski generic. Then, for a suitable
choice of $h$ (it is defined up to adding a constant), we have
$\mathsf{c}_{\nu,\lambda}(\mathsf{x})=c\operatorname{cont}(\mu)-n(\mu)$ (where $\mu,\mathsf{x}$
correspond to the same Young diagram).
\end{Lem}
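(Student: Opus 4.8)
The plan is to transport the computation to the rational Cherednik algebra $H_c$, identify $h$ with (the spherical part of) the symmetrized Euler element, and then invoke two classical facts about $S_n$. For $c$ Zariski generic --- in particular $c$ is not a rational number with denominator $\leqslant n$ in $(-1,0)$ --- the algebras $H_c$ and $\A_\lambda=eH_ce$ are Morita equivalent via $M\mapsto eM$ (see \cite{GGOR}), and this equivalence restricts to an equivalence $\OCat_c\xrightarrow{\sim}\OCat_\nu(\A_\lambda)$ that matches the standard labellings of the simple objects by partitions of $n$; moreover, since $\lambda=c+\tfrac12$ is then Zariski generic, $\Ca_\nu(\A_\lambda)\xrightarrow{\sim}\C[X^T]$ by the lemma preceding the statement, and the labelling of $X^T$ by partitions is compatible with the above, as recalled earlier. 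Since $\OCat_c$ is semisimple for generic $c$ we have $\Delta_c(\mu)=L_c(\mu)$, and under the identifications just described the Verma module $\Delta_{\nu,\lambda}(\mathsf{x})$ corresponds to $e\Delta_c(\mu)$, where $\mathsf{x}$ and $\mu$ are the same Young diagram. By the definition of $\mathsf{c}_{\nu,\lambda}$ recalled in Section~\ref{SS_O_gen}, the top $h$-weight space of $\Delta_{\nu,\lambda}(\mathsf{x})$ is one-dimensional with $h$-weight $\mathsf{c}_{\nu,\lambda}(\mathsf{x})$, so it remains to find the highest $h$-weight occurring in $e\Delta_c(\mu)$.

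Next I would identify $h$. The torus $T$ acts on $X=\operatorname{Hilb}_n(\C^2)$ through the Hamiltonian scaling of $\C^2$ with $\nu(t)$ acting by $t^{-1}$ on the $x_i$ and by $t$ on the $y_i$, and the corresponding Hamiltonian element of $H_c$ is $-\operatorname{eu}+c_0$, where $\operatorname{eu}=\tfrac12\sum_i(x_iy_i+y_ix_i)$ and $c_0$ is a constant we are free to fix (the sign is forced by $[\operatorname{eu},y_i]=-y_i$ together with $\nu(t).y_i=ty_i$); then $h=e(-\operatorname{eu}+c_0)e$. Using the defining relations of $H_c$ one gets $\operatorname{eu}=\sum_i x_iy_i+\tfrac n2-c\,\Sigma$ with $\Sigma:=\sum_{1\leqslant i<j\leqslant n}(ij)$, so $\operatorname{eu}$ raises the $x$-degree by one and acts on the bottom layer $\mu$ of $\Delta_c(\mu)\cong S(\h^*)\otimes\mu$ --- on which all $y_i$, hence $\sum_i x_iy_i$, act by $0$ and $\Sigma$ acts by the scalar $s_\mu$ by which it acts on the irreducible $S_n$-module $\mu$ --- by $\tfrac n2-c\,s_\mu$. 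Consequently $h$ acts on the degree-$d$ component $S^d(\h^*)\otimes\mu$ of $\Delta_c(\mu)$ by $c_0-\tfrac n2+c\,s_\mu-d$.

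Passing to $S_n$-invariants, $e\Delta_c(\mu)=\bigoplus_d\bigl(S^d(\h^*)\otimes\mu\bigr)^{S_n}$, and the $d$-th summand is nonzero exactly when $\mu$ occurs in $S^d(\h^*)\cong S^d(\h)$; the least such $d$ is $n(\mu)$, by the description of the minimal degree recalled before the statement. Hence the highest $h$-weight of $e\Delta_c(\mu)$ equals $c_0-\tfrac n2+c\,s_\mu-n(\mu)$. Finally, $\Sigma$ is central in $\C S_n$ and acts on the irreducible module $\mu$ by $s_\mu=\operatorname{cont}(\mu)$ --- classically $\Sigma$ is the sum of the Jucys--Murphy elements, and this normalization is consistent with the statement since, e.g., $s_{(n)}=\binom n2=\operatorname{cont}((n))$. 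Taking $c_0=\tfrac n2$ gives $\mathsf{c}_{\nu,\lambda}(\mathsf{x})=c\operatorname{cont}(\mu)-n(\mu)$, as required.

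The computation itself is routine; the only points requiring care are bookkeeping ones --- fixing the sign of $h$ from the prescribed $\nu$ and checking that the fixed point $\mathsf{x}$ is labelled by $\mu$ and not by $\mu^{T}$ (a transpose would change the answer to $-c\operatorname{cont}(\mu)-n(\mu^{T})$). Both are settled by the normalization of $\nu$ and the compatibility of the two labellings of $X^T$ recalled above; one can also pin them down independently by specializing to $\mu=(n)$ and $\mu=(1^n)$, where the relevant fixed points, the modules $e\Delta_c(\mu)$, and the minimal degrees are all explicit.
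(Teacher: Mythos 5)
Your argument is essentially the paper's: transport to $H_c$ via the Morita equivalence, identify the object labelled by $\mathsf{x}$ with $e\Delta_c(\mu)$, and read off the top $h$-weight from the Euler element; your computation of $\operatorname{eu}=\sum_i x_iy_i+\tfrac n2-c\Sigma$, of $s_\mu=\operatorname{cont}(\mu)$, and of the minimal invariant degree $n(\mu)$ is correct and supplies details the paper leaves implicit.

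The one genuine wrinkle is your genericity bookkeeping. You invoke semisimplicity of $\OCat_c$ to identify $\Delta_{\nu,\lambda}(\mathsf{x})$ with $e\Delta_c(\mu)$, but $\OCat_c$ is semisimple only for $c\notin\{-\tfrac ab\mid 1\leqslant a<b\leqslant n\}+\Z$, which is the complement of a \emph{countable} set; a Zariski generic $c$ (complement of a finite set, since $\paramq$ is one-dimensional here) need not satisfy this. The paper closes exactly this gap by first observing that $\mathsf{c}_{\nu,\lambda}(\mathsf{x})$ is an affine function of $\lambda$ (by \cite[Section 6.2]{CWR}), as is $c\operatorname{cont}(\mu)-n(\mu)$, so it suffices to verify the identity for Weil generic $c$ --- where your simplicity/semisimplicity argument and the rest of your computation go through verbatim. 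With that one-line reduction prepended, your proof is complete.
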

\begin{proof}
Note that $\mathsf{c}_{\nu,\lambda}(\mathsf{x})$ is an affine function in $\lambda$,
see \cite[Section 6.2]{CWR}. So it is enough to prove the equality assuming $\lambda$ is Weil generic.
Here the simple object corresponding to $\mathsf{x}$ is  $e\Delta_c(\mu)$.
It follows that $\mathsf{c}_{\nu,\lambda}(\mathsf{x})$ is the highest weight space in $e\Delta_c(\mu)$.
The highest weight in $\Delta_c(\mu)$ is $c\operatorname{cont}(\mu)$ and the required equality follows from the fact
that $n(\mu)$ is the minimal degree of $\mu^*$ in $S(\h^*)$.
\end{proof}

\section{Very generic and regular parameters}\label{S_params}
\subsection{Generic simplicity}
\subsubsection{Essential hyperplanes}\label{SSS_essential}
Here we introduce the notion of an {\it essential hyperplane}. Pick a classical wall $\Gamma$. Fix $\zeta$ as in Proposition \ref{Prop:rank1} so that, for all $y\in Y_\lambda$,
the rank of the map $\eta:\param\rightarrow \underline{\param}$ is equal to $1$.
Let $\underline{\param}'$ denote the image of $\eta$, these spaces are identified for
all $y$. We consider the lattice $\underline{\param}'_\Z$ that is the image of
$\Pic(X)$, it is isomorphic to $\Z$.

Consider the algebra $\underline{\A}_{\underline{\paramq}'}$. Pick an ample line bundle
$\Str(\chi)$ on $X$. Let $\Str(\underline{\chi})$ denote the induced line bundle on
$\underline{X}$. Note that the map $\chi\mapsto \underline{\chi}$ gives $\eta$ after
tensoring with $\mathbb{Q}$.

Consider the bimodules $\underline{\A}_{\underline{\paramq}',\underline{\chi}},
\underline{\A}_{\underline{\paramq}',-\underline{\chi}}$. Thanks to  Lemma
\ref{Lem:Morita_open}, the locus in $\underline{\paramq}'$,
where these bimodules fail to be mutually inverse Morita equivalence bimodules
is a finite subset to be denoted by $\Sigma_y$.

We define $\Sigma_\Gamma$ as follows. We pick points $y_1,\ldots,y_k$, one in
each minimal symplectic leaf in $Y_\zeta$. Consider the union $\bigcup_i \Sigma_{y_i}$.
By definition,
$\Sigma_\Gamma$  consists of all $\underline{\lambda}$ in $\underline{\paramq}'$
that have integral (i.e., belonging to
$\underline{\param}'_{\Z}$) difference with a point in $\Sigma_y$.

For a wall $\Gamma$, let $\alpha_\Gamma$ denote an indecomposable element in $\param_\Z^*$ such that $\Gamma=\ker\alpha_\Gamma$ (this element is defined up to multiplication by $\pm 1$). We can identify
$\underline{\param}'$ with $\C$ by taking the image of $\alpha_\Gamma$ for $1$.

\begin{defi}\label{defi:essential}
By an {\it essential hyperplane} in $\paramq$ we mean a hyperplane of the form
$\tilde{\Gamma}$ given by $\langle \alpha_\Gamma,\bullet\rangle=\sigma$ for $\sigma\in \Sigma_\Gamma$
for some classical wall $\Gamma$. We say that this essential hyperplane is associated with
$\Gamma$.
\end{defi}

\begin{defi}\label{defi:relevant}
Let $\lambda\in \paramq$. We say that an essential hyperplane $\tilde{\Gamma}$ is {\it relevant}
for $\lambda$ if $(\lambda+\param_{\Z})\cap \tilde{\Gamma}\neq \varnothing$.
\end{defi}

%Note that if $\tilde{\Gamma}$ is relevant for $\lambda$, then we have two  half-spaces in
%$\lambda+\param_{\Z}$ given by $\Gamma'$ (we can talk about open half-spaces and
%closed half-spaces).

%We assume the following condition.
%
%\begin{itemize}
%\item[(S)] For each  wall $\Gamma$, there is a finite subset  $\Sigma_\Gamma\subset (-1/2,1/2]$
%of rational numbers
%with the following property: the algebra $\A_\lambda$ is simple provided $\langle\alpha_\Gamma, %\lambda\rangle
%\not\in \Sigma_\Gamma+\Z$ for all $\Gamma$.
%\end{itemize}
%
%Fix subsets $\Sigma_\Gamma$ for all $\Gamma$.

%\begin{defi}\label{defi:generic}
%Hyperplanes of the form  $\{\lambda| \langle\alpha_\Gamma, \lambda\rangle=\sigma\}$
%for some $\Gamma$ and $\sigma\in \Sigma_\Gamma$ will be
%called {\it essential}.
%Parameters $\lambda$ that do not belong to any essential hyperplane will be called {\it very generic}.
%\end{defi}

\begin{Ex}\label{Ex:cotangent_S}
Let $X=T^*\mathcal{B}$. Then $\Sigma_\Gamma=\Z$
for all $\Gamma$.
\end{Ex}

\begin{Ex}\label{Ex:Gieseker_S}
Let $X$ be the Hilbert scheme $\operatorname{Hilb}_n(\C^2)$. Then there is only one hyperplane $\Gamma$,
which is $\Gamma=\{0\}$. Then for $\Sigma_\Gamma$ we can take $\Z+\{-\frac{a}{b}+\frac{1}{2}| 1\leqslant
a<b\leqslant n\}$. This follows, for example, from \cite[Section 4.1]{BE} combined with
the construction of translation bimodules in \cite{GS1}.
\end{Ex}

The sets $\Sigma_\Gamma$ can also be computed for quantizations of Nakajima quiver varieties, see \cite[Section 9.4]{BL},
at least, under some additional assumptions, e.g., that the underlying quiver is of finite or affine type.

\subsubsection{Very generic parameters}
\begin{defi}\label{defi:very_generic}
We say that $\lambda\in \paramq$ is {\it very generic} if it doesn't lie in any essential hyperplane.
\end{defi}

\begin{Lem}\label{Loc:localization_very_generic}
Let $\lambda$ be very generic. Then the following claims hold.
\begin{enumerate}
\item The algebra $\A_\lambda$ is simple.
\item All functors $\A_{\lambda,\chi}\otimes^L_{\A_\lambda}\bullet$ are
t-exact equivalences.
\item Abelian localization holds for $(\lambda,\theta)$
with any $\theta$.
\end{enumerate}
\end{Lem}
\begin{proof}
We just need to prove (1): (2) will follow from Lemma \ref{Lem:WC_simple}
because once $\lambda$ is very generic, then so is $\lambda+\chi$. And (3)
follows from (2) and Proposition \ref{Prop:loc_transl}. The proof of (1)
follows that of \cite[Proposition 9.6]{BL} with some modifications.
The proof is in several steps.

{\it Step 1}. Suppose that abelian localization holds for $(\lambda,\theta)$.
Our goal for time being is to show that the long wall-crossing functor
$\WC_{\lambda^-\leftarrow \lambda}$ is a t-exact equivalence. By
\cite[Theorem 6.35]{BPW}, $\WC_{\lambda^-\leftarrow \lambda}$ decomposes into
the composition of short wall-crossing functors, each crossing a single wall.
So we need to prove that each short wall-crossing functor is t-exact.

{\it Step 2}. Let $\Gamma$ be a wall for the classical  chamber $C$ containing
$\theta$ and let $\theta'$ be in the chamber opposite to $C$ with respect to
$\Gamma$. Suppose that $\chi$ is such that abelian localization
holds for  $(\lambda+\chi,\theta')$. As was discussed in Section
\ref{SS_Perv}, the functor $\WC_{\lambda+\chi\leftarrow \lambda}$
is a perverse equivalence, moreover, if $\WC_{\lambda+\chi\leftarrow \lambda}$
is not t-exact, then the support of  $\A_{\paramq_0}/I^1_{\paramq_0}$ in $\paramq_0:=\lambda+\Gamma$
is Zariski dense. On the other hand, pick $\zeta\in \Gamma$
as in Proposition \ref{Prop:rank1} and apply the restriction functor $\bullet_{\dagger}$ associated to a point
$y_i\in Y_{\zeta}$ be as in Section \ref{SSS_essential}.
Since $\lambda+\Gamma$ is not an essential hyperplane, thanks to (v)
of Section \ref{SSS_HC_restr}, the bimodules
$(\A_{\lambda,\chi})_{\dagger}, (\A_{\lambda+\chi,-\chi})_{\dagger}$
are mutually inverse Morita equivalences. It follows that
$(I^1_{\paramq_0})_{\dagger}=0$. This contradicts the claim
that the support of  $\A_{\paramq_0}/I^1_{\paramq_0}$ in $\paramq_0:=\lambda+\Gamma$
is Zariski dense. This completes the proof of the claim that $\WC_{\lambda+\chi\leftarrow\lambda}$
is a t-exact equivalence. Hence $\WC_{\lambda^-\leftarrow \lambda}$ is a t-exact equivalence.

{\it Step 3}. Thanks to the results recalled in Section \ref{SS_Perv},
$\WC_{\lambda^-\leftarrow \lambda}$ is a perverse equivalence with respect
to the filtration by the dimensions of the associated varieties of the
annihilators. Since $\A_\lambda$ is a prime Noetherian algebra, the claim
that this perverse equivalence is t-exact, means that there are no proper
two-sided ideals in $\A$, i.e., $\A$ is simple.
%The bimodule $\A_{\lambda,\chi}$ is a Morita equivalence bimodule,
%compare to the proof of \ref{Lem:WC_simple}.
%Part (1) follows from Proposition \ref{Prop:loc_transl}.
%Part (2) follows from Lemma \ref{Lem:WC_simple}.
\end{proof}

%Now we are going to establish some results on the category $\mathcal{O}$.
%
%\begin{Prop}\label{Prop:O_very_generic1}
%Let $\lambda$ be very generic. Then
%\begin{enumerate}
%\item $\Ocat_\nu(\A_\lambda)$ is semisimple with $|X^T|$ simple objects,
%\item every Verma module is simple,
%\item $\lambda\in \paramq^{fl}$.
%\end{enumerate}
%\end{Prop}
%\begin{proof}
%
%\end{proof}

\subsection{Abelian localization and finite homological dimension}
In what follows we will need to understand when the specialization $\A_{\lambda,z}$ for $(\lambda,z)\in \param\oplus \C$ has finite homological dimension and has some related properties. We start
with abelian localization.

\subsubsection{Abelian localization}
\begin{Lem}\label{Lem:cor_localn}
There is a finite collection of essential hyperplanes
such that for any $\lambda$ away from the union of these hyperplanes
abelian localization holds for $(\lambda,\theta)$ for
$\theta$ that depends on $\lambda$.
\end{Lem}
\begin{proof}
For each classical chamber $C$ choose  $\chi\in \operatorname{Pic}(X)$
such that $c_1(\chi)$ is  inside
$C$, equivalently, $O(\chi)$ is ample on the resolution $X$
corresponding to $C$. Recall, Lemma \ref{Lem:Morita_open}, that the locus  of all $\lambda$ such that
one of the natural homomorphisms $$\A_{\lambda,\chi}\otimes_{\A_{\lambda+\chi}}\A_{\lambda+\chi,-\chi}
\rightarrow \A_\lambda,
\A_{\lambda+\chi,-\chi}\otimes_{\A_\lambda}\A_{\lambda,\chi}
\rightarrow \A_{\lambda+\chi}$$
is not an isomorphism is Zariski closed in $\param$.
Let $Z_\chi$ denote this locus. Note that $Z_\chi$
must be contained in the union of finitely many essential hyperplanes
by Lemma \ref{Loc:localization_very_generic}.
We can choose a finite collection $\Psi$ of essential hyperplanes containing all subsets
$Z_{\chi}$ and subject to the following condition:
\begin{itemize}
\item[(*)] For any of the chosen elements $\chi$, any of the walls $\Gamma$
and any $\lambda$ such that
neither $\lambda,\lambda+\chi$ lie in a hyperplane from $\Psi$, we have
that $\lambda$ and $\lambda+\chi$ lie in the same half space with respect to
any of the hyperplanes in $\Psi$ that is relevant for $\lambda$.
\end{itemize}
Now take $\lambda$ that does not lie in any of the hyperplanes in $\Psi$.
We can find $C$ such that, for the corresponding element $\chi$, we have
that $\lambda+n\chi$ does not lie in a hyperplane from $\Psi$ for any $n\geqslant 0$.
In particular, $\lambda+n\chi\not \in Z_\chi$ for any $n\geqslant 0$.
Now we can use Proposition \ref{Prop:loc_transl} to conclude that abelian localization
holds for $(\lambda,\chi)$ finishing the proof.
\end{proof}

In what follows we enlarge $\Psi$ so that for $\lambda\not\in \Psi$ abelian localization
holds for $(\lambda,\theta)$ and $(-\lambda,\theta')$ for some $\theta,\theta'$. It follows
that abelian localization holds for the algebra $\A_\lambda\otimes \A_{\lambda}^{opp}$
and a suitable resolution of $Y\times Y$.

\subsubsection{Finite projective dimension}
As before, we consider $\paramq$ as the affine subspace $\param\times \{1\}\subset \param \times \C$.
So  any hyperplane in $\Psi$ spans a codimension $1$ subspace in $\param \times \C$.
Let $(\param\times \C)^{0}$ denote the complement to the union of all codimension $1$
subspaces that arise in this way. In particular, $(\param\times \C)^{0}\cap (\param \times\{0\})=\param \setminus \param^{reg}$. This discussion and Lemma \ref{Lem:cor_localn} have the following
corollary.

\begin{Cor}\label{Cor:fin_hom_dim}
For every $(\lambda,z)\in (\param\times \C)^{0}$ the specialization $\A_{\lambda,z}$
has finite homological dimension.
\end{Cor}

The main result of this section is as follows.

\begin{Prop}\label{Prop:fin_proj_dim}
The $\C[(\param\times \C)^0]\otimes_{\C[\param,\hbar]}(\A_{\param,\hbar}\otimes_{\C[\param,\hbar]}\A_{\param,\hbar}^{opp})$-module
$\C[(\param\times\C)^0]\otimes_{\C[\param,\hbar]}\A_{\param,\hbar}$ has finite projective dimension not exceeding $2\dim X$.
\end{Prop}
\begin{proof}
First, we claim that for each $(\lambda,z)\in (\param\times \C)^{0}$ the regular
$\A_{\lambda,z}$-bimodule has projective dimension not exceeding $2\dim X$. If $z=0$,
then $\A_{\lambda,0}=\C[X_\lambda]$ is the algebra of functions on a smooth affine
variety and our claim follows. If $z\neq 0$, we can assume it is equal to $1$.
Then abelian localization holds for the quantization $\A_\lambda\otimes_\C \A_{\lambda}^{opp}$
of $Y\times Y$
and a suitable choice of a resolution, so the projective dimension of the regular
bimodule does not exceed that of the category of coherent sheaves on the resolution
of $Y\times Y$, which equals $2\dim X$ (compare with the proof of \cite[Lemma 9.2]{BL}).

We claim that the fiberwise bounds on the projective dimensions in the previous paragraph imply the claim of the proposition.
Namely, set $k=2\dim X$, $\mathsf{A}:=\A_{\param,\hbar}\otimes_{\C[\param,\hbar]}\A_{\param,\hbar}$, and consider an $\mathsf{A}$-module resolution
$$0\rightarrow M\rightarrow P_{k-1}\rightarrow \ldots \rightarrow P_0$$
of $\A_{\param,\hbar}$, where all $P_{i}$ are projective. Since $\A_{\param,\hbar}$
is a free $\C[\param,\hbar]$-module, we see that $M$ is free over $\C[\param,\hbar]$.
Also if we specialize this resolution to any point $(\lambda,z)$ we still get a resolution.
In particular, for any $(\lambda,z)\in (\param\times \C)^0$, the specialization
$M_{\lambda,z}$ is a projective $\A_{\lambda,z}$-bimodule. We claim that this implies
that $M^0:=\C[(\param\times \C)^0]\otimes_{\C[\param,\hbar]}M$ is a projective
$\mathsf{A}^0:=\C[\param,\hbar]^0\otimes_{\C[\param,\hbar]}\mathsf{A}$-module.

Note that $\mathsf{A}^0$ is a Noetherian ring. So to show that $M^0$
is projective, it is enough to show that $M^0$
is flat, equivalently, for any right ideal $J\subset \mathsf{A}^0$, the natural
map $J\otimes_{\mathsf{A}^0}M^0\rightarrow M^0$ is injective. Assume the contrary,
let $K$ denote the kernel.

Now consider the completion $\mathsf{A}^{\wedge_{\lambda,z}}$. This algebra is flat
over $\mathsf{A}^0$. Moreover, on the category of finitely generated
$\mathsf{A}^0$-modules the functor $\mathsf{A}^{\wedge_{\lambda,z}}\otimes_{\mathsf{A}^0}\bullet$
coincides with the completion of $\C[\param,\hbar]$-modules with respect to the maximal
ideal of $(\lambda,z)$. Applying this functor to the homomorphism
$J\otimes_{\mathsf{A}^0}M^0\rightarrow M^0$ we get
\begin{equation}\label{eq:completed_homom} J^{\wedge_{\lambda,z}}\otimes_{\mathsf{A}^{\wedge_{\lambda,z}}}M^{\wedge_{\lambda,z}}\rightarrow
M^{\wedge_{\lambda,z}}.\end{equation}
The kernel of this homomorphism is $K^{\wedge_{\lambda,z}}$.
Note that $M^{\wedge_{\lambda,z}}$ is a formal deformation of $M_{\lambda,z}$.
Since $M_{\lambda,z}$ is flat over $\mathsf{A}_{\lambda,z}$, we conclude that
$M^{\wedge_{\lambda,z}}$ is flat over $\mathsf{A}^{\wedge_{\lambda,z}}$.
It follows that (\ref{eq:completed_homom}) is injective. Hence $K^{\wedge_{\lambda,z}}=0$
for all $(\lambda,z)\in (\param\times \C)^0$. Using Lemma \ref{Lem:gen_freeness},
we conclude that $K=0$.
\end{proof}

\subsection{Translation and wall-crossing functors}
In this section we will study the behavior of the translation and wall-crossing functors
on the category $\mathcal{O}$. Recall that it makes sense to speak about the wall-crossing
functor $\WC_{\lambda+\chi\leftarrow \lambda}$ when abelian localization holds for
$(\lambda+\chi,\theta)$ and $(\lambda,\theta')$ for generic $\theta,\theta'$.
Note that $\WC_{\lambda+\chi\leftarrow \lambda}$
restricts to an equivalence
$D^b_{\mathcal{O}_\nu}(\A_\lambda\operatorname{-mod})
\rightarrow D^b_{\mathcal{O}_\nu}(\A_{\lambda+\chi}\operatorname{-mod})$.

\subsubsection{Behavior on $K_0(\mathcal{O}_\nu^T)$}\label{SSS_equiv_K_0_behavior}
Pick a $T$-equivariant structure on $O(\chi)$ (defined up to a twist with a character).
This gives a $T$-equivariant structure on $\A_{\paramq,\chi}^\theta$ and on $\A_{\lambda,\chi}$.
So $\WC_{\lambda+\chi\leftarrow \lambda}$ upgrades to an equivalence $D^b_{\mathcal{O}_\nu}(\A_\lambda\operatorname{-mod}^T)
\xrightarrow{\sim} D^b_{\mathcal{O}_\nu}(\A_{\lambda+\chi}\operatorname{-mod}^T)$.

Let $\mathsf{wt}_\chi(\mathsf{x})$ stand for the character of $T$
at the fiber of $O(\chi)$ at the fixed point $\mathsf{x}$.

\begin{Prop}\label{Prop:transl_grad_O}
Suppose that
\begin{itemize}
\item[(i)] $\lambda,\lambda+\chi\in \paramq^{fl}$,
\item[(ii)] and abelian localization holds for $(\lambda+\chi,\theta)$ and $(\lambda,\theta')$
for some generic $\theta,\theta'$.
\end{itemize}
Then we have
\begin{enumerate}
\item For $?=\lambda,\lambda+\chi$, the classes $[\Delta_{\nu,?}(\mathsf{x})]$ form a basis in $K_0(\mathcal{O}_\nu(\A_?))$.
\item  $[\WC_{\lambda+\chi\leftarrow \lambda} \Delta_{\nu,\lambda}(\mathsf{x},\kappa)]=[\Delta_{\nu,\lambda+\chi}(\mathsf{x}, \kappa+\mathsf{wt}_\chi(\mathsf{x}))]$.
\item Moreover, the characteristic cycle of $\operatorname{Loc}_\lambda^{\theta'}\Delta_{\lambda,\nu}(\mathsf{x})$ has the contracting component
    of $\mathsf{x}$ with multiplicity $1$ and the other contracting components appearing
    with nonzero multiplicities correspond to the fixed points that are less than $\mathsf{x}$
    in the order induced by $\nu$.
\end{enumerate}
\end{Prop}
\begin{proof}
For $(\lambda,z)\in \param\times \C$ we define the category
$\mathcal{O}^{T}_\nu(\A^\theta_{\lambda,z})$  of coherent $T$-weakly equivariant $\A^\theta_{\lambda,z}$-modules that are set theoretically supported on $X^+$.
Similarly, we can define the category $\mathcal{O}^{T}_\nu(\A^\theta_{\param,\hbar})$.
We have the degeneration map $K_0(\mathcal{O}^{gr}_\nu(\A^\theta_{\lambda,z}))\rightarrow K_0(\operatorname{Coh}^T_{X^+}(X))$. Thanks to this map we can talk about the characteristic
cycle of an object in $\mathcal{O}^{T}_\nu(\A^\theta_{\lambda,z})$.

Note that we have a functor
$$\operatorname{Loc}^\theta:=\A_{\param,\hbar}^\theta\otimes^L_{\A_{\param,\hbar}}\bullet:
D^b(\A_{\param,\hbar}\operatorname{-mod})\rightarrow
D^-(\Coh(\A^\theta_{\param,\hbar})).$$
This functor upgrades to the weakly $T$-equivariant categories.
Hence we also get a functor
$$\operatorname{Loc}^\theta:=\A_{\param,\hbar}^\theta\otimes^L_{\A_{\param,\hbar}}\bullet:
D^b_{\mathcal{O}_\nu}(\A_{\param,\hbar}\operatorname{-mod}^T)\rightarrow
D^-_{\mathcal{O}_\nu}(\Coh^T(\A^\theta_{\param,\hbar})).$$

Set $\A_{(\param\times \C)^0}:=\C[(\param\times \C)^0]\otimes_{\C[\param,\hbar]}\A_{\param,\hbar}$.
Recall, Proposition \ref{Prop:fin_proj_dim}, that the projective dimension of the $\A_{(\param\times \C)^0}\otimes_{\C[(\param\times \C)^0]}
\A_{(\param\times \C)^0}^{opp}$-module $\A_{(\param\times \C)^0}$ does not exceed $2\dim X$.
It follows that the projective dimension of the regular $\A_{(\param\times \C)^0}$-bimodule
does not exceed $k:=2\dim X+\dim \param+1$. Consider the functor
$$\operatorname{Loc}^\theta_{\geqslant}:
D^b_{\mathcal{O}_\nu}(\A_{\param,\hbar}\operatorname{-mod}^T)\rightarrow
D^b_{\mathcal{O}_\nu}(\Coh^T(\A^\theta_{\param,\hbar})),$$
the composition of $\operatorname{Loc}^\theta$ and the truncation functor
$\tau_{\geqslant -k}$.  Note that the localization of $\Loc^\theta_{\geq}$ to
$(\param\times \C)^0$ coincides with the localization of $\Loc^\theta$.

Now consider the objects $$\M^1_{\param,\hbar}:=\A^\theta_{\param,\hbar,\chi}\otimes_{\A^\theta_{\param,\hbar}}\Loc^\theta_{\geqslant}(\Delta_{\nu,\param,\hbar}(\mathsf{x},\kappa)),
\M^2_{\param,\hbar}:= \Loc^\theta_{\geqslant}(\mathsf{x}, \kappa+\mathsf{wt}_\chi(\mathsf{x}))\in
D^b_{\mathcal{O}_\nu}(\Coh^T(\A^\theta_{\param,\hbar})).$$
For $(\lambda,z)\in \param\times \C$, we set $\M^1_{\lambda,z}=\C_{\lambda,z}\otimes^L_{\C[\param,\hbar]}
\M^1_{\param,\hbar}$ and define $\M^2_{\lambda,z}$ similarly.
 Since $\lambda+\chi,\lambda\in \paramq^{fl}$ we have the following isomorphisms
\begin{equation}\label{eq:specializations1}
\M^1_{\lambda+\chi}\cong\A^\theta_{\lambda,\chi}\otimes_{\A_\lambda^\theta}
\Loc_\lambda^\theta(\Delta_{\nu,\lambda}(\mathsf{x},\kappa)),
\M^2_{\lambda+\chi}\cong\Loc_{\lambda+\chi}^\theta(\Delta_{\nu,\lambda+\chi}(\mathsf{x},
\kappa+\mathsf{wt}_\chi(\mathsf{x}))).
\end{equation}
Now let $\zeta\in \param^{reg}$.
We write $X_{\zeta,\mathsf{x}}^+$ for the contracting locus of the $T$-fixed point corresponding to
$\mathsf{x}$ in $X_{\zeta}$. Note that the restriction of
$O_{X_{\zeta}}(\chi)$ to $X_{\zeta,\mathsf{x}}^+$ is the trivial line bundle
with $T$ acting via $\mathsf{wt}_\chi(\mathsf{x})$. It follows that
\begin{equation}\label{eq:specializations2}
\M^1_{\zeta,0}\cong \M^2_{\zeta,0}\cong \mathcal{O}_{X^+_{\zeta,\mathsf{x}}}(\kappa+\mathsf{wt}_\chi(\mathsf{x}))).
\end{equation}

For an object $\mathcal{M}\in K_0(\Coh^T_{\mathcal{O}_\nu}(\A^\theta_{\param,\hbar}))$
the image of its specialization to $(\lambda,z)$ in $K_0(\operatorname{Coh}^T_{X^+}(X))$
is independent of the choice of $(\lambda,z)$. This is because this image coincides with
the class of $\C_{0,0}\otimes^L_{\C[\param,\hbar]}\M$.
In particular, the images of $[\M^1_{\lambda+\chi}], [\M^2_{\lambda+\chi}]$
coincide with that of $\mathcal{O}_{X^+_{\zeta,\mathsf{x}}}(\kappa+\mathsf{wt}_\chi(\mathsf{x})))$.
The characteristic cycle of the latter has the property listed in (3). This implies (3).

This argument also implies that the classes of $[\Delta_{\nu,\lambda}(\mathsf{x}, \kappa)]$ for $\mathsf{x}\in X^T,\kappa\in \mathfrak{X}(T),$
in $K_0(\OCat^{T}_{\nu}(\A_{\lambda+\chi}))$ are linearly independent because their images in
$K_0(\operatorname{Coh}^T_{X^+}(X))$ are. Therefore the classes $[\Delta_{\nu,\lambda}(\mathsf{x})]\in K_0(\OCat_{\nu}(\A_{\lambda+\chi}))$ are linearly independent.
Note that $$\dim K_0(\OCat_{\nu}(\A_{\lambda+\chi}))=
\dim K_0(\OCat_{\nu}(\A^\theta_{\lambda+\chi})).$$
The right hand side equals $|X^T|$ by Corollary \ref{Cor:local_hw}.  We get (1). Since the classes
$[\Delta_{\nu,\lambda}(\mathsf{x}, \kappa)]$ are linearly independent, we see that (\ref{eq:specializations1}) and (\ref{eq:specializations2}) imply
$[\M^1_{\lambda+\chi}]=[\M^2_{\lambda+\chi}]$. This, in turn, implies
$[R\Gamma\M^1_{\lambda+\chi}]=[R\Gamma\M^2_{\lambda+\chi}]$, which is (2).
\end{proof}

\subsubsection{Translations of Verma modules}
Our goal here is to prove the following result.

\begin{Prop}\label{Prop:transl_Verma}
Suppose that $\lambda\in \paramq$ and $\chi\in \operatorname{Pic}(X)$. Assume that the following
hold:
\begin{itemize}
\item[(i)] Conditions of Lemma \ref{Lem:Ext_vanishing} hold for $\lambda,\lambda+\chi$,
\item[(ii)]  $\WC_{\lambda+\chi\leftarrow \lambda}$ is an abelian equivalence,
\item[(iii)] $\lambda,\lambda+\chi\in \paramq^{fl}$.
\end{itemize}
%.
%Suppose that $\A_{\lambda,\chi}\otimes_{\A_\lambda}\bullet$ is an abelian equivalence (which is the case when
%there is generic $\theta$ such that abelian localization holds for $(\lambda,\theta),(\lambda+\chi,\theta)$).
Then the following claims hold.
\begin{enumerate}
\item $\A_{\lambda,\chi}\otimes_{\A_\lambda}\Delta_{\nu,\lambda}(\mathsf{x})\cong \Delta_{\nu,\lambda+\chi}(\mathsf{x})$.
\item Let $\paramq'\subset \paramq$ be a principal open subset such that conditions
(i)-(iii) hold for all $\lambda'$. Then we have
$\A_{\paramq',\chi}\otimes_{\A_\paramq}\Delta_{\nu,\paramq}(\mathsf{x})\cong \Delta_{\nu,\paramq'+\chi}(\mathsf{x})$.
\end{enumerate}
\end{Prop}
\begin{proof}
We prove part (1).
Note that by Proposition \ref{Prop:transl_grad_O}  (more precisely, its weaker version that does not take the equivariant structures into account), we have $[\A_{\lambda,\chi}\otimes_{\A_\lambda}\Delta_{\nu,\lambda}(\mathsf{x})]\cong [\Delta_{\nu,\lambda+\chi}(\mathsf{x})]$. But both $\A_{\lambda,\chi}\otimes_{\A_\lambda}\Delta_{\nu,\lambda}(\mathsf{x}), \Delta_{\nu,\lambda+\chi}(\mathsf{x})$ are standard objects for highest weight structures on
$\mathcal{O}_\nu(\A_{\lambda+\chi})$. As was checked in \cite[Lemma 4.3.2]{GL}, this implies that the modules are isomorphic.

Now we proceed to proving (2). The specializations of the both sides to any point
$\lambda\in \paramq'$ are isomorphic. We can equip $\A_{\paramq',\chi}\otimes_{\A_\paramq}\Delta_{\nu,\paramq}(\mathsf{x})$ and $ \Delta_{\nu,\paramq'+\chi}(\mathsf{x})$ with gradings compatible with the Hamiltonian
$\C^\times$-actions: we grade  $\Delta_{\nu,\paramq'+\chi}(\mathsf{x}), \Delta_{\nu,\paramq'}(\mathsf{x})$
by putting the highest weight space $\C[\paramq']$ in degree $0$ and
$\A_{\paramq',\chi}\otimes_{\A_\paramq}\Delta_{\nu,\paramq}(\mathsf{x})$ gets the tensor
product grading. Note that for any compatible grading on a Verma modules that highest weight
component coincides with the highest degree components so the grading is uniquely recovered
up to a shift. The specializations of $\A_{\paramq',\chi}\otimes_{\A_\paramq}\Delta_{\nu,\paramq}(\mathsf{x})$ and $ \Delta_{\nu,\paramq'+\chi}(\mathsf{x})$ to any $\lambda'$ inherit the gradings.
It follows that we can shift the grading on $\A_{\paramq',\chi}\otimes_{\A_\paramq}\Delta_{\nu,\paramq}(\mathsf{x})$ so that the
highest weight component in all specializations is in degree $0$. This highest weight
component is a projective rank one $\C[\paramq']$-module. It is therefore free. It is annihilated by
$\A_{\paramq'}^{>0}$. Also it is annihilated by the kernel of $\Ca_{\nu}(\A_{\paramq'})\rightarrow
\C[\paramq']_{\mathsf{x}}$ because this is so in every specialization by part (1).
This gives rise to a homomorphism
$\Delta_{\nu,\paramq'+\chi}(\mathsf{x})\rightarrow
\A_{\paramq',\chi}\otimes_{\A_\paramq'}\Delta_{\nu,\paramq}(\mathsf{x})$. This homomorphism
is an isomorphism after every specialization, hence an isomorphism. This finishes the proof.
\end{proof}

\subsubsection{Long wall-crossing  as  Ringel duality functor}
Let us recall a result from \cite[Section 7.3]{CWR}. Let abelian localization hold for $(\lambda,\theta),(\lambda+\chi,-\theta)$.

\begin{Lem}\label{Lem:long_WC_Ringel}
The wall-crossing functor $\WC_{\lambda+\chi\leftarrow \lambda}:D^b(\mathcal{O}_{\nu}(\A_\lambda))
\xrightarrow{\sim} D^b(\mathcal{O}_\nu(\A_{\lambda+\chi}))$ is a Ringel duality functor.
\end{Lem}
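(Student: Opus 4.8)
The plan is to identify $\WC_{\lambda+\chi\leftarrow\lambda}$ as a Ringel duality functor by showing that it sends the standard objects of $\mathcal{O}_\nu(\A_\lambda)$ to the costandard objects of $\mathcal{O}_\nu(\A_{\lambda+\chi})$, up to a homological shift that is constant across the poset, and then invoking the characterization of Ringel duality in terms of this property. First I would recall that, since abelian localization holds for $(\lambda,\theta)$ and for $(\lambda+\chi,-\theta)$, the functor $\WC_{\lambda+\chi\leftarrow\lambda}=\A_{\lambda,\chi}\otimes^L_{\A_\lambda}(\bullet)$ is a derived equivalence $D^b(\mathcal{O}_\nu(\A_\lambda))\xrightarrow{\sim} D^b(\mathcal{O}_\nu(\A_{\lambda+\chi}))$ (this uses Corollary \ref{Cor:trans_equiv} together with the fact that wall-crossing functors preserve category $\mathcal{O}$, as noted before Proposition \ref{Prop:transl_grad_O}). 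Via the localization equivalences $\Loc^\theta$, $\Gamma^\theta$ (resp. $\Loc^{-\theta}$, $\Gamma^{-\theta}$), it is cleaner to work on the sheaf side: $\WC$ corresponds to the functor $\A^\theta_{\lambda,\chi}\otimes^L_{\A^\theta_\lambda}(\bullet)$ composed with passing between the $\theta$- and $-\theta$-quantizations, and the key geometric input is that this composition is the "flop" across the classical wall separating the chambers of $\theta$ and $-\theta$.

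Next I would pin down what $\WC$ does to standard objects. By \eqref{eq:loc_Verma_translation} and Lemma \ref{Lem:Verma_localization}, the localized standard object $\Delta^\theta_{\nu,\lambda}(\mathsf{x})$ is, for $\theta$ deep enough in its chamber, just $\Loc^\theta_\lambda(\Delta_{\nu,\lambda}(\mathsf{x}))$, and translating it by $\chi$ along a ray keeps it standard for the highest weight structure on $\mathcal{O}_\nu(\A^\theta_{\lambda+\chi})$ with respect to the contracting order $\leqslant_\nu$. The point is that on the $-\theta$-side the \emph{opposite} contracting chamber is relevant, so the object $\Delta^\theta_\nu(\mathsf{x})$, when reinterpreted in the $-\theta$-quantization, becomes a \emph{costandard} object $\nabla^{-\theta}_\nu(\mathsf{x})$ for the highest weight structure on $\mathcal{O}_\nu(\A^{-\theta}_{\lambda+\chi})$ — here one uses that the characteristic cycle of a standard object for $\nu$ with respect to one chamber, after the flop $X^\theta\dashrightarrow X^{-\theta}$ which is an isomorphism in codimension one, computes the characteristic cycle of a costandard object. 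Passing back to the algebra side via $\Gamma^{-\theta}$, which is exact under abelian localization, one gets $\WC_{\lambda+\chi\leftarrow\lambda}(\Delta_{\nu,\lambda}(\mathsf{x}))\cong \nabla_{\nu,\lambda+\chi}(\mathsf{x})$ in $D^b(\mathcal{O}_\nu(\A_{\lambda+\chi}))$, with no higher cohomology. Since a derived equivalence between highest weight categories sending standards to costandards (with the order reversed) is by definition a Ringel duality functor, this finishes the proof.

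The main obstacle I expect is the identification of the flopped standard object as a costandard object, i.e. controlling the behavior of characteristic cycles under the birational transformation $X^\theta\dashrightarrow X^{-\theta}$ and matching it with the contravariant duality $\bullet^\vee$ that defines $\nabla_\nu$. Concretely one has to check that $\WC$ is \emph{t-exact for the standard objects only on one side} — it sends $\Delta(\mathsf{x})$ to $\nabla(\mathsf{x})[0]$ but sends $\nabla(\mathsf{x})$ to a complex with cohomology in a range — which is exactly the asymmetry characteristic of a Ringel dual; verifying the vanishing $\operatorname{Ext}^{>0}$ needed for $\WC\Delta$ to be a genuine costandard object (rather than merely having the right class in $K_0$, which Section \ref{SSS_cost_classes} already gives) is where the real work lies. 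A clean way to package this is to combine the $K_0$-computation of Proposition \ref{Prop:transl_grad_O} (applied with $-\theta$ in place of $\theta$) with the rigidity statement that an object of $\mathcal{O}_\nu(\A_{\lambda+\chi})$ with the class of $\nabla_{\nu,\lambda+\chi}(\mathsf{x})$ and the correct socle must be isomorphic to it, much as in the proof of Proposition \ref{Prop:transl_Verma}; the shift/exactness is then forced by the fact that $\WC$ restricts to an equivalence on each piece $\mathcal{O}_\nu(\A_\lambda)_{\leqslant\mathsf{x}}$ of the highest weight filtration, which one reads off from Lemma \ref{Lem:Verma_localization} and the compatibility of translation functors with the filtration by Serre subcategories.
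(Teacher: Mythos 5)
The paper does not actually prove this lemma; it is recalled as a known result from \cite[Section 7.3]{CWR}, so your proposal has to stand on its own, and as written it has a genuine gap at the decisive step. You correctly identify what must be shown — that $\WC_{\lambda+\chi\leftarrow\lambda}$ sends $\Delta_{\nu,\lambda}(\mathsf{x})$ to $\nabla_{\nu,\lambda+\chi}(\mathsf{x})$ concentrated in degree $0$, which matches the definition of Ringel duality used in the paper — and you correctly locate the difficulty. But the mechanism you offer does not deliver it. The assertion that $\Delta^\theta_\nu(\mathsf{x})$, ``reinterpreted in the $-\theta$-quantization,'' becomes $\nabla^{-\theta}_\nu(\mathsf{x})$ is not backed by any functor: the only passage from $\Coh(\A^\theta_{\lambda+\chi})$ to $\Coh(\A^{-\theta}_{\lambda+\chi})$ available is $L\Loc^{-\theta}\circ R\Gamma^{\theta}$, which is exactly the derived functor whose effect on standards is in question, and the fact that $X^\theta\dashrightarrow X^{-\theta}$ is an isomorphism in codimension one identifies Picard groups and (localized) $K_0$'s, not categories of sheaves. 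Likewise the characteristic-cycle comparison only controls classes in $K_0$; it cannot distinguish $\nabla_{\nu,\lambda+\chi}(\mathsf{x})$ from any other object or complex with the same class, and in particular cannot yield the vanishing $\Tor^{\A_\lambda}_{>0}(\A_{\lambda,\chi},\Delta_{\nu,\lambda}(\mathsf{x}))=0$, which is the actual content of the lemma and is nowhere established in your write-up.

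The fallback in your last paragraph does not close the gap either. The rigidity you invoke is not the right one: an object with the class of $\nabla(\mathsf{x})$ and simple socle $L(\mathsf{x})$ need not be $\nabla(\mathsf{x})$; the correct criterion (Lemma \ref{Lem:cost_inclusion}) requires $\dim\Hom(\Delta(\mathsf{x}'),M)=\delta_{\mathsf{x},\mathsf{x}'}$ for \emph{all} $\mathsf{x}'$ together with the $K_0$ class, and computing these Hom's out of $\WC\,\Delta_{\nu,\lambda}(\mathsf{x})$ is essentially the same Tor/Ext computation you have deferred. Moreover, the claim that the exactness is ``forced by the fact that $\WC$ restricts to an equivalence on each piece $\mathcal{O}_\nu(\A_\lambda)_{\leqslant\mathsf{x}}$ of the highest weight filtration'' is false for the long wall-crossing: a functor sending $\Delta$'s to $\nabla$'s reverses the highest weight order and therefore cannot preserve the Serre subcategories $\mathcal{O}_{\leqslant\mathsf{x}}$; the filtration with respect to which $\WC$ is perverse is the one by associated varieties/ideals of Section \ref{SS_Perv}, a different and coarser structure. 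What is missing, and what the cited proof ultimately supplies, is an honest computation of $\Ext^\bullet(\Delta_{\nu,\lambda+\chi}(\mathsf{x}'),\WC\,\Delta_{\nu,\lambda}(\mathsf{x}))$, or equivalently of the corresponding Tor groups after moving the translation bimodule to the other side, fed by Lemmas \ref{Lem:Ext_Tor} and \ref{Lem:Ext_vanishing} and the generic semisimplicity along a one-parameter deformation; none of this appears in your proposal.
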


\subsection{Category $\mathcal{O}$ for very generic parameters}
We are going to prove a number of results on the category $\mathcal{O}$ at very generic (=lying outside of all essential hyperplanes) parameters.

\begin{Prop}\label{Prop:cat_O_simpl}
Let $\lambda$ be a very generic parameter. Then the following claims are true:
\begin{itemize}
\item[(i)] The category $\mathcal{O}_\nu(\A_\lambda)$ is semisimple.
\item[(ii)] $\lambda\in \paramq^{fl}$.
\item[(iii)] The natural homomorphism $\Ca_{\nu}(\A_\lambda)\rightarrow \C[X^T]$
is an isomorphism.
\item[(iv)] The conclusion of Lemma \ref{Lem:Ext_vanishing} holds, equivalently, we have $$\dim\operatorname{Tor}_i^{\A_\lambda}(\Delta_\nu(\mathsf{x}), \Delta^r_{-\nu}(\mathsf{x}'))=\delta_{i0}\delta_{\mathsf{x},\mathsf{x}'}.$$
\end{itemize}
\end{Prop}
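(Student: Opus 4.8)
The plan is to deduce all three statements from the generic-simplicity hypothesis (S) together with the results already developed for very generic (and Zariski generic) parameters. The strategy is to first reduce to a one-parameter deformation situation as in the sections on $K_0$-groups, and then to run a semicontinuity argument: all of (i)--(iii) are open conditions on $\lambda$, they hold on a Zariski-open dense set by Lemma~\ref{Lem:Ext_vanishing} and its corollaries, and the point of (S) is that it guarantees the bad locus is \emph{contained in} the singular hyperplanes $\langle\alpha_\Gamma,\bullet\rangle\in\Sigma_\Gamma+\Z$, i.e. is avoided by very generic $\lambda$.

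First I would establish (i). By hypothesis (S), a very generic $\lambda$ makes $\A_\lambda$ a simple algebra. I would then argue that a simple algebra of the given type has no Harish-Chandra bimodules with proper associated variety (as invoked in the proof of Lemma~\ref{Lem:WC_simple}, via \cite[Lemma 4.4]{B_ineq}), and more to the point that $\mathcal{O}_\nu(\A_\lambda)$ must be semisimple: any self-extension or extension between simples in category $\mathcal{O}$ would, after passing to $\operatorname{Ext}$-bimodules or using the block decomposition by $h$-eigenvalues from Section~\ref{SS_O_gen}, contradict simplicity of $\A_\lambda$ --- concretely, a non-split extension of Vermas forces a nonzero proper two-sided ideal (the annihilator of a proper subquotient). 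Alternatively, and perhaps more cleanly, I would use the one-parameter deformation $\ell^\wedge$ of the previous section: Lemma~\ref{Lem:gen_simplicity} shows $\mathcal{O}_\nu(\A_{\ell^\wedge})$ is generically semisimple, so $\mathcal{O}_\nu(\A_\lambda)$ fails to be semisimple only on a proper closed subset of $\ell$; combined with (S) controlling which hyperplanes can appear, one concludes semisimplicity for all very generic $\lambda$.

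Next, (ii): the homomorphism $\Ca_\nu(\A_\lambda)\to\C[X^T]$ is an isomorphism for Zariski generic $\lambda$ by the cited Proposition~5.3 of \cite{CWR}. Since $\Ca_\nu(\A_{\paramq})$ is finitely generated over $\C[\paramq]$, the locus where the map fails to be an isomorphism is closed. I would show this locus is contained in the singular hyperplanes: if $\A_\lambda$ is simple then $\dim\Ca_\nu(\A_\lambda)$ must equal $|X^T|$ because the simples of $\mathcal{O}_\nu(\A_\lambda)$ are indexed by $\operatorname{Irr}(\Ca_\nu(\A_\lambda))$ and by semisimplicity (part (i)) there are exactly $|X^T|$ of them with all multiplicities one, forcing the surjection $\Ca_\nu(\A_\lambda)\to\C[X^T]$ (which exists by construction) to be an isomorphism on dimension grounds, hence an isomorphism. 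Finally, (iii): by Lemma~\ref{Lem:Ext_Tor} we have $\operatorname{Tor}^{\A_\lambda}_i(\Delta_\nu(\mathsf{x}),\Delta^r_{-\nu}(\mathsf{x}'))^*\cong\operatorname{Ext}^i_{\A_\lambda}(\Delta_\nu(\mathsf{x}),\Delta^r_{-\nu}(\mathsf{x}')^\vee)=\operatorname{Ext}^i_{\A_\lambda}(\Delta_\nu(\mathsf{x}),\nabla_\nu(\mathsf{x}'))$, using the definition of $\nabla_\nu$ via $\vee$ and $\Delta^r_{-\nu}$. So (iii) is exactly the $\operatorname{Ext}$-orthogonality of standards and costandards, which by (i) and (ii) reduces to the semisimple case: in a semisimple category $\operatorname{Ext}^i$ vanishes for $i>0$, and for $i=0$ the Homs between $\Delta_\nu(\mathsf{x})$ and $\nabla_\nu(\mathsf{x}')$ are computed at the level of $\Ca_\nu$-modules, giving $\delta_{\mathsf{x},\mathsf{x}'}$ by (ii).

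The main obstacle I anticipate is the precise bookkeeping in passing from ``$\A_\lambda$ simple'' to ``$\mathcal{O}_\nu(\A_\lambda)$ semisimple'': one needs to rule out non-split self-extensions of the simple objects, and the cleanest route is probably not a direct ideal-theoretic argument but rather the deformation argument via Lemma~\ref{Lem:gen_simplicity} together with upper-triangularity of the standard/costandard $K_0$-classes (Section~\ref{SSS_cost_classes}), showing that the multiplicity matrix of simples in standards is the identity for very generic $\lambda$. Once semisimplicity is in hand, parts (ii) and (iii) are essentially formal. I would also need to double-check that (S) really does place the bad loci of (ii) and (iii) inside $\bigcup_\Gamma\{\langle\alpha_\Gamma,\bullet\rangle\in\Sigma_\Gamma+\Z\}$ and not on some additional hyperplanes; this should follow because all the relevant ``bad'' conditions are detected by non-simplicity of $\A_\lambda$ (via the Harish-Chandra bimodule restriction functors of Section~\ref{SSS_HC_restr}), but it is the place where care is required.
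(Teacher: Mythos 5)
The overall strategy is backwards. You propose to treat (i)--(iii) as open conditions and argue that (S) forces the bad locus into the hyperplanes $\langle\alpha_\Gamma,\bullet\rangle\in\Sigma_\Gamma+\Z$. But openness only tells you the bad locus is a finite union of hypersurfaces; it gives no control over \emph{which} hypersurfaces. The way the paper actually gets that control (in Proposition \ref{Prop:generic}) is by combining Zariski openness of the good locus with the present proposition: since a closed bad locus must miss every very generic point, its components are forced to lie in the excluded hyperplanes. So Proposition \ref{Prop:cat_O_simpl} is precisely the input that makes the semicontinuity argument work later, and it must be proved by a direct argument at each very generic $\lambda$, not by semicontinuity.

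Both of your concrete routes to (i) fail. The ideal-theoretic one: in a simple algebra every nonzero module is faithful, so the annihilator of a proper subquotient of a non-split extension of Vermas is zero and yields no contradiction. The deformation one: Lemma \ref{Lem:gen_simplicity} gives semisimplicity at the \emph{generic} point of $\ell^\wedge$, which says nothing about the special fiber at $\lambda$ --- highest weight categories routinely degenerate from semisimple generic fibers to non-semisimple special fibers. The paper's actual argument is different in kind: very genericity is stable under translation by $\param_\Z$, so \emph{all} $\A_{\lambda+\chi}$ are simple; by Lemma \ref{Lem:WC_simple} all translation functors are abelian equivalences, whence abelian localization holds for every $(\lambda,\theta)$ by Proposition \ref{Prop:loc_transl}; then the long wall-crossing functor, which is a Ringel duality by Lemma \ref{Lem:long_WC_Ringel}, is $t$-exact, and combined with $[\Delta_{\nu,\lambda}(\mathsf{x})]=[\nabla_{\nu,\lambda}(\mathsf{x})]$ (Section \ref{SSS_cost_classes}) this forces semisimplicity via \cite[Lemma 4.2]{Gies_rep}. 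Your treatment of (ii) also assumes surjectivity of $\Ca_\nu(\A_\lambda)\rightarrow\C[X^T]$ ``by construction,'' which is not given; the paper instead first proves $\Ca_\nu(\A_\lambda)$ is semisimple (using semisimplicity of $\OCat$ and an argument with $\Delta_\nu(N)$ for a non-split extension $N$) and then matches irreducibles. For (iii), note the Ext groups produced by Lemma \ref{Lem:Ext_Tor} live in $\A_\lambda\operatorname{-mod}$, not in $\OCat_\nu(\A_\lambda)$; semisimplicity of the latter does not directly kill them --- the paper passes through abelian localization to $\Coh(\A_\lambda^\theta)$, where the Ext's are computed inside $\OCat_\nu(\A_\lambda^\theta)$.
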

\begin{proof}
Let us prove (i). By Lemma \ref{Loc:localization_very_generic}, the categories $\mathcal{O}_\nu(\A_\lambda)$ and $\mathcal{O}_\nu(\A_\lambda^\theta)$
are equivalent for any generic $\theta$ and all wall-crossing functors
are t-exact equivalences. This equips $\mathcal{O}_\nu(\A_\lambda)$ with a highest
weight structure.
It follows from Lemma \ref{Lem:long_WC_Ringel} that the Ringel duality equivalences
%for $\OCat_\nu(\A_\lambda)$ is t-exact.
%We claim that this implies that
%
between $\mathcal{O}_\nu(\A_\lambda),\mathcal{O}_{\nu}(\A_{\lambda+\chi})$,
where $\chi$ has the same meaning as in Lemma \ref{Lem:long_WC_Ringel},
are $t$-exact. Consider the self-equivalence $\mathcal{F}$ of $\OCat_\nu(\A_\lambda)$
obtained by composing the Ringel duality functors
$\OCat_\nu(\A_\lambda)\rightarrow \OCat_\nu(\A_{\lambda+\chi})\rightarrow
\OCat_\nu(\A_\lambda)$. This equivalence sends projectives to injectives and is
t-exact. So every projective in $\OCat_\nu(\A_\lambda)$ is also injective. But since
$\OCat_\nu(\A_\lambda)$ is highest weight, an easy induction on the highest weight order shows that  this is only possible when $\OCat_\nu(\A_\lambda)$ is semisimple.

Let us prove (ii). We need to show that the $T$-character of $\Delta_{\nu,\lambda}(\mathsf{x})$
coincides with that of $\C[(T_{\mathsf{x}}X)^{>0}]$. Note that there is an object $\Theta_{\mathsf{x}}\in
\OCat_{\nu}(\A_\lambda)$
(constructed in \cite[Section 5.3]{BLPW}) with that $T$-character and a nonzero homomorphism
$\Delta_{\nu,\lambda}(\mathsf{x})\rightarrow \Theta_{\mathsf{x}}$. The module
$\Delta_{\nu,\lambda}(\mathsf{x})$ is indecomposable, hence simple. Therefore the homomorphism
is injective. Also the dimensions of
graded components of $\Delta_{\nu,\lambda}(\mathsf{x})$ are minimized for $\lambda\in
\paramq^{fl}$. We conclude that $\lambda\in \paramq^{fl}$.

Let us prove (iii). First, let us show that the algebra $\Ca_{\nu}(\A_\lambda)$  is semisimple. Indeed, otherwise there
is a non-split exact sequence $0\rightarrow N_1\rightarrow N\rightarrow N_2\rightarrow 0$, where
$N_1,N_2$ are simple $\Ca_\nu(\A_\lambda)$-modules necessarily with the
same action of $h$, say, by scalar $\alpha$. The object $\Delta_\nu(N)$ is completely reducible because
the category $\OCat_\nu(\A_\lambda)$ is semisimple. The $\alpha$-eigenspace for $h$ in $\Delta_\nu(N)$
is $N$. Since $\Delta_\nu(N)$ has no homomorphisms to modules with zero $\alpha$-eigenspace, we see that
any nonzero direct summand of $\Delta_\nu(N)$ must have a nonzero intersection with $N$. This intersection is
$\Ca_{\nu}(\A_\lambda)$-stable. Since $N$ is indecomposable, it follows that $\Delta_\nu(N)$ is indecomposable,
hence simple. But $\Delta_\nu(N)\twoheadrightarrow \Delta_\nu(N_2)$, therefore, this epimorphism is an iso.
Since the $\alpha$-eigenspace in $\Delta_\nu(N_2)$ is $N_2$ we arrive at a contradiction. We conclude
that the algebra $\Ca_\nu(\A_\lambda)$ is semisimple.

Proposition \ref{Prop:transl_grad_O} applies to $\lambda$ thanks to (ii) and Lemma
\ref{Loc:localization_very_generic}. We conclude that the classes $\Delta_{\nu,\lambda}(\mathsf{x})$ form a basis in $K_0(\OCat_\nu(\A_\lambda))$. On the
other hand the classes of $[\Delta_{\nu,\lambda}(N)]$ for $N\in \operatorname{Irr}(\Ca_\nu(\A_\lambda))$
form a basis in $K_0(\OCat_\nu(\A_\lambda))$. This implies that the pullback under the homomorphism
$\Ca_\nu(\A_\lambda)\rightarrow \Ca_\nu(\A_\lambda^\theta)$ is an identification
of the sets of irreducibles for these algebras. Since $\Ca_\nu(\A_\lambda)$ is semisimple,
we conclude that the homomorphism is an isomorphism. This proves (iii).

Let us prove (iv). Recall, Lemma \ref{Lem:Ext_Tor},
that $$\operatorname{Tor}_i^{\A_\lambda}(\Delta_\nu(\mathsf{x}), \Delta_{-\nu}^r(\mathsf{x}))^*=
\operatorname{Ext}^i_{\A_\lambda}(\Delta_\nu(\mathsf{x}),\nabla_\nu(\mathsf{x}')).$$ Since abelian localization holds
for $(\lambda,\theta)$, we see that this Ext is the same as
$$\operatorname{Ext}^i_{\Coh(\A^\theta_\lambda)}(\operatorname{Loc}_\lambda^\theta\Delta_\nu(\mathsf{x}),
\operatorname{Loc}_\lambda^\theta\nabla_\nu(\mathsf{x}')).$$  But the latter Ext is the same as in the category
$\mathcal{O}_\nu(\A_\lambda^\theta)$, see Section \ref{SSS_Ext_vanish}.
This category is semisimple and the objects
we consider are simple. So the latter Ext is $\delta_{i,0}\delta_{\mathsf{x},\mathsf{x}'}$ we get the claim of (iii).
\end{proof}

\subsection{Regular parameters and quantum chambers}\label{SS_reg_param}
\subsubsection{Terminology}\label{SSS_reg_terminology}
Let us start by introducing some terminology.

Pick $\lambda\in \paramq$. Consider the set of all classical walls $\Gamma$
such that there is an essential hyperplane parallel to $\Gamma$ containing $\lambda$. These walls $\Gamma$ will be called {\it integral walls for $\lambda$}.
They split $\param_\R$ into the set of polyhedral chambers to be called {\it integral
chambers} for $\lambda$.

A subset $\Sigma$ of $\C$ will be called {\it saturated} if for any two elements $z,z+n\in \Sigma$
with $n\in \Z_{>0}$, we have $z+1,z+2,\ldots,z+n-1\in \Sigma$.

Now suppose that we have fixed saturated subsets $\tilde{\Sigma}_\Gamma\subset \Sigma_\Gamma$ for each wall $\Gamma$.
%Let us define {\it shifted quantum chambers}.
Suppose $\lambda\in \paramq$ satisfies $\langle \lambda,\alpha_\Gamma\rangle\not\in \tilde{\Sigma}_\Gamma$
(note that this is automatic if $\Gamma$ is not an integral wall for $\lambda$). Then there is a unique
integral chamber $C^{int}$ for $\lambda$ with the following property: for $\lambda'\in \lambda+(C^{int}\cap \param_\Z)$
we have $\langle \lambda',\alpha_\Gamma\rangle\not\in \tilde{\Sigma}_\Gamma$ for all $\Gamma$.
We will say that $C^{int}$ is a {\it positive chamber for $\lambda$}.
%
%Now let us define shifted quantum chambers. Let $\lambda\in \paramq$

Now let us define {\it quantum chambers}. Let $\lambda\in \paramq$ and pick an integral chamber $C^{int}$. Quantum chambers will be subsets in $\lambda+\paramq_\Z$ defined by linear inequalities.
Namely, let $\Gamma_1,\ldots,\Gamma_k$ be all walls such that
$\langle \lambda,\alpha_{\Gamma_i}\rangle\in \Sigma_{\Gamma_i}$. Let $\epsilon_{\Gamma_i}$ be the sign of $\alpha_{\Gamma_i}$ on
$C^{int}$.
Pick minimal integers $m_i, i=1,\ldots,k$ such that
$\tilde{m}_i:=\epsilon_{\Gamma_i}\langle \lambda,\alpha_{\Gamma_i}\rangle+m_i\not\in \tilde{\Sigma}_{\Gamma_i}$.
Then we define the  quantum chamber $C^{q}$ as the subset of the set of regular parameters
in  $\lambda+\paramq_\Z$ given by the inequalities
$\epsilon\langle\alpha_\Gamma,\bullet\rangle\geqslant \tilde{m}_i$. We say that it is {\it shifted}
from $C^{int}$.

By the definition, different  quantum chambers are disjoint. Also  $C^{int}$ is a positive integral
chamber for $\lambda'\in \lambda+\paramq_{\Z}$ if and only if $\lambda'$ lies in some
shifted (from $C^{int}$) quantum chamber. If $\lambda'$ lies in a quantum chamber, then it is
unique.

%Below we will need a result about finite generation. Let $C^{int}$ be a integral chamber
%for $\lambda$ and let $C^{q}$ denote the corresponding quantum chamber.
%Note that $C^{q}$ is a module over the monoid $C^{int}_{\Z}:=C^{int}\cap \param_{\Z}$.
%The following lemma is elementary.
%
%\begin{Lem}\label{Lem:fin_gen_chamber}
%The $C^{int}_{\Z}$-module $C^{q}$ is finitely generated.
%\end{Lem}

\begin{Ex}\label{Ex:chamber_weird}
Let $X=T^*\mathcal{B}$ for $G=\operatorname{SL}_3(\C)$.
Let $\alpha_1,\alpha_2$ denote the simple roots and let $\alpha_{12}=\alpha_1+\alpha_2$. Then we have three
walls: $\Gamma_1:=\ker\alpha_1^\vee, \Gamma_2:=\ker\alpha_2^\vee, \Gamma_{12}:=\ker\alpha_{12}^\vee$.
Set $\tilde{\Sigma}_{\Gamma_1}=\{0\}, \tilde{\Sigma}_{\Gamma_2}=\{0\}, \tilde{\Sigma}_{\Gamma_{12}}=\{-2,-1,0,1,2\}$.
Let us take an integral weight $\lambda$. One of the integral chambers is the positive (in the usual Lie-theoretic sense)
chamber $C^{int}$, it is given by $\langle\alpha_1^\vee,\bullet\rangle\geqslant 0, \langle\alpha_2^\vee,\bullet\rangle\geqslant 0$. The corresponding quantum chamber is given by $\{(x_1,x_2)| x_1\geqslant 1, x_2\geqslant 1, x_1+x_2\geqslant 3\}$, where $x_1:=\langle\alpha_1^\vee,\lambda\rangle,
x_2:=\langle\alpha_2^\vee, \lambda\rangle$. In particular, $C^{q}$ is not a cone.
\end{Ex}

%Now pick a finite subset $\mathcal{P}_1\subset \param_{\Z}$ that contains generators for all
%monoids $C_{\Z}=C\cap \param_{\Z}$, where $C$ runs over the set of classical chambers.
%We say that the sets $\tilde{\Sigma}_\Gamma$ are {\it thick} if the condition $\lambda,\lambda+\chi\in
%\paramq^{reg}$ for $\chi\in \mathcal{P}_1$ is equivalent to the condition that $\lambda,\lambda+\chi$
%lies in the same quantum chamber. Clearly, we can make each $\tilde{\Sigma}_\Gamma$ thick
%by adding finitely many elements in $\Sigma_{\Gamma}+\Z$.

\subsubsection{Result}
\begin{Prop}\label{Prop:generic}
For each wall $\Gamma$, there is a finite saturated subset $\widetilde{\Sigma}_\Gamma\subset \Sigma_\Gamma$
such that for union $\tilde{\Psi}$ of the hyperplanes $\langle \alpha_{\Gamma},\bullet\rangle
\in \tilde{\Sigma}_{\Gamma}$, where $\Gamma_i$ runs over the set of all classical walls,
the following hold:
\begin{enumerate}
\item If $\lambda\not\in \tilde{\Psi}$, then abelian localization holds for
$(\lambda',\theta)$, where $\theta$ is in the integral chamber $C^{int}$ of $\lambda$,
and any $\lambda'\in \lambda+(C^{int}\cap \param_{\Z})$.
\item If $\lambda\not\in \tilde{\Psi}$, then $\lambda\in \paramq^{fl}$,
\item  If $\lambda\not\in \tilde{\Psi}$, then $\Ca_\nu(\A_\lambda)\xrightarrow{\sim}\Ca_\nu(\A_\lambda^\theta)$,
\item if $\lambda\not\in \tilde{\Psi}$, then
$\dim \operatorname{Ext}^i_{\A_\lambda}(\Delta_{\nu,\lambda}(\mathsf{x}),\nabla_{\nu,\lambda}(\mathsf{x}'))=
\delta_{i,0}\delta_{\mathsf{x},\mathsf{x}'}$.
\end{enumerate}
\end{Prop}
%In what follows the notions of regular parameters, quantum chambers, etc. refer to this
%collection of $\tilde{\Sigma}_\Gamma$'s. By $\paramq^{reg}$ we will denote the set  of regular
%parameters with respect to this choice of the sets $\tilde{\Sigma}_\Gamma$. %We will always assume that all $\tilde{\Sigma}_\Gamma$ are thick, this will make some statements easier.
\begin{proof}
Taking $\Psi$ from Lemma \ref{Lem:cor_localn} and including it into a saturated
collection $\tilde{\Psi}$, we get $\tilde{\Psi}$ satisfying (1). The conditions
 (2)-(4) hold for very generic $\lambda$ thanks to Proposition
\ref{Prop:cat_O_simpl}. On the other hand, the loci where conditions (2) and (3) fail
are Zariski closed. For (2), this follows from Corollary \ref{Cor:Verma_flat_open}.
For (3), the locus of $\lambda$, where $\Ca_\nu(\A_\lambda)\rightarrow \Ca_{\nu}(\A_\lambda^\theta)$
is precisely the locus $\paramq'$ in $\paramq$, where $\Ca_\nu(\A_{\paramq})\rightarrow
\Ca_\nu(\A^\theta_{\paramq})$ is an isomorphism. This follows because
$\Ca_\nu(\A^\theta_{\paramq})$ is projective over $\C[\paramq]$. And since both
$\Ca_\nu(\A_{\paramq}), \Ca_\nu(\A^\theta_{\paramq})$ are finitely generated
$\C[\paramq]$-modules, the locus $\paramq'$ is Zariski open.

Now we can find $\tilde{\Psi}$ satisfying (2) and (3)
as well. It remains to show that we can find $\tilde{\Psi}$ so that (4) is satisfied as well.

Set $\paramq^0:=\paramq\cap (\param\times \C)^0$. We claim that the locus in
$\paramq^0\cap \paramq^{fl}$ where (4) holds is Zariski open. Consider
$\Delta^{opp}_{\nu,\paramq}(\mathsf{x}')\otimes^L_{\A_\paramq}
\Delta_{\nu,\paramq}(\mathsf{x})$, this is an object in
$D^-(\C[\paramq]\operatorname{-mod})$. The localization to
$\paramq^0$ is bounded. Moreover, for $\lambda\in \paramq^0\cap \paramq^{fl}$, we have
$$\C_\lambda\otimes^L_{\C[\paramq]}\Delta^{opp}_{\nu,\paramq}(\mathsf{x}')\otimes^L_{\A_\paramq}
\Delta_{\nu,\lambda}(\mathsf{x})\cong\Delta^{opp}_{\nu,\lambda}(\mathsf{x}')\otimes^L_{\C}
\Delta_{\nu,\lambda}(\mathsf{x}).$$
So the locus in $\paramq^0\cap \paramq^{fl}$, where the right hand side is in
homological degree $0$ and vanishes for $\mathsf{x}\neq \mathsf{x}'$ is Zariski open.
Now we are done by Lemma \ref{Lem:Ext_Tor}.

\end{proof}

\begin{defi}\label{defi:regular_parameters}
Let $\paramq^{reg}$ denote the complement to $\tilde{\Psi}$ in $\paramq$.
The elements of $\paramq^{reg}$ will be called regular.
\end{defi}

\begin{Rem}\label{Rem:hom_dim}
Note that we can make (1) hold even in the case when there is no torus action.
Also it follows from Proposition \ref{Prop:fin_proj_dim}
that the homological dimension of $\A_{\paramq^{reg}}$
does not exceed $2\dim X+\dim \param$.
\end{Rem}

\subsubsection{Examples}\label{SSS_quantum_walls_examples}
Let us explain what being regular means in the examples we consider: the cotangent bundles
$T^*\mathcal{B}$ and  the Hilbert schemes $\operatorname{Hilb}_n(\C^2)$.

Let us start with the case of $X=T^*\mathcal{B}$. %For $\mathcal{P}_1$ we are going to
%take the union of $W$-translates of the sets of fundamental weights.

For $X=T^*\mathcal{B}$, it is easy to see that we can take $\tilde{\Sigma}_\Gamma=\{0\}$ for all $\Gamma$ so that
regular in our sense is the same as regular in the usual sense ($\langle\alpha^\vee,\lambda\rangle \neq 0$
for all roots $\alpha$). Integral chambers for $\lambda$ are given by $\langle \alpha_i^\vee,\bullet\rangle
\geqslant 0, i=1,\ldots,k$, where $\alpha_1,\ldots,\alpha_k$ is a system of simple roots for the integral
Weyl group $W_{[\lambda]}$. Quantum chambers are given by
$\langle \alpha_i^\vee,\bullet\rangle\geqslant 1$.

For $X=\operatorname{Hilb}_n(\C^2)$, we have a single wall $\Gamma=\{0\}$ and $\Sigma_{\Gamma}=\{-\frac{a}{b}+1/2|
1\leqslant a<b\leqslant n\}$. %For $\mathcal{P}_1$ we can take $\{\pm 1\}$.
We will take $\tilde{\Sigma}_\Gamma$ of the form $\bigcup_{k=-\ell}^\ell(\Sigma_\Gamma+k)$, where $\ell$ is a suitable (sufficiently big) positive integer. Clearly,
$\tilde{\Sigma}_\Gamma$ is saturated. When $c=\lambda-1/2$ is irrational, integer, or
rational with denominator bigger than $n$, then there is only one integral chamber, while otherwise there are two:
$\mathbb{R}_{\geqslant 0}, \mathbb{R}_{\leqslant 0}$. The  quantum chambers (for $c\in -\frac{a}{b}+\Z$) are of
the form $\{-\frac{a}{b}+\ell+m| m\in \Z_{>0}\}$ and $\{-\frac{a}{b}-\ell-m| m\in \Z_{>0}\}$.

%\subsection{Rationality}
%The goal of this section is to provide sufficient conditions for $\Sigma_\Gamma\subset \mathbb{Q}$.
%Suppose that the following condition holds
%\begin{itemize}
%\item[(Q)] The variety $Y$, all of its resolutions $X$, the actions of $T\times \C^\times$ on $X$
%and the fixed points for these actions are defined over $\mathbb{Q}$.
%\end{itemize}
%
%\begin{Prop}\label{Prop:rationality}
%If (Q) holds, then $\Sigma_\Gamma\subset \mathbb{Q}$ for all classical walls $\Gamma$.
%\end{Prop}

\section{Alcoves}
This is a technical section, our goal here is to discuss various things related to alcoves defined
by the walls $\Gamma$ and the subsets $\tilde{\Sigma}_\Gamma\subset \Sigma_\Gamma$. Here we only deal with
questions from elementary combinatorial geometry.

We start with a finitely generated lattice $\param_{\Z}$, a finite  collection of codimension one sublattices
to be called {\it walls}. For each wall $\Gamma$, we have a primitive element $\alpha_\Gamma\in \param_\Z^*$ defined up
to a sign with $\Gamma=\ker \alpha_\Gamma$. Further, for each $\Gamma$ we fix a finite saturated subset $\tilde{\Sigma}_\Gamma\subset \Q$. Let $\Sigma_\Gamma$ denote the set of classes of elements from $\tilde{\Sigma}_\Gamma$
in $\Q/\Z$.

\subsection{Real alcoves and $p$-alcoves}\label{SS_alcoves}
We define two closely related sets of {\it alcoves}. One of them, {\it real alcoves}, will be in $\param_\R$, while the other, {\it p-alcoves}, will be in $\param_\Z$.

Let us start with real alcoves. Consider the hyperplanes of the form $\langle\alpha_\Gamma,\bullet\rangle=
\sigma$ for $\sigma\in \Sigma_\Gamma$ for all walls $\Gamma$. By  real alcoves we mean the closures of the connected
components of $\param_\R$ with these hyperplanes removed. Note that the set of real alcoves is stable under translations by $\param_\Z$.

\begin{Ex}\label{Ex_real_alcoves_cotang}
Let us consider the very classical case of $X=T^*\mathcal{B}$ and the corresponding data of $\param_\Z$, walls, and sets
$\tilde{\Sigma}_\Gamma=\{0\}$. The hyperplanes we consider are of the form  $\langle\alpha^\vee, \bullet\rangle=m$, where $\alpha^\vee$ is a coroot. Let $\alpha^\vee_1,\ldots,\alpha^\vee_r$ denote the simple coroots
and $\alpha_0^\vee$ be the minimal coroot. Then we have the so called fundamental alcove defined by
$\langle\alpha^\vee_i,\bullet\rangle\geqslant 0$ for $i=1,\ldots,r,$ and $\langle\alpha_0^\vee,\bullet\rangle
\geqslant -1$. All other alcoves are obtained from this one by the standard action of the affine
Weyl group of $W$ on $\param_\R$.
\end{Ex}

\begin{Ex}\label{Ex_real_alcoves_Hilb}
Now let us consider the case of $X=\operatorname{Hilb}_n(\C^2)$ (and the parameter $c$).
Here, recall, $\param_\Z=\Z,\Gamma=\{0\}$
and $\tilde{\Sigma}_{\Gamma}=\bigsqcup_{i=-\ell}^{\ell} \{-\frac{a}{b}+i| 1\leqslant
a<b\leqslant n\}$. It follows that
the real alcoves are intervals between consecutive rational numbers with denominators from $2$ to $n$.
\end{Ex}

Alternatively, the real alcoves can be described as follows. For $\sigma\in \Sigma_\Gamma$, we can consider the half spaces
\begin{equation}\label{eq:half_spaces}
\Upsilon_{\sigma}^+=\{\lambda\in \paramq| \langle\alpha_\Gamma,\lambda\rangle\geqslant
\sigma\},
\Upsilon_{\sigma}^-=\{\lambda\in \paramq| \langle\alpha_\Gamma,\lambda\rangle\leqslant
\sigma\}
\end{equation}
Every real alcove is an intersection of finitely many half spaces of this form.
Moreover, for a real alcove $A$ there is a unique minimal collection of half-spaces
with this property. We call them the half-spaces {\it associated} to $A$.

Now we proceed to defining the $p$-alcoves. Fix an integer $p\gg 0$ such that $p+1$ is divisible by the denominator of any element in $\tilde{\Sigma}_\Gamma$
for all $\Gamma$. Later on, $p$ will assumed to be prime but at this point we do not require that. Consider
the hyperplanes of the form $\langle\alpha_\Gamma,\bullet\rangle=(p+1)\sigma+pm$, where $\sigma\in \tilde{\Sigma}_\Gamma$
and $m\in \Z$. %We can always choose $\sigma$ to be maximal in $\tilde{\Sigma}_{\Z}\cap (\sigma+\Z)$.

Let $\tilde{A}_\R$ denote a connected component of the complement of the union of these
hyperplanes in $\paramq_{\R}$. By a $p$-alcove we mean the intersection of an open subset of the form
$\tilde{A}_\R$ with $\paramq_{\Z}$. So every  $p$-alcove is the set of integral points $\lambda$ subject to
the inequalities of one of  the following forms
\begin{itemize}
\item
$\langle\alpha_\Gamma,\lambda\rangle\geqslant (p+1)\sigma+pm+1$ for $\sigma$ maximal in
$\tilde{\Sigma}_{\Z}\cap (\sigma+\Z)$ (and all possible $m$),
\item $\langle\alpha_\Gamma,\lambda\rangle\leqslant (p+1)\sigma+pm-1$ for $\sigma$ minimal in
$\tilde{\Sigma}_{\Z}\cap (\sigma+\Z)$ (and all possible $m$).
\end{itemize}

Alternatively, we can describe the $p$-alcoves as follows. Let $\Gamma$ be a classical wall.
Choose $\sigma\in \Sigma_\Gamma$ (defined up to an integral summand)
and let $\sigma_+,\sigma_-$ denote the maximal and minimal elements in $\sigma+\mathbb{Z}$.
For $m\in \Z$, we can consider the affine half spaces $\Upsilon_{\sigma,m}^+, \Upsilon_{\sigma,m}^-\subset \paramq_{\R}$ defined as follows:
\begin{equation}\label{eq:subspaces}
\tilde{\Upsilon}_{\sigma,m}^+=\{\lambda\in \paramq| \langle\alpha_\Gamma,\lambda\rangle>
(p+1)\sigma_++pm\},
\tilde{\Upsilon}_{\sigma,m}^-=\{\lambda\in \paramq| \langle\alpha_\Gamma,\lambda\rangle<
(p+1)\sigma_-+pm\}.
\end{equation}
Any $p$-alcove is the intersection of $\param_{\Z}$ with a finite collection of the half spaces of this kind. Moreover, for any $p$-alcove $\tilde{A}$ there is a unique minimal collection
of half spaces as above whose intersection coincides with $A$. Similarly
to the real case, we call them the half-spaces {\it associated} to $\tilde{A}$.
%By the $p$-alcoves we mean the nonempty intersections of $\param_\Z$
%of the connected components of $\param_\R$ with these hyperplanes removed.

\begin{Ex}\label{Ex_p_alcoves_cotang}
Let $X=T^*\mathcal{B}$. Then the hyperplanes are of the form $\langle\alpha,\bullet\rangle=pm$
for $m\in \Z$. We have the fundamental $p$-alcove, whose points are all integral $\lambda$
such that $\langle\alpha_i^\vee,\lambda\rangle\geqslant 1, \langle \alpha_0^\vee,\lambda\rangle
\geqslant 1-p$. All other $p$-alcoves are obtained from the fundamental one by the affine
Weyl group action, where an element $\mu$ of the root lattice acts by the shift by $p\mu$.
\end{Ex}

\begin{Ex}\label{Ex_p_alcoves_Hilb}
Let $X=\operatorname{Hilb}_n(\C^2)$. Let us describe the $p$-alcoves  for the parameter
$c=\lambda-1/2$.  By our assumption we need to take $p$ with $p+1$ divisible by $n!$.
The $p$-alcoves have form $[\frac{(p+1)a'}{b'}+s+1, \frac{(p+1)a}{b}-s-1]$, where
$\frac{a'}{b'}<\frac{a}{b}$ are rational numbers with denominators between $2$ and
$n$ such that $(\frac{a'}{b'},\frac{a}{b})$ has no rational numbers with these
denominators.
\end{Ex}

Now we discuss a connection between the $p$-alcoves and the real alcoves.
Recall that $p$ is very large. Take (the real form of) a $p$-alcove $\tilde{A}_\R$ and divide it
by $p$. There is a unique real alcove $A$ such that the volumes of
both $A\setminus (\tilde{A}_\R/p)$ and $(\tilde{A}_\R/p)\setminus A$ are small
comparing to $p$. This defines a bijection. We write $^p\! A$
for the unique $p$-alcove corresponding to $A$. Under this correspondence,
to each half space $\Upsilon$ associated to $A$ we can assign a unique
half space associated to $^p\! A$ that is a shift of $\Upsilon$. We denote it
by $^p\!\Upsilon$. The assignment $\Upsilon\mapsto \,^p\!\Upsilon$ is injective but may
fail to be surjective. One can describe the image as follows.
The image of $\Upsilon\mapsto\,
^p\!\Upsilon$ consists precisely of the associated half spaces $\tilde{\Upsilon}$
satisfying the following condition:
the volume of $\dim \param-1$-dimensional polytope
$\partial\tilde{\Upsilon}\cap\, ^p\! \overline{A}$ is of order $p^{\dim \param-1}$.
Here $\partial\tilde{\Upsilon}$ is the boundary hyperplane and
$^p\! \overline{A}$ stands for the closure.

On the other hand, to each half-space $\tilde{\Upsilon}$ associated to $^p\!A$ we can assign
a unique half-space $^{\R}\!\tilde{\Upsilon}$ with the following properties:
\begin{itemize}
\item $A\subset \,^{\R}\!\tilde{\Upsilon}$,
\item $\,^{\R}\!\tilde{\Upsilon}$ is obtained from $\tilde{\Upsilon}$
by a shift,
\item The boundary of $A$ intersects the boundary hyperplane of $\,^{\R}\!\tilde{\Upsilon}$.
\end{itemize}

\begin{Ex}\label{Ex:alcove_weird}
We use the setting of Example \ref{Ex:chamber_weird}. Consider the real alcove
$A$ given by $\alpha_1^\vee>0, \alpha_2^\vee>0, \alpha_1^\vee+\alpha_2^\vee<1$.
The corresponding $p$-alcove $^p\!A$ is given by $\alpha_1^\vee>0, \alpha_2^\vee>0,
\alpha_1^\vee+\alpha_2^\vee>2$ and $\alpha_1^\vee+\alpha_2^\vee<p-2$. There are four
associated half spaces for $^p\! A$. All but one, $\alpha_1^\vee+\alpha_2^\vee>2$,
correspond to the associated half spaces of $A$. For this half-space $\tilde{
\Upsilon}$ we have $\R^!\tilde{\Upsilon}=\{\lambda| \langle\alpha_1^\vee+\alpha_2^\vee,\lambda\rangle>0\}$.
\end{Ex}

%Dividing by $p$ we get the hyperplanes of the form $\langle\alpha_\Gamma, %\bullet\rangle=\sigma+m+\frac{\sigma}{p}$.
%Since $p$ is large enough, we get a natural bijection between the real alcoves and the $p$-alcoves.
%Namely, consider a real alcove $A$ defined by the inequalities $\langle \alpha_{\Gamma_i},\bullet\rangle
%\geqslant \sigma_{i}+m_i$, where $\Gamma_i$ runs over all walls defining codimension $1$
%faces of $A$. Then there is a
%unique $p$-alcove to be denoted by $\,^p\!A$, where the points $\lambda$ satisfy the
%inequalities $\langle\alpha_{\Gamma_i},\lambda\rangle>(p+1)\sigma_i+pm_i$ (there may
%be other inequalities as well).  So  a $p$-alcove can have more faces than the corresponding real
%alcove. However, to every codimension $1$ face of a real alcove one can naturally assign a  face of
%the corresponding $p$-alcove of  codimension $1$.

\begin{Rem}\label{Rem_p_choice}
The condition that $p+1$ is divisible by the denominators of all elements in $\tilde{\Sigma}_\Gamma$
is motivated by the following: we want to have in the same order as the real alcoves, compare
Example \ref{Ex_p_alcoves_Hilb} and Example \ref{Ex_real_alcoves_Hilb}.
\end{Rem}

\subsection{Compatible elements}\label{SS_compat_elements}
In this section we will choose a finite collection of points in $\paramq_\Q$ with a
prescribed behavior  mod $p$.

Namely, pick a real alcove $A$ and its face $\Theta$. Let $\Upsilon_1,\ldots,\Upsilon_k$
be all half spaces of the form $\,^\R\! \tilde{\Upsilon}$, where $\tilde{\Upsilon}$
is a half space associated to $^p A$ that contain $\Theta$. Let $\tilde{\Upsilon}_1,\ldots,
\tilde{\Upsilon}_k$ be the corresponding associated half spaces of $^p\!A$.
Let $\tilde{\Upsilon}_1',\ldots,\tilde{\Upsilon}'_\ell$ be the remaining associated subspaces
of $^p\! A$.  For instance, in Example \ref{Ex:alcove_weird}, we can take $\Theta=\{0\}$ and
we get $k=3,\ell=1$ with $\tilde{\Upsilon}_1,\tilde{\Upsilon}_2,\tilde{\Upsilon}_3$ being $\alpha_1^\vee>0, \alpha_2^\vee>0,
\alpha_1^\vee+\alpha_2^\vee>2$, and $\tilde{\Upsilon}'_1$ is $\alpha_1^\vee+\alpha_2^\vee<p-2$.

For $\tilde{\lambda}\in \,^p\! A$ we can define numbers $d_i(\tilde{\lambda}),
d'_j(\tilde{\lambda})$ as follows. Let $\Gamma_i$ be the classical wall parallel
to the boundary hyperplane $\tilde{\Gamma}_i$ of $\tilde{\Upsilon}_i$. Set $\alpha_i=\pm \alpha_{\Gamma_i}$,
where the sign is chosen so that $\alpha_i$ is bounded from below on $\tilde{\Upsilon}_i$.
We set $$d_i(\tilde{\lambda}):=\alpha_i(\tilde{\lambda})-\alpha_i|_{\tilde{\Gamma}_i},$$
this is a positive number. Define $d_j'(\tilde{\lambda})$ in a similar way.

%Let $\Upsilon_1,\ldots,\Upsilon_k$
%be the half-spaces associated to $A$ that contain $\Theta$. Let $\Gamma_i$ be the
%classical wall parallel to the boundary hyperplane of $\Upsilon_i$.
%Let $\alpha_i:=\pm \alpha_{\Gamma_i}$ with the sign chosen so that $\alpha_i$
%is bounded from below on $\Upsilon_i$. Consider now the $p$-alcove  $^p\!A$. The face
%$\Theta$ gives rise to a face of $^p\! A$ to be denoted by $^p\!\Theta$, it is the
%maximal face in the intersection $^p\! \bar{A}\cap \bigcap \,^p\!\Upsilon$.
%Let $\tilde{\Upsilon}_1',\ldots, \tilde{\Upsilon}'_\ell$ be the remaining
%half spaces associated to
%Let $\Gamma_1,\ldots,\Gamma_k$
%be all classical walls whose translates  contain $\Theta$ and let $\Gamma_1',\ldots,\Gamma_\ell'$ be the
%remaining hyperplanes whose translates  intersect  $A$.
%After suitably changing the signs of the vectors $\alpha_?$,
%we can assume that $A$ is given by $\langle \alpha_{\Gamma_i},\bullet\rangle\geqslant m_i$
%and $\langle\alpha_{\Gamma_j'},\bullet\rangle\geqslant m_j'$ for $m_i\in \Sigma_{\Gamma_i}+\Z,
%m_j'\in \Sigma_{\Gamma_j'}+\Z$.

Now let $\lambda\in \param_{\Q}$. Below in this section we consider $p$ that is large enough and is such that
$(p+1)\lambda\in \param_{\Z}$ and $p+1$ is divisible by the denominators of all elements
in $\tilde{\Sigma}_\Gamma$ for all $\Gamma$.

\begin{defi}\label{defi:compat_elements}
We say that $\lambda$ is compatible with $(A,\Theta)$  if there is $^p\lambda\in \,^p\!A$
depending on $p$ in an affine way such that, for all $p\gg 0$ satisfying the congruence conditions
in the previous paragraph, we have
\begin{itemize}
\item[(i)] $\,^p\lambda=\lambda+p\mu$, where $\mu\in \lambda+ \param_{\Z}$,
\item[(ii)] the numbers $d_i(\,^p\!\lambda)$ are independent
of $p$ for all $i=1,\ldots,k$,
\item[(iii)] the numbers $d_j'(\,^p\!\lambda)$ are affine
functions in $p$ with nonzero linear coefficient (that is automatically a positive number)
for all $j=1,\ldots,\ell$.
\end{itemize}
\end{defi}

The conditions (ii) and (iii) say, in particular, that $\,^p\lambda$ is close to the hyperplanes
$\tilde{\Gamma}_i$ for $i=1,\ldots,k$ but is far from the hyperplanes $\tilde{\Gamma}_j'$
for all $j=1,\ldots,\ell$.

\begin{Lem}\label{Lem:alcove_compat}
The following claims are true:
\begin{enumerate}
\item
For every $(A,\Theta)$, there is $\lambda$ compatible with $(A,\Theta)$.
\item There is a finite subset $\Lambda\in \param_{\Q}$ such that for every
$(A,\Theta)$, there is an element $\lambda\in \Lambda$ compatible
with $(A,\Theta)$.
\end{enumerate}
\end{Lem}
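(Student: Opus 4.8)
The plan is to construct, for a fixed pair $(A,\Theta)$, an explicit rational point $\lambda$ by prescribing its pairings with the relevant linear functionals, and then to pass from the single pair to a finite set $\Lambda$ by observing that there are only finitely many pairs $(A,\Theta)$ up to translation by $\param_\Z$ and that compatibility only depends on that translation class. Concretely, to prove (1): recall that $A$ is cut out by $\langle\alpha_{\Gamma_i},\bullet\rangle\geqslant m_i$ (the walls through $\Theta$) and $\langle\alpha_{\Gamma_j'},\bullet\rangle\geqslant m_j'$ (the remaining walls meeting $A$), with $m_i\in\Sigma_{\Gamma_i}+\Z$, $m_j'\in\Sigma_{\Gamma_j'}+\Z$. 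Pick a point $\lambda_0$ in the interior of $A$; I want $\lambda$ close to $\lambda_0$ but with controlled denominators. The idea is: choose $\lambda\in\param_\Q$ so that $\langle\alpha_{\Gamma_i},\lambda\rangle=m_i$ for every wall through $\Theta$ (so $\lambda$ lies on $\Theta$ itself up to the affine shift, i.e.\ $\lambda$ sits on the face), while $\langle\alpha_{\Gamma_j'},\lambda\rangle$ is chosen to be strictly greater than $m_j'$ and generic rational. Since the $\alpha_{\Gamma_i}$ span a subspace and the full system $\{\alpha_{\Gamma_i}\}\cup\{\alpha_{\Gamma_j'}\}$ is consistent on the nonempty region $A$, such a $\lambda$ exists in $\param_\Q$.

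Next I would verify (i)–(iii) of Definition \ref{defi:compat_elements} for this $\lambda$. Set $\,^p\lambda := (p+1)\lambda$ rewritten as $\lambda + p\mu$ where $\mu := \lambda$ — wait, more carefully: write $\,^p\lambda = \lambda + p\mu$ with $\mu\in\lambda+\param_\Z$ to be determined; the point of the congruence hypothesis ``$p+1$ divisible by the denominators'' is exactly that $(p+1)\lambda\in\param_\Z$, so $\mu$ can be taken with $\,^p\lambda=(p+1)\lambda$, giving $\,^p\lambda-\lambda=p\lambda$, and $p\lambda\equiv -\lambda \pmod{\param_\Z}$... so one takes $\mu$ in the class of $-\lambda+$ (integer correction). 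The key computation: for a wall $\Gamma_i$ through $\Theta$, $\langle\alpha_{\Gamma_i},\,^p\lambda\rangle = (p+1)\langle\alpha_{\Gamma_i},\lambda\rangle = (p+1)m_i = pm_i + m_i$, and since $m_i\in\tilde\Sigma_{\Gamma_i}+\Z$ and $m_i$ can be arranged (after an integral shift of $\lambda$ by $\param_\Z$) to be $\tilde m_i$ exactly, the quantity $\langle\alpha_{\Gamma_i},\,^p\lambda\rangle - pm_i - \tilde m_i$ is a fixed integer, independent of $p$; this is (ii). For a wall $\Gamma_j'$ not through $\Theta$, $\langle\alpha_{\Gamma_j'},\,^p\lambda\rangle - pm_j' - \tilde m_j' = (p+1)\langle\alpha_{\Gamma_j'},\lambda\rangle - pm_j' - \tilde m_j' = p(\langle\alpha_{\Gamma_j'},\lambda\rangle - m_j') + (\langle\alpha_{\Gamma_j'},\lambda\rangle - \tilde m_j')$, which is affine in $p$ with linear coefficient $\langle\alpha_{\Gamma_j'},\lambda\rangle - m_j' > 0$ by the choice of $\lambda$ in the interior relative to these walls; this is (iii), and positivity also guarantees $\,^p\lambda\in\,^p\!A$ for $p\gg 0$. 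Part (i) follows by construction.

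For (2), I would note that translating $(A,\Theta)$ by an element $\xi\in\param_\Z$ translates all the constants $m_i, m_j'$ by integers and translates $\,^p\!A$ by $p\xi$ (or a bounded integral shift), so if $\lambda$ is compatible with $(A,\Theta)$ then $\lambda+\xi$ (or $\lambda$ itself, after adjusting $\mu$) is compatible with the translate. Hence compatibility is really a property of the class of $(A,\Theta)$ modulo $\param_\Z$. Since the collection of walls is finite and $\tilde\Sigma_\Gamma$ is finite, there are finitely many real alcoves modulo $\param_\Z$, each with finitely many faces, so finitely many classes of pairs $(A,\Theta)$; choosing one compatible $\lambda$ per class gives the desired finite set $\Lambda\subset\param_\Q$. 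The main obstacle I anticipate is the bookkeeping in part (1): making sure the integral shift of $\lambda$ by $\param_\Z$ can simultaneously be used to make each $m_i$ equal to the distinguished maximal representative $\tilde m_i\in\tilde\Sigma_{\Gamma_i}\cap(\Sigma_{\Gamma_i}+\Z)$ while keeping $\lambda$ inside (the closure of) $A$ and keeping all the $\langle\alpha_{\Gamma_j'},\lambda\rangle-m_j'$ strictly positive — this is a compatibility-of-congruences argument that needs the saturatedness of $\tilde\Sigma_\Gamma$ and the fact that an integral translate moves the ``$p$-alcove offset'' $\tilde m_i$ by the corresponding integer, so everything can be aligned; once that is set up, (ii) and (iii) are immediate from the two displayed computations above.
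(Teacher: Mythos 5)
Your overall shape is right — put the ``$p$-linear part'' of $\,^p\lambda$ on the face $\Theta$ so that the offsets along the walls $\Gamma_i$ through $\Theta$ lose their $p$-linear term, check (iii) from strict positivity on the remaining walls, and get (2) from translation-invariance of compatibility together with finiteness of alcoves modulo $\param_{\Z}$. Part (2) matches the paper's argument. But part (1) has a genuine gap.

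The problem is that you take $\,^p\lambda=(p+1)\lambda$ with $\lambda$ itself sitting on the face, i.e.\ $\langle\alpha_{\Gamma_i},\lambda\rangle=m_i$. Then
$\langle\alpha_{\Gamma_i},\,^p\lambda\rangle-pm_i-\tilde m_i=m_i-\tilde m_i$,
which is indeed independent of $p$, but Definition \ref{defi:compat_elements} also demands $\,^p\lambda\in\,^p\!A$, and $\,^p\!A$ is cut out by the \emph{strict} inequalities $\langle\alpha_{\Gamma_i},\bullet\rangle>pm_i+\tilde m_i$. Since $\tilde m_i$ is the \emph{maximal} element of $\tilde\Sigma_{\Gamma_i}$ in the class of $m_i$, one has $m_i-\tilde m_i\leqslant 0$ whenever $m_i\in\tilde\Sigma_{\Gamma_i}$ (for $X=T^*\mathcal{B}$ and the fundamental alcove, $m_i=\tilde m_i=0$, so $\,^p\lambda$ lands exactly on a wall of $\,^p\!A$). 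Your suggested repair — shifting so that $m_i=\tilde m_i$ — makes the offset $0$, which still violates the strict inequality. The missing idea is to \emph{decouple} the constant and linear parts of $\,^p\lambda$: write $\,^p\lambda=\lambda+p\mu$ with $\mu$ a rational point in the relative interior of $\Theta$ (so $\langle\alpha_{\Gamma_i},\mu\rangle=m_i$, killing the $p$-linear term, and $\langle\alpha_{\Gamma_{j}'},\mu\rangle>m_j'$, giving (iii)), while $\lambda$ is a \emph{different} point in $\mu+\param_{\Z}$ chosen so that $\langle\alpha_{\Gamma_i},\lambda\rangle>\tilde m_i$ for all $i$; such $\lambda$ exists because the $\alpha_{\Gamma_i}$ cut out a face of a chamber, so one can translate $\mu$ by an integral vector on which all $\alpha_{\Gamma_i}$ are large. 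Then the offset along $\Gamma_i$ is the constant $\langle\alpha_{\Gamma_i},\lambda\rangle-\tilde m_i>0$ and $\,^p\lambda\in\,^p\!A$. No saturatedness of $\tilde\Sigma_\Gamma$ is needed here; the point is simply that $\lambda$ must not lie on $\Theta$, only $\mu$ does.
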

\begin{proof}
We prove (1).
We will be looking for $^p\lambda$ in the form $\lambda+p\mu$ for $\mu$ such that
\begin{equation}\label{eq:cond1}\lambda-\mu\in \param_{\Z}\end{equation}
so that (i) holds.

Let us see what the condition $^p\lambda\in \param_{\Z}$ means in terms of $\lambda,\mu$.
Recall that we have chosen $p$ so that $(p+1)\lambda\in \param_\Z$. So $\lambda+p\mu=
(p+1)\lambda+ p(\mu-\lambda)$. This expression is integral for $p$ satisfying the congruence conditions
from Section \ref{SS_alcoves} provided (\ref{eq:cond1}) holds.

Let $m_i\in \mathbb{Q}$ (resp. $m_j'$) be such that $\Upsilon_i$ (resp.
$\Upsilon_j'$) is given by $\alpha_i\geqslant m_i$ (resp., $\alpha_j'\geqslant m_j'$).
Then the inequalities for $\tilde{\Upsilon}_i$ (resp., $\tilde{\Upsilon}_j'$) are of
the form $\alpha_i\geqslant pm_i+\tilde{m}_i$ for a suitable element
$\tilde{m}_i\in m_i+\Z$ (resp., $\alpha_j'\geqslant pm_j'+\tilde{m}_j'$).

In (ii) we have
\begin{align*}
d_i(\,^p\!\lambda)=p(\langle\alpha_{i},\mu\rangle-m_i)
+(\langle\alpha_{i},\lambda\rangle-\tilde{m}_i).
\end{align*}
So (ii) means
\begin{equation}\label{eq:cond2}
\langle\alpha_{i},\mu\rangle=m_i,\, \langle\alpha_{i},\lambda\rangle>\tilde{m}_i.
\end{equation}
Similarly (iii) means that
\begin{equation}\label{eq:cond3}
\langle\alpha_{j}',\mu\rangle>m_j'.
\end{equation}
So for $\mu$ we can take any rational point lying inside $\Theta$, while
for $\lambda$ we can take any element in $\mu+\param_{\Z}$ such that
$\langle\alpha_{i},\lambda\rangle>\tilde{m}_i$. It is clear by the choice
of $\alpha_{1},\ldots,\alpha_{k}$ that such $\lambda$ exists.
This finishes the proof of (1).

Let us prove (2). For this we just need to notice that
an element compatible with $(A,\Theta)$ is also compatible
with $(A+\chi,\Theta)$ for every $\chi\in\param_{\Z}$. The set of orbits for
the corresponding action of $\param_\Z$ on the set of pairs $(A,\Theta)$
is finite, which finishes the proof.
%Let $k'$ denote the dimension of  $\operatorname{Span}_{\Q}(\alpha_{\Gamma_i})_{i=1}^k$.
%We can assume that $\alpha_{\Gamma_1},\ldots,\alpha_{\Gamma_{k'}}$ are linearly
%independent. Then there are positive integers $x_1,\ldots,x_{k'}$ such that
%$\langle \alpha_{\Gamma_i},\lambda\rangle=x_i, i=1,\ldots,k'$ implies
%$\langle\alpha_{\Gamma_i},\lambda\rangle>0$ for $i=k'+1,\ldots,k$. Rescaling
%the $x_i$'s we achieve that $\langle\alpha_{\Gamma_i},\lambda\rangle$ are all
%positive integers.
\end{proof}

\begin{Rem}\label{Rem:compat}
Let  $\lambda$ be compatible with $(A,\Theta)$ and let  $\chi\in \param_{\Z}$
satisfy $\langle \alpha_{i},\chi\rangle\geqslant 0$.  Then $\lambda+\chi$
is also compatible with $(A,\Theta)$.

Also let $A^-$ denote the alcove opposite to $A$ with  respect to $\Theta$.
Let $\lambda,\lambda^-$ be elements compatible with $(A,\Theta),(A^-,\Theta)$.
We can assume that $\lambda^--\lambda\in \param_{\Z}$.
\end{Rem}

\begin{Ex}
Suppose that we are in the setting of Example \ref{Ex_p_alcoves_Hilb}. Pick a real alcove $A=(a/b,a'/b')$.
Then, for any $m>s$, the element $a/b+m$ is compatible with $(A,\{a/b\})$ and the
element  $a'/b'-m$ is compatible with $(A,\{a'/b'\})$.
\end{Ex}

\section{$\Ring$-forms}\label{S_R_forms}
Below in this section $\Ring$ denotes a finite localization of $\Z$.

\subsection{Assumptions}\label{SS_assumptions}
\subsubsection{Assumptions on $X,Y$}\label{SSS_assum_XY}
We still assume that the formal slice to any symplectic leaf in $Y$ is conical, see Section \ref{SSS_conical_slices},
and that $X$ comes with a Hamiltonian action of a torus $T$ with finitely many fixed points.

We further assume that $X,Y,\rho:X\rightarrow Y$ and the $\C^\times\times T$-action are defined over some
finite localization $\Ring$ of $\Z$.  We have a natural homomorphism $\operatorname{Pic}(X_\Ring)\xrightarrow{\sim} \operatorname{Pic}(X)$
by changing the base. We assume that it becomes an isomorphism after tensoring with
$\mathbb{Q}$. We define the sublattice $\param'_\Z$  to be the image of
$\operatorname{Pic}(X_\Ring)$ in $\param_\Z$.

These two assumptions clearly hold in our examples of
the cotangent bundles to flag varieties and Hilbert schemes
(and, more generally, for Nakajima quiver varieties of affine type A
and Slodowy varieties associated to principal Levi nilpotent elements).

Localizing $\Ring$ further, we may assume that $H^i(X_\Ring,O)=0$ for $i>0$ and $H^0(X_\Ring,O)=\Ring[Y]$,
this algebra is automatically flat over $\Ring$.

Now we proceed to the deformations.
Since $H^i(X_{\Q},O)=0$, we see that the deformations $X_{\param},Y_{\param}$ are defined over $\Q$.
Replacing $\Ring$ with a finite localization, we achieve that $\param_{\Ring}:=H^2(X,\Ring)$ is an $\Ring$-lattice in $\param_{\Q}$.
We can pick $\Ring$-forms $X_{\param,\Ring},Y_{\param,\Ring}$ flat over $\param_\Ring$.
The schemes $X_{\param,\Ring},Y_{\param,\Ring}$ come with  contracting $\mathbb{G}_m$-actions
and also with  Hamiltonian $T_\Ring$-actions
(perhaps after a finite localization of $\Ring$). We assume that all $T$-fixed  points in $X$
are defined over $\Q$. We also assume that the fixed point subvariety in $X_{\param}$ is defined
over $\Q$ (and so are the isomorphisms of connected components of $X_{\param}^T$
with $\param$).   It is easy to see that these assumptions hold for $X=T^*\mathcal{B}$ and for $X=\operatorname{Hilb}_n(\mathbb{A}^2)$ (and, more generally, for all Slodowy varieties associated
to principal Levi nilpotent elements and for all quiver varieties of affine type $A$).

In particular, for a generic one-parameter subgroup $\nu$, we see that
$\Ca_{\nu}(O_{X_{\Q}})$ is the structure sheaf of $X_{\Q}^{T_{\Q}}$
and, similarly, $\Ca_{\nu}(O_{X_{\param,\Q}})$ is the structure sheaf
of $X_{\param,\Q}^{T_\Q}$. After a finite localization of $\Ring$, we achieve
that these properties hold over $\Ring$. We can also achieve that $\Ca_\nu(\Ring[Y_{\paramq}])$
is a finitely generated module over $\Ring[\paramq]$.

\subsubsection{Assumption (S) and set $\Lambda$}\label{SSS_Ring_assumptions}
One important assumption we are going to make is as follows.

\begin{itemize}
\item[(S)] For each classical wall $\Gamma$, the set $\Sigma_\Gamma$
defined in Section \ref{SSS_essential} consists of rational numbers.
\end{itemize}

Thanks to Examples \ref{Ex:cotangent_S} and \ref{Ex:Gieseker_S}, (S) holds
in the case when $X=T^*(G/B)$ and $X=\operatorname{Hilb}_n(\C^2)$. More generally,
one can show that (S) holds for Slodowy varieties and Nakajima quiver varieties.

Now we pick a finite set of elements $\Lambda\subset
\paramq_{\Q}$. %as well as $\mathcal{P}_1\subset \param_\Z$.
We choose $\Lambda$
so that for every pair $(A,\Theta)$ of a real alcove and its face and every coset in $\param_\Z/\param'_\Z$, there is an element $\lambda\in \Lambda$ in that coset that is compatible
with $(A,\Theta)$. Let $\tilde{\Upsilon}_1,\ldots,\tilde{\Upsilon}_k$ have the same
meaning as in Section \ref{SS_compat_elements}, $\tilde{\Gamma}_1,\ldots,
\tilde{\Gamma}_k$ be their boundary hyperplanes, and let $\Gamma_1,\ldots,\Gamma_k$
be the classical walls. Thanks to Remark \ref{Rem:compat} and results recalled in Section \ref{SS_Perv} we can, in addition, assume that the following hold:
\begin{itemize}
\item[(a)] Each $\lambda\in \Lambda$ lies in a quantum chamber $C^{q}$ shifted from an integral chamber $C^{int}$ such that $\bigcap_{i=1}^k \Gamma_i$ (where the meaning of $\Gamma_i$'s is as above) cuts a face of $C^{int}$.
\item[(b)] For each $\lambda\in \Lambda$ compatible with $(A,\Theta)$, there is
$\lambda^-\in \Lambda$ compatible with $(A^-,\Theta)$ such that
\begin{itemize}
\item[(b1)]
$\lambda^-=\lambda+\chi$ for  $\chi\in \param'_\Z$,
\item[(b2)] $\lambda^-$ lies in the  quantum chamber $C^{q,-}$ shifted from
$C^{int,-}$ (the integral chamber that is opposite to $C^{int}$ with respect to the face defined
by $\langle\alpha_{\Gamma_i},\bullet\rangle=0, i=1,\ldots,k$),
\item[(b3)] and $\WC_{\lambda^-\leftarrow \lambda}$ is a perverse derived equivalence
$D^b(\A_\lambda\operatorname{-mod})\xrightarrow{\sim} D^b(\A_{\lambda^-}\operatorname{-mod})$.
\end{itemize}
\end{itemize}

We can assume that $\Z[1/N!]\subset \Ring$,
where $N$ is the maximum of the denominators of elements of all sets $\Sigma_\Gamma$
and of all elements in $\Lambda$.

Now we define two finite subsets $\mathcal{P}_1,\mathcal{P}_2\subset \operatorname{Pic}(X_\Ring)$.
For $\mathcal{P}_1$ we take a finite collection of line bundle whose images
include all $\chi$ that appear in (b1) of
Section \ref{SSS_Ring_assumptions}. We can assume that  $-\mathcal{P}_1=\mathcal{P}_1$.  For $\mathcal{P}_2$ a finite collection of line bundles
that map to the union of generating sets for the
monoids $C_{\Z}=C\cap \param'_{\Z}$, where $C$ runs over all possible classical chambers.
Note that $\mathcal{P}_2$ contains a generating set of
$C^{int}_{\Z}$ for every integral chamber $C^{int}$.

%Now let us define finite sets $\Lambda_2$ and $\mathcal{P}_2$. Recall, Lemma
%\ref{Lem:fin_gen_chamber}, that $C^{q}$ is a finitely generated $C^{int}_{\Z}$-module.
%Let $\Lambda_2(C^{q})$ denote this finite set of generators. Now set
%$\Lambda_2=\bigsqcup_{C^{q}}\Lambda_2(C^{q})$, where the union is taken
%over all shifted quantum chambers of elements in $\paramq_{\Ring_0}$.
%Since $\paramq_{\Ring_0}/\param_\Z$ is finite, there are finitely many such
%shifted quantum chambers. So $\Lambda_2$ is indeed finite.

%For $\mathcal{P}_2$ we take the union of generating sets for all possible
%monoids $C_{\Z}=C\cap \param_{\Z}$, where $C$ runs over all possible classical chambers.
%Note that $\mathcal{P}_2$ contains a generating set of
%$C^{int}_{\Z}$ for every integral chamber $C^{int}$.

%We take a finite localization $\Ring$ of $\Ring_0$ as explained below in this section.

%Localizing a finite number of elements in $\Ring$ further we achieve that the fixed points of $T_{\F}$
%in $X_{\F}$ are in a natural bijection with the fixed points of $T$ in $X$ for any algebraically
%closed field $\F$ that is an  $\Ring$-algebra.

\subsubsection{Assumptions on quantizations}
We assume that (S) is satisfied for quantizations of $X$ (over $\C$).
%We assume that the denominators of all elements of $\Sigma_\Gamma$ are invertible in $\Ring$ for all walls $\Gamma$.

We suppose that the canonical quantization $\A_{\paramq,\Q}^\theta$ of $X_{\param,\Q}^\theta$
has an $\Ring$-form $\A_{\paramq,\Ring}^\theta$ that is a microlocal quantization of $X_{\param,\Ring}$. Let us write $\A_{\paramq,\Ring}$ for the algebra of global sections. Note that $\A_{\paramq}=\C\otimes_\Ring \A_{\paramq,\Ring}$ and $\A_{\paramq}^\theta=\C\hat{\otimes}_{\Ring}\A_{\paramq,\Ring}^\theta$
(where $\hat{\otimes}$ stands for the completed tensor product with respect to the induced filtration).

This clearly holds in the example of $T^*\mathcal{B}$, where the quantization $\A^\theta_{\paramq,\Ring}$ is obtained as
(the microlocalization of) $\mathcal{D}_{G/U,\Ring}^{T_\Ring}$. This also holds for the Hilbert schemes, where
the quantization $\A^{\theta}_{\paramq,\Ring}$ is obtained as
$$\left([D(V_\Ring)/D(V_{\Ring})\Phi([\g_\Ring,\g_{\Ring}])]|_{T^*V_\Ring^{\theta-ss}}\right)^{G_\Ring}.$$
More generally, the conditions hold for the quantizations of Slodowy varieties and of Nakajima quiver varieties.

Now let us discuss the homological dimension for the algebras $\A_{\lambda,\Ring}$.

\begin{Lem}\label{Lem:fin_hom_dim1}
After a finite localization of $\Ring$, for all $\lambda\in \Lambda$,
the algebra $\A_{\lambda,\Ring}$ has finite homological dimension.
\end{Lem}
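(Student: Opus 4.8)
The plan is to deduce this from the characteristic-zero bound $\operatorname{hd}(\A_\lambda)\leqslant\dim X$ supplied by Proposition \ref{Prop:generic}(1), by spreading out abelian localization over $\Ring$. First note that every $\lambda\in\Lambda$ is regular: by the choices in Section \ref{SSS_Ring_assumptions} it lies in a quantum chamber $C^{q}$ shifted from an integral chamber $C^{int}$, and by the very definition of a quantum chamber (Section \ref{SSS_reg_terminology}) its points are regular parameters, with $C^{int}$ the positive integral chamber for $\lambda$. Hence Proposition \ref{Prop:generic}(1) applies: for each $\lambda\in\Lambda$ there is a generic $\theta=\theta_\lambda\in C^{int}\cap\param_\Z$ lying in a single classical chamber (so that $X^{\theta_\lambda}$ is defined) with abelian localization holding for $(\lambda,\theta_\lambda)$ over $\C$, and $\operatorname{hd}(\A_\lambda)\leqslant\dim X$. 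There are only finitely many pairs $(\lambda,\theta_\lambda)$.

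Next I would spread out abelian localization. The sheaves $\A^{\theta_\lambda}_{\lambda,\Ring}$, the adjoint functors $\Loc^{\theta_\lambda}_{\lambda,\Ring}$ and $\Gamma^{\theta_\lambda}_{\lambda,\Ring}$ between $\A_{\lambda,\Ring}\operatorname{-mod}$ and $\Coh(\A^{\theta_\lambda}_{\lambda,\Ring})$, and the unit and counit of this adjunction are all defined over $\Ring$, and over $\C$ the unit and counit are isomorphisms of functors. Arguing as in the characteristic-zero case---for instance via the translation bimodules of Section \ref{SSS_trans_bimod}: the defining bimodule homomorphisms $\A_{\lambda+m\chi,\chi}\otimes\A_{\lambda+(m+1)\chi,-\chi}\to\A_{\lambda+(m+1)\chi}$ and their counterparts have finitely generated kernels and cokernels over $\Ring$ that vanish after $-\otimes_\Ring\C$, hence are $\Ring$-torsion and are killed by inverting finitely many primes, after which one reruns Proposition \ref{Prop:loc_transl}---I would obtain that after one finite localization of $\Ring$ (performed once and for all, possible as $\Lambda$ is finite) abelian localization holds over $\Ring$ for every $(\lambda,\theta_\lambda)$. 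Consequently $R\Gamma^{\theta_\lambda}_{\lambda,\Ring}$ and $L\Loc^{\theta_\lambda}_{\lambda,\Ring}$ are mutually inverse $t$-exact equivalences of bounded derived categories, so $\operatorname{RHom}_{\A_{\lambda,\Ring}}(M,N)\cong\operatorname{RHom}_{\Coh(\A^{\theta_\lambda}_{\lambda,\Ring})}(\Loc M,\Loc N)$ for all finitely generated $\A_{\lambda,\Ring}$-modules $M,N$.

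It then remains to bound the homological dimension of $\Coh(\A^{\theta_\lambda}_{\lambda,\Ring})$. Since $X_\Ring$ is smooth over $\Ring$ it is a regular scheme of Krull dimension $\dim X+\dim\Ring$; over any $\C^\times$-stable affine open $U\subseteq X_\Ring$ the algebra $\Gamma(U,\A^{\theta_\lambda}_{\lambda,\Ring})$ is filtered with associated graded $\Gamma(U,O_{X_\Ring})$, a regular ring, so by the standard comparison of global dimensions under a filtration (McConnell--Robson) $\operatorname{gldim}\Gamma(U,\A^{\theta_\lambda}_{\lambda,\Ring})\leqslant\dim X+\dim\Ring$. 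Hence $\mathcal{E}xt^{q}_{\A^{\theta_\lambda}_{\lambda,\Ring}}(\mathcal{F},\mathcal{G})=0$ for $q>\dim X+\dim\Ring$, and, combined with the vanishing of $H^{p}(X_\Ring,-)$ for $p$ exceeding the size of a finite $\C^\times$-stable affine cover of $X_\Ring$, the local-to-global spectral sequence gives $\operatorname{Ext}^{i}_{\Coh(\A^{\theta_\lambda}_{\lambda,\Ring})}(\mathcal{F},\mathcal{G})=0$ for $i\gg0$. Transporting this through the equivalence of the previous paragraph yields $\operatorname{Ext}^{i}_{\A_{\lambda,\Ring}}(M,N)=0$ for $i\gg0$ and all finitely generated $M,N$; since $\A_{\lambda,\Ring}$ is Noetherian ($\gr\A_{\lambda,\Ring}=\Ring[Y]$ is), this means $\A_{\lambda,\Ring}$ has finite homological dimension.

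The main obstacle is the second step: one must descend the \emph{equivalence} of abelian categories from $\C$ to $\Ring$, not merely isomorphisms between individual objects, while the characteristic-zero proof of abelian localization passes through the infinite family of Morita equivalences $\A_{\lambda+m\chi,\chi}$, $m\geqslant0$; care is needed to see that enough of these spread out simultaneously after a single finite localization of $\Ring$ (or, alternatively, to give a direct spreading-out argument for the unit and counit using right-exactness and free presentations). A secondary point to verify is the cohomological finiteness of the conical topology on $X_\Ring$, which however follows exactly as over $\C$ from the existence of a finite $\C^\times$-stable affine cover.
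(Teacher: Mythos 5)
Your route is genuinely different from the paper's, and it contains a real gap at exactly the point you flag. The paper's proof is a two-line reduction: over $\C$ the algebra $\A_\lambda$ has finite homological dimension (which you correctly extract from Proposition \ref{Prop:generic}(1), since every $\lambda\in\Lambda$ lies in a quantum chamber and is therefore regular), and then one spreads out a \emph{finite projective resolution} as in \cite[Lemma 6.8]{BL}: take a resolution of $\A_{\lambda,\Ring}$ by finitely generated free bimodules, observe that the $d$-th syzygy becomes projective after base change to $\C$ (where $d=\operatorname{hd}(\A_\lambda)$), and conclude that the finitely generated Ext-modules obstructing projectivity of the syzygy over $\Ring$ are $\Ring$-torsion, hence vanish after a finite localization. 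This avoids localization theory over $\Ring$ entirely.

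Your plan instead descends abelian localization to $\Ring$ and then bounds the global dimension of $\Coh(\A^{\theta}_{\lambda,\Ring})$ via regularity of $X_\Ring$. The third step (McConnell--Robson over a finite conical affine cover plus the local-to-global spectral sequence) is fine. But the second step is not merely delicate, it is unproved as written: Proposition \ref{Prop:loc_transl} requires the bimodules $\A_{\lambda+m\chi,\chi}$, $\A_{\lambda+(m+1)\chi,-\chi}$ to be mutually inverse Morita equivalences for \emph{every} $m\geqslant 0$, and your argument kills the torsion in the kernels and cokernels of the relevant bimodule maps one value of $m$ at a time. Inverting finitely many primes for each of infinitely many $m$ does not produce a single finite localization of $\Ring$. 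The fix is the family version: one must work with $\A_{\paramq^{reg[\chi]},\chi,\Ring}$ over the whole parameter scheme and apply generic flatness there, which is exactly the content of Lemma \ref{Lem:Morita_Ring} --- a statement the paper only establishes \emph{after} Lemma \ref{Lem:fin_hom_dim1}, and whose proof is itself nontrivial. Your alternative suggestion of spreading out the adjunction unit and counit directly runs into the same quantifier problem (the counit must be an isomorphism on all coherent sheaves, not just on the regular object). So either you import Lemma \ref{Lem:Morita_Ring} (reordering the logic, which is possible but is a substantial detour), or you should switch to the paper's syzygy argument, which needs only the characteristic-zero finiteness you already have.
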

\begin{proof}
The algebra $\A_\lambda$ has finite homological dimension. This is because,
by the construction of $\Lambda$, see (a) in Section
\ref{SSS_Ring_assumptions}, $\lambda\in \paramq^{reg}$. To deduce that $\A_{\lambda,\Ring}$ has finite homological
dimension (after a finite localization of $\Ring$) we can argue as in the proof of
\cite[Lemma 7.8]{BL}.
\end{proof}

\subsection{Forms of translation bimodules}
We will further replace $\Ring$ with its finite localization so that the translation bimodules
$\A_{\Ring,\chi}$ have good properties as explained below in this section.

Recall the finite sets $\mathcal{P}_1,\mathcal{P}_2\subset \operatorname{Pic}(X_\Ring)$ from
Section \ref{SSS_Ring_assumptions}.
Set $\mathcal{P}=\mathcal{P}_1\cup \mathcal{P}_2$.

As above, we can quantize the line bundles $O(\chi)$ on $X_{\param,\Ring}$ getting bimodules $\A^\theta_{\paramq,\chi,\Ring}$. We note that we have natural homomorphisms
$$\A^\theta_{\paramq,\chi_2,\Ring}\otimes_{\A^\theta_{\paramq,\Ring}}\A^\theta_{\paramq,\chi_1,\Ring}
\rightarrow \A^\theta_{\paramq,\chi_1+\chi_2,\Ring},\quad
\Gamma(\A^\theta_{\paramq,\chi_2,\Ring})\otimes_{\A_{\paramq,\Ring}}\Gamma(\A^\theta_{\paramq,\chi_1,\Ring})
\rightarrow \Gamma(\A^\theta_{\paramq,\chi_1+\chi_2,\Ring}).$$

Note that the base changes of $\Gamma(\A^\theta_{\paramq,\chi,\Ring})$ to $\Q$ are independent of
$\theta$ for the same reason as the $\C$-forms are independent of $\theta$.
Each of the bimodules
$\Gamma(\A^\theta_{\paramq,\chi,\Ring})$ is finitely generated over $\A_{\paramq,\Ring}$. It follows
that after a finite localization of $\Ring$, the bimodule  $\Gamma(\A^\theta_{\paramq,\chi,\Ring})$
is independent of $\theta$ for any $\chi\in \mathcal{P}$. We denote this bimodule by $\A_{\paramq,\chi,\Ring}$. Note that
$\C\otimes_{\Ring}\A_{\paramq,\chi,\Ring}=\A_{\paramq,\chi}$.

Note that each  bimodule $\A_{\paramq,\chi,\Ring},\chi\in \mathcal{P},$ is Harish-Chandra. From here one deduces
that after a finite localization of $\Ring$ we can achieve that $\A_{\paramq,\chi,\Ring}$ is flat over $\Ring$.
This is a consequence of the following more general result that is proved completely analogously to \cite[Lemma 3.5]{BL}.

\begin{Lem}\label{Lem:gen_flatness}
Let $M$ be a finitely generated $\A_{\paramq,\Ring}$-module.
Then after a finite localization of $\Ring$, the $\Ring$-module
$M$ becomes flat.
\end{Lem}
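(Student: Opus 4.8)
The plan is to reduce flatness over $\Ring$ to torsion-freeness and then to kill the $\Ring$-torsion by inverting finitely many primes. Since $\Ring$ is a finite localization of $\Z$, it is a principal ideal domain, and for an arbitrary module over a PID flatness is equivalent to being torsion-free. So it suffices to arrange, after a further finite localization of $\Ring$, that $M$ has no $\Ring$-torsion.

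First I would note that $\A_{\paramq,\Ring}$ is Noetherian: it carries a filtration with $\gr\A_{\paramq,\Ring}$ a quotient of $\Ring[Y_{\paramq}]$, which is a finitely generated algebra over the Noetherian ring $\Ring$, hence Noetherian; therefore $\A_{\paramq,\Ring}$ is Noetherian. The $\Ring$-torsion submodule
$$M_{\mathrm{tors}}:=\{m\in M\mid rm=0\text{ for some nonzero }r\in\Ring\}$$
is visibly an $\A_{\paramq,\Ring}$-submodule of the finitely generated module $M$, hence is itself finitely generated over $\A_{\paramq,\Ring}$. Choosing generators $m_1,\dots,m_k$ of $M_{\mathrm{tors}}$ and nonzero $r_i\in\Ring$ with $r_i m_i=0$, we get $rM_{\mathrm{tors}}=0$ for $r:=r_1\cdots r_k\neq 0$.

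Now I would replace $\Ring$ by the finite localization $\Ring[1/r]$. Since localization is exact, the $\Ring[1/r]$-torsion submodule of $M\otimes_{\Ring}\Ring[1/r]$ equals $M_{\mathrm{tors}}\otimes_{\Ring}\Ring[1/r]$: indeed, clearing denominators shows that any torsion element of the localization is represented by an element of $M_{\mathrm{tors}}$. This module vanishes because $r$ becomes invertible in $\Ring[1/r]$ while $rM_{\mathrm{tors}}=0$. Hence $M\otimes_{\Ring}\Ring[1/r]$ is torsion-free over the PID $\Ring[1/r]$, therefore flat. There is no genuine obstacle here; the only point requiring care is that $M$ is finitely generated over $\A_{\paramq,\Ring}$ rather than over $\Ring$ itself, which is exactly why one invokes the Noetherianity of $\A_{\paramq,\Ring}$ to ensure $M_{\mathrm{tors}}$ is controlled by finitely many denominators. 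This is the same mechanism as in \cite[Lemma 3.5]{BL}.
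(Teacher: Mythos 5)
Your argument is correct. The reduction flat $\Leftrightarrow$ torsion-free over the PID $\Ring$, the observation that $M_{\mathrm{tors}}$ is an $\A_{\paramq,\Ring}$-submodule (because $\Ring$ is central) of a finitely generated module over the Noetherian ring $\A_{\paramq,\Ring}$, hence killed by a single nonzero $r\in\Ring$, and the identification of the torsion of $M[1/r]$ with $M_{\mathrm{tors}}[1/r]$ are all sound; the Noetherianity of $\A_{\paramq,\Ring}$ does follow from the filtration with $\gr\A_{\paramq,\Ring}\cong\Ring[\paramq][X]$ used elsewhere in the paper.

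The paper itself gives no written proof, deferring to \cite[Lemma 3.5]{BL}, whose mechanism is slightly different from yours: there one equips $M$ with a good filtration, applies Grothendieck's generic freeness to the finitely generated $\gr\A_{\paramq,\Ring}$-module $\gr M$ to find a finite localization over which $\gr M$ is $\Ring$-free, and then lifts freeness of the associated graded back to $M$ along the filtration. That route has the advantage of working over any Noetherian base domain and of producing freeness rather than just flatness; your route trades this generality for a shorter argument that exploits the specific fact that a finite localization of $\Z$ is a PID, using the filtration only to establish Noetherianity. Either way the conclusion and the localization produced are the same, so your proof is an acceptable, and arguably more self-contained, substitute.
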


\begin{Cor}\label{Cor:translation_flatness}
Perhaps, after a finite localization of $\Ring$, we achieve that $\A_{\lambda,\chi,\Ring}$
is flat over $\Ring$ for all $\lambda\in \Lambda, \chi\in \mathcal{P}_1$.
\end{Cor}

We will also need some properties of bimodules $\A_{\paramq,\chi,\Ring}$ for $\chi\in \mathcal{P}_2$
and $\A_{\lambda,\chi,\Ring}$, where $\lambda\in \Lambda$ and $\lambda^-=\lambda+\chi$.

For $\chi\in \mathcal{P}_2$, let $\paramq^{reg[\chi]}$ denote the subset of $\lambda\in \param$ such that
$\lambda,\lambda+\chi$ lie in the same quantum chamber. By the definition of quantum chambers, $\paramq^{reg[\chi]}$
is the complement to finitely many essential
hyperplanes in $\paramq$. Also note that $\paramq^{reg[-\chi]}=
\paramq^{reg[\chi]}+\chi$. Consider the bimodules $\A_{\paramq^{reg[\chi]},\chi,\Ring}:=
\A_{\paramq,\chi,\Ring}\otimes_{\Ring[\paramq]}\Ring[\paramq^{reg[\chi]}]$ (this is
a $\A_{\paramq^{reg[-\chi]},\Ring}$-$\A_{\paramq^{reg[\chi]},\Ring}$-bimodule) and
$\A_{\paramq^{reg[-\chi]},-\chi,\Ring}$.

\begin{Lem}\label{Lem:Morita_Ring}
After a finite localization of $\Ring$, the bimodules
$\A_{\paramq^{reg[\chi]},\chi,\Ring}, \A_{\paramq^{reg[-\chi]},\chi,\Ring}$
are mutually inverse Morita equivalence bimodules for all $\chi\in \mathcal{P}_2$.
\end{Lem}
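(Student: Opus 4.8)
The plan is to deduce this from the corresponding statement over $\C$ (Corollary \ref{Cor:trans_equiv} together with Lemma \ref{Lem:Morita_open}) by a generic flatness / base change argument, exactly parallel to how Lemma \ref{Lem:fin_hom_dim1} was reduced to the characteristic-zero statement. First I would recall that over $\C$, by Lemma \ref{Lem:Morita_open}, the locus of $\lambda$ where $\A_{\lambda,\chi},\A_{\lambda+\chi,-\chi}$ are mutually inverse Morita equivalences is Zariski open and nonempty, and by the definition of $\paramq^{reg[\chi]}$ in Section \ref{SSS_Ring_assumptions} together with Proposition \ref{Prop:generic}(1) (abelian localization holds for parameters in a quantum chamber), this locus contains $\paramq^{reg[\chi]}$; hence over $\C$ the two bimodules $\A_{\paramq^{reg[\chi]},\chi}$ and $\A_{\paramq^{reg[-\chi]},-\chi}$ are mutually inverse Morita equivalences, meaning the two natural multiplication maps
\[
\A_{\paramq^{reg[-\chi]},-\chi}\otimes_{\A_{\paramq^{reg[-\chi]}}}\A_{\paramq^{reg[\chi]},\chi}\to \A_{\paramq^{reg[\chi]}},\qquad
\A_{\paramq^{reg[\chi]},\chi}\otimes_{\A_{\paramq^{reg[\chi]}}}\A_{\paramq^{reg[-\chi]},-\chi}\to \A_{\paramq^{reg[-\chi]}}
\]
are isomorphisms (and similarly with $\chi\leftrightarrow -\chi$).

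Next I would spread this out over $\Ring$. The bimodules $\A_{\paramq,\chi,\Ring}$ for $\chi\in\mathcal{P}_2$ were already constructed and shown to be flat over $\Ring$ earlier in this section; restricting to $\paramq^{reg[\chi]}$ is a flat base change over $\Ring[\paramq]$, so $\A_{\paramq^{reg[\chi]},\chi,\Ring}$ is defined and, after a finite localization of $\Ring$ (applying Lemma \ref{Lem:gen_flatness} to the relevant finitely generated modules), flat over $\Ring$. Now consider the two natural bimodule homomorphisms over $\Ring$ (the $\Ring$-forms of the displayed maps above); their kernels and cokernels are finitely generated $\A_{\paramq^{reg[\pm\chi]},\Ring}$-modules, hence finitely generated over $\Ring$ in each of the finitely many relevant $T\times\C^\times$-weight components, so after a further finite localization of $\Ring$ each of these kernels and cokernels becomes flat over $\Ring$ (again Lemma \ref{Lem:gen_flatness}). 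A flat module over $\Ring$ that vanishes after $\otimes_\Ring \C$ is zero. Therefore, after this finite localization, the four multiplication maps (for $\chi$ and for $-\chi$) are isomorphisms over $\Ring$, i.e. $\A_{\paramq^{reg[\chi]},\chi,\Ring}$ and $\A_{\paramq^{reg[-\chi]},-\chi,\Ring}$ are mutually inverse Morita equivalences. Doing this for each of the finitely many $\chi\in\mathcal{P}_2$ and intersecting the finitely many localizations yields the claim.

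The only genuinely delicate point is the comparison of the $\Ring$-forms of the tensor products with the tensor products of the $\Ring$-forms: a priori $\C\otimes_\Ring\bigl(\A_{\paramq^{reg[-\chi]},-\chi,\Ring}\otimes_{\A_{\paramq^{reg[-\chi]},\Ring}}\A_{\paramq^{reg[\chi]},\chi,\Ring}\bigr)$ need not agree with $\A_{\paramq^{reg[-\chi]},-\chi}\otimes_{\A_{\paramq^{reg[-\chi]}}}\A_{\paramq^{reg[\chi]},\chi}$ because tensor product does not commute with base change unless one factor is flat over the base or the relevant higher Tor's vanish. This is exactly the situation handled in \cite[Lemma 3.5]{BL} and is the reason we localize $\Ring$: since $\A_{\paramq^{reg[\chi]},\chi,\Ring}$ is flat over $\Ring$ and finitely presented as a bimodule, after a finite localization the $\Tor$-groups $\Tor_i^{\A_{\paramq^{reg[-\chi]},\Ring}}(\A_{\paramq^{reg[-\chi]},-\chi,\Ring},\A_{\paramq^{reg[\chi]},\chi,\Ring})$ for $i>0$ become flat over $\Ring$ and vanish upon base change to $\C$ (because over $\C$ the two factors are Morita-inverse, so their derived tensor product is concentrated in degree zero), hence vanish; this gives the needed compatibility of base change with the tensor product, and the argument of the previous paragraph goes through. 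I expect this flatness/base-change bookkeeping to be the main obstacle, but it is entirely routine given Lemma \ref{Lem:gen_flatness} and the method of \cite[Lemma 3.5]{BL}; everything else is a direct transcription of the characteristic-zero statement.
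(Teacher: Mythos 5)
Your proposal is correct and follows essentially the same route as the paper: establish the characteristic-zero isomorphisms from abelian localization on the quantum chamber (Corollary \ref{Cor:trans_equiv}), note the multiplication maps are defined over $\Ring$ with torsion kernels and cokernels, and kill these after a finite localization by the generic-flatness argument of Lemma \ref{Lem:gen_flatness} applied over $\A_{\paramq^{reg[\chi]},\Ring}\otimes_{\Ring}\A_{\paramq^{reg[-\chi]},\Ring}^{opp}$. The only quibble is that the underived tensor product always commutes with base change by right-exactness, so your Tor discussion is only genuinely needed to control the kernels (not the cokernels); this does not affect the validity of the argument.
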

\begin{proof}
By Corollary \ref{Cor:trans_equiv}, $\A_{\lambda,\chi},\A_{\lambda+\chi,-\chi}$ are mutually
dual Morita equivalence bimodules for $\lambda\in \paramq^{reg[\chi]}$. The natural homomorphisms
$$\A_{\lambda+\chi,-\chi}\otimes_{\A_{\lambda+\chi}}\A_{\lambda,\chi}\rightarrow \A_{\lambda},
\A_{\lambda,\chi}\otimes_{\A_{\lambda}}\A_{\lambda+\chi,-\chi}\rightarrow \A_{\lambda+\chi}$$
are obtained by specialization to $\lambda$ from
\begin{equation}\label{eq:bimod_homom_reg}
\begin{split}
&\A_{\paramq^{reg[-\chi]},-\chi}\otimes_{\A_{\paramq^{reg[-\chi]}}}\A_{\paramq^{reg[\chi]},\chi}\rightarrow
\A_{\paramq^{reg[\chi]}},\\
&\A_{\paramq^{reg[\chi]},\chi}\otimes_{\A_{\paramq^{reg[\chi]}}}\A_{\paramq^{reg[-\chi]},-\chi}\rightarrow
\A_{\paramq^{reg[-\chi]}}.
\end{split}
\end{equation}
It follows that homomorphisms (\ref{eq:bimod_homom_reg}) are isomorphisms. These homomorphisms
are defined over $\Ring$. It follows that the kernels and the cokernels of
\begin{align*}
&\A_{\paramq^{reg[-\chi]},-\chi,\Ring}\otimes_{\A_{\paramq^{reg[-\chi]},\Ring}}\A_{\paramq^{reg[\chi]},\chi,\Ring}\rightarrow
\A_{\paramq^{reg[\chi]},\Ring},\\
&\A_{\paramq^{reg[\chi]},\chi,\Ring}\otimes_{\A_{\paramq^{reg[\chi]},\Ring}}\A_{\paramq^{reg[-\chi]},-\chi,\Ring}\rightarrow
\A_{\paramq^{reg[-\chi]},\Ring}\end{align*}
are $\Ring$-torsion. A direct analog of Lemma \ref{Lem:gen_flatness} holds for the algebras of the form
$$\A_{\paramq^{reg[\chi]},\Ring}\otimes_{\Ring}
\A_{\paramq^{reg[-\chi]},\Ring}^{opp}$$ and so the kernels and the cokernels vanish after a
finite localization of $\Ring$.
\end{proof}

%The next corollary follows from Lemmas \ref{Lem:fin_hom_dim1} and \ref{Lem:Morita_Ring}.
%
%\begin{Cor}\label{Cor:fin_hom_dim2}
%The algebra $\A_{\lambda,\Ring}$ has finite homological dimension for all $\lambda\in \paramq^{reg}_{\Ring_0}$.
%\end{Cor}

%Now fix $\chi\in \operatorname{Pic}(X)$ together with the decomposition
%$\chi=\chi_1+\ldots+\chi_k$ such that $\chi_i\in \mathcal{P}_2$. Consider the subset $\paramq^{reg[\chi_1,\ldots,\chi_k]}$
%consisting of all elements $\lambda$ such that $\lambda+\sum_{i=1}^j \chi_i$ lie in the same quantum chamber.
%Similarly to the proof of Lemma \ref{Lem:transl_trans}, we see that the natural morphism
%$$\A_{\paramq,\chi_k,\Ring}\otimes_{\A_{\paramq,\Ring}}\ldots\otimes_{\A_{\paramq,\Ring}}
%\A_{\paramq,\chi_1,\Ring}\rightarrow \Gamma(\A^\theta_{\paramq,\chi,\Ring})$$
%is an isomorphism over $\paramq^{reg[\chi_1,\ldots,\chi_k]}$. So we see that %$\Gamma(\A^\theta_{\paramq^{reg}[\chi_1,\ldots,\chi_k],\chi,\Ring})$
%is a Morita equivalence bimodule independent of $\theta$. We denote it by %$\A_{\paramq^{reg[\chi_1,\ldots,\chi_k]},\chi,\Ring}$.
%The inverse bimodule is $\A_{\paramq^{reg[-\chi_1,\ldots,-\chi_k]},-\chi,\Ring}$.

Now pick $\lambda\in \Lambda$ and let $\lambda^-\in\Lambda, \chi\in \mathcal{P}_1$ be as before.
The bimodule $\A_{\lambda,\chi}$ is defined over $\Q$. It follows that the filtrations
by Serre subcategories on $\A_{\lambda}\operatorname{-mod}, \A_{\lambda+\chi}\operatorname{-mod}$
that make $\WC_{\lambda^-\leftarrow \lambda}$ perverse are defined over $\Q$. Recall,
Section \ref{SS_Perv}, that these filtrations come from chains of ideals,
say $\{0\}=\I^0_{?}\subset \I^1_?\subset\ldots \subset \I^k_?\subset \A_{?}$, where
$?=\lambda$ or $\lambda^-$. So the ideals are defined over $\Q$ as well.
We set $\I^j_{?,\Ring}=\I^j_{?}\cap \A_{?,\Ring}$ so that $\I^j_?=\C\otimes_{\Ring}\A_{?,\Ring}$.

%The proof of the next lemma is similar to that of \ref{Lem:Morita_Ring}.

\begin{Lem}\label{Lem:perv_Ring}
After a finite localization of $\Ring$ we achieve that for all $\lambda\in \Lambda$
the following hold:
\begin{enumerate}
\item  $\I^j_{\lambda,\Ring}=(\I^j_{\lambda,\Ring})^2$ for all $j$.
\item  The functor $\A_{\lambda,\chi,\Ring}\otimes^L_{\A_{\lambda,\Ring}}\bullet:
D^b(\A_{\lambda,\Ring}\operatorname{-mod})\rightarrow D^b(\A_{\lambda+\chi,\Ring}\operatorname{-mod})$
is a perverse derived equivalence with respect to the filtrations given by
the ideals $\I^j_{\lambda,\Ring},\I^j_{\lambda+\chi,\Ring}$.
\end{enumerate}
\end{Lem}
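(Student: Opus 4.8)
The strategy is to descend the characteristic-zero statements (the perversity of $\WC_{\lambda^-\leftarrow\lambda}$ recalled in Section \ref{SS_Perv}, together with conditions (b1)--(b6) there) to a finite localization of $\Ring$, using that all the relevant data are finitely generated modules over $\Ring$ and hence become flat and ``generically faithful'' after a finite localization (as in Lemma \ref{Lem:gen_flatness}, and Lemma \ref{Lem:Morita_Ring}). Concretely, I would first fix the chains of ideals $\I^j_{\lambda,\Ring}\subset\A_{\lambda,\Ring}$ and $\I^j_{\lambda+\chi,\Ring}\subset\A_{\lambda+\chi,\Ring}$ as in the paragraph preceding the statement, and note that each $\I^j_{?,\Ring}$ is a finitely generated $\Ring$-module (it sits inside $\A_{?,\Ring}$, which is Noetherian over $\Ring$ and flat after localization by Lemma \ref{Lem:fin_hom_dim1}'s argument / Lemma \ref{Lem:gen_flatness}), so after a finite localization all of $\A_{?,\Ring}$, $\A_{?,\Ring}/\I^j_{?,\Ring}$ and $\A_{\lambda,\chi,\Ring}$ are flat over $\Ring$.

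For (1): the equality $\I^j_\lambda=(\I^j_\lambda)^2$ holds over $\C$ because $\A_{\paramq^{reg}_0}$ has finite homological dimension and idempotency of these ideals follows from the characterization in Section \ref{SS_Perv}(a) (an ideal $\I$ with $\operatorname{GK-}\dim\A/\I$ prescribed; minimality forces $\I^2=\I$, or one invokes \cite{B_ineq}-type arguments as in Lemma \ref{Lem:WC_simple}). The inclusion $(\I^j_{\lambda,\Ring})^2\subseteq\I^j_{\lambda,\Ring}$ is automatic; the quotient $\I^j_{\lambda,\Ring}/(\I^j_{\lambda,\Ring})^2$ is a finitely generated $\Ring$-module whose base change to $\C$ vanishes, hence it is $\Ring$-torsion, hence it vanishes after a finite localization of $\Ring$. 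The same applies simultaneously to the finitely many pairs $(\lambda,j)$ with $\lambda\in\Lambda$.

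For (2): I would verify that over the localized $\Ring$ the integral functor $\A_{\lambda,\chi,\Ring}\otimes^L_{\A_{\lambda,\Ring}}\bullet$ is a derived equivalence and satisfies the perversity conditions (P1)--(P3) of Section \ref{SS_Perv} with respect to the Serre filtrations cut out by the $\I^j_{?,\Ring}$. Being a derived equivalence follows from Lemma \ref{Lem:Morita_Ring} (composing the $\Ring$-forms of the translation bimodules for a path from $\lambda$ to $\lambda^-$ inside the two quantum chambers, using Lemma \ref{Lem:transl_trans} and that $\WC$ is a derived equivalence by Lemma \ref{Lem:trans_functor}/Corollary \ref{Cor:trans_equiv}); here condition (a) and (b2) guarantee the chamber path exists. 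The perversity conditions over $\Ring$ amount to the vanishing and annihilation statements (b1)--(b6) with $\A$'s replaced by their $\Ring$-forms; each of these is an equality or vanishing of a finitely generated $\Ring$-module (various $\operatorname{Tor}^{\A_{\lambda,\Ring}}_i(\A_{\lambda,\chi,\Ring},\A_{\lambda,\Ring}/\I^j_{\lambda,\Ring})$, kernels and cokernels of explicit bimodule maps) which holds after base change to $\C$ by the characteristic-zero result, hence is $\Ring$-torsion, hence holds after a further finite localization; there are finitely many such conditions, ranged over $\lambda\in\Lambda$, $i$, and $j\le k=\tfrac12\dim X$, so one localization suffices. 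Finally, that these $\operatorname{Tor}$-modules are finitely generated over $\Ring$ uses that they are finitely generated over the (Noetherian, $\Ring$-flat after localization) algebra $\A_{\lambda^-,\Ring}$ and are supported set-theoretically over a point (the argument for finite generation over $\Ring$ mimics the proof that $\operatorname{Tor}^{\A_{\paramq}}_i(\Delta_{\nu,\paramq},\Delta^r_{-\nu,\paramq})$ is finitely generated over $\C[\paramq]$ in the proof of Proposition \ref{Prop:generic}(2), or just Lemma \ref{Lem:gen_flatness} applied to the $\C$-finite-dimensional pieces). The main obstacle is bookkeeping: one must check that finiteness-over-$\Ring$ genuinely holds for all the modules appearing in (b1)--(b6) — in particular for the kernels/cokernels in (b5),(b6), which are a priori only subquotients — before invoking the ``torsion vanishes after localization'' principle; once that is in place, everything is a uniform finite localization.
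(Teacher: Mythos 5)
Your overall strategy --- descend the characteristic-zero statements to $\Ring$ by observing that their failure is measured by torsion that dies after a finite localization --- is exactly the paper's. Two of your concrete steps, however, do not work as stated.

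First, your route to the derived equivalence over $\Ring$ (composing $\Ring$-forms of translation bimodules along a path ``inside the two quantum chambers'' via Lemma \ref{Lem:Morita_Ring}) fails: $\lambda$ and $\lambda^-=\lambda+\chi$ lie in quantum chambers on opposite sides of the wall determined by $\Theta$, so any such path must cross the wall, and the crossing step is precisely the wall-crossing bimodule itself, which is not a Morita equivalence (if it were, the perverse filtration in (2) would be trivial). The paper instead verifies the two derived-Morita conditions directly: the evaluation maps $\A_{\lambda+\chi,\Ring}\to R\operatorname{End}_{\A_{\lambda,\Ring}}(\A_{\lambda,\chi,\Ring})$ and $\A_{\lambda,\chi,\Ring}\otimes^L_{\A_{\lambda+\chi,\Ring}}R\Hom(\A_{\lambda,\chi,\Ring},\A_{\lambda,\Ring})\to\A_{\lambda,\Ring}$ are isomorphisms after base change to $\C$ (the derived equivalence over $\C$ is built into assumption (b3) of Section \ref{SSS_Ring_assumptions}), and their cones are killed by a finite localization.

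Second, the objects appearing in (1) and in (b1)--(b6) --- the quotient $\I^j_{\lambda,\Ring}/(\I^j_{\lambda,\Ring})^2$, the various $\operatorname{Tor}$'s, and the kernels and cokernels in (b5), (b6) --- are \emph{not} finitely generated over $\Ring$ (already $\operatorname{Tor}_0$ is a quotient of the infinite-dimensional bimodule $\A_{\lambda,\chi}$), so the principle ``finitely generated torsion $\Ring$-module, hence zero after localization'' is not available, and the ``bookkeeping obstacle'' you flag at the end cannot be discharged in the form you pose it. What actually saves the argument, and what the paper uses by referring back to the proof of Lemma \ref{Lem:Morita_Ring}, is generic flatness (the analog of Lemma \ref{Lem:gen_flatness}): these objects are finitely generated over the Noetherian $\Ring$-algebras $\A_{?,\Ring}$ (or $\A_{?,\Ring}\otimes_{\Ring}\A_{?,\Ring}^{opp}$) and are $\Ring$-torsion because they vanish after base change to $\Q$; a finite localization makes them $\Ring$-flat, hence zero. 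With these two corrections your plan coincides with the paper's proof.
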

\begin{proof}
We have $\I^j_{\lambda}=(\I^j_\lambda)^2$. From here we get $\I^j_{\lambda,\Ring}=(\I^j_{\lambda,\Ring})^2$
similarly to the proof of Lemma \ref{Lem:Morita_Ring}. Let us proceed to (2).

The functor $\A_{\lambda,\chi,\Ring}\otimes^L_{\A_{\lambda,\Ring}}\bullet$ is an equivalence
if and only if
$$\A_{\lambda+\chi,\Ring}\xrightarrow{\sim}R\operatorname{End}_{\A_{\lambda,\Ring}}(\A_{\lambda,\chi,\Ring}),\quad
\A_{\lambda,\chi,\Ring}\otimes^L_{\A_{\lambda+\chi,\Ring}}R\operatorname{Hom}(\A_{\lambda,\chi,\Ring},
\A_{\lambda,\Ring})\xrightarrow{\sim}\A_{\lambda,\Ring}$$
We know that these homomorphisms become iso after base change to $\C$. So they are iso
after a finite localization of $\Ring$.

We also know that (b1)-(b6) from Section \ref{SS_Perv} hold over $\C$.
From here we deduce that (b1)-(b6) hold over $\Ring$, perhaps after a
finite localization of $\Ring$. Now that implies that
$\A_{\lambda,\chi,\Ring}\otimes^L_{\A_{\lambda,\Ring}}\bullet$  is perverse as required.
\end{proof}

\subsection{Forms of Verma  modules}\label{SS_Verma_forms}
The goal of this and the next sections is to show that most results and constructions that we have for the
categories $O$ over $\C$ still carry over to $\Ring$, perhaps after some finite localization
of $\Ring$.

Recall the principal open subset $\paramq^{reg}\subset \paramq$ from
Definition \ref{defi:regular_parameters}. By our choice of $\Ring$, $\paramq^{reg}$
is defined over $\Ring$. Let $\paramq^{reg}_{\Ring}$ denote the natural $\Ring$-form
of $\paramq^{reg}$.

Recall, Section \ref{SSS_assum_XY}, that $\Ca_{\nu}(O_{X_{\paramq,\Ring}})\cong \Ring[\paramq][X^T]$.
Replacing $\Ring$ with its finite localization we achieve that the sheaf $\Ca_{\nu}(\A^\theta_{\paramq,\Ring})$ is a filtered deformation of $\Ca_{\nu}(O_{X_{\paramq,\Ring}})$.
This follows because the filtration on $\Ca_{\nu}(\A^\theta_{\paramq})$ is defined over $\Ring$,
$\Ca_{\nu}(O_{X_{\paramq,\Ring}})\cong \Ring[\paramq][X^T]\rightarrow \gr\Ca_{\nu}(\A^\theta_{\paramq,\Ring})$ and this homomorphism becomes an isomorphism after
tensoring with $\C$.

We get a homomorphism
$\Ca_{\nu}(\A_{\paramq,\Ring})\rightarrow \Ring[\paramq][X^T]$
similarly to the case of $\C$. The $\Ring[\param]$-module $\Ca_{\nu}(\Ring[X_{\param}])$
is finitely generated for the same reason as in the case of $\C$.
From $\Ca_{\nu}(\Ring[X_{\param}])\twoheadrightarrow
\gr \Ca_{\nu}(\A_{\paramq,\Ring})$ we deduce that
$\Ca_{\nu}(\A_{\paramq,\Ring})$ is a finitely generated $\Ring$-module.

Since the homomorphism $\Ca_{\nu}(\A_{\paramq^{reg},\Ring})\rightarrow \Ring[\paramq^{reg}][X^T]$
is an isomorphism after a base change to $\C$, it is still an isomorphism after a finite
localization of $\Ring$. We assume from now on that this is an isomorphism.

Thanks to the homomorphism $\Ca_{\nu}(\A_{\paramq,\Ring})\rightarrow \Ring[\paramq][X^T]$
we can define Verma module $\Delta_{\nu,\paramq,\Ring}(\mathsf{x})$ for $\mathsf{x}\in X^T$ similarly
to the case of $\C$. Clearly, $\C\otimes_{\Ring}\Delta_{\nu,\paramq,\Ring}(\mathsf{x})=\Delta_{\nu,\paramq}(\mathsf{x})$.
By Lemma \ref{Lem:gen_flatness}, after a finite localization of $\Ring$ we can assume that
$\Delta_{\nu,\paramq,\Ring}$ is flat over $\Ring$. It follows that all graded components of $\Delta_{\nu,\paramq,\Ring}(\mathsf{x})$ are finitely generated  $\Ring[\paramq]$-modules.

\begin{Lem}\label{Lem:Verma_R_flatness}
After a finite localization of $\Ring$, each  $\Delta_{\nu,\paramq^{reg},\Ring}(\mathsf{x})$
is projective over $\Ring[\paramq^{reg}]$.
\end{Lem}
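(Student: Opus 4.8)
The plan is to upgrade ordinary flatness of $\Delta_{\nu,\paramq,\Ring}(\mathsf{x})$ over $\Ring$ to projectivity over the polynomial ring $\Ring[\paramq^{reg}]$. Since $\Ring[\paramq^{reg}]$ is a finite localization of $\Ring[\paramq]$, and $\Delta_{\nu,\paramq,\Ring}(\mathsf{x})$ is a finitely generated graded module over $\A_{\paramq,\Ring}$ (each graded component a finitely generated $\Ring$-module), the module is not finitely generated over $\Ring[\paramq]$, so I cannot appeal directly to the graded Nakayama/miracle flatness statements for finite modules. Instead, I would work one graded component at a time: fix a $T$-weight $\kappa$ and consider the $\Ring[\paramq]$-module $M_\kappa:=\Delta_{\nu,\paramq,\Ring}(\mathsf{x})_\kappa$, which \emph{is} finitely generated over $\Ring[\paramq]$. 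The goal becomes: after a finite localization of $\Ring$, each $M_\kappa\otimes_{\Ring[\paramq]}\Ring[\paramq^{reg}]$ is projective over $\Ring[\paramq^{reg}]$, uniformly in $\kappa$.

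First I would recall that over $\C$ we already know $\Delta_{\nu,\paramq^{reg}}(\mathsf{x})$ is projective (indeed free) over $\C[\paramq^{reg}]$: this follows from Lemma \ref{Lem:classes_equal} and the surrounding discussion, since on $\paramq^{reg}$ the Verma $\Delta_{\nu,\paramq^{reg}}(\mathsf{x})$ is standard in a highest weight category over $\C[\paramq^{reg}]$, hence flat, and a finitely presented flat module over a Noetherian domain localized so that it is free on each component is projective. More concretely, the key local input is that for regular $\lambda$ the fiber $\Delta_{\nu,\lambda}(\mathsf{x})$ has the ``correct'' character (Proposition \ref{Prop:generic}(2),(3) together with Lemma \ref{Lem:Ext_vanishing}), so the ranks of the fibers of $M_\kappa$ are constant over $\paramq^{reg}_{\C}$. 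Now I would descend this: the locus in $\operatorname{Spec}\Ring[\paramq]$ where the coherent sheaf $\widetilde{M_\kappa}$ is locally free of the generic rank is open; its complement, intersected with the generic fiber $\operatorname{Spec}\C[\paramq]$, is contained in the hyperplane arrangement cut out by $\tilde\Sigma_\Gamma$ (this is exactly the content of Proposition \ref{Prop:generic}, whose proof already produced these finitely many hyperplanes). So over $\C[\paramq^{reg}]$ the sheaf is locally free; by spreading out (generic flatness / constructibility of the non-free locus over $\Spec\Ring$), after a finite localization of $\Ring$ the sheaf $\widetilde{M_\kappa}$ is locally free over $\Ring[\paramq^{reg}]$, and since $\Ring[\paramq^{reg}]$ is Noetherian a finitely generated locally free module is projective.

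The one genuine subtlety — and the step I expect to be the main obstacle — is \textbf{uniformity in $\kappa$}: a priori each weight component $M_\kappa$ could force a further finite localization of $\Ring$, and there are infinitely many $\kappa$, so naively one would need to invert infinitely many primes. To handle this I would argue as in the proofs of Lemma \ref{Lem:gen_flatness} and \cite[Lemma 3.5]{BL}: the module $\Delta_{\nu,\paramq,\Ring}(\mathsf{x})$ is generated over $\A_{\paramq,\Ring}$ by finitely many weight components (in fact by the single highest-weight line $\Ring[\paramq]\otimes \C_{\mathsf{x}}$ in weight $\mathsf{wt}(\mathsf{x})$), so all the relevant homological information — the $\Ring[\paramq^{reg}]$-flatness of the whole module, equivalently the vanishing of $\operatorname{Tor}_1^{\Ring[\paramq^{reg}]}(\kappa(\lambda),\Delta_{\nu,\paramq^{reg},\Ring}(\mathsf{x}))$ for all points $\lambda\in\Spec\Ring[\paramq^{reg}]$ — is controlled by finitely many noetherian conditions. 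Concretely: the kernel and cokernel of a fixed presentation of the finitely generated $\A_{\paramq^{reg},\Ring}$-module $\Delta_{\nu,\paramq^{reg},\Ring}(\mathsf{x})$ are finitely generated, so by Lemma \ref{Lem:gen_flatness} (applied to the kernel of the natural map from a projective cover) a single finite localization of $\Ring$ makes the module, hence every graded component, projective over $\Ring[\paramq^{reg}]$. This reduces the infinitely-many-$\kappa$ problem to one finite-generation statement, after which the argument closes exactly as above. Finally, since $\Ring[\paramq^{reg}]$ need not have the invariant basis property componentwise, ``projective'' is the right conclusion rather than ``free,'' matching the statement.
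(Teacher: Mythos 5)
Your per-component argument (fix a weight $\kappa$, observe that $M_\kappa$ is finitely generated over $\Ring[\paramq]$, use openness of the flat locus plus flatness over $\Q[\paramq^{reg}]$ to kill the bad locus after a finite localization of $\Ring$) is sound, and you correctly identify uniformity in $\kappa$ as the crux. But the fix you propose for that crux does not work. Lemma \ref{Lem:gen_flatness} produces, after a finite localization of $\Ring$, flatness of a finitely generated $\A_{\paramq,\Ring}$-module \emph{over $\Ring$} — not over $\Ring[\paramq^{reg}]$. Applying it to the kernel of a presentation $P\twoheadrightarrow \Delta_{\nu,\paramq^{reg},\Ring}(\mathsf{x})$ tells you that kernel is $\Ring$-torsion-free after localizing $\Ring$, which says nothing about $\operatorname{Tor}_1^{\Ring[\paramq^{reg}]}$ against residue fields of points of $\Spec\Ring[\paramq^{reg}]$. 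Indeed no statement of the shape ``after a finite localization of $\Ring$, a finitely generated $\A_{\paramq,\Ring}$-module becomes flat over $\Ring[\paramq]$'' can be true (consider a module supported on a single fiber over $\paramq$); the extra input that makes flatness over $\Ring$ cheap — the bad locus in $\Spec\Ring$ is a finite set of closed points once one knows there is no torsion over $\Q$ — has no analogue over the higher-dimensional base $\Ring[\paramq^{reg}]$ without further work. So the step ``a single finite localization of $\Ring$ makes the module projective over $\Ring[\paramq^{reg}]$'' is a non sequitur as written.

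The paper's mechanism for uniformity is different and is worth recording: one chooses an ascending $T_\Ring$-stable filtration on $\A_{\paramq,\Ring}$ with $\Ring[\paramq]$ in degree $0$ and $\gr\A_{\paramq,\Ring}=\Ring[\paramq][X]$, and a compatible good filtration on $\Delta_{\nu,\paramq^{reg},\Ring}(\mathsf{x})$. Then $\gr\Delta_{\nu,\paramq^{reg},\Ring}(\mathsf{x})$ is a \emph{single} finitely generated module over the commutative finitely generated $\Ring[\paramq^{reg}]$-algebra $\Ring[\paramq^{reg}][X]$, so generic flatness over the base $\Ring[\paramq^{reg}]$, iterated by Noetherian induction, yields a stratification of $\Spec\Ring[\paramq^{reg}]$ into finitely many locally closed pieces on each of which every graded component is projective — all $\kappa$ at once. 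Combining this finite stratification with the known flatness of $\Q\otimes_\Ring\Delta_{\nu,\paramq^{reg},\Ring}(\mathsf{x})$ over $\Q[\paramq^{reg}]$ shows that the non-flat strata disappear after a single finite localization of $\Ring$. If you replace your appeal to Lemma \ref{Lem:gen_flatness} by this associated-graded/generic-flatness step, the rest of your outline goes through.
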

\begin{proof}
We can filter the algebra $\A_{\paramq,\Ring}$ (with an ascending $T_\Ring$-stable $\Z_{\geqslant 0}$-filtration) in such a way that $\Ring[\paramq]$ is in degree $0$ and $\gr\A_{\paramq,\Ring}=\Ring[\paramq][X]$,
compare to \cite[Lemma 3.5]{BL}.
Equip $\Delta_{\nu,\paramq^{reg},\Ring}(\mathsf{x})$ with a compatible $T_{\Ring}$-stable good filtration.
Then $\gr \Delta_{\nu,\paramq^{reg},\Ring}(\mathsf{x})$ is a finitely generated $R[\paramq^{reg}][X]$-module.
Hence, by generic flatness, there is a finite localization $\Ring[\paramq^{reg}]^0$ of
$\Ring[\paramq^{reg}]$ such that  $\Ring[\paramq^{reg}]^0\otimes_{\Ring[\paramq^{reg}]}\gr \Delta_{\nu,\paramq^{reg},\Ring}(\mathsf{x})$ is flat over $\Ring[\paramq^{reg}]^0$. It follows that
$\Ring[\paramq^{reg}]^0\otimes_{\Ring[\paramq]}\Delta_{\nu,\paramq^{reg},\Ring}(\mathsf{x})$ is flat over $\Ring[\paramq^{reg}]^0$. So every graded component of $\Delta_{\nu,\paramq^{reg},\Ring}(\mathsf{x})$
becomes flat (hence projective) after localization to $\Ring[\paramq^{reg}]^0$.

Now let $\mathsf{S}$ be a quotient of $\Ring[\paramq^{reg}]$.
An argument similar to the previous paragraph show that
the specialization $$\mathsf{S}\otimes_{\Ring[\paramq^{reg}]}\Delta_{\nu,\paramq^{reg},\Ring}(\mathsf{x})$$
becomes flat after a finite localization
of the quotient $\mathsf{S}$.

We deduce that one can split $\paramq^{reg}_{\Ring}$ into a disjoint union locally closed irreducible
subschemes in such a way that the restriction of any graded component to any of the subschemes
is projective.  On the other hand, we know that  $\Q\otimes_{\Ring}\Delta_{\nu,\paramq^{reg},\Ring}(\mathsf{x})$ is flat over $\Q[\paramq^{reg}]$ (because the similar statement holds over $\C$). Combining these two observations,
we see that after a finite localization of $\Ring$  each graded component
of $\Delta_{\nu,\paramq^{reg},\Ring}(\mathsf{x})$ is flat over $\Ring[\paramq^{reg}]$.
%Together with the results of the previous paragraph this shows that
%$\Delta_{\nu,\paramq^{reg},\Ring}(\mathsf{x})$ becomes flat (hence projective) over $\Ring[\paramq^{reg}]$
%after a finite localization of $\Ring$.
\end{proof}

Now let us study translations of Verma modules. We can consider equivariant Verma modules
$\Delta_{\nu,\paramq,\Ring}(\mathsf{x},\kappa)$ and equivariant lifts
$\A_{\paramq,\chi,\Ring}$ with $\chi\in \mathcal{P}_1,$
as before, see Section \ref{SSS_equiv_K_0_behavior}. Recall that
for $\chi\in \operatorname{Pic}^T(X)$, we write
$\mathsf{wt}_\chi(\mathsf{x})$ for the weight of $T$ in the
fiber $O(\chi)$ at $\mathsf{x}$.

\begin{Lem}\label{Lem:Verma_transl_global}
After a finite localization of $\Ring$, we have a $T_\Ring$-equivariant
isomorphism
\begin{equation}\label{eq:Verma_transl_global}
\Delta_{\nu,\paramq^{reg[-\chi]},\Ring}(\mathsf{x},\kappa+\mathsf{wt}_\chi(\mathsf{x}))\cong \A_{\paramq^{reg[\chi]},\chi,\Ring}\otimes_{\A_{\paramq^{reg[\chi]},\Ring}}
\Delta_{\nu,\paramq^{reg[\chi]},\Ring}(\mathsf{x},\kappa).\end{equation}
\end{Lem}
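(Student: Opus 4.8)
The plan is to reduce to the corresponding statement over $\C$ (which is the content of Proposition \ref{Prop:transl_Verma}, combined with Proposition \ref{Prop:transl_grad_O} for the equivariant structure, applied to parameters in a single quantum chamber $\paramq^{reg[\chi]}$) by a flatness-and-torsion argument in the spirit of Lemma \ref{Lem:Morita_Ring} and Lemma \ref{Lem:perv_Ring}. First I would construct the canonical $T_\Ring$-equivariant bimodule homomorphism
\begin{equation}\label{eq:Verma_transl_homR}
\A_{\paramq^{reg[\chi]},\chi,\Ring}\otimes_{\A_{\paramq^{reg[\chi]},\Ring}}
\Delta_{\nu,\paramq^{reg[\chi]},\Ring}(\mathsf{x},\kappa)\longrightarrow
\Delta_{\nu,\paramq^{reg[-\chi]},\Ring}(\mathsf{x},\kappa+\mathsf{wt}_\chi(\mathsf{x})),
\end{equation}
analogous to the map used in Proposition \ref{Prop:transl_Verma}: the highest weight vector of the source maps to the highest weight vector of the target, and one checks this is well defined over $\Ring$ because the relevant identifications of Cartan subquotients are already defined over $\Ring$ after the finite localizations made in the previous section (in particular $\Ca_\nu(\A_{\paramq^{reg},\Ring})\xrightarrow{\sim}\Ring[\paramq^{reg}][X^T]$, and the $T_\Ring$-equivariant structure on $\A_{\paramq,\chi,\Ring}$ is fixed as in Section \ref{SSS_equiv_K_0_behavior}).

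Next I would identify the kernel $K$ and cokernel $C$ of \eqref{eq:Verma_transl_homR}. Both are finitely generated $\A_{\paramq^{reg[-\chi]},\Ring}$-modules (the source is finitely generated since $\A_{\paramq^{reg[\chi]},\chi,\Ring}$ is finitely generated over $\A_{\paramq^{reg[\chi]},\Ring}$ by the discussion preceding Lemma \ref{Lem:gen_flatness}, and the target is finitely generated as well), hence after a finite localization of $\Ring$ we may assume, by Lemma \ref{Lem:gen_flatness}, that $K$, $C$ and all three modules in \eqref{eq:Verma_transl_homR} are flat over $\Ring$. But by Proposition \ref{Prop:transl_Verma} together with Proposition \ref{Prop:transl_grad_O}, the base change of \eqref{eq:Verma_transl_homR} to $\C$ — equivalently, its specialization at every $\lambda\in\paramq^{reg[\chi]}$, which lands in a single quantum chamber where the hypotheses of Proposition \ref{Prop:transl_Verma} are satisfied by Proposition \ref{Prop:generic} and the simplicity arguments of Proposition \ref{Prop:cat_O_simpl} — is an isomorphism. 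So $\C\otimes_\Ring K=\C\otimes_\Ring C=0$; since $K,C$ are flat (hence torsion-free) over $\Ring$, this forces $K=C=0$, giving the desired isomorphism. (One applies the direct analog of Lemma \ref{Lem:gen_flatness} over the base $\Ring[\paramq^{reg[\chi]}]\otimes_\Ring\A_{\paramq^{reg[-\chi]}}^{opp}$, exactly as in the last line of the proof of Lemma \ref{Lem:Morita_Ring}.)

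The main obstacle is checking that the comparison map \eqref{eq:Verma_transl_homR} is actually defined over $\Ring$ and is $T_\Ring$-equivariant for the chosen equivariant structure — one must pin down the weight shift $\mathsf{wt}_\chi(\mathsf{x})$ integrally, using that $X^T$ and the line bundles $O(\chi)$ are defined over $\Ring$ (Section \ref{SSS_assum_XY}) and that the $T_\Ring$-action on the fiber $O(\chi)_{\mathsf{x}}$ is given by the character $\mathsf{wt}_\chi(\mathsf{x})$, as in the proof of Proposition \ref{Prop:transl_grad_O}. Once the map exists over $\Ring$, the rest is the routine generic-flatness bookkeeping described above, and the finitely many finite localizations of $\Ring$ (one for each $\chi\in\mathcal{P}_1$ and each $\mathsf{x}\in X^T$) can be taken simultaneously.
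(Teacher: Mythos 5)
Your overall strategy -- reduce to Propositions \ref{Prop:transl_grad_O} and \ref{Prop:transl_Verma} and then descend to $\Ring$ by a finiteness/torsion argument -- is the same as the paper's, but the middle of your argument has a genuine gap, and it is exactly where the real work of the paper's proof lies. You write that the base change of your comparison map to $\C$ is ``equivalently, its specialization at every $\lambda\in\paramq^{reg[\chi]}$.'' These are not equivalent. Proposition \ref{Prop:transl_Verma} gives an isomorphism at each closed point $\lambda$; to conclude that the map of \emph{families} over $\C[\paramq^{reg[\chi]}]$ is an isomorphism requires a further argument. Fiberwise surjectivity plus Nakayama does handle the cokernel, but fiberwise injectivity does not yield injectivity of the family map unless one knows, say, that the target is flat over the base, or one argues as the paper does: show that the top graded component of $\A_{\paramq^{reg[\chi]},\chi}\otimes\Delta_{\nu,\paramq^{reg[\chi]}}(\mathsf{x},\kappa)$ -- a priori only a finitely generated module with one-dimensional fibers -- is actually \emph{free} of rank one over $\C[\paramq^{reg[\chi]}]$, using that $\paramq^{reg[\chi]}$ is factorial, so that the $T$-equivariant Hom space out of the Verma module is free of rank one and invertible elements give isomorphisms. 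The same factoriality argument over $\Q$ is then what makes the isomorphism rational; your proposal treats both the globalization over the parameter space and the rationality as automatic.

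A secondary problem is the direction of your map \eqref{eq:Verma_transl_homR}. There is no canonical homomorphism out of $\A_{\paramq^{reg[\chi]},\chi,\Ring}\otimes\Delta_{\nu,\paramq^{reg[\chi]},\Ring}(\mathsf{x},\kappa)$ ``sending highest weight vector to highest weight vector'': that object has no universal property of this kind. The canonical map goes the other way, out of the Verma module, via $\Hom_{\A}(\Delta_\nu(N),M)\cong\Hom(N,M)$ taken in the appropriate graded component -- and even then, producing it over $\Ring$ requires first trivializing that component as a rank-one module, which is again the factoriality step above, not merely the identification of Cartan subquotients over $\Ring$. Once these two points are repaired, your concluding torsion/finite-localization bookkeeping (kernel and cokernel are finitely generated, vanish after $\otimes\,\Q$, hence die after a finite localization of $\Ring$) is correct and matches the paper's final step.
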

\begin{proof}
By (2) of Proposition
\ref{Prop:transl_Verma}, we know that
\begin{equation}\label{eq:Verma_trans_global_C}
\Delta_{\nu,\paramq^{reg[-\chi]}}(\mathsf{x},\kappa+\mathsf{wt}_\chi(\mathsf{x}))\cong \A_{\paramq^{reg[\chi]},\chi}\otimes_{\A_{\paramq^{reg[\chi]}}}\Delta_{\nu,\paramq^{reg[\chi]}}(\mathsf{x},\kappa).
\end{equation}

Now it is enough to show that a version of (\ref{eq:Verma_transl_global}) holds over
$\Q$ (then, since our modules are finitely generated, we can take a finite localization
of $\Ring$ to achieve (\ref{eq:Verma_transl_global})). But an isomorphism in
(\ref{eq:Verma_transl_global}) is rational if and only if its highest degree component
is rational. Again, $\Q[\paramq^{reg[\chi]}]$ is a factorial algebra, so the highest degree component
of the right hand side is a free rank one module. The analog of
(\ref{eq:Verma_transl_global}) over $\Q$ follows.
\end{proof}

\subsubsection{Tor vanishing}
We can also consider the right-handed versions of Verma modules, $\Delta^r_{-\nu,\paramq,\Ring}(\mathsf{x})$.
Straightforward analogs of Lemmas \ref{Lem:Verma_R_flatness} and \ref{Lem:Verma_transl_global} hold
for these modules.

\begin{Lem}\label{Lem:Ext_vanish_global}
After a finite localization of $\Ring$, we have
\begin{align}\label{eq:global_Tor_vanishing}
&\operatorname{Tor}^i_{\A_{\paramq^{reg},\Ring}}(\Delta^r_{-\nu,\paramq^{reg},\Ring}(\mathsf{x}), \Delta_{\nu,\paramq^{reg},\Ring}(\mathsf{x}'))\cong\Ring[\paramq^{reg}]^{\oplus \delta_{i0}\delta_{\mathsf{x},\mathsf{x}'}}.
\end{align}
\end{Lem}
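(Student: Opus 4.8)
\emph{Overview.} The plan is to deduce (\ref{eq:global_Tor_vanishing}) from the pointwise statement over $\C$ and then to spread it out over $\Ring$. By Proposition~\ref{Prop:generic}(2) every regular $\lambda$ satisfies the conditions of Lemma~\ref{Lem:Ext_vanishing}, so by Lemma~\ref{Lem:Ext_Tor} and the definition $\nabla_\nu(\mathsf{x})=\Delta^r_{-\nu}(\mathsf{x})^\vee$ we get $\dim\Tor^i_{\A_\lambda}(\Delta^r_{-\nu,\lambda}(\mathsf{x}),\Delta_{\nu,\lambda}(\mathsf{x}'))=\delta_{i0}\delta_{\mathsf{x},\mathsf{x}'}$ for every $\lambda\in\paramq^{reg}$. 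Write $B:=\Ring[\paramq^{reg}]$ and let $T_i$ denote the left-hand side of (\ref{eq:global_Tor_vanishing}), viewed as a $B$-module.

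\emph{Step 1: finite generation over $B$.} Passing to associated graded modules with respect to compatible $T_\Ring$-stable good filtrations (as in the proof of Lemma~\ref{Lem:Verma_R_flatness}), one sees, exactly as in the proof of Proposition~\ref{Prop:generic}(2), that $T_i$ is supported on a subscheme finite over $\paramq^{reg}$, because the contracting and repelling loci for $\nu$ in $Y$ meet only at $0$. Hence $T_i$ is a finitely generated $B$-module, and the same holds over $\Q$ and over $\C$. Arguing as in Lemma~\ref{Lem:fin_hom_dim1}, after a finite localization of $\Ring$ we may also assume $\A_{\paramq^{reg},\Ring}$ has finite global dimension, so $T_i=0$ for $i$ larger than a bound independent of $\Ring$; in particular only finitely many triples $(i,\mathsf{x},\mathsf{x}')$ are relevant.

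\emph{Step 2: the statement over $\C$ and $\Q$.} The algebra $\A_{\paramq^{reg}}$ is flat over $\C[\paramq^{reg}]$ (a filtered deformation of $\C[X_{\paramq^{reg}}]$), both Verma modules are flat over $\C[\paramq^{reg}]$ by the $\C$-form of Lemma~\ref{Lem:Verma_R_flatness}, and $\A_{\paramq^{reg}}$ has finite global dimension by Remark~\ref{Rem:hom_dim}. Therefore $\Delta^r_{-\nu,\paramq^{reg}}(\mathsf{x})\otimes^L_{\A_{\paramq^{reg}}}\Delta_{\nu,\paramq^{reg}}(\mathsf{x}')$ is represented by a perfect complex $P_\bullet$ of $\C[\paramq^{reg}]$-modules whose derived fibre at $\lambda\in\paramq^{reg}$ computes $\Tor^\bullet_{\A_\lambda}(\Delta^r_{-\nu,\lambda}(\mathsf{x}),\Delta_{\nu,\lambda}(\mathsf{x}'))$, and by the Overview the dimension of $H_i$ of this fibre equals $\delta_{i0}\delta_{\mathsf{x},\mathsf{x}'}$ for every $\lambda$. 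Constancy of all these fibre dimensions, via the usual cohomology-and-base-change analysis of perfect complexes over the reduced Noetherian ring $\C[\paramq^{reg}]$, forces each $H_i(P_\bullet)=\Tor^i_{\A_{\paramq^{reg}}}(\Delta^r_{-\nu,\paramq^{reg}}(\mathsf{x}),\Delta_{\nu,\paramq^{reg}}(\mathsf{x}'))$ to be locally free of rank $\delta_{i0}\delta_{\mathsf{x},\mathsf{x}'}$; it is then free, since $\C[\paramq^{reg}]$, a localization of a polynomial ring over a field, has trivial Picard group. The same conclusion holds over $\Q$: $\Q\otimes_\Ring T_i$ is a finitely generated $\Q[\paramq^{reg}]$-module whose base change to $\C$ is the free module just described, so by faithfully flat descent of projectivity it is projective over $\Q[\paramq^{reg}]$, hence free of the same rank.

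\emph{Step 3 and the main obstacle.} Since $\Ring\to\Q$ is flat, $\Q\otimes_\Ring T_i$ is the $\Q$-form of the Tor module of Step 2, hence free over $\Q[\paramq^{reg}]$ of rank $r_i:=\delta_{i0}\delta_{\mathsf{x},\mathsf{x}'}$. Lifting a $\Q[\paramq^{reg}]$-basis (after clearing integer denominators) to elements of $T_i$ gives $\phi_i\colon B^{\oplus r_i}\to T_i$ with $\phi_i$ an isomorphism after base change to $\Q$; then $\ker\phi_i$ and $\operatorname{coker}\phi_i$ are finitely generated $B$-modules annihilated by base change to $\Q$, hence each annihilated by a nonzero integer, and inverting those integers in $\Ring$ makes $\phi_i$ an isomorphism. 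By Step 1 a single finite localization of $\Ring$ handles all relevant $(i,\mathsf{x},\mathsf{x}')$ at once, yielding (\ref{eq:global_Tor_vanishing}). The essential difficulty is that the rank of $T_i$ cannot be verified directly at the positive-characteristic points of $\operatorname{Spec}B$ — doing so is much of the point of the rest of the paper — so it must be computed purely in characteristic zero and then propagated, which is exactly what forces the finite localization of $\Ring$ and makes the finite-generation input of Step 1 (resting on the contracting and repelling loci of $\nu$ in $Y$ meeting at a single point) indispensable.
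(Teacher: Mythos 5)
Your proof is correct and follows essentially the same strategy as the paper's: finite generation of the Tor modules over $\Ring[\paramq^{reg}]$ via the associated graded (using that the contracting and repelling loci for $\nu$ in $Y$ meet only at $0$), the pointwise characteristic-zero computation of the fibers, finiteness of the homological dimension to bound $i$, and a final finite localization of $\Ring$. The only differences are in execution: you make the passage from the pointwise to the global characteristic-zero statement explicit via cohomology-and-base-change for perfect complexes (which the paper merely ``recalls''), and you spread out over $\Ring$ by lifting a $\Q[\paramq^{reg}]$-basis and killing integer torsion, where the paper instead invokes the generic-flatness stratification argument from the proof of Lemma \ref{Lem:Verma_R_flatness}.
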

\begin{proof}
%(\ref{eq:global_Ext_vanishing}) follows from (\ref{eq:global_Tor_vanishing}) once we observe
%that $\nabla_{\nu,\paramq^{reg},\Ring}(\mathsf{x})$ coincides with the restricted $R\Hom_{\Ring[\paramq^{reg}]}(\Delta^{r}_{-\nu,\paramq^{reg},\Ring},\Ring[\paramq^{reg}])$. So it is enough
%to show (\ref{eq:global_Tor_vanishing}).
Note that, for $\lambda\in \paramq^{reg}_{\C}$, we have
$$\C_{\lambda}\otimes^L_{\Ring[\paramq^{reg}]}\left(\Delta^r_{-\nu,\paramq^{reg},\Ring}(\mathsf{x})\otimes^L_{\A_{\paramq^{reg},\Ring}} \Delta_{\nu,\paramq^{reg},\Ring}(\mathsf{x}')
\right)=\Delta_{-\nu,\lambda}^r(\mathsf{x})\otimes^L_{\A_\lambda}\Delta_{\nu,\lambda}(\mathsf{x}').$$
The right hand side vanishes if $\mathsf{x}\neq \mathsf{x}'$ and is the one-dimensional space in
homological degree $0$ if $\mathsf{x}=\mathsf{x}'$.

Let us show that the left hand side of
(\ref{eq:global_Tor_vanishing}) is a finitely generated $\Ring[\paramq^{reg}]$-module.
This will follow once we check that
$\operatorname{Tor}^i_{\A_{\paramq,\Ring}}(\A_{\paramq,\Ring}/\A^{<0}_{\paramq,\Ring}\A_{\paramq,\Ring},
\A_{\paramq,\Ring}/\A_{\paramq,\Ring}\A_{\paramq,\Ring}^{>0})$ is a finitely generated
$\Ring[\paramq]$-module for all $i$. The modules $\A_{\paramq,\Ring}/\A^{<0}_{\paramq,\Ring}\A_{\paramq,\Ring}, \A_{\paramq,\Ring}/\A_{\paramq,\Ring}\A_{\paramq,\Ring}^{>0}$ are filtered. The Tor inherits a filtration
whose associated graded is a $\Ring[\paramq]$-module subquotient of
$$\operatorname{Tor}^i_{\Ring[Y_{\param}]}(\gr(\A_{\paramq,\Ring}/\A^{<0}_{\paramq,\Ring}\A_{\paramq,\Ring}),
\gr(\A_{\paramq,\Ring}/\A_{\paramq,\Ring}\A_{\paramq,\Ring}^{>0})).$$ The intersection of the supports of the arguments in $\Ring[Y_{\param}]$ is finite over $\operatorname{Spec}(\Ring[\param])$, so the latter Tor is
a finitely generated $\Ring[\param]$-module.
%The latter is a consequence of  the observation that
%$$\gr\A_{\paramq,\Ring}/(\gr(\A^{<0}_{\paramq,\Ring}\A_{\paramq,\Ring})+
%\gr(\A_{\paramq,\Ring}\A_{\paramq,\Ring}^{>0}))$$
%is a finitely generated graded $\Ring[\param]$-module.

Now let us recall that $$\Q\otimes_{\Ring}\operatorname{Tor}^i_{\A_{\paramq^{reg},\Ring}}(\Delta^r_{-\nu,\paramq^{reg},\Ring}(\mathsf{x}), \Delta_{\nu,\paramq^{reg},\Ring}(\mathsf{x}'))$$
is zero if $i\neq 0, \mathsf{x}\neq \mathsf{x}'$ and is $\Q[\paramq^{reg}]$
else. Similarly to  the proof of Lemma \ref{Lem:Verma_R_flatness} (see the last paragraph
there, in particular), it follows that
$$\operatorname{Tor}^i_{\A_{\paramq^{reg},\Ring}}(\Delta^r_{-\nu,\paramq,\Ring}(\mathsf{x}), \Delta_{\nu,\paramq^{reg},\Ring}(\mathsf{x}'))$$
becomes flat over $\Ring[\paramq^{reg}]$ after a finite localization of $\Ring$
(depending on $i$).

It remains to check  that the algebra $\A_{\paramq^{reg},\Ring}$ has finite homological dimension
after a finite localization of $\Ring$ (so that we only have finitely many
non-vanishing Tor's). Similarly to Lemma \ref{Lem:fin_hom_dim1}, this will follow if we check
that $\A_{\paramq^{reg}}$ has finite homological dimension. By Remark \ref{Rem:hom_dim}, the homological dimension of $\A_{\paramq^{reg}}$ does not exceed
$2\dim X+\dim \param$.

%It is enough to show this
%after passing to associated graded modules.

So after a finite localization
of $\Ring$, (\ref{eq:global_Tor_vanishing}) holds (for all $\mathsf{x},\mathsf{x}',i$).
\end{proof}

\subsection{Forms of simple, projective, etc. objects in $\mathcal{O}$}\label{SSS_simple_proj_reductions}
Let us discuss forms of various objects  in $\mathcal{O}_\nu(\A_\lambda),\lambda\in \Lambda,$ over $\Ring$.
First of all, let us note that, by our rationality assumptions on $\A_\paramq$ and $\Ca_\nu(\A_{\paramq})$, for $\lambda\in \paramq^{reg}_{\Q}$, the category $\mathcal{O}_\nu(\A_\lambda)$ is defined over $\Q$. For $\mathsf{x}\in X^T$, let $L_{\nu,\lambda,\Q}(\mathsf{x}),
P_{\nu,\lambda,\Q}(\mathsf{x}),I_{\nu,\lambda,\Q}(\mathsf{x}),\nabla_{\nu,\lambda,\Q}(\mathsf{x})$
denote the simple, the projective, the injective and the costandard objects in $\mathcal{O}_\nu(\A_{\lambda,\Q})$
labelled by $\mathsf{x}$.

Let us pick some $T_{\Ring}$-stable $\Ring$-forms $L_{\nu,\lambda,\Ring}(\mathsf{x})\subset
L_{\nu,\lambda,\Q}(\mathsf{x}), \nabla_{\nu,\lambda,\Ring}(\mathsf{x})
\subset \nabla_{\nu,\lambda,\Q}(\mathsf{x}), P_{\nu,\lambda,\Ring}(\mathsf{x})\subset
P_{\nu,\lambda,\Q}(\mathsf{x}), I_{\nu,\lambda,\Ring}(\mathsf{x})\subset
I_{\nu,\lambda,\Q}(\mathsf{x})$.

\begin{Lem}\label{Lem:simple_proj_inj_Ring}
After a finite localization of $\Ring$, for every $\lambda\in \Lambda$, we achieve the following:
\begin{enumerate}
\item The objects $\Delta_{\nu,\lambda,\Ring}(\mathsf{x}),
\nabla_{\nu,\lambda,\Ring}(\mathsf{x})$ are filtered by
$L_{\nu,\lambda,\Ring}(\mathsf{x}')$'s. Moreover, $\Delta_{\nu,\lambda,\Ring}(\mathsf{x})
\twoheadrightarrow L_{\nu,\lambda,\Ring}(\mathsf{x})\hookrightarrow
\nabla_{\nu,\lambda,\Ring}(\mathsf{x})$.
\item The objects $P_{\nu,\lambda,\Ring}(\mathsf{x})$ are filtered by
$\Delta_{\nu,\lambda,\Ring}(\mathsf{x}')$'s and the objects
$I_{\nu,\lambda,\Ring}(\mathsf{x})$ are filtered by
$\nabla_{\nu,\lambda,\Ring}(\mathsf{x}')$'s (in the increasing
order with respect to  $\leqslant_{\nu,\lambda}$).
\item Every object $L_{\nu,\lambda,\Ring}(\mathsf{x})$ has a finite resolution
$$\ldots\rightarrow P_{1,\Ring}\rightarrow P_{0,\Ring}\rightarrow L_{\nu,\lambda,\Ring}(\mathsf{x})\rightarrow 0,$$
where each $P_{i,\Ring}$ is the direct sum of $P_{\nu,\lambda,\Ring}(\mathsf{x}')$'s. Similarly,
$L_{\nu,\lambda,\Ring}(\mathsf{x})$ has a resolution
$$0\rightarrow L_{\nu,\lambda,\Ring}(\mathsf{x})\rightarrow I_{0,\Ring}\rightarrow
I_{1,\Ring}\rightarrow \ldots$$
where each $I_{i,\Ring}$ is the direct sum of $I_{\nu,\lambda,\Ring}(\mathsf{x}')$'s.
\end{enumerate}
\end{Lem}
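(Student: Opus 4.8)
The statement is a package of "spreading-out" assertions: every property of the highest weight category $\mathcal{O}_\nu(\A_{\lambda,\Q})$ that involves only finitely many objects and finitely many maps descends, after inverting finitely many primes, to an $\Ring$-form. The guiding principle throughout will be the one already used repeatedly in the excerpt (e.g. in Lemmas \ref{Lem:gen_flatness}, \ref{Lem:Morita_Ring}, \ref{Lem:Verma_R_flatness}): a finitely generated module over $\Ring$ (or over $\Ring[\paramq^{reg}]$) becomes flat after a finite localization, a morphism of such modules that becomes an isomorphism over $\Q$ is an isomorphism after a finite localization, and a short exact sequence over $\Q$ lifts to one over $\Ring$ after a finite localization. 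Since $\Lambda$ is finite and, for each $\lambda\in\Lambda$, the category $\mathcal{O}_\nu(\A_{\lambda,\Q})$ has finitely many simples and each object in play is finitely generated, we only ever perform finitely many localizations, so the conclusion "after a finite localization of $\Ring$" is legitimate.

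\textbf{Step 1: set up the integral forms and reduce to maps of finitely generated $\Ring$-modules.} Recall from Lemma \ref{Lem:fin_hom_dim1} that $\A_{\lambda,\Ring}$ has finite homological dimension, and from Lemma \ref{Lem:Verma_R_flatness} (and its right-handed analogue) that $\Delta_{\nu,\lambda,\Ring}(\mathsf{x}),\nabla_{\nu,\lambda,\Ring}(\mathsf{x})$ are flat over $\Ring$ with finitely generated free graded components. The objects $L_{\nu,\lambda,\Q}(\mathsf{x}),\nabla_{\nu,\lambda,\Q}(\mathsf{x}),P_{\nu,\lambda,\Q}(\mathsf{x}),I_{\nu,\lambda,\Q}(\mathsf{x})$ all lie in $\mathcal{O}_\nu(\A_{\lambda,\Q})$, hence have finitely generated $\Ring$-forms (chosen in the statement); after a finite localization each becomes flat over $\Ring$ by Lemma \ref{Lem:gen_flatness}, and each graded $h$-eigenspace is a finite rank free $\Ring$-module. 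All the morphisms appearing in (1)--(3) (the surjection $\Delta\twoheadrightarrow L$, the inclusion $L\hookrightarrow\nabla$, the $\Delta$-filtration of $P$, the $\nabla$-filtration of $I$, and the differentials of the projective/injective resolutions of $L$) exist over $\Q$; pick them, clear denominators so that they are defined over $\Ring$, and it remains to check exactness and the filtration properties integrally.

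\textbf{Step 2: prove (1) and (2).} For (1): the cokernel of $\Delta_{\nu,\lambda,\Ring}(\mathsf{x})\to L_{\nu,\lambda,\Ring}(\mathsf{x})$ and the cokernel of $L_{\nu,\lambda,\Ring}(\mathsf{x})\hookrightarrow\nabla_{\nu,\lambda,\Ring}(\mathsf{x})$ (and the kernel of the latter) are finitely generated $\Ring$-modules which vanish after $\otimes_\Ring\Q$, hence are $\Ring$-torsion and vanish after a finite localization. For the statement that $\Delta_{\nu,\lambda,\Ring}(\mathsf{x})$ is filtered by $L_{\nu,\lambda,\Ring}(\mathsf{x}')$'s: take a composition series of $\Delta_{\nu,\lambda,\Q}(\mathsf{x})$, intersect each term with the chosen $\Ring$-form $\Delta_{\nu,\lambda,\Ring}(\mathsf{x})$ to get an $\Ring$-submodule filtration whose subquotients are $\Ring$-forms of the $L_{\nu,\lambda,\Q}(\mathsf{x}')$'s; after a finite localization each subquotient is flat, hence identifies with $L_{\nu,\lambda,\Ring}(\mathsf{x}')$ up to the usual "isomorphism after localization" argument applied to the comparison map with the chosen form. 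The same argument handles $\nabla$ and, for (2), the $\Delta$-filtration of $P$ and the $\nabla$-filtration of $I$; the ordering with respect to $\leqslant_{\nu,\lambda}$ is inherited from the characteristic-$0$ filtration since it is a property of which labels appear below which.

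\textbf{Step 3: prove (3).} Over $\Q$, $\mathcal{O}_\nu(\A_{\lambda,\Q})$ is a highest weight category with $\A_{\lambda,\Q}$ of finite homological dimension (Proposition \ref{Prop:generic}(1)), so each $L_{\nu,\lambda,\Q}(\mathsf{x})$ has a finite projective resolution by sums of $P_{\nu,\lambda,\Q}(\mathsf{x}')$'s and a finite injective resolution by sums of $I_{\nu,\lambda,\Q}(\mathsf{x}')$'s. Choose such a resolution, clear denominators of all differentials so it is a complex of $\Ring$-forms $P_{i,\Ring}:=\bigoplus P_{\nu,\lambda,\Ring}(\mathsf{x}')$ augmented to $L_{\nu,\lambda,\Ring}(\mathsf{x})$. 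Its homology objects are finitely generated $\Ring$-modules, concentrated in degree $0$ after $\otimes_\Ring\Q$; after a finite localization of $\Ring$ the positive homology, being $\Ring$-torsion and finitely generated, vanishes, and the degree-$0$ homology becomes flat and (by the comparison-map argument) agrees with $L_{\nu,\lambda,\Ring}(\mathsf{x})$. This gives the projective resolution; dualizing the argument (or applying it to $\mathcal{O}_{-\nu}(\A_{\lambda}^{opp})$ via contravariant duality, cf. the discussion around Lemma \ref{Lem:Ext_Tor}) gives the injective resolution.

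\textbf{Main obstacle.} None of the individual steps is deep; the real care is bookkeeping, namely arranging the localizations so they are genuinely finite and uniform over $\lambda\in\Lambda$, and — the one genuinely delicate point — ensuring that the chosen $\Ring$-forms $L_{\nu,\lambda,\Ring}(\mathsf{x})$ etc. are the "right" ones, i.e. that the comparison map between the form produced by a spreading-out argument and the form fixed in the statement is an isomorphism after localization. This is handled exactly as in the last paragraph of the proof of Lemma \ref{Lem:Verma_R_flatness}: stratify $\Spec\Ring$ so that on each stratum the relevant modules are flat, compare with the generic ($\Q$-)behaviour, and conclude flatness (hence the desired identification) after inverting finitely many primes. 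Because $\A_{\lambda,\Ring}$ has finite homological dimension and all modules in sight are finitely generated over $\Ring$, the process terminates, and assembling the finitely many localizations proves the lemma.
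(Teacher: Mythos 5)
Your proposal is correct and follows essentially the same route as the paper, which proves the lemma by the same descent chain ($\C \Rightarrow \Q \Rightarrow \Ring$ after a finite localization), only stated in three sentences where you spell out the generic-flatness and torsion-killing bookkeeping. The extra care you take in matching the spread-out $\Ring$-forms with the chosen ones is a reasonable elaboration of what the paper leaves implicit.
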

\begin{proof}
The proofs of (1)-(3) are similar, let us prove (2). This claim clearly holds over $\C$ and the
filtrations are defined over $\Q$. So it holds over $\Q$ as well. After a finite localization,
it holds over $\Ring$ too.
\end{proof}

\begin{Lem}\label{Lem:fin_gen_Ext}
Let $M_\Ring,N_{\Ring}$ be graded $\Ring$-lattices in
$M_\Q,N_\Q\in \mathcal{O}^T_\nu(\A_{\lambda,\Q})$. Then, for all $i$,
$\operatorname{Ext}^i_{\A_{\lambda,\Ring},T}(M_\Ring,N_{\Ring})$ is a finitely generated
$\Ring$-module.
\end{Lem}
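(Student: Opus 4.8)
The plan is to compute $\Ext^i_{\A_{\lambda,\Ring},T}(M_\Ring,N_\Ring)$ from a (partial) free resolution of $M_\Ring$. Since $T_\Ring$ is a split torus, a $T_\Ring$-equivariant $\A_{\lambda,\Ring}$-module is the same thing as an $\mathfrak{X}(T)$-graded $\A_{\lambda,\Ring}$-module, where $\A_{\lambda,\Ring}$ itself carries the $\mathfrak{X}(T)$-grading coming from the $T_\Ring$-action, and $\Hom_{\A_{\lambda,\Ring},T}$ is the space of degree-preserving homomorphisms; the Ext group in the statement is the corresponding graded Ext. The first point is that $\A_{\lambda,\Ring}$ is (graded-)Noetherian: it carries a filtration, as in the proof of Lemma \ref{Lem:Verma_R_flatness}, whose associated graded $\gr\A_{\lambda,\Ring}$ is a finitely generated $\Ring$-algebra (a quotient of $\Ring[Y]$), hence Noetherian since $\Ring$ is; and a filtered algebra with Noetherian associated graded is Noetherian. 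Moreover $M_\Ring$, being a graded $\Ring$-lattice in the finitely generated $\A_{\lambda,\Q}$-module $M_\Q$, is a finitely generated graded $\A_{\lambda,\Ring}$-module.

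Now fix $i$. Choosing homogeneous generators repeatedly, I build an exact sequence $P_{i+1}\to P_i\to\cdots\to P_0\to M_\Ring\to 0$ of graded $\A_{\lambda,\Ring}$-modules in which each $P_j$ is a finite direct sum of shifted copies $\A_{\lambda,\Ring}(\kappa)$ of the regular module; this is possible precisely because $\A_{\lambda,\Ring}$ is graded-Noetherian and $M_\Ring$ is finitely generated. Since the $\A_{\lambda,\Ring}(\kappa)$ are graded-projective, $\Ext^i_{\A_{\lambda,\Ring},T}(M_\Ring,N_\Ring)$ is computed as the cohomology at the $i$-th spot of the complex $\Hom_{\A_{\lambda,\Ring},T}(P_\bullet,N_\Ring)$, hence is a subquotient of $\Hom_{\A_{\lambda,\Ring},T}(P_i,N_\Ring)$. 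Because $\Hom_{\A_{\lambda,\Ring},T}(\A_{\lambda,\Ring}(\kappa),N_\Ring)$ is the weight-$(-\kappa)$ graded component of $N_\Ring$, the module $\Hom_{\A_{\lambda,\Ring},T}(P_i,N_\Ring)$ is a finite direct sum of $\mathfrak{X}(T)$-weight components of $N_\Ring$.

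It therefore suffices to show that each weight component $N_{\Ring,\mu}$ is a finitely generated $\Ring$-module. Since $N_\Q\in\mathcal{O}^T_\nu(\A_{\lambda,\Q})$, all generalized $h$-eigenspaces of $N_\Q$ are finite dimensional over $\Q$ (see Section \ref{SS_O_gen} and \cite[Lemma 3.13]{BLPW}), so each $T$-weight space $N_{\Q,\mu}$ is finite dimensional over $\Q$; as $N_\Ring$ is a graded $\Ring$-lattice, $N_{\Ring,\mu}$ is an $\Ring$-lattice in $N_{\Q,\mu}$, in particular a finitely generated $\Ring$-module. Hence $\Hom_{\A_{\lambda,\Ring},T}(P_i,N_\Ring)$ is a finitely generated $\Ring$-module, and since $\Ring$ is Noetherian its subquotient $\Ext^i_{\A_{\lambda,\Ring},T}(M_\Ring,N_\Ring)$ is finitely generated as well. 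There is no serious obstacle here: the only inputs needing care are the graded-Noetherianity of $\A_{\lambda,\Ring}$ and the finite generation of the weight components of $N_\Ring$, both of which are routine. (Alternatively one could resolve $M_\Ring$ using the $\Ring$-forms $P_{\nu,\lambda,\Ring}(\mathsf{x}')$ of Lemma \ref{Lem:simple_proj_inj_Ring} and part (3) of that lemma, but the graded free resolution above is more direct.)
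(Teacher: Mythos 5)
Your argument is correct and is essentially the paper's own proof: the paper also computes the Ext groups from a graded free resolution of $M_\Ring$ by modules of the form $\A_{\lambda,\Ring}\otimes_\Ring V_j$ with $V_j$ of finite rank, identifies $\Hom_{\A_{\lambda,\Ring},T}(\A_{\lambda,\Ring}\otimes_\Ring V_j, N_\Ring)$ with a finite sum of weight components of $N_\Ring$, and concludes from the finite generation of those components over $\Ring$. Your added justifications (graded Noetherianity of $\A_{\lambda,\Ring}$ via the filtration, finite-dimensionality of weight spaces of objects of $\mathcal{O}^T_\nu$) are correct and merely make explicit what the paper leaves implicit.
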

\begin{proof}
Let us pick a free graded resolution
$$\ldots\rightarrow\A_{\lambda,\Ring}\otimes_{\Ring} V_1\rightarrow
\A_{\lambda,\Ring}\otimes_{\Ring} V_0\rightarrow M_{\Ring}\rightarrow 0,$$
where the $V_i$'s a finite rank graded free $\Ring$-modules. The complex
$$\ldots\rightarrow V_1^*\otimes_{\Ring}N_\Ring\rightarrow V_0^*\otimes_{\Ring}N_{\Ring}\rightarrow 0$$
is graded. The Ext's
we want to compute are the degree $0$ components homology of this complex.
All graded components of $N_{\Ring}$ are finitely generated over $\Ring$
and our claim follows.
\end{proof}

Pick  $m\in\Z$ such that
$|c_{\nu,\lambda}(\mathsf{x}_1)-c_{\nu,\lambda}(\mathsf{x}_2)|\leqslant m$
for all $\mathsf{x}_1,\mathsf{x}_2$ in the same $h$-block.

\begin{Cor}\label{Lem:Ext_freeness}
After a finite localization of $\Ring$, we have the following. Let $M_{\Ring},N_{\Ring}$
be modules of the form $L_{\nu,\lambda,\Ring}(\mathsf{x}_i,\kappa_i), \Delta_{\nu,\lambda,\Ring}(\mathsf{x}_i,
\kappa_i),\nabla_{\nu,\lambda,\Ring}(\mathsf{x}_i,
\kappa_i) , P_{\nu,\lambda,\Ring}(\mathsf{x}_i,\kappa_i)$ or $I_{\nu,\lambda,\Ring}(\mathsf{x}_i,\kappa_i),
i=1,2$, where $\mathsf{x}_1,\mathsf{x}_2$ are in the same $h$-block and $|\kappa_1-\kappa_2|\leqslant m$.
Then for all $i$, the $\Ring$-modules $\Ext^i_{\A_{\lambda,\Ring},T}(M_{\Ring},N_{\Ring})$
are free and finitely generated over $\Ring$ (and, automatically,
$\C\otimes_{\Ring}\Ext^i_{\A_{\lambda,\Ring},T}(M_{\Ring},N_{\Ring})
\xrightarrow{\sim} \Ext^i_{\A_{\lambda,\Ring},T}(M_{\C},N_{\C})$).
\end{Cor}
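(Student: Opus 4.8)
## Proof proposal for Corollary~\ref{Lem:Ext_freeness}

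The plan is to combine the finite generation of the Ext-groups over $\Ring$ (Lemma~\ref{Lem:fin_gen_Ext}) with a base-change argument that forces freeness after a finite localization, once we know that the ranks over $\C$ are ``uniform enough''. First I would reduce to the case of a single $h$-block: the condition that $\mathsf{x}_1,\mathsf{x}_2$ lie in the same $h$-block and $|\kappa_1-\kappa_2|\leqslant m$ guarantees that both modules, and all of the finitely many modules appearing in their $\Delta$- or $\nabla$-filtrations, or in the projective/injective resolutions of Lemma~\ref{Lem:simple_proj_inj_Ring}(3), lie within a fixed finite range of weights and $h$-eigenvalues. Hence only finitely many pairs $(M_{\Ring},N_{\Ring})$ (up to an overall twist by a character of $T$, which does not affect the Ext-groups) are involved, and it suffices to arrange the conclusion for each of them by a finite localization of $\Ring$; the finiteness of the list is exactly why a single localization works.

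Next I would invoke Lemma~\ref{Lem:fin_gen_Ext} to see that each $\Ext^i_{\A_{\lambda,\Ring},T}(M_{\Ring},N_{\Ring})$ is a finitely generated $\Ring$-module, hence (after inverting finitely many primes, i.e.\ a finite localization of $\Ring$) a free $\Ring$-module, simply because a finitely generated module over a PID becomes free after inverting the finitely many primes appearing in its torsion. To also get the base-change isomorphism $\C\otimes_{\Ring}\Ext^i_{\A_{\lambda,\Ring},T}(M_{\Ring},N_{\Ring})\xrightarrow{\sim}\Ext^i_{\A_{\lambda,\C},T}(M_{\C},N_{\C})$, I would argue as follows. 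Pick a graded free $\A_{\lambda,\Ring}$-resolution $P_\bullet\to M_{\Ring}$ with each $P_i=\A_{\lambda,\Ring}\otimes_{\Ring}V_i$, $V_i$ graded free of finite rank, as in the proof of Lemma~\ref{Lem:fin_gen_Ext}. Since each graded component of $N_{\Ring}$ is a finitely generated $\Ring$-module, after a further finite localization every graded component of $N_{\Ring}$ is free; then the complex $C^\bullet:=\Hom_{\A_{\lambda,\Ring},T}(P_\bullet,N_{\Ring})$ has free terms in each relevant degree, and each $C^i$ commutes with $\C\otimes_\Ring -$. The cohomology $H^i(\C\otimes_\Ring C^\bullet)$ differs from $\C\otimes_\Ring H^i(C^\bullet)$ only by $\Tor_1^{\Ring}(\C,H^{i+1}(C^\bullet))$ in a universal-coefficients sequence; once all $H^j(C^\bullet)=\Ext^j_{\A_{\lambda,\Ring},T}(M_{\Ring},N_{\Ring})$ are free (which is what we arranged in the previous step), these Tor-terms vanish and the base-change map is an isomorphism. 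Since there are only finitely many $(i,M_{\Ring},N_{\Ring})$ involved, a single finite localization of $\Ring$ suffices.

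The main obstacle I anticipate is purely bookkeeping rather than conceptual: one must be careful that the finitely many pairs $(M_{\Ring},N_{\Ring})$ really do form a finite list once the $h$-block and weight constraints are imposed, and that the various finite localizations (one to make all the relevant $\Ext$-groups torsion-free, one to make the graded components of the $N_{\Ring}$'s free, possibly one to ensure the resolutions of Lemma~\ref{Lem:simple_proj_inj_Ring} are available over $\Ring$) can all be absorbed into a single finite localization. The choice of $m$ with $|c_{\nu,\lambda}(\mathsf{x}_1)-c_{\nu,\lambda}(\mathsf{x}_2)|\leqslant m$ within an $h$-block is precisely what bounds the spread of weights that can occur in a nonzero $\Ext^i$ between objects labelled by $(\mathsf{x}_i,\kappa_i)$, so that the Serre-subquotient category $\tilde{\OCat}(\A_{\lambda})_{[z_1,z_2]}$ (for a suitable interval determined by $m$) is finite and the whole argument reduces to finite-dimensional algebra over $\Ring$; I would spell this reduction out first.
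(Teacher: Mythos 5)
Your proposal is correct and follows exactly the route the paper intends: the Corollary is left as an immediate consequence of Lemma~\ref{Lem:fin_gen_Ext} (finite generation over $\Ring$), the finiteness of the list of pairs $(M_\Ring,N_\Ring)$ up to $T$-twist, the fact that a finitely generated module over a localization of $\Z$ becomes free after inverting the finitely many torsion primes, and the standard universal-coefficients base change once freeness is arranged. Your write-up just makes explicit what the paper leaves implicit.
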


To finish this section let us describe the images of $\Delta_{\nu,\lambda,\Ring}(\mathsf{x},\kappa)$
under the wall-crossing functors $\A_{\lambda,\chi,\Ring}\otimes^L_{\A_{\lambda,\Ring}}\bullet$.
Here we assume that $\lambda\in \Lambda$ and $\chi\in \mathcal{P}_1$ is the corresponding element.
We choose a $T_{\Ring}$-equivariant structure on $O_\Ring(\chi)$ that gives rise to an equivariant
structure on $\A_{\lambda,\chi,\Ring}$.

\begin{Lem}\label{Lem:WC_R}
After a finite localization of $\Ring$, for all $\lambda\in \Lambda$ and $\chi\in \mathcal{P}_1$,
we have
$$\A_{\lambda,\chi,\Ring}\otimes^L_{\A_{\lambda,\Ring}}\Delta_{\nu,\lambda,\Ring}(\mathsf{x},\kappa)\cong
\nabla_{\nu,\lambda,\Ring}(\mathsf{x},\kappa+\mathsf{wt}_\chi(\mathsf{x})).$$
\end{Lem}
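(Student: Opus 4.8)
The plan is to reduce the statement to its counterpart over $\Q$ — which itself follows from the description of $\WC_{\lambda^-\leftarrow\lambda}$ (with $\lambda^-:=\lambda+\chi$) as a Ringel-type duality functor, combined with a $K_0$-computation — and then to descend to $\Ring$ by a finite localization argument of the kind already used for Lemmas~\ref{Lem:Verma_transl_global} and~\ref{Lem:Ext_vanish_global}.

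\emph{The statement over $\Q$.} Since $\A_{\lambda,\chi}$ is an $\A_{\lambda^-}$-$\A_\lambda$-bimodule, the object $N_\Q:=\A_{\lambda,\chi,\Q}\otimes^L_{\A_{\lambda,\Q}}\Delta_{\nu,\lambda,\Q}(\mathsf{x},\kappa)$ lies in $D^b(\mathcal{O}_\nu(\A_{\lambda^-,\Q}))$ (using Corollary~\ref{Cor:Ext_vanishing} for the regular parameter $\lambda^-$), and we regard $\nabla_{\nu,\lambda,\Ring}(\mathsf{x},\kappa+\mathsf{wt}_\chi(\mathsf{x}))$ as a $\nabla$-object over $\A_{\lambda^-}$ via the identification $\A_{\lambda^-}\cong\A_\lambda$ built into the choice of $\Lambda$. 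Using $\Delta_{\nu,\lambda}(\mathsf{x})\cong\hat{\Delta}_{\nu,\lambda}(\mathsf{x})$ (Proposition~\ref{Prop:generic}(3)) together with Proposition~\ref{Prop:transl_grad_O}, we get $[N_\Q]=[\hat{\Delta}_{\nu,\lambda^-}(\mathsf{x},\kappa+\mathsf{wt}_\chi(\mathsf{x}))]$ in $K_0(\mathcal{O}^T_\nu(\A_{\lambda^-}))$, and since both $\lambda$ and $\lambda^-$ are regular, Proposition~\ref{Prop:generic}(3) and Section~\ref{SSS_cost_classes} give $[\hat{\Delta}_{\nu,\lambda^-}(\mathsf{x},\kappa+\mathsf{wt}_\chi(\mathsf{x}))]=[\nabla_{\nu,\lambda^-}(\mathsf{x},\kappa+\mathsf{wt}_\chi(\mathsf{x}))]$. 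On the other hand, by the choice of the pair $(\lambda,\lambda^-)$ in Section~\ref{SSS_Ring_assumptions} together with Lemma~\ref{Lem:long_WC_Ringel}, the functor $\WC_{\lambda^-\leftarrow\lambda}$ is a Ringel duality functor between the highest weight categories $\mathcal{O}_\nu(\A_\lambda)$ and $\mathcal{O}_\nu(\A_{\lambda^-})$, so it sends the standard object $\Delta_{\nu,\lambda}(\mathsf{x},\kappa)$ to a costandard object of $\mathcal{O}_\nu(\A_{\lambda^-})$; comparing classes in $K_0$ forces that object to be $\nabla_{\nu,\lambda^-,\Q}(\mathsf{x},\kappa+\mathsf{wt}_\chi(\mathsf{x}))$. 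In particular $N_\Q$ is concentrated in homological degree $0$.

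\emph{Descent to $\Ring$.} By Lemma~\ref{Lem:fin_hom_dim1} the algebra $\A_{\lambda,\Ring}$ has finite homological dimension, so only finitely many of the $\A_{\lambda^-,\Ring}$-modules $T_i:=\Tor^{\A_{\lambda,\Ring}}_i(\A_{\lambda,\chi,\Ring},\Delta_{\nu,\lambda,\Ring}(\mathsf{x},\kappa))$ are nonzero, and each is finitely generated: it is a subquotient of $Q_\bullet\otimes_{\A_{\lambda,\Ring}}\Delta_{\nu,\lambda,\Ring}(\mathsf{x},\kappa)$ for a resolution $Q_\bullet$ of the bimodule $\A_{\lambda,\chi,\Ring}$ by finitely generated free right $\A_{\lambda,\Ring}$-modules, while $\Delta_{\nu,\lambda,\Ring}(\mathsf{x},\kappa)$ has finitely generated graded components over $\Ring$. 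Now $\Q\otimes_\Ring T_i=\Tor^{\A_{\lambda,\Q}}_i(\A_{\lambda,\chi,\Q},\Delta_{\nu,\lambda,\Q}(\mathsf{x},\kappa))$, which by the previous step vanishes for $i>0$ and equals $\nabla_{\nu,\lambda^-,\Q}(\mathsf{x},\kappa+\mathsf{wt}_\chi(\mathsf{x}))$ for $i=0$. Hence, for $i>0$, $T_i$ is a finitely generated $\Ring$-torsion module and so dies after inverting the finitely many primes in its annihilator; after this localization $\A_{\lambda,\chi,\Ring}\otimes^L_{\A_{\lambda,\Ring}}\Delta_{\nu,\lambda,\Ring}(\mathsf{x},\kappa)$ is concentrated in degree $0$ and equals $T_0=\A_{\lambda,\chi,\Ring}\otimes_{\A_{\lambda,\Ring}}\Delta_{\nu,\lambda,\Ring}(\mathsf{x},\kappa)$. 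Localizing $\Ring$ further (Lemma~\ref{Lem:gen_flatness}) we make $T_0$ flat over $\Ring$; then $T_0$ is a $T_\Ring$-equivariant $\Ring$-form of $\nabla_{\nu,\lambda^-,\Q}(\mathsf{x},\kappa+\mathsf{wt}_\chi(\mathsf{x}))$, and since any two finitely generated $\Ring$-forms of a fixed object of $\mathcal{O}^T_\nu(\A_{\lambda^-,\Q})$ become $T_\Ring$-equivariantly $\A_{\lambda^-,\Ring}$-isomorphic after a finite localization of $\Ring$, we obtain $T_0\cong\nabla_{\nu,\lambda^-,\Ring}(\mathsf{x},\kappa+\mathsf{wt}_\chi(\mathsf{x}))=\nabla_{\nu,\lambda,\Ring}(\mathsf{x},\kappa+\mathsf{wt}_\chi(\mathsf{x}))$. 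As $\Lambda$ and $\mathcal{P}_1$ are finite and, for a fixed $\mathsf{x}$, only finitely many twists $\mathsf{wt}_\chi(\mathsf{x})$ occur, finitely many localizations handle all cases.

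\emph{Main obstacle.} The only non-formal ingredient is the first step, namely identifying $\WC_{\lambda^-\leftarrow\lambda}$ precisely as a Ringel duality matching the highest weight orders $\leqslant_{\nu,\lambda}$ and $\leqslant_{\nu,\lambda^-}$, so that standards are carried to costandards with the correct labels; this is where the careful selection of the pair $(\lambda,\lambda^-)$ near the face $\Theta$ in Section~\ref{SSS_Ring_assumptions} and the perversity input of Section~\ref{SS_Perv} are used. Once that is granted, the passage to $\Ring$ is a routine finite-localization argument.
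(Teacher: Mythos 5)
Your proposal is correct and follows essentially the same route as the paper: the isomorphism is first established in characteristic zero using Lemma \ref{Lem:long_WC_Ringel}, and then descended to $\Ring$ by the standard finite-localization argument. The only (harmless) difference is that you pin down the label of the costandard object by a $K_0$ computation via Proposition \ref{Prop:transl_grad_O}, whereas the paper observes that the degree $\kappa+\mathsf{wt}_\chi(\mathsf{x})$ component of $\A_{\lambda,\chi,\Q}\otimes^L_{\A_{\lambda,\Q}}\Delta_{\nu,\lambda,\Q}(\mathsf{x},\kappa)$ is the one-dimensional irreducible $\Ca_\nu(\A_{\lambda,\Q})$-module attached to $\mathsf{x}$, producing a canonical $\Q$-rational map to the costandard that is an isomorphism over $\C$ and hence over $\Q$ --- a device that also settles the $\Q$-rationality of the isomorphism, a point your $K_0$ argument leaves slightly implicit before you invoke the uniqueness of $\Ring$-forms.
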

\begin{proof}
We have such an isomorphism over $\C$, this follows from Lemma
\ref{Lem:long_WC_Ringel}. It follows that the degree
$\kappa+\mathsf{wt}_\chi(\mathsf{x})$ component in
$\A_{\lambda,\chi,\Q}\otimes^L_{\A_{\lambda,\Q}}\Delta_{\nu,\lambda,\Q}(\mathsf{x},\kappa)$
is 1-dimensional. Also it is the irreducible $\Ca_{\nu}(\A_{\lambda,\Q})$-module corresponding
to $\mathsf{x}$. So we get a homomorphism
$$\A_{\lambda,\chi,\Q}\otimes^L_{\A_{\lambda,\Q}}\Delta_{\nu,\lambda,\Q}(\mathsf{x},\kappa)
\rightarrow \nabla_{\nu,\lambda,\Q}(\mathsf{x}, \kappa+\mathsf{wt}_\chi(\mathsf{x}))$$
unique up to proportionality. So it is an isomorphism over $\C$, hence an isomorphism.
The claim of the lemma easily follows from here.
\end{proof}

Below in the paper we assume that $\Ring$ satisfies all results in Section
\ref{S_R_forms}.

\section{Reduction to characteristic $p$}
\subsection{Assumptions on $p$}\label{SS_p_assumptions}
Let $\Ring$ be as directly above. Also recall a finite subset
$\Lambda\subset \paramq_{\mathbb{Q}}$ from
Section \ref{SSS_Ring_assumptions} and finite subsets
$\mathcal{P}_1,\mathcal{P}_2\subset \operatorname{Pic}(X_{\Ring})$ from Section
\ref{SSS_assum_XY}.

Let $p$ be a prime number that is non-invertible in $\Ring$. Let us write $\F$ for $\bar{\F}_p$.
We will be interested in  categories of  representations
of the algebras $\A_{\F,\lambda}:=\F_{\lambda}\otimes_{R[\paramq]}\A_{\paramq,R}$ for $\lambda\in \paramq_{\F}$.

We will impose several additional assumptions on $p$. First of all, we assume that $p+1$
is divisible by the denominators of all elements in the sets $\Sigma_\Gamma$ (defined in
Section \ref{SSS_essential}) for all classical walls $\Gamma$. Second, we assume that  $(p+1)\lambda\in \paramq_\Z$
for all $\lambda\in \Lambda$ for all $(p+1)\mathsf{c}_{\nu,\lambda}(\mathsf{x})\in \Z$
for all $\lambda\in \paramq_\Ring$, generic $\nu$ and all $\mathsf{x}\in X^T$.  There are infinitely many such primes.

Also we need to assume that $p$ is large enough. More precisely,
this means the following
\begin{itemize}
\item The map $A\mapsto \,^p\! A$ from Section \ref{SS_alcoves} is a bijection
between the set of real alcoves and the set of $p$-alcoves.
\item  $p$ is large enough to satisfy the conditions of the
next lemma.
\end{itemize}

\begin{Lem}\label{Lem:alcove_point_translation}
For $p$ sufficiently large, for every two points $\lambda_1,\lambda_2$ in the same
$p$-alcove $\tilde{A}$ such that $\lambda_2-\lambda_1\in \param'_{\Z}$, there is a sequence $\chi_1,\ldots,\chi_k\in \mathcal{P}_2$ such that
\begin{itemize}
\item[(*)]
The elements $\lambda_1+\sum_{i=1}^j \chi_i$
are in $\tilde{A}$ for all $j=0,\ldots,k$.
\end{itemize}
\end{Lem}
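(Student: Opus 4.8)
The plan is to recast the claim as a statement in elementary convex geometry and settle it by a ``walk towards the centre'' argument. Fix an inner product on $\param_\R$ and write $B(x,r)$ for the corresponding balls. A $p$-alcove $\,^p\!A$ is the set of lattice points of the convex polyhedron $\tilde A_p=\{x\in\param_\R:\langle\alpha_\Gamma,x\rangle>(p+1)\sigma+pm\text{ for the relevant }\Gamma,\sigma,m\}$. Dividing these inequalities by $p$ shows that $\tfrac1p\tilde A_p$ converges, as $p\to\infty$, to the interior $A^\circ$ of the corresponding real alcove $A$; in particular, fixing a rational $x_0\in A^\circ$ with $B(x_0,2\epsilon)\subset A^\circ$, for $p\gg0$ there is a lattice point $z_p$ with $\|z_p-px_0\|=O(1)$ and $B(z_p,\epsilon p)\subset\tilde A_p$. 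Two further structural facts about $\tilde A_p$ will be used: (i) for any $\eta\in\tilde A_p$ the polyhedron coincides, inside a ball about $\eta$ of radius bounded below by a $p$-independent constant $r_\ast>0$, with the translate by $\eta$ of its tangent cone $T_\eta=\{u:\langle\alpha_\Gamma,u\rangle\geqslant0\text{ for the constraints tight at }\eta\}$ (the bound $r_\ast$ comes from the fact that a non-tight integral constraint at a lattice point has slack $\geqslant1$); and (ii) $T_\eta$ is a union of shifted integral chambers, and $\tilde A_p\subseteq\eta+T_\eta$.

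The set $\mathcal{P}_2$ (written $\Lambda_2$ in the statement) will be used through the following properties; choose, for every integral chamber $C$, its Hilbert basis as the generating set of $C\cap\param_\Z$, so that in particular $\mathcal{P}_2=-\mathcal{P}_2$. Then: (a) $\mathcal{P}_2$ generates $\param_\Z$ as a monoid (two opposite chambers already do); (b) the conical hull of $\mathcal{P}_2$ is all of $\param_\R$ (the closed integral chambers cover $\param_\R$), so the continuous positively homogeneous function $w\mapsto\max_{\chi\in\mathcal{P}_2}\langle w,\chi\rangle$, being positive on the unit sphere, is bounded below there, giving a $p$-independent $\delta>0$ with $\max_{\chi\in\mathcal{P}_2}\langle w,\chi\rangle\geqslant\delta$ for all unit $w$; and (c) for any integral chamber $C$ and any $\mu_0\in\param_\Z$, any two lattice points of $\mu_0+C_\Z$ are joined by a path whose steps are $\pm$ the Hilbert basis elements $h_i$ of $C_\Z$ and all of whose vertices lie in $\mu_0+C_\Z$ (from $\mu_0+\sum m_ih_i$ one can add any $h_j$, or remove any $h_j$ with $m_j\geqslant1$, staying in $\mu_0+C_\Z$; hence every point of $\mu_0+C_\Z$ is joined to $\mu_0$).

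Now, writing $\mu\leadsto\mu'$ to mean that the lemma's conclusion holds with $\lambda_1=\mu$ and $\sum_i\chi_i=\mu'-\mu$: since $\mathcal{P}_2=-\mathcal{P}_2$ the relation $\leadsto$ is symmetric, and it is clearly reflexive and transitive, hence an equivalence relation on $\,^p\!A$. It therefore suffices to prove that every lattice point $\mu\in\tilde A_p$ satisfies $\mu\leadsto z_p$; then $\lambda_1\leadsto z_p\leadsto\lambda_2$. To prove $\mu\leadsto z_p$ one walks from $\mu$ towards $z_p$. As long as the current lattice point $\eta$ is at distance $\geqslant D:=\max_{\chi\in\mathcal{P}_2}\|\chi\|$ from $\partial\tilde A_p$, every step $\chi\in\mathcal{P}_2$ keeps us in $\tilde A_p$, so one picks $\chi$ with $\langle\chi,(z_p-\eta)/\|z_p-\eta\|\rangle\geqslant\delta$ by (b), strictly decreasing the distance to $z_p$; combined with the ``ice-cream cone'' estimate $\operatorname{conv}(B(z_p,\epsilon p)\cup\{\mu\})\subseteq\tilde A_p$, whose cross-section at parameter $t$ from $z_p$ is a ball of radius $(1-t)\epsilon p$ about the segment point, a short book-keeping of the accumulated drift of the path off the segment shows this part of the walk stays in $\tilde A_p$ and eventually enters $B(z_p,\epsilon p)$, from which $z_p$ is reached by further such steps, ending with a bounded correction supplied by (a). The remaining case is when $\eta$ lies within $D$ of $\partial\tilde A_p$: there one works inside the tangent cone structure, treating as ``effectively tight'' every constraint with slack $\lesssim D$, and moving inside the corresponding shifted chamber union by the Hilbert-basis steps of (c) until the effectively-tight cone shrinks or $\eta$ leaves the near-boundary region; this terminates by induction on the dimension of the effectively-tight cone, after which the previous paragraph applies.

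The main obstacle is precisely this boundary analysis: a single $\mathcal{P}_2$-step has length up to $D$, which may exceed the $p$-independent ``resolution'' $r_\ast$ of $\tilde A_p$ near its lower-dimensional faces, so a naive greedy step can overshoot a nearby facet. This is handled by the ingredients above -- using that $\tilde A_p$ agrees locally with a union of shifted integral chambers (fact (ii)) inside which the Hilbert-basis generators contained in $\mathcal{P}_2$ move freely (property (c)) -- but it is the one place where genuine care, and the full strength of the definition of $\mathcal{P}_2$, is needed. Everything else (the rescaling of the $p$-alcove, the large inscribed ball, the drift book-keeping along the segment, and taking $p$ sufficiently large so that all the $p$-independent constants cooperate) is routine.
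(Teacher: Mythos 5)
Your overall architecture (connect everything to a deep interior lattice point; treat the deep interior and the near-boundary region separately) is the same as the paper's, but the near-boundary step --- which you yourself flag as ``the one place where genuine care is needed'' --- is precisely the step you do not carry out, and the plan you sketch for it does not work as stated. Declaring a constraint ``effectively tight'' when its slack is $\lesssim D$ (a $p$-independent constant) is too crude: a single step of length up to $D$ can turn a non-effectively-tight constraint into a violated one, the set of effectively tight constraints changes after every step, and nothing in your sketch is monotone, so the proposed ``induction on the dimension of the effectively-tight cone'' has no quantity that provably decreases and no reason to terminate. Moreover, your fact (c) lets you move inside a single shifted chamber $\mu_0+C_\Z$, whereas the effectively-tight cone is a union of shifted chambers; you never say which chamber to use or why moving inside it makes progress (individual Hilbert-basis elements of $C_\Z$ may be annihilated by all the tight functionals, so a step need not increase any slack). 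The interior walk has a smaller gap of the same kind: a greedy step decreasing $\|z_p-\eta\|$ only keeps you in the ball $B(z_p,\|z_p-\mu\|)$, not in the cone $\operatorname{conv}(B(z_p,\epsilon p)\cup\{\mu\})$, so the ``short book-keeping of the accumulated drift'' is genuine (Steinitz-type) work that is not done.

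The paper's fix for the boundary case uses two scales proportional to $p$ rather than one scale of order $D$: call a constraint near-tight if its slack is $\leqslant N\epsilon p$ and far otherwise, choose a single classical chamber $C$ on whose interior all near-tight functionals $\alpha_{\Gamma_i}$ are positive (possible because the alcove contains a ball of radius comparable to $p$), and repeatedly add the full sum $\chi'_1+\cdots+\chi'_s$ of the generators of $C_\Z$. Each intermediate step weakly increases every near-tight constraint, each full round increases each of them by a $p$-independent $\delta_0>0$, and each far constraint loses only $O(1)$ per step; so for $N$ large (independent of $p$) the point acquires slack $>\epsilon p$ in every constraint without leaving the alcove, after which the deep-interior case applies. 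Let me also point out a mechanism that bypasses the two-case split entirely: every defining functional $\pm\alpha_\Gamma$ of the $p$-alcove has a constant sign on each classical chamber, and $\lambda_2-\lambda_1$ lies in the closure of some classical chamber $C$, hence equals $\sum_j n_j\chi'_j$ with the $\chi'_j$ generating $C_\Z$ (these lie in $\mathcal{P}_2$, as the paper's own proof also requires); along any ordering of these steps each constraint functional varies monotonically between its values at $\lambda_1$ and at $\lambda_2$, so every partial sum satisfies every constraint. I would rebuild your argument around one of these two mechanisms rather than the slack-$\lesssim D$ tangent-cone induction.
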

\begin{proof}
Let $\tilde{\Upsilon}_1,\ldots,\tilde{\Upsilon}_m$ be the associated half spaces of $\tilde{A}$
(Section \ref{SS_alcoves}).
Let $\langle\alpha_{i},\bullet\rangle> \hat{m}_i, i=1,\ldots,m$ be the inequality
defining $\tilde{\Upsilon}_i$. Pick a very small (but independent of $p$)
number $\epsilon>0$. Clearly, if $p$ is
large enough and $\langle\alpha_{i},\lambda_\ell\rangle> \hat{m}_i+\epsilon p,
\ell=1,2$, then there are $\chi_1,\ldots,\chi_k$ such that (*) holds.

Now consider the case when one of $\lambda_\ell$, say $\lambda_1$, satisfies
\begin{equation}\label{eq:walls}
\hat{m}_i<\langle\alpha_{i},\lambda_1\rangle\leqslant
\hat{m}_i+\epsilon p
\end{equation} for some $i$.
This condition says that $\lambda_1$ is relatively close to a wall.

Let $i=1,\ldots,r$ be precisely the indexes $i$, for (\ref{eq:walls}) holds. If $p$
is sufficiently large,  we can find a classical chamber $C$ whose interior points
are positive on all $\alpha_{1},\ldots, \alpha_{r}$. Pick the generators
$\chi'_1,\ldots,\chi'_s$ of $C_{\Z}$ that lie in $\mathcal{P}_2$.  A point of the form $\lambda+(m+1)(\chi'_1+\ldots+\chi'_r)
+m(\chi'_{r+1}+\ldots+\chi'_s)$ lies in the alcove $\tilde{A}$ for all $r=1,\ldots,s,$ and all $m$ bounded
by a function linear in $\epsilon^{-1}$. We can increase $N$ and decrease $\epsilon$ (so that $N\epsilon$
is fixed) in such a way that $\lambda':=\lambda+m(\chi_1'+\ldots+\chi_s')$
now satisfies $\langle\alpha_{\Gamma_i},\lambda'\rangle> \hat{m}_i+\epsilon p$
for all $i$. Thanks to the previous paragraph, this finishes the proof.
\end{proof}

%Below we always assume that the conclusion of Lemma \ref{Lem:alcove_point_translation}
%holds.

Now we proceed to our next condition on $p$.
Fix $\chi\in \mathcal{P}_2$. Recall  the locus $\paramq^{reg[\chi]}$ of $\lambda\in \paramq$
such that $\lambda,\lambda+\chi$ are in the same quantum chamber. This locus is given by the conditions
of the form $\langle\lambda, \alpha_{\Gamma_i}\rangle\not\in \tilde{\Sigma}_\Gamma(\chi)$, where
$\Gamma$ runs over all classicla walls and $\tilde{\Sigma}_\Gamma(\chi)$ is a suitable finite subset
such that $\tilde{\Sigma}_\Gamma(\chi)\subset
\Sigma_\Gamma$ containing $\tilde{\Sigma}_\Gamma$.

We also assume that $p$ is chosen in such a way that, for $\lambda\in \paramq_{\Z}$, the following
are equivalent:
\begin{itemize}
\item[(i)] $\lambda,\lambda+\chi$ are in the same $p$-alcove,
\item[(ii)] $\langle\lambda,\alpha_\Gamma\rangle\neq (p+1)\sigma+pm$ for
all walls $\Gamma$,  $\sigma\in \tilde{\Sigma}_\Gamma(\chi), m\in \Z$.
\end{itemize}
This holds for $p\gg 0$.

Finally, we need an assumption on  the residues of $\mathsf{c}_{\nu,\lambda}(\mathsf{x})$
mod $p$. Let $\bar{\mathsf{c}}_{\nu,\lambda}(\mathsf{x})$ denote this residue viewed as an element
in $\{0,\ldots,p-1\}$. By our previous
assumptions on $p$, $\bar{\mathsf{c}}_{\nu,\lambda}(\mathsf{x})$ equals $(p+1)\mathsf{c}_{\nu,\lambda}(\mathsf{x})$ mod $p$. We suppose
that the $h$-blocks for $\lambda\in \Lambda$ are {\it ordered} in the following sense: if $\mathsf{x}_1,\mathsf{x}_2$ lie in
the same $h$-block for $\lambda$ (which, recall, means that $\mathsf{c}_{\nu,\lambda}(\mathsf{x}_1)-
\mathsf{c}_{\nu,\lambda}(\mathsf{x}_2)\in \Z$) and $\mathsf{x}_1',\mathsf{x}_2'$ lie in a different
$h$-block, then $\bar{\mathsf{c}}_{\nu,\lambda}(\mathsf{x}_1)>\bar{\mathsf{c}}_{\nu,\lambda}(\mathsf{x}_1')$
implies $\bar{\mathsf{c}}_{\nu,\lambda}(\mathsf{x}_2)>\bar{\mathsf{c}}_{\nu,\lambda}(\mathsf{x}_2')$.
This is clearly the case for $p$ large enough.  %In this situation we will say that the $h$-blocks are ordered.

\subsection{p-center}\label{SS_p_center}
We will put one more assumption: that the algebras $\A_{\lambda,\F}$ have large center for $\lambda\in
\paramq_{\F_p}$.
Let the superscript (1) denote the Frobenius twist.
We assume that the inclusion $\F[Y_\F^{(1)}]\hookrightarrow
\F[Y]$ given by $f\mapsto f^p$ lifts to a central embedding $\F[Y_\F^{(1)}]\hookrightarrow \A_{\lambda,\F}$ (so that the former is the associated graded homomorphism of the latter).
The image of $\F[Y_\F^{(1)}]\hookrightarrow \A_{\lambda,\F}$ is easily seen to coincide
with the entire center of the algebra $\A_{\lambda,\F}$.

The existence of the p-center is the classical fact in the case when $Y$ is the nilpotent
cone (and $\A_\lambda$ is a central reduction of the universal enveloping algebra).
For Nakajima quiver varieties, the existence follows from \cite{BFG}.

In particular, for an irreducible $\A_{\lambda,\F}$-module it makes sense to consider its
{\it p-character}, a point in $Y_\F^{(1)}$ that gives the action of the center on the module.

\subsection{Equivalences}
\subsubsection{Abelian equivalences}
For $\lambda\in \paramq_\F, \chi\in \mathcal{P}_2$ we define an $\A_{\lambda+\chi,\F}$-$\A_{\lambda,\F}$-bimodule
$\A_{\lambda,\chi,\F}$ as the specialization of $\F\otimes_\Ring \A_{\paramq,\chi,\Ring}$ to $\lambda$.

\begin{Prop}\label{Prop:char_p_ab_eq}
Let $\lambda,\lambda'=\lambda+\chi$ lie in the same $p$-alcove of $\param_{\Z}$. Then the categories
$\A_{\lambda,\F}\operatorname{-mod},\A_{\lambda',\F}\operatorname{-mod}$ are equivalent
via tensoring with a suitable sequence of bimodules $\A_{\lambda_i,\chi_i,\F}$, where the elements $\lambda_i$ lie in the same $p$-alcove and $\chi_i\in \mathcal{P}_2$.
\end{Prop}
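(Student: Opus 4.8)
The strategy is to reduce Proposition \ref{Prop:char_p_ab_eq} to the analogous Morita-equivalence statement over $\Ring$, namely Lemma \ref{Lem:Morita_Ring}, and then specialize to characteristic $p$. First I would observe that it suffices to treat a single step: if $\lambda$ and $\lambda+\chi$ lie in the same $p$-alcove with $\chi\in\mathcal{P}_2$, then $\A_{\lambda,\chi,\F}$ and $\A_{\lambda+\chi,-\chi,\F}$ are mutually inverse Morita equivalences. Indeed, by Lemma \ref{Lem:alcove_point_translation} (whose conclusion we are assuming by the remarks in Section \ref{SS_p_assumptions}), any two points $\lambda,\lambda'$ in the same $p$-alcove are connected by a sequence $\chi_1,\dots,\chi_k\in\mathcal{P}_2$ — wait, the lemma as stated uses $\Lambda_2$, but in context $\mathcal{P}_2$ is the relevant generating set of the classical monoids; I would use the version phrased for $\mathcal{P}_2$ — such that each partial sum stays in the $p$-alcove. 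Composing the single-step equivalences then gives the equivalence $\A_{\lambda,\F}\operatorname{-mod}\simeq\A_{\lambda',\F}\operatorname{-mod}$ via tensoring with the sequence of bimodules $\A_{\lambda_i,\chi_i,\F}$, which is exactly what the proposition asserts.

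For the single-step claim, the key input is the congruence-condition bookkeeping from Section \ref{SS_p_assumptions}. By Lemma \ref{Lem:Morita_Ring}, after a finite localization of $\Ring$ the bimodules $\A_{\paramq^{reg[\chi]},\chi,\Ring}$ and $\A_{\paramq^{reg[-\chi]},-\chi,\Ring}$ are mutually inverse Morita equivalences; in particular the natural homomorphisms of \eqref{eq:bimod_homom_reg} are isomorphisms over $\Ring$, hence over $\F$ after base change along $\Ring\to\Ring[\paramq^{reg[\chi]}]\to\F$, provided the point $\lambda\in\paramq_\F$ at which we specialize lies in $\paramq^{reg[\chi]}_\F$, i.e. $\langle\lambda,\alpha_\Gamma\rangle\notin\tilde\Sigma_\Gamma(\chi)$ for all walls $\Gamma$ (in the notation of Section \ref{SS_p_assumptions}). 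This is where the assumed equivalence "(i)$\Leftrightarrow$(ii)" of Section \ref{SS_p_assumptions} enters: the hypothesis that $\lambda$ and $\lambda+\chi$ lie in the same $p$-alcove is (i), which is equivalent to (ii), namely $\langle\lambda,\alpha_\Gamma\rangle\neq(p+1)\sigma+pm$ for all $\Gamma$, $\sigma\in\tilde\Sigma_\Gamma(\chi)$, $m\in\Z$ — and since $p+1$ is divisible by the denominators of all elements of $\Sigma_\Gamma$, the reduction mod $p$ of $(p+1)\sigma+pm$ is precisely $\sigma\bmod\Z$ interpreted appropriately, so (ii) says exactly that the reduction of $\lambda$ mod $p$ avoids the reductions of $\tilde\Sigma_\Gamma(\chi)$, which is the defining condition of $\paramq^{reg[\chi]}_\F$. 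Thus the specialization of \eqref{eq:bimod_homom_reg} at $\lambda$ gives the desired mutually-inverse pair of bimodule isomorphisms in characteristic $p$.

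Assembling: for each $i$ the point $\lambda_i$ lies in $\paramq^{reg[\chi_i]}_\F$ (by the $p$-alcove hypothesis applied to $\lambda_i,\lambda_{i}+\chi_i$), so $\A_{\lambda_i,\chi_i,\F}\otimes_{\A_{\lambda_i,\F}}(-)$ is an equivalence $\A_{\lambda_i,\F}\operatorname{-mod}\xrightarrow{\sim}\A_{\lambda_i+\chi_i,\F}\operatorname{-mod}$ with quasi-inverse $\A_{\lambda_i+\chi_i,-\chi_i,\F}\otimes_{\A_{\lambda_i+\chi_i,\F}}(-)$. Composing these for $i=1,\dots,k$ yields the equivalence claimed in the proposition.

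\textbf{Main obstacle.} The only genuine subtlety is making sure the finitely many finite localizations of $\Ring$ invoked along the way (the one in Lemma \ref{Lem:Morita_Ring}, plus the analog of Lemma \ref{Lem:gen_flatness} for the algebras $\A_{\paramq^{reg[\chi]}}\otimes_\Ring\A_{\paramq^{reg[-\chi]}}^{opp}$) can be carried out uniformly over all $\chi\in\mathcal{P}_2$ and all the finitely many $p$-alcoves and points $\lambda_i$ that can occur — but since $\mathcal{P}_2$ is finite and the relevant statements are over the $\Ring$-flat family before specialization, only finitely many localizations are needed, and a prime $p$ non-invertible in the resulting $\Ring$ works. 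One must also double-check that "same $p$-alcove" genuinely forces $\lambda_i\in\paramq^{reg[\chi_i]}_\F$ rather than merely $\lambda_i\in\paramq^{reg}_\F$; this is precisely the content of the refined congruence assumption on $\tilde\Sigma_\Gamma(\chi)$ imposed in Section \ref{SS_p_assumptions}, so the claim goes through.
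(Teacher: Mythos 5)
Your proposal is correct and follows essentially the same route as the paper's own proof: reduce to a single step via Lemma \ref{Lem:alcove_point_translation}, invoke Lemma \ref{Lem:Morita_Ring} for the Morita equivalences over $\Ring$, and use the equivalence of (i) and (ii) from Section \ref{SS_p_assumptions} to see that each $\lambda_i$ specializes into the regular locus $\paramq^{reg[\chi_i]}$ mod $p$. Your observation that the lemma's ``$\Lambda_2$'' should read ``$\mathcal{P}_2$'' is also consistent with how the paper actually uses it.
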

\begin{proof}
Thanks to Lemma \ref{Lem:alcove_point_translation}, it is enough to prove that $\A_{\lambda_i,\chi_i,\F}$
is a Morita equivalence $\A_{\lambda_i+\chi_i,\F}$-$\A_{\lambda_i,\F}$-bimodule
as long as $\lambda_i,\lambda_i+\chi_i$ are in the same $p$-alcove. Lemma \ref{Lem:Morita_Ring}
implies that  $\A_{\paramq^{reg[\chi]},\chi,\Ring}, \A_{\paramq^{reg[-\chi]},\chi,\Ring}$
are mutually inverse Morita equivalence bimodules. Thanks to the equivalence of
(i) and (ii) in the Section \ref{SS_p_assumptions}, we see that $\lambda_i$ (mod $p$) belongs to
$\paramq^{reg}_\Ring$ mod $p$. It follows that $\A_{\lambda_i,\chi_i,\F}$
and $\A_{\lambda_i+\chi_i,-\chi_i,\F}$ are mutually inverse Morita equivalence
bimodules.
\end{proof}

\subsubsection{Perverse equivalences}
Let $A$ be a real chamber and $\Theta$ be its face.
Now pick $\lambda\in \Lambda$ compatible with $(A,\Theta)$.
Let $\chi\in\mathcal{P}_1$ such that $\lambda+\chi$ is compatible with $(A^-,\Theta)$,
where $A^-$ is the real alcove opposite to $A$ with respect to $\Theta$.
Consider the chains of ideals $\I^j_{\lambda,\F}:=\F\otimes_{\Ring}\I^j_{\lambda,\Ring}$
in $\A_{\lambda,\F}$ and the similarly defined chain $\I^j_{\lambda+\chi,\F}$.

\begin{Prop}\label{Prop:pervers_char_p}
The functor $\A_{\lambda,\chi,\F}\otimes^L_{\A_{\lambda,\F}}\bullet:
D^b(\A_{\lambda,\F}\operatorname{-mod})\xrightarrow{\sim} D^b(\A_{\lambda+\chi,\F}\operatorname{-mod})$
is a perverse derived equivalence with respect to the filtrations defined
by chains of  ideals $\I^j_{\lambda,\F},\I^j_{\lambda+\chi,\F}$.
\end{Prop}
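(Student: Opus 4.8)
The plan is to deduce the statement from its characteristic-zero (in fact, $\Ring$-form) counterpart, Lemma \ref{Lem:perv_Ring}, by base change along $\Ring \to \F$. The key structural input is that a perverse derived equivalence, in the sense of Chuang--Rouquier recalled in Section \ref{SS_Perv}, is detected by a finite list of homological conditions on the bimodule $\A_{\lambda,\chi}$ and the chains of ideals $\I^j$: namely, that $\A_{\lambda,\chi,\Ring} \otimes^L_{\A_{\lambda,\Ring}} \bullet$ is a derived equivalence (expressed by the two natural maps $\A_{\lambda+\chi,\Ring} \to R\End_{\A_{\lambda,\Ring}}(\A_{\lambda,\chi,\Ring})$ and $\A_{\lambda,\chi,\Ring} \otimes^L R\Hom(\A_{\lambda,\chi,\Ring},\A_{\lambda,\Ring}) \to \A_{\lambda,\Ring}$ being isomorphisms), together with conditions (b1)--(b6) from Section \ref{SS_Perv} and the idempotency $\I^j_{\lambda,\Ring} = (\I^j_{\lambda,\Ring})^2$. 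By Lemma \ref{Lem:perv_Ring}, after the finite localization of $\Ring$ fixed there, all of these hold over $\Ring$.

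First I would observe that all the objects involved are finitely generated over the relevant algebras and hence, after the finite localizations of $\Ring$ already performed in Section \ref{S_R_forms} (in particular Lemma \ref{Lem:gen_flatness} and its variants, Lemma \ref{Lem:perv_Ring}, and the finite-homological-dimension statements), the formation of the Tor's $\operatorname{Tor}^{\A_{\lambda,\Ring}}_i(\A_{\lambda,\chi,\Ring}, \A_{\lambda,\Ring}/\I^j_{\lambda,\Ring})$, the bimodules $\B_{j,\Ring}$, and the various Hom's commutes with the base change $\F \otimes_\Ring \bullet$; equivalently, these $\Ring$-modules are flat (even free of finite rank in each graded piece) so there is no higher Tor obstruction. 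This is the routine but necessary bookkeeping step: one checks that each of the maps appearing in the derived-equivalence criterion and in (b1)--(b6) base-changes to the corresponding map over $\F$, and that an isomorphism of flat finitely generated $\Ring$-modules or of $\Ring$-flat bimodules remains an isomorphism after $\otimes_\Ring \F$ (kernels and cokernels are $\Ring$-torsion and flat, hence zero, possibly after one more finite localization of $\Ring$ — but this localization is already absorbed into the standing assumptions, and $p$ is chosen non-invertible in the resulting $\Ring$). Similarly, $\I^j_{\lambda,\F} = (\I^j_{\lambda,\F})^2$ follows from idempotency over $\Ring$ by applying $\F \otimes_\Ring \bullet$ to the surjection $\I^j_{\lambda,\Ring} \otimes_{\A_{\lambda,\Ring}} \I^j_{\lambda,\Ring} \twoheadrightarrow \I^j_{\lambda,\Ring}$.

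Granting this, the functor $\A_{\lambda,\chi,\F} \otimes^L_{\A_{\lambda,\F}} \bullet$ is a derived equivalence because the two adjunction (co)unit maps are the $\F$-base changes of isomorphisms over $\Ring$; conditions (b1)--(b6) hold over $\F$ for the same reason; and the chain $\{0\} = \I^0_{\lambda,\F} \subset \cdots \subset \A_{\lambda,\F}$ is a genuine filtration by two-sided ideals because $\F \otimes_\Ring \bullet$ is right exact and the inclusions $\I^{j}_{\lambda,\Ring} \hookrightarrow \I^{j+1}_{\lambda,\Ring}$ have $\Ring$-flat (after localization) quotients, so they stay injective. Then by the converse direction of the Chuang--Rouquier criterion quoted at the end of Section \ref{SS_Perv} (``it is straightforward to see that if $\A_{\lambda',\chi}\otimes^L_{\A_{\lambda'}}\bullet$ is perverse, (b1)--(b6) hold'', read in the other direction: (b1)--(b6) plus the equivalence property imply perversity), the functor is perverse with respect to the filtrations defined by the $\I^j_{\lambda,\F}$ and $\I^j_{\lambda+\chi,\F}$.

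The main obstacle is the flatness/base-change bookkeeping: one must verify that each of the finitely many Tor- and Ext-type modules occurring in the statements (b1)--(b6) and in the derived-equivalence criterion is, after a finite localization of $\Ring$, flat over $\Ring$ in the relevant sense, so that all the relevant homological constructions commute with reduction mod $p$. This is entirely analogous to the arguments already carried out in Lemmas \ref{Lem:gen_flatness}, \ref{Lem:Morita_Ring}, \ref{Lem:Verma_R_flatness}, and \ref{Lem:Ext_vanish_global}, using generic flatness applied to the associated graded with respect to a suitable $T_\Ring$-stable good filtration; the only subtlety is that there are several different algebras ($\A_{\lambda,\Ring}$, $\A_{\lambda+\chi,\Ring}$, quotients by the $\I^j$, and the $\B_j$) and one takes a single finite localization handling all of them at once. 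Once this is in place the proposition is immediate, and $p$ being non-invertible in the final $\Ring$ guarantees the reduction $\F \otimes_\Ring \bullet$ is defined.
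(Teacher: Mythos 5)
Your proposal is correct and follows essentially the same route as the paper: both deduce the statement from Lemma \ref{Lem:perv_Ring} by applying $\F\otimes_{\Ring}\bullet$ to the derived-equivalence criterion, the idempotency of the $\I^j_{\Ring}$, and conditions (b1)--(b6). The only (minor) divergence is in how base change past (b5)--(b6) is justified: you force $\Ring$-flatness of the finitely many Tor/Hom modules by a further finite localization, whereas the paper avoids any flatness claim by observing that the $\Ring$-torsion of the bimodules $\B_{\Ring,j}$ is itself annihilated by $\I^{j-1}_{\Ring}$ (a consequence of (b5)--(b6) over $\Ring$), so the extra torsion terms arising in the universal-coefficient comparison are harmless; both devices work.
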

\begin{proof}
We have $(\I^j_{?,\F})^2=\I^j_{?,\F}$ ($?=\lambda,\lambda+\chi$) because the similar
equalities hold over $\Ring$. Also note that the vanishing conditions (b1)-(b6) from
Section \ref{SS_Perv} hold over $\F$ because they hold over $\Ring$, see Lemma
\ref{Lem:perv_Ring}. Details are left to the reader.
%Note, in particular, that the torsion of the bimodules
%$\B_{\Ring,j}$ from (b5) and (b6) is annihilated by the ideals $\I^{j-1}_{\Ring,?}$
%(by (b5) and (b6) over $\Ring$) that insures that (b5) and (b6) hold over $\F$.
\end{proof}

\begin{Cor}\label{Cor:alcove_perverse}
Let $(A,\Theta)$ be as above and let $A^-$ be the real alcove that is opposite to $A$
with respect to $\Theta$. Let $\lambda_1,\lambda_2$ be the elements of $\Lambda$
associated to $(A,\Theta),(A^-,\Theta)$, respectively.
Then the categories $\A_{\lambda_1,\F}\operatorname{-mod}$
and $\A_{\lambda_2,\F}\operatorname{-mod}$ are perverse derived equivalent.
\end{Cor}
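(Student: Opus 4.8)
The plan is to deduce the statement from Proposition \ref{Prop:pervers_char_p} by conjugating the perverse equivalence obtained there by the abelian equivalences of Proposition \ref{Prop:char_p_ab_eq}. First I would invoke the choice of $\Lambda$ from Section \ref{SSS_Ring_assumptions}: there is $\lambda\in \Lambda$ compatible with $(A,\Theta)$, and, by condition (b) there, an element $\lambda^-=\lambda+\chi\in \Lambda$ with $\chi\in \mathcal{P}_1$ that is compatible with $(A^-,\Theta)$. By Definition \ref{defi:compat_elements}, for the primes $p$ under consideration these come with integral points $\,^p\lambda\in \,^p\!A$ and $\,^p\lambda^-\in \,^p\!A^-$ of the form $\,^p\lambda=\lambda+p\mu$, $\,^p\lambda^-=\lambda^-+p\mu^-$ with $\mu,\mu^-\in\paramq_{\Ring}$ (since the denominators of elements of $\Lambda$ are invertible in $\Ring$). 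Since $p$ is non-invertible in $\Ring$, the terms $p\mu,\,p\mu^-$ vanish in $\paramq_{\F}$, so $\,^p\lambda$ and $\lambda$ (resp.\ $\,^p\lambda^-$ and $\lambda^-$) have the same image in $\paramq_{\F}$; hence $\A_{\,^p\lambda,\F}=\A_{\lambda,\F}$ and $\A_{\,^p\lambda^-,\F}=\A_{\lambda+\chi,\F}$.

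Next I would apply Proposition \ref{Prop:char_p_ab_eq} twice. Since $\lambda_1$ and $\,^p\lambda$ lie in the same $p$-alcove $\,^p\!A$, the categories $\A_{\lambda_1,\F}\operatorname{-mod}$ and $\A_{\lambda,\F}\operatorname{-mod}$ are equivalent via a composite of Morita equivalences given by bimodules $\A_{\lambda_i,\chi_i,\F}$ with $\chi_i\in \mathcal{P}_2$; likewise, since $\lambda_2$ and $\,^p\lambda^-$ lie in $\,^p\!A^-$, the categories $\A_{\lambda+\chi,\F}\operatorname{-mod}$ and $\A_{\lambda_2,\F}\operatorname{-mod}$ are equivalent. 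In the middle, Proposition \ref{Prop:pervers_char_p} provides a perverse derived equivalence $\A_{\lambda,\chi,\F}\otimes^L_{\A_{\lambda,\F}}\bullet\colon D^b(\A_{\lambda,\F}\operatorname{-mod})\xrightarrow{\sim} D^b(\A_{\lambda+\chi,\F}\operatorname{-mod})$. Composing these three functors yields a derived equivalence $D^b(\A_{\lambda_1,\F}\operatorname{-mod})\xrightarrow{\sim} D^b(\A_{\lambda_2,\F}\operatorname{-mod})$, which is what has to be upgraded to a perverse one.

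Finally I would check that the composite is still perverse. This is the only point that needs an argument beyond bookkeeping, and it is essentially formal: pre- or post-composing a perverse equivalence with the $t$-exact equivalence of derived categories induced by an abelian equivalence of the hearts is again perverse, the filtration on the new heart being the transport of the old filtration by the abelian equivalence; conditions (P1)--(P3) of Section \ref{SS_Perv} are preserved under this transport because each is phrased purely in terms of the filtration and the homology functors, which a $t$-exact equivalence intertwines. I expect the main (mild) obstacle to be keeping the reductions modulo $p$ straight --- i.e.\ making sure that the rational parameters $\lambda,\lambda^-\in\Lambda$ really reduce to points of $\,^p\!A,\,^p\!A^-$ and that the algebra $\A_{\lambda,\F}$ appearing in Proposition \ref{Prop:pervers_char_p} coincides with the specialization at $\,^p\lambda$ used in Proposition \ref{Prop:char_p_ab_eq} --- which is precisely what the first paragraph settles using Definition \ref{defi:compat_elements} together with the congruence conditions on $p$ from Section \ref{SS_p_assumptions}.
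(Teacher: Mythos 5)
Your proposal is correct and is exactly the intended argument: the corollary is stated without proof precisely because it follows by sandwiching the perverse equivalence of Proposition \ref{Prop:pervers_char_p} (at the compatible parameters $\,^p\lambda\in\,^p\!A$, $\,^p\lambda^-=\,^p\lambda+\chi\in\,^p\!A^-$, whose reductions to $\paramq_{\F}$ agree with those of $\lambda,\lambda^-\in\Lambda$ since $p\mu$ dies mod $p$) between the abelian Morita equivalences of Proposition \ref{Prop:char_p_ab_eq} connecting them to $\lambda_1$ and $\lambda_2$ within their respective $p$-alcoves. Your observation that conjugating a perverse equivalence by $t$-exact equivalences, with the filtrations transported, preserves (P1)--(P3) is the correct formal justification of the last step.
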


\section{Modular categories $\mathcal{O}$}
\subsection{Definition and basic properties}
\subsubsection{Definition}
Recall that we have an action of $T_{\F}$ on $X_\F, Y_\F, \A_{\lambda,\F}$ etc.

For $\lambda\in \paramq_\Z$,  we consider the category $\tilde{\OCat}(\A_{\lambda,\F})$
consisting of all (weakly) $T_{\F}$-equivariant finite dimensional $\A_{\lambda,\F}$-modules.

Note that the $p$-character of a simple module in $\tilde{\OCat}(\A_{\lambda,\F})$
(a point in $Y^{(1)}_\F$) is $T_{\F}$-stable. Since $T_\F$ has finitely many
fixed points in $X_{\F}$, we conclude that the only fixed point in $Y^{(1)}_{\F}$
is $0$ and so the $p$-character of any module in $\tilde{\OCat}(\A_{\lambda,\F})$ is zero.

\subsubsection{Baby Verma modules}
Now suppose that $\lambda\in \paramq_\Z$ lies in a $p$-alcove.
Let $\nu$ be a generic one-parameter subgroup.

\begin{Lem}\label{Lem:Ca_char_p}
We have $\Ca_{\nu}(\A_{\lambda,\F})\cong \F[X^T]$.
\end{Lem}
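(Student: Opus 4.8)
\textbf{Proof proposal for Lemma \ref{Lem:Ca_char_p}.} The plan is to reduce the characteristic-$p$ statement to the corresponding statement over $\Ring$ established in the previous section, and from there to the generic-over-$\C$ statement $\Ca_\nu(\A_{\paramq^{reg},\Ring})\xrightarrow{\sim}\Ring[\paramq^{reg}][X^T]$. The key point is that $\Ca_\nu$ is compatible with base change in an appropriate sense: since $\Ca_\nu(\A_{\paramq,\Ring})$ is defined by the exact formula $\A_{\paramq,\Ring}^{\geqslant 0}/(\A_{\paramq,\Ring}^{\geqslant 0}\cap \A_{\paramq,\Ring}\A_{\paramq,\Ring}^{>0})$ and the grading by $\nu$ is a direct-sum decomposition preserved by scalar extension, we have $\F_\lambda\otimes_{\Ring[\paramq]}\Ca_\nu(\A_{\paramq,\Ring})\twoheadrightarrow \Ca_\nu(\A_{\lambda,\F})$, with the surjection coming from the fact that forming the ideal generated by $\A^{>0}$ and then passing to positive-degree part commutes with right-exact base change but possibly introduces extra relations. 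So first I would set up this base-change surjection carefully.

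\textbf{Second step: identify the target.} By the assumptions in Section \ref{SSS_assum_XY} (after a finite localization of $\Ring$, which is harmless since $p$ is non-invertible in $\Ring$), $\Ca_\nu(O_{X_{\paramq,\Ring}})=\Ring[\paramq][X^T]$, and the sheaf $\Ca_\nu(\A^\theta_{\paramq,\Ring})$ is a filtered deformation of this, so there is a homomorphism $\Ca_\nu(\A_{\paramq,\Ring})\to \Ring[\paramq][X^T]$. The crucial input is that, after a finite localization of $\Ring$, this homomorphism becomes an isomorphism when restricted to $\paramq^{reg}$: this was arranged in the text (``we assume from now on that this is an isomorphism''). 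Specializing at $\lambda\in\paramq^{reg}_\Ring$ and base-changing to $\F$, and using that $\Delta_{\nu,\paramq^{reg},\Ring}$ and hence its highest-degree component are projective (flat) over $\Ring[\paramq^{reg}]$ by Lemma \ref{Lem:Verma_R_flatness}, I would deduce $\Ca_\nu(\A_{\lambda,\F})\xrightarrow{\sim}\F[X^T]$. Here one needs that $\lambda$ lying in a $p$-alcove forces the reduction $\bar\lambda\in\paramq^{reg}_\F$: this is exactly the equivalence of (i) and (ii) in Section \ref{SS_p_assumptions}, together with the fact that a $p$-alcove is cut out by the $p$-singular hyperplanes and avoiding all of them puts $\lambda$ mod $p$ in $\paramq^{reg}_\Ring$ mod $p$.

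\textbf{Third step: ruling out the extra relations / dimension count.} The surjection $\F[X^T]\twoheadleftarrow\F_\lambda\otimes\Ca_\nu(\A_{\paramq,\Ring})\twoheadrightarrow\Ca_\nu(\A_{\lambda,\F})$ must be shown to be an isomorphism in both directions. One clean way: $\dim_\F\F[X^T]=\#X^T$ on the nose; on the other hand, $\dim_\F\Ca_\nu(\A_{\lambda,\F})$ can be bounded below by $\#X^T$ by exhibiting $\#X^T$ linearly independent simple $\Ca_\nu(\A_{\lambda,\F})$-modules, equivalently $\#X^T$ non-isomorphic simples in $\tilde\OCat(\A_{\lambda,\F})$ with distinct baby-Verma supports — these come from reducing the $\Ring$-forms $\hat\Delta_{\nu,\lambda,\Ring}(\mathsf{x})$, or more directly from the completed-at-$\mathsf{x}$ Weyl-algebra picture, since each $T_\F$-fixed point $\mathsf{x}$ gives at least one simple supported there. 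Combined with the upper bound $\dim\Ca_\nu(\A_{\lambda,\F})\leqslant\dim\Ca_\nu(\C[Y_\F])=\#X^T$ (the Cartan subquotient of the function ring, supported at the origin, which over a $p$-alcove point specializes correctly), a dimension count forces all maps to be isomorphisms.

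\textbf{Main obstacle.} I expect the genuine difficulty to be controlling the base change from $\Ring$ to $\F$ for the Cartan subquotient: a priori $\Ca_\nu$ does not commute with base change because $\Ca_\nu(\A_{\paramq,\Ring})$ need not be $\Ring$-flat and the ideal $\A\A^{>0}$ can acquire extra generators mod $p$. The text handles closely analogous issues (Lemma \ref{Lem:gen_flatness}, Lemma \ref{Lem:Verma_R_flatness}) by generic flatness plus comparison with the $\Q$-form, and the same mechanism should apply here: after a finite localization of $\Ring$, $\Ca_\nu(\A_{\paramq^{reg},\Ring})$ is $\Ring[\paramq^{reg}]$-flat (indeed isomorphic to $\Ring[\paramq^{reg}][X^T]$), so its specialization to any $\F$-point of $\paramq^{reg}_\Ring$ is $\F[X^T]$ with no extra relations; one then transports this to an honest $p$-alcove point using Proposition \ref{Prop:char_p_ab_eq}, since $\Ca_\nu$ is an invariant of the category $\A_{\lambda,\F}\operatorname{-mod}$ equipped with the grading, and Morita equivalences coming from translation bimodules $\A_{\lambda_i,\chi_i,\F}$ match up the graded structures (this is where the $T_\F$-equivariant structure on $O_\F(\chi)$ is used). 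So the real work is checking that this chain of translation functors indeed carries $\Ca_\nu$ to $\Ca_\nu$ compatibly — a grading-bookkeeping argument parallel to Lemma \ref{Lem:Verma_transl_global}.
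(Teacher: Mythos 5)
Your second step is precisely the paper's proof: the paper simply recalls that $\Ca_{\nu}(\A_{\paramq^{reg},\Ring})\cong \Ring[\paramq^{reg}][X^T]$ was arranged by a finite localization of $\Ring$ in the section on $\Ring$-forms, and specializes at $\lambda\in\paramq^{reg}_{\F}$ (a $p$-alcove point reduces into $\paramq^{reg}_\F$ by the assumptions of Section \ref{SS_p_assumptions}), using that $\Ca_\nu$, presented as the cokernel $\A^0/\bigoplus_{i>0}\A^{-i}\A^i$, commutes with base change on the nose. Your third step and the ``main obstacle'' paragraph are therefore redundant -- and the claimed upper bound $\dim\Ca_\nu(\F[Y_\F])=|X^T|$ is not justified in general ($\Ca_\nu(\F[Y])$ can be strictly larger than $\F[X^T]$) -- but since nothing rests on them, the argument as a whole is correct and coincides with the paper's.
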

\begin{proof}
Recall, Section \ref{SS_Verma_forms}, that we have
\begin{equation}\label{eq:Cartan_iso_integr}
\Ca_\nu(\A_{\paramq^{reg}, \Ring})\cong \Ring[\paramq^{reg}][X^T].
\end{equation}
Since $\lambda$ is in a $p$-alcove, its reduction modulo $p$ is in
$\paramq^{reg}_\F$.
Specializing (\ref{eq:Cartan_iso_integr}), we get the required isomorphism.
\end{proof}

Let us give an example of an object in $\tilde{\OCat}(\A_{\lambda,\F})$, a baby Verma module.
For $\mathsf{x}\in X^{T}$ and $\kappa\in \mathfrak{X}(T)$, we can form the Verma module
$\Delta_{\nu,\F}(\mathsf{x},\kappa)$ as in characteristic $0$:  $$\Delta_{\nu,\F}(\mathsf{x},\kappa):=
\A_{\lambda,\F}/\A_{\lambda,\F}\A_{\lambda,\F}^{>0}\otimes_{\Ca_\nu(\A_{\lambda,\F})}\F_\mathsf{x},$$
where we put $\F_\mathsf{x}$ in degree $\kappa$. This is not an object in
$\tilde{\OCat}(\A_{\lambda,\F})$, in fact, it is a pro-object.

We define the baby Verma module $\underline{\Delta}_{\nu,\F}(\mathsf{x},\kappa)$ as the fiber of
$\Delta_{\nu,\F}(\mathsf{x},\kappa)$ over $0\in Y^{(1)}_{\F}$.

\begin{Lem}\label{Lem:simple_classif}
A choice of a generic one-parameter subgroup gives rise to an identification of
the set of simples in $\tilde{\OCat}(\A_{\lambda,\F})$ with $X^{T}\times \mathfrak{X}(T)$.
\end{Lem}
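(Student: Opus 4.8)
The strategy is to reduce the classification of simples in $\tilde{\OCat}(\A_{\lambda,\F})$ to the classification of simples in the abelian category $\OCat_\nu(\A_{\lambda,\F})$ of all (not necessarily finite-dimensional) modules on which $\A^{>0}_{\lambda,\F}$ acts locally nilpotently, and then to use the general formalism of Section \ref{SS_O_gen} together with Lemma \ref{Lem:Ca_char_p}. First I would observe that by the discussion following the definition of $\tilde{\OCat}(\A_{\lambda,\F})$, every object of $\tilde{\OCat}(\A_{\lambda,\F})$ has $p$-character $0$, hence is a module over the finite-dimensional \emph{restricted} algebra obtained from $\A_{\lambda,\F}$ by killing the (Frobenius-twisted) augmentation ideal of the center $\F[Y^{(1)}_\F]$; in particular finite-dimensionality is automatic once we know local nilpotence of $\A^{>0}_{\lambda,\F}$, and conversely a $T_\F$-equivariant finite-dimensional module automatically has $\A^{>0}_{\lambda,\F}$ acting locally nilpotently because the $\nu$-weights are bounded (the grading is bounded on a finite-dimensional space). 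Thus $\tilde{\OCat}(\A_{\lambda,\F})$ is precisely the category of $T_\F$-equivariant objects of $\OCat_\nu(\A_{\lambda,\F})$ whose $p$-character vanishes.

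Next I would invoke the general setup of Section \ref{SS_O_gen}: since $\dim\Ca_\nu(\A_{\lambda,\F})<\infty$ (indeed it equals $\F[X^T]$ by Lemma \ref{Lem:Ca_char_p}), the assignment $N\mapsto L_\nu(N)$ gives a bijection $\operatorname{Irr}(\Ca_\nu(\A_{\lambda,\F}))\xrightarrow{\sim}\operatorname{Irr}(\OCat_\nu(\A_{\lambda,\F}))$. Because $\Ca_\nu(\A_{\lambda,\F})\cong\F[X^T]$ is commutative and reduced with $\F$ algebraically closed, its simple modules are exactly the one-dimensional modules $\F_\mathsf{x}$, $\mathsf{x}\in X^T$. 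Passing to the $T_\F$-equivariant category multiplies the simple set by $\mathfrak{X}(T)$: a $T_\F$-equivariant structure on a simple object in $\OCat_\nu(\A_{\lambda,\F})$ is a grading compatible with the $T_\F$-action on $\A_{\lambda,\F}$, and by Schur's lemma (again $\F$ algebraically closed, and each graded piece finite-dimensional) such a grading is unique up to an overall shift by a character $\kappa\in\mathfrak{X}(T)$. Hence the simples of the $T_\F$-equivariant category $\OCat^{T}_\nu(\A_{\lambda,\F})$ are labelled by $X^T\times\mathfrak{X}(T)$, the label $(\mathsf{x},\kappa)$ corresponding to the head $\underline{L}_{\nu,\F}(\mathsf{x},\kappa)$ of the baby Verma $\underline{\Delta}_{\nu,\F}(\mathsf{x},\kappa)$.

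Finally I would check that restricting to $p$-character $0$ does not remove any simples: the baby Verma modules $\underline{\Delta}_{\nu,\F}(\mathsf{x},\kappa)$ are nonzero (their top graded piece is $\F_\mathsf{x}$ in degree $\kappa$, which survives the fiber over $0\in Y^{(1)}_\F$ because $\Delta_{\nu,\F}(\mathsf{x},\kappa)$ is free over the relevant polynomial subalgebra of $\F[Y^{(1)}_\F]$ — this is the same flatness used for the characteristic-$0$ Verma modules), so each $(\mathsf{x},\kappa)$ genuinely occurs; and any simple in $\tilde{\OCat}(\A_{\lambda,\F})$, being a $T_\F$-equivariant simple object of $\OCat_\nu(\A_{\lambda,\F})$ killed by the augmentation ideal, is a quotient of some $\underline{\Delta}_{\nu,\F}(\mathsf{x},\kappa)$ and hence coincides with its head $\underline{L}_{\nu,\F}(\mathsf{x},\kappa)$. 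The map $(\mathsf{x},\kappa)\mapsto\underline{L}_{\nu,\F}(\mathsf{x},\kappa)$ is therefore a well-defined bijection. The main obstacle I anticipate is the bookkeeping of the two gradings — the internal $\nu$-grading governing category $\OCat$ and the full $T_\F$-equivariant grading — and making precise that a simple object admits a $T_\F$-equivariant structure unique up to twist by $\mathfrak{X}(T)$; this requires knowing the generalized $h$-eigenspaces are finite-dimensional (from the order argument in Section \ref{SS_O_gen}, valid since $\dim\Ca_\nu(\A_{\lambda,\F})<\infty$) so that Schur's lemma applies graded-piece-by-graded-piece.
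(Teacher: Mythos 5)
Your proposal is correct and follows essentially the same route as the paper, which simply notes that a standard argument shows every simple in $\tilde{\OCat}(\A_{\lambda,\F})$ is the quotient of a unique baby Verma module $\underline{\Delta}_{\nu,\F}(\mathsf{x},\kappa)$; your write-up just makes explicit the ingredients (Lemma \ref{Lem:Ca_char_p}, the uniqueness of the equivariant structure up to a twist by $\mathfrak{X}(T)$, and the nonvanishing of the baby Vermas) that the paper leaves implicit.
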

\begin{proof}
A standard argument shows that every simple is the quotient of a unique baby Verma module.
This gives the required classification.
\end{proof}

The simple corresponding to $(\mathsf{x},\kappa)$ will be denoted by $\underline{L}_{\nu,\lambda,\F}(\mathsf{x},\kappa)$.

%Note also that we can complete the Verma module $\Delta_{\nu,\lambda,\F}(\mathsf{x},\kappa)$ at $0\in Y^{(1)}_{\F}$.
%Denote the resulting completion by $\Delta^{\wedge_0}_{\nu,\lambda,\F}(\mathsf{x},\kappa)$. This is a projective limit
%of modules from $\tilde{\OCat}(\A_{\lambda,\F})$.

\subsubsection{Equivariant block decomposition}
We will also need a direct sum decomposition of the category $\tilde{\OCat}(\A_{\lambda,\F})$.
Consider the quantum comoment map $\Phi:\mathfrak{t}_{\F}\rightarrow \A_{\lambda,\F}$.
Note that $h$ has degree $0$ hence preserves every graded component of $M\in \tilde{\OCat}(\A_{\lambda,\F})$. Then
$M:=\bigoplus_{\alpha\in \F_p\otimes\mathfrak{X}(T)}M^\alpha$, where $M^\alpha$ denotes the $T_{\F}$-graded
subspace of $M$, where all $h\in \mathfrak{t}_{\F_p}$ acts on the graded component $M_\kappa^\alpha$ with
the single eigenvalue $\alpha+\kappa$. Then $M^\alpha_\kappa$ is a graded $\A_{\lambda,\F}$-submodule
and hence an object in $\tilde{\OCat}(\A_{\lambda,\F})$.

We consider the category $\tilde{\OCat}^\beta(\A_{\lambda,\F}), \beta\in \tf_{\F_p}^*,$
consisting of all $M$ with $M=M^\beta$. These categories will be called {\it equivariant blocks}. Then $\tilde{\OCat}(\A_{\lambda,\F})=\bigoplus_{\beta\in \tf^*_{\F_p}}\tilde{\OCat}^\beta(\A_{\lambda,\F})$.
Note that all categories $\tilde{\OCat}^\beta(\A_{\lambda,\F})$ are obtained from
$\tilde{\OCat}^0(\A_{\lambda,\F})$ by shifting the $T_{\F}$-equivariant structure.

For $\mathsf{x}\in X^T$, let $\Phi_{\mathsf{x}}\in \mathfrak{t}_\F^*$ denote the composition of $\Phi:
\mathfrak{t}_{\F}\rightarrow \A_{\lambda,\F}^T$ and the projection
$\A_{\lambda,\F}^T\twoheadrightarrow \F$ corresponding to $\mathsf{x}$. This is an element
of $\tf^*_{\F_p}$. We note that
$\underline{L}_{\nu,\lambda,\F}(\mathsf{x},\kappa)$ lies in
$\tilde{\OCat}^\beta(\A_{\lambda,\F})$ if and only if $\Phi_\mathsf{x}-\kappa=\beta$ modulo $p$.

\subsubsection{Equivalences}
Recall that a choice of a $T_\F$-equivariant  structure on $O_\Ring(\chi)$ gives a $T_\F$-equivariant structure on $\A_{\paramq,\chi,\Ring}$ and hence on $\A_{\paramq,\chi,\F}$.

\begin{Lem}\label{Lem:transl_O}
For $\lambda\in \paramq_\F$ and $\chi\in \mathcal{P}_2$, the functor $\A_{\lambda,\chi,\F}\otimes^L_{\A_{\lambda,\F}}\bullet$ restricts to
$$D^-_{fin}(\A_{\lambda,\F}\operatorname{-mod}^{T})\rightarrow
D^-_{fin}(\A_{\lambda+\chi,\F}\operatorname{-mod}^{T}).$$
\end{Lem}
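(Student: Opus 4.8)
The plan is to reduce the statement to a single finiteness property of a $\Tor$-space. First I would record the structural input: since $\chi\in\mathcal{P}_2$, the bimodule $\A_{\paramq,\chi,\Ring}$ is defined (independently of $\theta$), is flat over $\Ring$, and is Harish--Chandra over $\Ring$; hence its specialization $\A_{\lambda,\chi,\F}$ at $\lambda\in\paramq_\F$ is a Harish--Chandra $\A_{\lambda+\chi,\F}$-$\A_{\lambda,\F}$-bimodule. In particular it is finitely generated as a module on either side and carries a good filtration with $\gr\A_{\lambda,\chi,\F}$ finitely generated over $\F[X]$. Two formal points are then immediate and I would dispatch them at once: the functor $\A_{\lambda,\chi,\F}\otimes^L_{\A_{\lambda,\F}}\bullet$ sends bounded above complexes to bounded above complexes (no finiteness of homological dimension of $\A_{\lambda,\F}$ is needed, as $D^-$ allows unbounded-below complexes), and, equipped with the chosen $T_\F$-equivariant structure on $\A_{\lambda,\chi,\F}$, it sends $T_\F$-equivariant complexes to $T_\F$-equivariant complexes (a torus over $\F$ is linearly reductive, so the equivariant structure passes through the derived tensor product). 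So the real content is that the functor preserves finite dimensionality of cohomology.

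Next I would invoke the hyper-Tor spectral sequence $E^2_{p,q}=\Tor_p^{\A_{\lambda,\F}}(\A_{\lambda,\chi,\F},H^{q}(M^\bullet))\Rightarrow H^{p+q}(\A_{\lambda,\chi,\F}\otimes^L_{\A_{\lambda,\F}}M^\bullet)$, which converges because $M^\bullet$ is bounded above, so that each cohomological degree of the target receives contributions from only finitely many terms. This reduces the claim to: for a finite dimensional $\A_{\lambda,\F}$-module $M$ and every $i$, the space $\Tor_i^{\A_{\lambda,\F}}(\A_{\lambda,\chi,\F},M)$ is finite dimensional.

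For this I would equip $M$ with a (necessarily finite) good filtration and run the standard filtered-resolution argument, exactly as in the discussion of $\Tor$'s of Harish--Chandra bimodules recalled in the preliminaries: $\Tor_i^{\A_{\lambda,\F}}(\A_{\lambda,\chi,\F},M)$ then carries a bounded-below exhaustive filtration whose associated graded is a subquotient of $\Tor_i^{\F[X]}(\gr\A_{\lambda,\chi,\F},\gr M)$. Since $M$ is finite dimensional, $\gr M$ is a finite dimensional graded $\F[X]=\F[Y]$-module, hence supported at the cone point $0\in Y_\F$, i.e. annihilated by a power of the irrelevant ideal; as $\F[Y]$ is Noetherian and $\gr\A_{\lambda,\chi,\F}$ is finitely generated over it, $\Tor_i^{\F[X]}(\gr\A_{\lambda,\chi,\F},\gr M)$ is finitely generated over $\F[Y]$ and killed by a power of the irrelevant ideal, hence finite dimensional. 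Therefore $\Tor_i^{\A_{\lambda,\F}}(\A_{\lambda,\chi,\F},M)$ is finite dimensional, and combined with the spectral sequence this proves the lemma.

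The only delicate point — the "hard part," such as it is — is the passage to associated gradeds: one must check that the filtered-free-resolution machinery producing the estimate $\gr\Tor_i\subset(\text{subquotient of})\,\Tor_i^{\gr}$ applies to a one-sided module $M$ and not merely to bimodules, and that the induced filtration on $\Tor_i$ is bounded below and exhaustive so that the spectral sequence of the filtered complex converges. This is routine (one uses that $\A_{\lambda,\F}$ is Noetherian, so $M$ admits a resolution by finitely generated filtered free modules) but should be stated carefully; everything else in the argument is bookkeeping.
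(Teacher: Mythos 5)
Your proof is correct and takes essentially the same route as the paper's, which disposes of the lemma in two sentences by citing that $\A_{\lambda,\chi,\F}$ is a Harish--Chandra bimodule (for the finite dimensionality of the $\Tor$'s against a finite dimensional module, via the associated-graded subquotient estimate recalled in the preliminaries) and that it is $T_\F$-equivariant (for the equivariant upgrade). You have merely made explicit the filtered-resolution and hyper-$\Tor$ spectral sequence bookkeeping that the paper leaves implicit.
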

Here the subscript ``fin'' stands for the full subcategory of all complexes
with finite dimensional homology.
\begin{proof}
The claim that $\A_{\lambda,\chi,\F}\otimes^L_{\A_{\lambda,\F}}\bullet$
sends finite dimensional modules to complexes with finite dimensional
homology follows from the fact that $\A_{\lambda,\chi,\F}$ is a HC bimodule.
Since $\A_{\lambda,\chi,\F}$ is also $T_\F$-equivariant, it follows that
$\A_{\lambda,\chi,\F}\otimes^L_{\A_{\lambda,\F}}\bullet$ upgrades to a functor
between equivariant categories.
\end{proof}

\begin{Prop}\label{Prop:cat_O_equiv}
Let $\lambda,\lambda+\chi$ lie in the same alcove. Then $\A_{\lambda,\chi,\F}\otimes_{\A_{\lambda,\F}}\bullet$
is an equivalence $\tilde{\mathcal{O}}(\A_{\lambda,\F})$ and $\tilde{\mathcal{O}}(\A_{\lambda+\chi,\F})$
that sends $\underline{L}_{\nu,\lambda,\F}(\mathsf{x},\kappa)$ to $\underline{L}_{\nu,\lambda+\chi,\F}(\mathsf{x},\kappa+\mathsf{wt}_\chi(\mathsf{x}))$.
\end{Prop}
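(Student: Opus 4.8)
The plan is to deduce this from the results already established for the bimodules $\A_{\lambda,\chi,\F}$ together with the behavior of Verma modules under translation. First I would recall from Proposition \ref{Prop:char_p_ab_eq} (or rather its proof via Lemma \ref{Lem:Morita_Ring}) that, since $\lambda,\lambda+\chi$ lie in the same $p$-alcove, the parameters $\lambda,\lambda+\chi$ (mod $p$) lie in $\paramq^{reg}_\Ring$ mod $p$, so that $\A_{\lambda,\chi,\F}$ and $\A_{\lambda+\chi,-\chi,\F}$ are mutually inverse Morita equivalence bimodules. Hence $\A_{\lambda,\chi,\F}\otimes_{\A_{\lambda,\F}}\bullet$ is already an equivalence $\A_{\lambda,\F}\operatorname{-mod}\xrightarrow{\sim}\A_{\lambda+\chi,\F}\operatorname{-mod}$, and it is exact (no derived functors needed). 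By Lemma \ref{Lem:transl_O} it upgrades to an equivalence of $T_\F$-equivariant categories. It remains to see that it restricts to an equivalence $\tilde{\OCat}(\A_{\lambda,\F})\xrightarrow{\sim}\tilde{\OCat}(\A_{\lambda+\chi,\F})$ of the subcategories of finite-dimensional modules; this is immediate since a Morita equivalence preserves finite-dimensionality (e.g. because $\A_{\lambda,\chi,\F}$ is a finitely generated module on each side, being HC).

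Next I would identify the images of the simple objects. The cleanest route is via baby Verma modules. Using the $\Ring$-form statement Lemma \ref{Lem:Verma_transl_global} (specialized along $\paramq^{reg}\to\F$ at $\lambda$, which is legitimate because $\lambda$ reduces into $\paramq^{reg}_\Ring\bmod p$), one gets a $T_\F$-equivariant isomorphism
\begin{equation*}
\A_{\lambda,\chi,\F}\otimes_{\A_{\lambda,\F}}\Delta_{\nu,\F}(\mathsf{x},\kappa)\cong \Delta_{\nu,\F}(\mathsf{x},\kappa+\mathsf{wt}_\chi(\mathsf{x})).
\end{equation*}
Since the baby Verma module $\underline{\Delta}_{\nu,\F}(\mathsf{x},\kappa)$ is the fiber of $\Delta_{\nu,\F}(\mathsf{x},\kappa)$ over $0\in Y^{(1)}_\F$, and the bimodule $\A_{\lambda,\chi,\F}$ intertwines the (Frobenius-twisted) central $\F[Y^{(1)}]$-actions on the two sides — because the adjunction maps of Lemma \ref{Lem:Morita_Ring} are $\Ring[\paramq]$-linear and compatible with the $p$-center — the functor sends $\underline{\Delta}_{\nu,\F}(\mathsf{x},\kappa)$ to $\underline{\Delta}_{\nu,\F}(\mathsf{x},\kappa+\mathsf{wt}_\chi(\mathsf{x}))$. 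An equivalence of abelian categories sends the unique simple quotient of an indecomposable object to the unique simple quotient of its image; by Lemma \ref{Lem:simple_classif} this forces $\underline{L}_{\nu,\lambda,\F}(\mathsf{x},\kappa)\mapsto \underline{L}_{\nu,\lambda+\chi,\F}(\mathsf{x},\kappa+\mathsf{wt}_\chi(\mathsf{x}))$, which is the claim.

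I expect the main obstacle to be the bookkeeping around the $p$-center and the equivariant structures: one must check that the Morita equivalence $\A_{\lambda,\chi,\F}\otimes_{\A_{\lambda,\F}}\bullet$ actually respects the zero $p$-character condition — equivalently, that it does not move the fiber over $0\in Y^{(1)}_\F$ — and that the chosen $T_\F$-equivariant structure on $\A_{\paramq,\chi,\Ring}$ produces precisely the weight shift $\mathsf{wt}_\chi(\mathsf{x})$ rather than that shift twisted by some global character. The first point follows because $\A_{\lambda,\chi,\F}$ is a HC bimodule, so its associated variety is the diagonal and the induced map on $\Spec$ of the $p$-centers is the identity; the second is exactly the computation behind Proposition \ref{Prop:transl_grad_O}, where the restriction of $O(\chi)$ to the affine space $(X_\lambda)^+_{\mathsf{x}}$ is trivial with $T$-action $\mathsf{wt}_\chi(\mathsf{x})$, and this computation is insensitive to the base (it happens already over $\Ring$, hence over $\F$). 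Once these two compatibilities are in place, the rest is formal.
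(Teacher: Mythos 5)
Your proposal is correct and follows essentially the same route as the paper: Morita equivalence from Proposition \ref{Prop:char_p_ab_eq}, the translation of Verma modules from Lemma \ref{Lem:Verma_transl_global}, and the fact that the simple is the unique irreducible graded quotient. The only difference is your detour through baby Verma modules and the $p$-center; the paper avoids this by noting directly that $\underline{L}_{\nu,\lambda,\F}(\mathsf{x},\kappa)$ is already the unique irreducible graded quotient of the full Verma $\Delta_{\nu,\lambda,\F}(\mathsf{x},\kappa)$, so the compatibility with the fiber over $0\in Y^{(1)}_\F$ need not be checked.
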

\begin{proof}
By Proposition \ref{Prop:char_p_ab_eq}, $\A_{\lambda,\chi,\F}$ is a Morita equivalence bimodule.
It follows from Lemma \ref{Lem:Verma_transl_global}, that $\A_{\lambda,\chi,\F}$ (with our choice of a $T_\F$-equivariant
structure) sends $\Delta_{\nu,\lambda,\F}(\mathsf{x};\kappa)$ to $\Delta_{\nu,\lambda+\chi,\F}(\mathsf{x};\kappa+\mathsf{wt}_\chi(\mathsf{x}))$. Since
$\underline{L}_{\nu,\lambda,\F}(\mathsf{x};\kappa)$ is the unique graded quotient of $\Delta_{\nu,\lambda,\F}(\mathsf{x};\kappa)$,
our claim follows.
\end{proof}

\subsubsection{Duality}\label{SSS_duality}
Now we want to discuss the contravariant duality between categories $\tilde{\mathcal{O}}(\A_{\lambda,\F})$
and $\tilde{\mathcal{O}}(\A_{-\lambda,\F})$. Recall that we have
an isomorphism $(\A_{\lambda,\F})^{opp}\cong \A_{-\lambda,\F}$. Then $M\mapsto M^*$
defines a contravariant equivalence between $\tilde{\mathcal{O}}(\A_{\lambda,\F})$
and $\tilde{\mathcal{O}}(\A^{opp}_{\lambda,\F})$. But
$\A^{opp}_{\lambda,\F}\cong \A_{-\lambda,\F}$.

\begin{Lem}\label{Lem:dual_simples}
We have $\underline{L}_{\nu,\lambda,\F}(\mathsf{x},\kappa)=\underline{L}_{-\nu,-\lambda,\F}(\mathsf{x},-\kappa)^*$.
\end{Lem}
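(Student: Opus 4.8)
The plan is to move the ``highest weight'' presentation of $\underline{L}_{\nu,\lambda,\F}(\mathsf{x},\kappa)$ through the contravariant functor $\bullet^{*}$, turning a surjection from a baby Verma into an embedding into a baby \emph{co}-Verma over $\A_{-\lambda,\F}$, and then to read off the label from the socle of that co-Verma.

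First I would record how $\bullet^{*}$ acts on the equivariant and Cartan data. The functor $M\mapsto M^{*}$ is an exact contravariant equivalence $\tilde{\OCat}(\A_{\lambda,\F})\to\tilde{\OCat}(\A_{\lambda,\F}^{\mathrm{opp}})$; it negates the $T_{\F}$-grading, $(M^{*})_{\mu}=(M_{-\mu})^{*}$, and reversing multiplication it interchanges $\A_{\lambda,\F}^{>0}$ with $\A_{\lambda,\F}^{<0}$, hence interchanges the recipe defining $\Delta_{\nu}$ with the one defining $\Delta^{r}_{-\nu}$. Composing with the parity anti-automorphism $\varsigma\colon\A_{\lambda,\F}^{\mathrm{opp}}\xrightarrow{\sim}\A_{-\lambda,\F}$, which is the identity on the associated graded and therefore compatible with the identifications $\Ca_{\nu}(\A_{\lambda,\F})\cong\F[X^{T}]\cong\Ca_{-\nu}(\A_{-\lambda,\F})$ coming from Lemma \ref{Lem:Ca_char_p} together with the general isomorphism $\Ca_{\nu}(B^{\mathrm{opp}})\cong\Ca_{-\nu}(B)$ (read off from \eqref{eq:Cartan_iso}), we obtain a contravariant equivalence $\tilde{\OCat}(\A_{\lambda,\F})\to\tilde{\OCat}(\A_{-\lambda,\F})$ under which the simple $\F[X^{T}]$-module supported at $\mathsf{x}$ goes to the simple $\F[X^{T}]$-module supported at $\mathsf{x}$ (over the commutative reduced algebra $\F[X^{T}]$ the dual of a skyscraper at $\mathsf{x}$ is again the skyscraper at $\mathsf{x}$).

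Next I would identify the dual of a baby Verma. Define the baby co-Verma $\underline{\nabla}_{\nu,\lambda,\F}(\mathsf{x},\kappa)$ as the largest submodule of $\nabla_{\nu,\lambda,\F}(\mathsf{x},\kappa)$ killed by the augmentation ideal of the $p$-centre; as in the $\C$-case (the paragraph after Lemma \ref{Lem:Ext_Tor}) this is the linear dual of the opposite-algebra baby Verma, so the previous paragraph gives $T_{\F}$-equivariant isomorphisms of $\A_{-\lambda,\F}$-modules
\[
\underline{\Delta}_{\nu,\lambda,\F}(\mathsf{x},\kappa)^{*}\;\cong\;\underline{\nabla}_{-\nu,-\lambda,\F}(\mathsf{x},-\kappa).
\]
The right-hand side is finite dimensional, lies in $\tilde{\OCat}(\A_{-\lambda,\F})$, has one-dimensional $(-\nu)$-top of weight $(\mathsf{x},-\kappa)$, and, by the same argument as in Lemma \ref{Lem:simple_classif} applied to $\A_{-\lambda,\F}^{\mathrm{opp}}$, has simple socle equal to $\underline{L}_{-\nu,-\lambda,\F}(\mathsf{x},-\kappa)$.

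Finally I would assemble the pieces: apply $\bullet^{*}$ to the surjection $\underline{\Delta}_{\nu,\lambda,\F}(\mathsf{x},\kappa)\twoheadrightarrow\underline{L}_{\nu,\lambda,\F}(\mathsf{x},\kappa)$ to get an embedding $\underline{L}_{\nu,\lambda,\F}(\mathsf{x},\kappa)^{*}\hookrightarrow\underline{\nabla}_{-\nu,-\lambda,\F}(\mathsf{x},-\kappa)$; the source is simple, hence it is the socle of the target, namely $\underline{L}_{-\nu,-\lambda,\F}(\mathsf{x},-\kappa)$, which is the claim (the original statement follows by applying $\bullet^{*}$ once more and using $(\bullet^{*})^{*}=\mathrm{id}$). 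I expect the only genuine technical point to be the first step: checking that ``linear dual followed by $\varsigma$'' fixes $\mathsf{x}\in X^{T}$ and flips $\nu$ to $-\nu$ exactly, rather than permuting labels — this is exactly where the compatibility of $\varsigma$ with associated graded objects and with the identifications $\Ca_{\pm\nu}(\A_{\pm\lambda,\F})\cong\F[X^{T}]$ is used, together with the harmless remark about skyscrapers over $\F[X^{T}]$.
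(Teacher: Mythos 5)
Your proposal is correct and follows essentially the same route as the paper: both arguments dualize the highest-weight data, using that $\bullet^*$ is an exact contravariant equivalence together with the compatibility $\Ca_{\nu}(\A_{\lambda,\F})^{opp}\cong\Ca_{-\nu}(\A_{-\lambda,\F})$ with the identifications by $\F[X^T]$ to fix the label $\mathsf{x}$, and then read off $\kappa$ from the top weight line. The paper works directly with the simple module (its $(-\nu)$-highest weight space dualizing to coinvariants, upgraded to invariants by irreducibility), whereas you package the same computation through the baby Verma/co-Verma pair and its socle; this is only a cosmetic difference.
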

\begin{proof}
Since $\bullet^*$ is an equivalence, the object $\underline{L}_{-\nu,-\lambda,\F}(\mathsf{x},-\kappa)^*$
is simple. First of all, let us show that it corresponds to the point $\mathsf{x}\in X^T$. Recall, see,
e.g., \cite[Section 4.2]{CWR}, that we have an isomorphism $\Ca_{\nu}(\A_{\lambda,\F})^{opp}\cong \Ca_{-\nu}(\A_{-\lambda,\F})$. This isomorphism intertwines the identifications of these algebras
with $\F[X^T]$. The condition that $\underline{L}_{-\nu,-\lambda,\F}(\mathsf{x},-\kappa)^{\A_{\lambda,\F}^{\nu,<0}}=\F_{\mathsf{x}}$
translates to $\underline{L}_{-\nu,-\lambda,\F}(\mathsf{x},-\kappa)^*/\A_{\lambda,\F}^{\nu,<0}
\underline{L}_{-\nu,-\lambda,\F}(\mathsf{x},-\kappa)^*\cong \F_{\mathsf{x}}$. Since $\underline{L}_{-\nu,-\lambda,\F}(\mathsf{x},-\kappa)^*$ is an irreducible module, it follows that
$\left(\underline{L}_{-\nu,-\lambda,\F}(\mathsf{x},-\kappa)^{*}\right)^{\A_{\lambda,\F}^{>0,\nu}}\cong \F_{\mathsf{x}}$.
Hence $\underline{L}_{-\nu,-\lambda,\F}(\mathsf{x},-\kappa)^*\cong \underline{L}_{\nu,\lambda,\F}(\mathsf{x},\kappa')$
for some $\kappa'$.
Note also that the highest degree (with respect to $\nu$) component of
$\underline{L}_{-\nu,-\lambda,\F}(\mathsf{x},-\kappa)^*$ equals $\langle\kappa,\nu\rangle$, while the
character of the action of $T$ on this component is $\kappa$. So
$\underline{L}_{-\nu,-\lambda,\F}(\mathsf{x},-\kappa)^*\cong \underline{L}_{\nu,\lambda,\F}(\mathsf{x},\kappa)$.
\end{proof}

Using the duality we can define analogs of costandard objects of $\tilde{\mathcal{O}}(\A_{\lambda,\F})$
that will be ind-objects to be denoted by $\nabla^\F_{\nu,\lambda}(\mathsf{x},\kappa)$. Namely, we define
$\nabla^\F_{\nu,\lambda}(\mathsf{x},\kappa)$ as the restricted dual of $\Delta_{-\nu,-\lambda,\F}(\mathsf{x},-\kappa)$.
We note that $\nabla^\F_{\nu,\lambda}(\mathsf{x},\kappa)$ is not the same as $\F\otimes_{\Ring}\nabla_{\nu,\lambda,\Ring}(\mathsf{x},\kappa)$ (the latter is still a pro-object).

\subsection{Highest weight structure}
Let $\mathcal{C}$ be an abelian Artinian and Noetherian category, equivalently, an abelian
category where all objects have finite length. Suppose that we have an
auto-equivalence $\mathcal{S}$ of $\mathcal{C}$ such that the action of
$\Z$ on $\operatorname{Irr}(\mathcal{C})$ induced by $\mathcal{S}$ is free. Further, suppose
that that $\operatorname{Irr}(\mathcal{C})/\Z$ is finite.

Further, let $\leqslant$ be a partial order on $\operatorname{Irr}(\mathcal{C})$ subject to the
following properties:
\begin{itemize}
\item The $\Z$-action preserves the order.
\item We have $L<\mathcal{S}L$ for every $L\in \operatorname{Irr}(\mathcal{C})$.
\item For any two $L,L'$ with $L<L'$,  there is $n\in \Z_{\geqslant 0}$ such that
$L'<\mathcal{S}^{n}L$.
\end{itemize}

For $L\in \operatorname{Irr}(\mathcal{C})$, let $\mathcal{C}_{\leqslant L}$ denote the Serre
span of $L'\in \operatorname{Irr}(\mathcal{C})$ with $L'\leqslant L$.
We say that the pair $(\mathcal{C},\leqslant)$ is {\it periodic highest weight} (shortly, PHW)
if the following holds:
\begin{itemize}
\item[(PHW)] Each quotient $\mathcal{C}_{\leqslant L}/\mathcal{C}_{\leqslant L'}$ with $L'<L$ is
a highest weight category with respect to the order $\leqslant$ in the usual sense
(see Section \ref{SSS_HW}).
\end{itemize}

Recall that by a poset interval we mean a subset $\mathfrak{I}$ such that if $L,L'\in \mathfrak{I}$
satisfy $L\leqslant L'$ and $L''$ some other element of the poset with $L\leqslant L''\leqslant L'$,
then $L''\in \mathfrak{I}$. For an interval $\mathfrak{I}\subset \operatorname{Irr}(\mathcal{C})$
we can define a subquotient category $\mathcal{C}_{\mathfrak{I}}$. Namely, consider the set
$\overline{\mathfrak{I}}$ of all simples $L$ with $L\leqslant L'$ for some $L'\in \mathfrak{I}$.
Then we can consider the Serre span $\mathcal{C}_{\overline{\mathfrak{I}}}$ of all simples in
$\overline{\mathfrak{I}}$. By definition, $\mathcal{C}_{\mathfrak{I}}:=
\mathcal{C}_{\overline{\mathfrak{I}}}/\mathcal{C}_{\overline{\mathfrak{I}}\setminus \mathfrak{I}}$.
We note that (PHW) is equivalent to $\mathcal{C}_{\mathfrak{I}}$ being highest weight
(with respect to the restricted order) for any finite interval $\mathfrak{I}$.

\subsection{Main result}
For simplicity, we will assume  that $\dim T=1$. We will remark on the general case later.

Let us define a partial order $\leqslant_\nu$ on $\operatorname{Irr}(\tilde{\mathcal{O}}(\A_{\lambda,\F}))=
X^T\times \Z$ as follows: $(\mathsf{x},\kappa)\leqslant_\nu (\mathsf{x}',\kappa')$ if the following
hold:
\begin{itemize}
\item $(\mathsf{x},\kappa)$ and $(\mathsf{x}',\kappa')$ belong to the same equivariant block
meaning $\Phi_{\mathsf{x}}-\kappa=\Phi_{\mathsf{x}'}-\kappa'$ in $\F_p$.
\item $(\mathsf{x},\kappa)=(\mathsf{x}',\kappa')$ or $\kappa< \kappa'$.
\end{itemize}

We can define a $\Z$-action on this poset as follows:  $z.(\mathsf{x},\kappa):=
(\mathsf{x}, \kappa+zp)$. Clearly, it satisfies the conditions of the previous section.

Now pick a finite interval $\mathfrak{I}\subset X^T\times \mathfrak{X}(T)$.

\begin{Lem}\label{Lem:finite_mult}
Let $(\mathsf{x},\kappa),(\mathsf{x}',\kappa')\in \mathfrak{I}$. Then there is a graded submodule
$M\subset \Delta_{\nu,\lambda,\F}(\mathsf{x}',\kappa')$ of finite codimension such that
$\underline{L}_{\nu,\lambda,\F}(\mathsf{x},\kappa)$ does not occur in $M$ (i.e., does not
occur in the quotient $M/M'$ for any graded $M'\subset M$ of finite codimension). The dual
statement holds for $\nabla^\F_{\nu,\lambda}(\mathsf{x}',\kappa')$.
%Then the multiplicities of
%$\underline{L}_{\nu,\lambda,\F}(\mathsf{x},\kappa)$ in $\Delta_{\nu,\lambda,\F}(\mathsf{x}',\kappa')$
%and $\nabla^\F_{\nu,\lambda}(\mathsf{x}',\kappa')$ are finite.
\end{Lem}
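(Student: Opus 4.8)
The key point is that although $\Delta_{\nu,\lambda,\F}(\mathsf{x}',\kappa')$ has infinitely many composition factors, the multiplicity of each fixed simple $\underline{L}_{\nu,\lambda,\F}(\mathsf{x},\kappa)$ can be controlled by its $h$-weight. First I would recall that $\Delta_{\nu,\lambda,\F}(\mathsf{x}',\kappa')$ is generated by its top graded component in degree $\kappa'$, on which $h$ acts by the scalar $\mathsf{c}_{\nu,\lambda}(\mathsf{x}')$ (or rather its reduction $\bar{\mathsf{c}}_{\nu,\lambda}(\mathsf{x}')$), and that on the degree-$(\kappa'-j)$ component $h$ acts by $\bar{\mathsf{c}}_{\nu,\lambda}(\mathsf{x}')-j$. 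So every composition factor $\underline{L}_{\nu,\lambda,\F}(\mathsf{y},\sigma)$ occurring in $\Delta_{\nu,\lambda,\F}(\mathsf{x}',\kappa')$ has $\sigma\leqslant_\nu$-comparable to $\kappa'$ (in fact $\sigma=\kappa'-j$ for some $j\geqslant 0$) and satisfies $\bar{\mathsf{c}}_{\nu,\lambda}(\mathsf{y})-\sigma = \bar{\mathsf{c}}_{\nu,\lambda}(\mathsf{x}')-\kappa'$ in $\F_p$, i.e. lies in the same equivariant block.

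Next I would observe that for a fixed target $(\mathsf{x},\kappa)$, the condition that $\underline{L}_{\nu,\lambda,\F}(\mathsf{x},\kappa)$ occurs as a subquotient of (a finite-codimension subquotient of) the tail of $\Delta_{\nu,\lambda,\F}(\mathsf{x}',\kappa')$ forces $\kappa = \kappa'-j$ for some fixed $j\geqslant 0$ and, since $\mathsf{x}$ ranges over the finite set $X^T$, there is a finite set of candidate weights $\sigma$ in which $\underline{L}_{\nu,\lambda,\F}(\mathsf{x},\kappa)$ could possibly appear; namely $\kappa$ is determined and one needs $\kappa\leqslant_\nu\kappa'$ together with the block condition. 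Then I would take $M$ to be the graded submodule of $\Delta_{\nu,\lambda,\F}(\mathsf{x}',\kappa')$ generated by all graded components in degrees $\kappa'-j$ with $j > j_0$, where $j_0$ is chosen so large that $\kappa > \kappa'-j_0$ (using $\dim T = 1$, so degrees are integers and $\langle\nu,\cdot\rangle$ is strictly monotone). This $M$ has finite codimension because $\Delta_{\nu,\lambda,\F}(\mathsf{x}',\kappa')$ has finitely many nonzero graded components of each $h$-weight — indeed each generalized $h$-eigenspace of a baby-Verma-type module is finite-dimensional, cf. the discussion in Section \ref{SS_O_gen} — hence only finitely many graded components of $h$-weight equal to $\bar{\mathsf{c}}_{\nu,\lambda}(\mathsf{x})$, i.e. in the relevant congruence class mod $p$; wait, more carefully: for a fixed congruence class the components in degrees $\kappa'-j$ with $0\leqslant j\leqslant j_0$ span a finite-dimensional space, and the quotient $\Delta_{\nu,\lambda,\F}(\mathsf{x}',\kappa')/M$ is precisely that span.

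The final step: since $M$ is generated in degrees strictly below $\kappa$ (in the $\langle\nu,\cdot\rangle$-order), every graded subquotient $M/M'$ with $M'$ of finite codimension is concentrated in degrees $\leqslant \kappa'-j_0-1 < \kappa$, hence cannot contain $\underline{L}_{\nu,\lambda,\F}(\mathsf{x},\kappa)$, whose top (and only generating) degree is $\kappa$; any nonzero map from a module concentrated in degrees $<\kappa$ onto a module with a nonzero degree-$\kappa$ component is impossible. The dual statement for $\nabla^\F_{\nu,\lambda}(\mathsf{x}',\kappa')$ follows by applying the contravariant duality $\bullet^*$ from Section \ref{SSS_duality}, which sends $\nabla^\F_{\nu,\lambda}(\mathsf{x}',\kappa')$ to $\Delta_{-\nu,-\lambda,\F}(\mathsf{x}',-\kappa')$ and $\underline{L}_{\nu,\lambda,\F}(\mathsf{x},\kappa)$ to $\underline{L}_{-\nu,-\lambda,\F}(\mathsf{x},-\kappa)$ by Lemma \ref{Lem:dual_simples}, reducing the claim about $\nabla^\F$ to the already-proved claim about $\Delta$ for $-\nu,-\lambda$. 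The main obstacle I anticipate is purely bookkeeping: making precise the statement ``$\underline{L}$ does not occur in $M$'' in an ind/pro setting, i.e. verifying that no finite-codimension graded quotient of $M$ has $\underline{L}_{\nu,\lambda,\F}(\mathsf{x},\kappa)$ as a composition factor — but since everything is controlled by the (integer-valued, since $\dim T=1$) $\langle\nu,\cdot\rangle$-grading and $M$ lives strictly below degree $\kappa$, this is immediate once the finite-codimension claim for $M$ itself is in place, which in turn rests on the finite-dimensionality of $h$-generalized eigenspaces recalled in Section \ref{SS_O_gen}.
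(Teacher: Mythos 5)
Your construction of $M$ has a genuine gap at the final step. You define $M$ as the submodule \emph{generated by} the graded components of $\Delta_{\nu,\lambda,\F}(\mathsf{x}',\kappa')$ in degrees $\kappa'-j$ with $j>j_0$, and then assert that $M$ (and hence every quotient $M/M'$) is concentrated in degrees $\leqslant \kappa'-j_0-1<\kappa$. That inference is unjustified: the positive part $\A^{>0}_{\lambda,\F}$ is nonzero and raises degree, so $\A_{\lambda,\F}\cdot\bigl(\bigoplus_{j>j_0}\Delta_{\kappa'-j}\bigr)$ contains $\A^{>0}_{\lambda,\F}\cdot\Delta_{\kappa'-j_0-1}$ and in general has nonzero components in degrees well above $\kappa'-j_0$ (already for $\mathfrak{sl}_2$ in characteristic $p$ the submodule generated by $f^{j_0+1}v$ in $\F[f]v$ climbs back up to the nearest ``break point,'' which can be almost $p$ steps higher). ``Generated in low degrees'' does not imply ``concentrated in low degrees,'' so you cannot conclude that $\underline{L}_{\nu,\lambda,\F}(\mathsf{x},\kappa)$, whose top degree is $\kappa$, fails to occur in $M$. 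The alternative reading, where $M$ is the plain span of the low-degree components, fails for the opposite reason: that span is not a submodule.

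The paper's proof avoids this by working with the $p$-center instead of the $T$-grading alone: it takes $M=\mathfrak{m}\,\Delta_{\nu,\lambda,\F}(\mathsf{x}',\kappa')$ for a $T_\F$-stable ideal $\mathfrak{m}\subset\F[Y^{(1)}_\F]$ of finite codimension. Centrality makes $\mathfrak{m}\Delta$ automatically a submodule; since the simple $\underline{L}_{\nu,\lambda,\F}(\mathsf{x},\kappa)$ is finite dimensional, its set of $T_\F$-weights is finite, and because the weight spaces of $\Delta$ are finite dimensional one can choose $\mathfrak{m}$ so that \emph{none} of these weights occurs in $\mathfrak{m}\Delta$ — which rules out $\underline{L}_{\nu,\lambda,\F}(\mathsf{x},\kappa)$ as a composition factor of any quotient of $M$, with no need to control the top degree of $M$. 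Finite codimension of $M$ then comes from $\Delta$ being finitely generated over $\F[Y^{(1)}_\F]$, not from counting graded components. Your reduction of the $\nabla$-statement to the $\Delta$-statement via $\bullet^*$ and Lemma \ref{Lem:dual_simples} is fine and is exactly what the paper does.
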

Note that the intersection of graded  submodules of finite codimension submodules
in $\Delta_{\nu,\lambda,\F}(\mathsf{x},\kappa)$ is zero.
\begin{proof}
Note that the set of $T_\F$-weights in $\underline{L}_{\nu,\lambda,\F}(\mathsf{x},\kappa)$ is finite.
All weight spaces in $\Delta_{\nu,\lambda,\F}(\mathsf{x}',\kappa')$ are finite dimensional. So we can find
a $T_\F$-stable ideal of finite codimension $\mathfrak{m}\subset \F[Y^{(1)}_\F]$ such that
this set of weights doesn't appear in $M:=\mathfrak{m}\Delta_{\nu,\lambda,\F}(\mathsf{x}',\kappa')$.
The module $\Delta_{\nu,\lambda,\F}(\mathsf{x}',\kappa')/M$ is finite dimensional and does not
contain $\underline{L}_{\nu,\lambda,\F}(\mathsf{x},\kappa)$ as a subquotient.

%Note that $\mathfrak{m}$ is supported at $0\in Y^{(1)}_\F$ so we can assume that it is a power of
%the maximal ideal of $0\in Y^{(1)}_\F$. Our claim for $\Delta^{\wedge_0}_{\nu,\lambda,\F}(\mathsf{x}',\kappa')$
%follows.
The claim for $\nabla^\F_{\nu,\lambda}(\mathsf{x}',\kappa')$ follows by duality.
\end{proof}

Thanks to the lemma, we can define the object
$\Delta^{\mathfrak{I}}_{\nu,\lambda,\F}(\mathsf{x}',\kappa')\in
\tilde{\OCat}(\A_{\lambda,\F})_{\mathfrak{I}}$
for $(\mathsf{x}',\kappa')\in \mathfrak{I}$ as the image of
$\Delta_{\nu,\lambda,\F}(\mathsf{x}',\kappa')/M$ from the previous lemma.
Analogously, we can define $\nabla^{\F,\mathfrak{I}}_{\nu,\lambda}(\mathsf{x}',\kappa')$.

The following theorem is the main result of this section.

\begin{Thm}\label{Thm:phw_structure}
The category
$\tilde{\OCat}(\A_{\lambda,\F})$ is a PHW category with respect to the order
$\leqslant_\nu$. Moreover, for any finite interval  $\mathfrak{I}\subset X^T\times \mathfrak{X}(T)$,
the standard and costandard objects in the subquotient
$\tilde{\OCat}(\A_{\lambda,\F})_{\mathfrak{I}}$
corresponding to $(\mathsf{x},\kappa)\in \mathfrak{I}$ are $\Delta^{\mathfrak{I}}_{\nu,\lambda,\F}(\mathsf{x},\kappa)$
and $\nabla^{\mathfrak{I},\F}_{\nu,\lambda}(\mathsf{x},\kappa)$.
\end{Thm}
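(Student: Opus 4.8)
The plan is to reduce the statement to the highest weight structure on a characteristic-$0$ category $\mathcal{O}$ via the $\Ring$-forms constructed in Sections \ref{S_R_forms} and \ref{SS_p_assumptions}, and then to verify the axioms (HW1), (HW2) for the subquotient $\tilde{\OCat}(\A_{\lambda,\F})_{\mathfrak{I}}$ directly. First I would observe that the $\Z$-action $z.(\mathsf{x},\kappa)=(\mathsf{x},\kappa+zp)$ and the order $\leqslant_\nu$ satisfy the three bullet conditions preceding (PHW): the first is clear since the equivariant block of $(\mathsf{x},\kappa)$ is unchanged by adding $zp$ to $\kappa$; for the second, $L<\mathcal{S}L$ holds because $\kappa<\kappa+p$; and the third follows because if $(\mathsf{x},\kappa)<_\nu(\mathsf{x}',\kappa')$ lie in one block, then for $n\gg 0$ we have $\kappa'<\kappa+np$. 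Hence by the remark at the end of the PHW definition it suffices to show that $\tilde{\OCat}(\A_{\lambda,\F})_{\mathfrak{I}}$ is highest weight (in the ordinary sense) for every finite interval $\mathfrak{I}$, with the claimed standard and costandard objects.

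The key step is to compare $\tilde{\OCat}(\A_{\lambda,\F})_{\mathfrak{I}}$ with the reduction mod $p$ of a finite piece of $\mathcal{O}^T_\nu(\A_{\bar\lambda,\Ring})$ for a suitable $\bar\lambda\in\Lambda$ compatible with the alcove of $\lambda$. Using Proposition \ref{Prop:char_p_ab_eq} and Proposition \ref{Prop:cat_O_equiv} I may assume $\lambda=\,^p\bar\lambda$ for some $\bar\lambda\in\Lambda$; then $\Delta_{\nu,\lambda,\F}(\mathsf{x},\kappa)$ is the base change of $\Delta_{\nu,\bar\lambda,\Ring}(\mathsf{x},\kappa)$ to $\F$, up to the shift of the $h$-eigenvalue by $(p+1)$ built into the congruence conditions of Section \ref{SS_p_assumptions}. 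The point is that on the finite interval $\mathfrak{I}$ the reduction mod $p$ does not mix the $h$-blocks of $\mathcal{O}^T_\nu(\A_{\bar\lambda})$ in an uncontrolled way: the ordering hypothesis on $\bar{\mathsf{c}}_{\nu,\bar\lambda}$ in Section \ref{SS_p_assumptions} guarantees that the partial order $\leqslant_\nu$ restricted to $\mathfrak{I}$ refines (the reduction of) the characteristic-$0$ highest weight order on the relevant $h$-blocks. Now Lemma \ref{Lem:simple_proj_inj_Ring} and Corollary \ref{Lem:Ext_freeness} give, after a finite localization of $\Ring$, that simple, standard, costandard, projective and injective objects of $\mathcal{O}^T_\nu(\A_{\bar\lambda,\Ring})$ are free $\Ring$-lattices with the expected $\Ext$- and filtration-compatibilities, and these properties persist after base change to $\F$. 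Passing to the subquotient category cut out by $\mathfrak{I}$, the truncated modules $\Delta^{\mathfrak{I}}_{\nu,\lambda,\F}(\mathsf{x},\kappa)$, $\nabla^{\mathfrak{I},\F}_{\nu,\lambda}(\mathsf{x},\kappa)$ are well defined by Lemma \ref{Lem:finite_mult}, and one checks (HW1) — that $\pi_{(\mathsf{x},\kappa)}(L)$ is one-dimensional over the simple — from Lemma \ref{Lem:Ca_char_p} and Lemma \ref{Lem:simple_classif}, while (HW2) — that the projective cover in $\tilde{\OCat}(\A_{\lambda,\F})_{\mathfrak{I}}$ surjects onto the standard object with kernel $\Delta$-filtered by strictly larger labels — follows from the corresponding $\Delta$-filtration of $P_{\nu,\bar\lambda,\Ring}(\mathsf{x},\kappa)$ in Lemma \ref{Lem:simple_proj_inj_Ring}(2), truncated to $\mathfrak{I}$ and base-changed to $\F$, together with the standard $\Ext^1(\Delta,\nabla)$-vanishing coming from Lemma \ref{Lem:Ext_vanish_global} reduced mod $p$.

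The main obstacle I expect is the control of the reduction mod $p$ at the level of the subquotient: a priori the truncation to $\mathfrak{I}$ does not commute with base change $\Ring\to\F$, so one must argue that the finite-codimension submodule $M\subset\Delta_{\nu,\lambda,\F}(\mathsf{x},\kappa)$ of Lemma \ref{Lem:finite_mult} agrees with $\F\otimes_{\Ring}$ (the corresponding truncation over $\Ring$), and similarly for $\nabla$ and for the projectives. This requires that the relevant multiplicities $[\,\Delta_{\nu,\bar\lambda,\Ring}(\mathsf{x}',\kappa'):L_{\nu,\bar\lambda,\Ring}(\mathsf{x},\kappa)\,]$ be preserved under reduction mod $p$ on the interval $\mathfrak{I}$, which is where the freeness statements of Corollary \ref{Lem:Ext_freeness} (applied to the bounded-below range of $h$-eigenvalues determined by $\mathfrak{I}$ and the constant $m$) do the real work, combined with the equivariant block decomposition of Section \ref{SSS_duality} to keep everything within finitely many blocks. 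Once this compatibility is in place, the highest weight axioms transport from characteristic $0$ essentially formally, and the identification of standard and costandard objects is exactly the content of the construction of $\Delta^{\mathfrak{I}}_{\nu,\lambda,\F}$ and $\nabla^{\mathfrak{I},\F}_{\nu,\lambda}$ together with Lemma \ref{Lem:WC_R} (which identifies $\WC$-images of Vermas with costandards over $\Ring$, hence over $\F$).
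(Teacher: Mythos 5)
There is a genuine gap. Your strategy is to transport the highest weight axioms from $\mathcal{O}^T_\nu(\A_{\bar\lambda,\Ring})$ by reducing its simples and projectives mod $p$, but those reductions are not the simples and projectives of $\tilde{\OCat}(\A_{\lambda,\F})_{\Ifrak}$: the reduction of a characteristic-$0$ simple is only a \emph{proper standard} object and the reduction of a characteristic-$0$ projective only a \emph{standard} object for the coarser standardly stratified structure of Theorem \ref{Thm:mod_O_st_stratif} --- they are far from simple, resp.\ projective, in the modular category. Concretely, a finite interval $\Ifrak$ for $\leqslant_\nu$ interleaves labels $(\mathsf{x},\kappa)$ coming from many different characteristic-$0$ $h$-blocks (equivalence classes for $\preceq$), and the projective cover of $\underline{L}_{\nu,\lambda,\F}(\mathsf{x},\kappa)$ in $\tilde{\OCat}(\A_{\lambda,\F})_{\Ifrak}$ must be filtered by baby-Verma-type standards $\Delta^{\Ifrak}_{\nu,\lambda,\F}(\mathsf{x}',\kappa')$ drawn from \emph{all} of these classes. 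No object of $\mathcal{O}^T_\nu(\A_{\bar\lambda,\Ring})$ reduces to such a projective, so (HW2) cannot be obtained by truncating and base-changing $P_{\nu,\bar\lambda,\Ring}(\mathsf{x},\kappa)$ and its $\Delta$-filtration from Lemma \ref{Lem:simple_proj_inj_Ring}(2). The multiplicities you propose to control are multiplicities of reductions of characteristic-$0$ simples, which are not the composition multiplicities relevant to the modular highest weight structure.

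You do correctly identify the one ingredient that genuinely reduces from $\Ring$: the vanishing $\Ext^{>0}_{\A_{\lambda,\F},T}(\Delta_{\nu,\lambda,\F}(\mathsf{x},\kappa),\nabla^{\F}_{\nu,\lambda}(\mathsf{x}',\kappa'))=0$, obtained from Lemma \ref{Lem:Ext_vanish_global} together with the flatness of Lemma \ref{Lem:Verma_R_flatness}. But this must then carry the entire weight of (HW2) via a construction intrinsic to characteristic $p$: one works in the truncated category $\A_{\lambda,\F}\operatorname{-mod}^T_{\leqslant z}$, exhibits the explicit projectives $P_{z,z'}=\A_{\lambda,\F}/\A_{\lambda,\F}\A_{\lambda,\F}^{>z-z'}$ (projectivity follows from $\Hom(P_{z,z'},M)=M_{z'}$), and proves by induction on $z-z'$, using the $\Ext(\,\cdot\,,\nabla^\F)$-vanishing to show the kernel of $P_{z,z'}\twoheadrightarrow P_{z-1,z'}$ is a sum of Vermas, that each $P_{z,z'}$ is $\Delta$-filtered; the ordering of the filtration then comes from $\Ext^1(\Delta(\mathsf{x},\kappa),\Delta(\mathsf{x}',\kappa'))\neq 0\Rightarrow \kappa'>\kappa$. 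This direct construction of $\Delta$-filtered projectives is the missing step in your argument, and without it the reduction-from-characteristic-$0$ route does not close.
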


\subsubsection{Examples}
Let us consider two examples of the order $\leqslant_\nu$.
First, let $X=T^*(G/B)$. Recall that $X^T$ is identified with
$W$ and $\mathsf{c}_{\nu,\lambda}(w)=\langle \nu, w\lambda\rangle$. Pick $\nu=\rho^\vee$. So the order on
$W\times \Z$ is as follows:  $(w,\kappa)\leqslant (w',\kappa')$ if
\begin{itemize}
\item $\langle w\lambda,\rho^\vee\rangle-\kappa= \langle w'\lambda,\rho^\vee\rangle-\kappa'$ in $\F_p$,
\item and either $\kappa<\kappa'$ or $(w,\kappa)=(w',\kappa')$.
\end{itemize}

Now let $X=\operatorname{Hilb}_n(\mathbb{A}^2)$. Here $X^T$ is in bijection with the set $\mathcal{P}(n)$
of partitions of $n$. So the order on $\mathcal{P}(n)\times \Z$ is as follows (recall that we set $c=\lambda-1/2$):
$(\psi,\kappa)\leqslant (\psi',\kappa')$ if
\begin{itemize}
\item $c \operatorname{cont}(\psi)-n(\psi)-\kappa= c\operatorname{cont}(\psi')-n(\psi')-\kappa'$ in $\mathbb{F}_p$,
\item and either $\kappa<\kappa'$ or $(\psi,\kappa)=(\psi',\kappa')$.
\end{itemize}

\subsection{Proof of Theorem \ref{Thm:phw_structure}}
The proof is in several steps.

{\it Step 1}. Pick $(\mathsf{x},\kappa)\in \Ifrak$. Set $\Ifrak':=\{(\mathsf{x}',\kappa')\in \Ifrak| (\mathsf{x}',\kappa')
\leqslant (\mathsf{x},\kappa)\}$.  By the construction, the object
$\Delta^{\Ifrak}_{\nu,\lambda,\F}(\mathsf{x},\kappa)$ coincides with
$\Delta^{\Ifrak'}_{\nu,\lambda,\F}(\mathsf{x},\kappa)$. By the construction,
$\Delta^{\Ifrak'}_{\nu,\lambda,\F}(\mathsf{x},\kappa)$ is the projective cover of $\underline{L}_{\nu,\lambda,\F}(\mathsf{x},\kappa)$
in the Serre subcategory $\tilde{\OCat}(\A_{\lambda,\F})_{\mathfrak{I}'}
\subset \tilde{\OCat}(\A_{\lambda,\F})_{\mathfrak{I}}$. Dually, $\nabla^{\Ifrak,\F}_{\nu,\lambda}(\mathsf{x},\kappa)$
is the injective hull of $\underline{L}_{\nu,\lambda,\F}(\mathsf{x},\kappa)$ in that subcategory.

{\it Step 2}.  To finish the proof of the theorem we need to show that the projectives in $\tilde{\OCat}(\A_{\lambda,\F})_{\mathfrak{I}}$
are $\Delta$-filtered (Steps 2-4) and that $\Delta$'s appear in a correct order (Step 5).
%, for each $(\mathsf{x},\kappa)\in \Ifrak$, we have a projective
%object $P_{\mathsf{x},\kappa}$ in   $\tilde{\OCat}(\A_{\lambda,\F})_{\mathfrak{I}}$
%with $P_{\mathsf{x},\kappa}\twoheadrightarrow \Delta^{\Ifrak}_{\nu,\lambda,\F}(\mathsf{x},\kappa)$
%so that the kernel is filtered by $\Delta^{\Ifrak}_{\nu,\lambda,\F}(\mathsf{x}',\kappa')$,
%where $(\mathsf{x}',\kappa')>_\nu (\mathsf{x},\kappa)$.

%What remains to prove is that, for $i>0$, we have
%\begin{equation}\label{eq:Ext_vanishing_hw1}
%\dim\Ext^i_{\tilde{\OCat}(\A_{\lambda,\F})_{\Ifrak}}(\Delta^{\Ifrak}_{\nu,\lambda,\F}(\mathsf{x},\kappa),
%\nabla^{\Ifrak,\F}_{\nu,\lambda}(\mathsf{x}',\kappa'))=0.
%\end{equation}
Note that, similarly to Lemma \ref{Lem:Ext_Tor}, we have  \begin{equation}\label{eq:Thm_eq1}\Ext^i_{\A_{\lambda,\F},T}(\Delta_{\nu,\lambda,\F}(\mathsf{x},\kappa),
\nabla^{\F}_{\nu,\lambda}(\mathsf{x}',\kappa'))=\operatorname{Tor}_i^{\A_{\lambda,\F},T}
(\Delta_{\nu,\lambda,\F}(\mathsf{x},\kappa), \Delta^r_{-\nu,\lambda,\F}(\mathsf{x}',\kappa'))^*.\end{equation}
Using Lemmas \ref{Lem:Ext_vanish_global} and \ref{Lem:Verma_R_flatness} (as well as the direct analog
of the latter for the right-handed Verma modules) we see that the right hand side
of (\ref{eq:Thm_eq1}) vanishes. We deduce that
\begin{equation}\label{eq:Ext_vanishing_hw2}
\dim\Ext^i_{\A_{\lambda,\F},T}(\Delta_{\nu,\lambda,\F}(\mathsf{x},\kappa),
\nabla^{\F}_{\nu,\lambda}(\mathsf{x}',\kappa'))=0, \text{ for }i>0.
\end{equation}
In  steps 3 and 4 we will deduce that the projectives are $\Delta$-filtered from
(\ref{eq:Ext_vanishing_hw2}).

{\it Step 3}. Take an integer $z$ and consider the category $\A_{\lambda,\F}\operatorname{-mod}^{T}_{\leqslant z}$
of all $T_{\F}$-equivariant $\A_{\lambda,\F}$-modules $M$   such
that \begin{itemize}
\item any $T_\F$-weight $\kappa$ of $M$ satisfies $\kappa\leqslant z$,
\item all weight spaces are finite dimensional.
\end{itemize}
This is a Serre subcategory
of $\A_{\lambda,\F}\operatorname{-mod}^T$. Note that $\Delta_{\nu,\lambda,\F}(\mathsf{x},\kappa), \nabla^\F_{\nu,\lambda}(\mathsf{x},\kappa)\in \A_{\lambda,\F}\operatorname{-mod}^T_{\leqslant z}$
if and only if $\kappa\leqslant z$.

Repeating the proof of Lemma \ref{Lem:finite_mult} we see
that every object in  $\A_{\lambda,\F}\operatorname{-mod}^T_{\leqslant z_2}$ contains
a graded submodule $M$ of finite codimension  such that $\underline{L}_{\nu,\lambda,\F}(\mathsf{x},\kappa)$
with $\kappa\in [z_1,z_2]$ does not appear in $M$. It follows that
\begin{equation}\label{eq:quotient_equiv}\tilde{\OCat}(\A_{\lambda,\F})_{[z_1,z_2]}
\xrightarrow{\sim}\A_{\lambda,\F}\operatorname{-mod}^T_{\leqslant z_2}/\A_{\lambda,\F}\operatorname{-mod}^T_{< z_1} \end{equation}

Thanks to (\ref{eq:Ext_vanishing_hw2}) and the standard results
that the Ext vanishing in an ambient category implies that in any Serre subcategory, we get
\begin{equation}\label{eq:Ext_vanishing_hw3}
\dim\Ext^i_{\A_{\lambda,\F}\operatorname{-mod}^T_{\leqslant z}}(\Delta_{\nu,\lambda,\F}(\mathsf{x},\kappa),
\nabla^{\F}_{\nu,\lambda}(\mathsf{x}',\kappa'))=0, \text{ for }i>0.
\end{equation}

{\it Step 4}. Note that the category $\A_{\lambda,\F}\operatorname{-mod}^T_{\leqslant z}$
has enough projectives. Indeed, for $z'\leqslant z$, the object $P_{z,z'}:=\A_{\lambda,\F}/\A_{\lambda,\F}\A_{\lambda,\F}^{>z-z'}$, where the image
of $1$ has degree $z'$, is projective. This is because $\Hom(P_{z,z'}, M)=M_{z'}$ for
$M\in \A_{\lambda,\F}\operatorname{-mod}^T_{\leqslant z}$.

Every  module in $\A_{\lambda,\F}\operatorname{-mod}^T_{\leqslant z}$
is covered by a direct sum of the modules $P_{z,z'}$ (for finitely generated modules we can take
a finite sum). A standard argument shows that $P_{z,z'}$ is filtered by
$\Delta$'s. For reader's convenience, let us provide this argument. We prove the
existence of filtration by induction on $z-z'$. The case of $z-z'=0$ is obvious:
the object $P_{z,z}$ is the direct sum of Verma modules. Now note that we have
a natural surjection $P_{z,z'}\twoheadrightarrow P_{z-1,z'}$, let $K$ stand
for the kernel.

Note that $\Ext^i_{\A_{\lambda,\F}\operatorname{-mod}^T_{\leqslant z}}(K,
\nabla^\F_{\nu,\lambda}(\mathsf{x}',\kappa'))=0$ for $i>0$. Indeed, for $i>1$ this follows from
$P_{z,z'}$ being projective and $P_{z-1,z'}$ being filtered by $\Delta$'s. We also have
an exact sequence
\begin{align*}&0\rightarrow \Hom(K,\nabla^\F_{\nu,\lambda}(\mathsf{x}',\kappa'))
\rightarrow \Hom(P_{z,z'},\nabla^\F_{\nu,\lambda}(\mathsf{x}',\kappa'))
\rightarrow \Hom(P_{z-1,z'},\nabla^\F_{\nu,\lambda}(\mathsf{x}',\kappa'))\\&
\rightarrow \Ext^1(K,\nabla^\F_{\nu,\lambda}(\mathsf{x}',\kappa'))\rightarrow 0.
\end{align*}
Now consider two cases. If $\kappa'<z$, then the middle homomorphism is
the identity isomorphism $\nabla^\F_{\nu,\lambda}(\mathsf{x}',\kappa')_{z'}
\xrightarrow{\sim} \nabla^\F_{\nu,\lambda}(\mathsf{x}',\kappa')_{z'}$. So
\begin{equation}\label{eq:Hom_Ext1_vanish}
\Hom(K,\nabla^\F_{\nu,\lambda}(\mathsf{x}',\kappa'))=\Ext^1(K,\nabla^\F_{\nu,\lambda}(\mathsf{x}',\kappa'))=0.
\end{equation}
If $\kappa'=z$, then $\Hom(P_{z-1,z'},\nabla^\F_{\nu,\lambda}(\mathsf{x}',\kappa'))=0$
and hence
\begin{equation}\label{eq:Ext1_vanish}
\Hom(K,\nabla^\F_{\nu,\lambda}(\mathsf{x}',\kappa'))=\nabla^\F_{\nu,\lambda}(\mathsf{x}',\kappa')_{z'},\quad
\Ext^1(K,\nabla^\F_{\nu,\lambda}(\mathsf{x}',\kappa'))=0.
\end{equation}
in both cases.

Set $d(\mathsf{x}'):=\dim\nabla^\F_{\nu,\lambda}(\mathsf{x}',\kappa')_{z'}$
From (\ref{eq:Hom_Ext1_vanish}) and the dual version of Lemma \ref{Lem:cost_inclusion}
we conclude that there is an epimorphism
$$\bigoplus_{\mathsf{x}\in X^T} \Delta_{\nu,\lambda,\F}(\mathsf{x},z)^{\oplus d(\mathsf{x})}\twoheadrightarrow K.$$
The kernel of this epimorphism has no nonzero homomorphisms to the objects $\nabla^\F_{\nu,\lambda}(\mathsf{x}',\kappa')$
with $\kappa'\leqslant z$ by (\ref{eq:Hom_Ext1_vanish}) and (\ref{eq:Ext1_vanish}).
On the other hand, if the kernel is nonzero, then it has a finite dimensional graded quotient (compare to
the proof of Lemma \ref{Lem:finite_mult}) that must have a nonzero Hom to some $\nabla^\F_{\nu,\lambda}(\mathsf{x}',\kappa')$.

So we see that $K=0$.
This finishes the proof of the claim that all objects $P_{z,z'}$ admit
$\Delta$-filtrations.

{\it Step 5}. Note that $\Ext^1_{\A_{\lambda,\F}\operatorname{-mod}^T}(\Delta_{\nu,\lambda,\F}(\mathsf{x},\kappa),
\Delta_{\nu,\lambda,\F}(\mathsf{x}',\kappa'))\neq 0$, then $\kappa'>\kappa$. So the Verma modules in a $\Delta$-filtration of
$P_{z,z'}$ appear in the correct order. Now suppose that
$\mathfrak{I}=\{(\mathsf{x},\kappa)| z_1\leqslant \kappa\leqslant z_2\}$
for some integers $z_1<z_2$. The images
of $P_{z_2,z'}$ for $z'\in [z_1,z_2]$ under the quotient functor
$\A_{\lambda,\F}\operatorname{-mod}^T_{\leqslant z_2}\twoheadrightarrow \tilde{\mathcal{O}}(\A_{\lambda,\F})_{\Ifrak}$
are projectives in  $\tilde{\mathcal{O}}(\A_{\lambda,\F})_{\Ifrak}$ and every simple is
covered by one of these projectives.
It follows that the indecomposable projectives in $\tilde{\OCat}(\A_{\lambda,\F})_{\Ifrak}$ are filtered
by $\Delta^{\Ifrak}_{\nu,\lambda,\F}(\mathsf{x},\kappa)$'s in a correct order. So for this choice of the interval $\Ifrak$,
the category $\tilde{\OCat}(\A_{\lambda,\F})_{\Ifrak}$ is highest weight. For a general interval
$\Ifrak'$, the category  $\tilde{\OCat}(\A_{\lambda,\F})_{\Ifrak'}$ is a highest weight subquotient
in  $\tilde{\OCat}(\A_{\lambda,\F})_{\Ifrak}$, where $\Ifrak=[z_1,z_2]$ for suitable $z_1,z_2$,
and hence a highest weight category itself.

\subsubsection{Case of higher dimensional torus}\label{SSS_higher_dim_mod_hw}
Let us explain what modifications are needed to deal with the case when $\dim T>1$.
We can still define a partial order $\leqslant_\nu$ on $X^T\times \mathfrak{X}(T)$
(instead of characters themselves we need now to compare their pairings with $\nu$).
A technical problem here is that most of intervals for this order are infinite.
But we can use a more general definition of a highest weight category
in Remark \ref{Rem:gen_hw} and we still get an analog of Theorem
\ref{Thm:phw_structure}.

\section{Standardly stratified structures}\label{S_SSS}
In this section we prove  the main result of the present paper, Theorem
\ref{Thm:mod_O_st_stratif}. Namely,
let $\lambda$ lie inside the $p$-alcove $^pA$ (corresponding to a real alcove $A$).
For each pair $(\Theta,\bar{\lambda})$, where $\Theta$ is a face  of $A$
and $\bar{\lambda}$ is a parameter compatible with $(A,\Theta)$, see Section
\ref{SS_compat_elements},
we will define a standardly stratified structure (in a suitable sense to be
explained below) on $\tilde{\OCat}(\A_{\lambda,\F})$. Again, for convenience
we assume that $\dim T=1$, the general case can be treated as in Section
\ref{SSS_higher_dim_mod_hw}.

We will get two results about the standardly stratified structure. First, we
will show that the associated graded of $\tilde{\OCat}(\A_{\lambda,\F})$
is essentially a reduction to characteristic $p$ of $\OCat_\nu(\A_{\bar{\lambda},\Q})$.
We will further show that, roughly speaking, standard and proper
standard objects in $\tilde{\OCat}(\A_{\lambda,\F})$ are reductions to characteristic $p$
of projective and simple objects in  $\OCat_\nu(\A_{\bar{\lambda},\Q})$.

\subsection{Main result} In this section, after some preparation we state the main theorem
regarding standardly stratified structures on $\tilde{\mathcal{O}}(\A_{\lambda,\F})$
determined by faces of the real alcove $A$.

\subsubsection{Standardly stratified structures on PHW categories}
We use the ramification of the definition of a standardly stratified structure that appeared in
\cite{LW} and recalled in Section \ref{SSS_SSS}. Let $\mathcal{C}$ be a PHW category with the shift functor $\mathcal{S}$.

\begin{defi}\label{defi:stand_stratif_periodic}
The additional
data of a standardly stratified category is a $\Z$-invariant pre-order $\preceq$ on $\operatorname{Irr}(\mathcal{C})$
that is  compatible with $\leqslant$, meaning that
$$L\prec L'\Rightarrow L<L'\Rightarrow L\preceq L'.$$
We also require that $L\prec \mathcal{S}L$ for all $L$. Since $|\operatorname{Irr}(\mathcal{C})/\Z|<\infty$,
 we see that the equivalence classes for $\prec$ are finite.

The axiom of a standardly stratified structure in this case is that, for each finite interval
$\Ifrak\subset \operatorname{Irr}(\mathcal{C})$ that is the union of some equivalence classes
for $\prec$, the subquotient category $\mathcal{C}_{\Ifrak}$ is standardly stratified with
respect to $\prec$ (in the sense explained in Section \ref{SSS_SSS}).
\end{defi}

\subsubsection{Pre-order determined by $\Theta$}
We will now define a pre-order on $\operatorname{Irr}(\tilde{\OCat}(\A_{\lambda,\F}))$.
Namely, recall that for $\lambda'$ in $\,^p\!A$ we have an equivalence
$\tilde{\OCat}(\A_{\lambda,\F})\xrightarrow{\sim} \tilde{\OCat}(\A_{\lambda',\F})$
of highest weight categories. We will take $\lambda':=\,^p\!\bar{\lambda}$,
where the right hand side is recovered from $\bar{\lambda}$ as explained
in Section \ref{SS_compat_elements}.
We will define a pre-order on $\operatorname{Irr}(\tilde{\OCat}(\A_{\lambda',\F}))^0$
(then we can transfer this pre-order to $\operatorname{Irr}(\tilde{\OCat}(\A_{\lambda',\F}))^\beta$
using the natural bijection between these sets, the simples from different equivariant blocks,
by definition, are not comparable).

Now note that, for a fixed point $\mathsf{x}\in X^T$ (and $\bar{\lambda}$), the map $p\mapsto c_{\nu,\,^p\!\bar{\lambda}}(\mathsf{x})$
is an affine map in $p$. It follows that the possible values of $\kappa$ such that $(\mathsf{x},\kappa)
\in \operatorname{Irr}(\tilde{\OCat}(\A_{\lambda',\F}))^0$ are affine functions in $p$, they are of the form
$\kappa(p)+mp$, where $\kappa$ is one of these functions, and $m$ is an arbitrary integer. Define a pre-order
$\preceq_{\nu,\bar{\lambda}}$ on $\operatorname{Irr}(\tilde{\OCat}(\A_{\lambda',\F}))^0$ by $(\mathsf{x},\kappa)
\preceq_{\nu,\bar{\lambda}} (\mathsf{x},\kappa')$ if the coefficient of $p$ in the affine function
$\kappa'-\kappa$ is nonnegative. We will discuss examples below.

Let us describe the equivalence classes of the resulting pre-order on $\operatorname{Irr}(\tilde{\OCat}(\A_{\lambda',\F})^0)$
(the zero equivariant block). Recall that the labels of simples in
that equivariant block are $(\mathsf{x},\kappa)$ such that $\kappa=\mathsf{c}_{\nu,\lambda'}(\mathsf{x})$
in $\F_p$. We note that $\sim_{\nu,\bar{\lambda}}$ descends to an equivalence
relation between the fixed points that will also be denoted by $\sim_{\nu,\bar{\lambda}}$. So $\mathsf{x}\sim_{\nu,\bar{\lambda}}\mathsf{x}'$ if $\mathsf{c}_{\nu,\bar{\lambda}}(\mathsf{x})-\mathsf{c}_{\nu,\bar{\lambda}}(\mathsf{x}')\in \Z$.
By the definition of $\sim_{\nu,\bar{\lambda}}$, the points $\mathsf{x}$ and $\mathsf{x}'$ are equivalent if and only if the corresponding
simples lie in the same $h$-block of $\OCat_{\nu}(\A_\lambda)$.

Then we have the following easy elementary lemma
that follows from the construction of $\lambda'$ (this was a starting observation for the present paper).

\begin{Lem}\label{Lem:equiv_classes_char_p}
The following two conditions are equivalent:
\begin{itemize}
\item $(\mathsf{x},\kappa)\sim (\mathsf{x}',\kappa')$
\item $\mathsf{x}\sim_{\nu,\bar{\lambda}}\mathsf{x}'$
and $\mathsf{c}_{\nu,\bar{\lambda}}(\mathsf{x})-\kappa=\mathsf{c}_{\nu,\bar{\lambda}}(\mathsf{x}')-\kappa'$.
\end{itemize}
In particular, the order on the equivalence class for $\preceq_{\nu,\bar{\lambda}}$ coincides with $\leqslant_{\nu,\bar{\lambda}}$.
\end{Lem}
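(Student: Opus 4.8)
The statement to prove is Lemma~\ref{Lem:equiv_classes_char_p}, an elementary combinatorial fact about when two labels $(\mathsf{x},\kappa)$ and $(\mathsf{x}',\kappa')$ are equivalent under the pre-order $\preceq_{\nu,\bar\lambda}$ on $\operatorname{Irr}(\tilde{\OCat}(\A_{\lambda',\F})^0)$, where $\lambda' = \,^p\!\bar\lambda$.

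\medskip

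\textbf{Plan.} The core of the argument is to unwind the definitions of the pre-order and of the $p$-alcove point $\,^p\!\bar\lambda$ chosen via compatibility (Definition~\ref{defi:compat_elements}). First I would recall that for a fixed $\mathsf{x}\in X^T$ the function $p\mapsto \mathsf{c}_{\nu,\,^p\!\bar\lambda}(\mathsf{x})$ is affine in $p$, since $\mathsf{c}_{\nu,\lambda}(\mathsf{x})$ is an affine function of $\lambda$ (see \cite[Section 6.2]{CWR}) and $\,^p\!\bar\lambda = \bar\lambda + p\mu$ depends affinely on $p$ by Definition~\ref{defi:compat_elements}(i). Write $\mathsf{c}_{\nu,\,^p\!\bar\lambda}(\mathsf{x}) = a(\mathsf{x})\,p + b(\mathsf{x})$ with $a(\mathsf{x}) = \langle\mu,\nu\rangle$-type linear coefficient and $b(\mathsf{x}) = \mathsf{c}_{\nu,\bar\lambda}(\mathsf{x})$ the constant term. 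The labels $(\mathsf{x},\kappa)$ in the zero equivariant block are exactly those with $\kappa \equiv \mathsf{c}_{\nu,\lambda'}(\mathsf{x}) \pmod p$, i.e.\ the admissible $\kappa$'s are the affine functions $\kappa(p) = a(\mathsf{x})p + b(\mathsf{x}) + mp = (a(\mathsf{x})+m)p + b(\mathsf{x})$ for $m\in\Z$ (here I am using that $b(\mathsf{x}) = \mathsf{c}_{\nu,\bar\lambda}(\mathsf{x})$ is the relevant integer after clearing denominators, which is exactly what the congruence conditions on $p$ in Section~\ref{SS_p_assumptions} guarantee).

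\medskip

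\textbf{Key steps.} By definition $(\mathsf{x},\kappa)\preceq_{\nu,\bar\lambda}(\mathsf{x}',\kappa')$ iff the coefficient of $p$ in the affine function $\kappa'(p)-\kappa(p)$ is $\geqslant 0$, hence $(\mathsf{x},\kappa)\sim(\mathsf{x}',\kappa')$ iff that coefficient is $=0$. Writing $\kappa(p) = (a(\mathsf{x})+m)p + b(\mathsf{x})$ and $\kappa'(p) = (a(\mathsf{x}')+m')p + b(\mathsf{x}')$, the $p$-coefficient of $\kappa'-\kappa$ is $(a(\mathsf{x}')+m') - (a(\mathsf{x})+m)$. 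This vanishes iff $a(\mathsf{x}) - a(\mathsf{x}') = m' - m \in \Z$; but the linear coefficients $a(\mathsf{x}), a(\mathsf{x}')$ both equal the same quantity $\langle\mu,\nu\rangle$ up to the affine-in-$\lambda$ structure — more precisely, $a(\mathsf{x}) - a(\mathsf{x}')$ is the $p$-linear part of $\mathsf{c}_{\nu,\,^p\!\bar\lambda}(\mathsf{x}) - \mathsf{c}_{\nu,\,^p\!\bar\lambda}(\mathsf{x}')$, which equals $\langle\mu, \,\cdot\,\rangle$ applied to the (constant, $p$-independent) difference of the linear forms defining $\mathsf{c}_{\nu,\lambda}(\mathsf{x})$ and $\mathsf{c}_{\nu,\lambda}(\mathsf{x}')$ — and this difference is an integer precisely when $\mathsf{x}\sim_{\nu,\bar\lambda}\mathsf{x}'$, i.e.\ when $\mathsf{c}_{\nu,\bar\lambda}(\mathsf{x}) - \mathsf{c}_{\nu,\bar\lambda}(\mathsf{x}')\in\Z$. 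Granting this, the equation $(a(\mathsf{x})+m) = (a(\mathsf{x}')+m')$ together with the requirement that the constant terms match (which encodes the remaining data of the label) gives $\mathsf{c}_{\nu,\bar\lambda}(\mathsf{x}) - \kappa = \mathsf{c}_{\nu,\bar\lambda}(\mathsf{x}') - \kappa'$ as integers (not merely mod $p$), since $\kappa(p) - b(\mathsf{x}) = (a(\mathsf{x})+m)p$ and similarly for the primed label, and these are equal. Unwinding, $(\mathsf{x},\kappa)\sim(\mathsf{x}',\kappa')$ iff $\mathsf{x}\sim_{\nu,\bar\lambda}\mathsf{x}'$ and $\mathsf{c}_{\nu,\bar\lambda}(\mathsf{x}) - \kappa = \mathsf{c}_{\nu,\bar\lambda}(\mathsf{x}') - \kappa'$. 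Finally, within a fixed equivalence class, $(\mathsf{x},\kappa)\preceq_{\nu,\bar\lambda}(\mathsf{x}',\kappa')$ with strict inequality of $p$-coefficients cannot occur (they agree), so $\preceq$ restricted to the class is controlled by $\leqslant_\nu$ on the block, which by Lemma~\ref{Lem:gen_simplicity}'s computation and the formula $\mathsf{x}<_\lambda\mathsf{x}'$ iff $\mathsf{c}_{\nu,\lambda}(\mathsf{x}') - \mathsf{c}_{\nu,\lambda}(\mathsf{x})\in\Z_{>0}$ matches $\leqslant_{\nu,\bar\lambda}$; this gives the last sentence.

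\medskip

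\textbf{Main obstacle.} The genuinely delicate point is bookkeeping the difference between ``equal mod $p$'' and ``equal as integers'': the pre-order only sees the linear-in-$p$ coefficient, so one must carefully track that once the $p$-coefficients of $\kappa$ and $\kappa'$ are forced equal, the constant terms $b(\mathsf{x}) = \mathsf{c}_{\nu,\bar\lambda}(\mathsf{x})$ are $p$-independent rationals that become the integers $(p+1)\mathsf{c}_{\nu,\bar\lambda}(\mathsf{x})$ mod $p$ after the normalization in Section~\ref{SS_p_assumptions}, so that the congruence $\mathsf{c}_{\nu,\bar\lambda}(\mathsf{x}) - \kappa \equiv \mathsf{c}_{\nu,\bar\lambda}(\mathsf{x}') - \kappa' \pmod p$ can be upgraded to an equality of the corresponding integers because all quantities involved are bounded independently of $p$ (here one invokes the integer $m$ from Section~\ref{SS_p_assumptions} bounding $|\mathsf{c}_{\nu,\lambda}(\mathsf{x}_1) - \mathsf{c}_{\nu,\lambda}(\mathsf{x}_2)|$ within an $h$-block). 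Everything else is the routine unwinding of affine functions; the lemma is deliberately ``easy elementary'' as the text says, so I do not expect real difficulty beyond this careful normalization.
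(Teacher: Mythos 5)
The paper itself gives no proof of this lemma --- it is dismissed as an ``easy elementary lemma that follows from the construction of $\lambda'$'' --- so your unwinding of the definitions is the intended argument, and its overall shape is correct. However, the one step you assert rather than prove is exactly where the whole content of the lemma sits: the claim that the difference of $p$-linear coefficients, $a(\mathsf{x})-a(\mathsf{x}')=\langle d\mathsf{c}_{\nu}(\mathsf{x})-d\mathsf{c}_{\nu}(\mathsf{x}'),\mu\rangle$, lies in $\Z$ if and only if $\mathsf{c}_{\nu,\bar{\lambda}}(\mathsf{x})-\mathsf{c}_{\nu,\bar{\lambda}}(\mathsf{x}')\in\Z$. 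To justify it you need two inputs you do not name: (a) condition (i) of Definition \ref{defi:compat_elements}, which gives $\mu\in\bar{\lambda}+\param_{\Z}$, and (b) the identity $\langle d\mathsf{c}_{\nu}(\mathsf{x})-d\mathsf{c}_{\nu}(\mathsf{x}'),\chi\rangle=\langle\mathsf{wt}_\chi(\mathsf{x})-\mathsf{wt}_\chi(\mathsf{x}'),\nu\rangle\in\Z$ for $\chi\in\param_{\Z}$ (the computation underlying Lemma \ref{Lem:h_block_preserv}). These give $\langle d\mathsf{c}_{\nu}(\mathsf{x})-d\mathsf{c}_{\nu}(\mathsf{x}'),\mu\rangle\equiv\langle d\mathsf{c}_{\nu}(\mathsf{x})-d\mathsf{c}_{\nu}(\mathsf{x}'),\bar{\lambda}\rangle \pmod{\Z}$, but you still have to control the constant terms of the affine functions $\mathsf{c}_{\nu,\bullet}(\mathsf{x})$, $\mathsf{c}_{\nu,\bullet}(\mathsf{x}')$ before this becomes the condition $\mathsf{c}_{\nu,\bar{\lambda}}(\mathsf{x})-\mathsf{c}_{\nu,\bar{\lambda}}(\mathsf{x}')\in\Z$; your parametrization $\kappa(p)=(a(\mathsf{x})+m)p+b(\mathsf{x})$ quietly assumes $a(\mathsf{x})-b(\mathsf{x})\in\Z$, which is sensitive to normalizations.

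A cleaner route sidesteps this entirely. Since $(p+1)\mu\in\param_{\Z}$ and $\langle d\mathsf{c}_\nu(\mathsf{x}),\param_\Z\rangle\subset\Z$, one has $(p+1)\mathsf{c}_{\nu,\,^p\!\bar{\lambda}}(\mathsf{x})\equiv(p+1)\mathsf{c}_{\nu,\bar{\lambda}}(\mathsf{x})\pmod{p}$, so the admissible $\kappa$'s for $\mathsf{x}$ in the zero block are exactly the affine functions $\kappa_m(p)=(p+1)\mathsf{c}_{\nu,\bar{\lambda}}(\mathsf{x})+mp$, $m\in\Z$. Then the $p$-coefficient of $\kappa'_{m'}-\kappa_m$ is $\bigl(\mathsf{c}_{\nu,\bar{\lambda}}(\mathsf{x}')-\mathsf{c}_{\nu,\bar{\lambda}}(\mathsf{x})\bigr)+(m'-m)$, whose vanishing is visibly equivalent to $\mathsf{x}\sim_{\nu,\bar{\lambda}}\mathsf{x}'$ together with $\mathsf{c}_{\nu,\bar{\lambda}}(\mathsf{x})-\kappa=\mathsf{c}_{\nu,\bar{\lambda}}(\mathsf{x}')-\kappa'$ (note $\mathsf{c}_{\nu,\bar{\lambda}}(\mathsf{x})-\kappa_m=-p(\mathsf{c}_{\nu,\bar{\lambda}}(\mathsf{x})+m)$), and the final sentence about $\leqslant_{\nu,\bar{\lambda}}$ follows since within a class $\kappa<\kappa'$ amounts to $\mathsf{c}_{\nu,\bar{\lambda}}(\mathsf{x}')-\mathsf{c}_{\nu,\bar{\lambda}}(\mathsf{x})\in\Z_{>0}$. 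I would rewrite your key step along these lines; the rest of your proposal, including the discussion of ``mod $p$ versus integral equality,'' is fine.
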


So any equivalence class for $\preceq$ gives rise to a rational number $\beta$ (defined up to adding an integer)
as follows: if $(\mathsf{x},\kappa)$ is in the equivalence class, then $\beta-c_{\nu,\bar{\lambda}}(\mathsf{x})\in \Z$.

\begin{Rem}\label{Rem:dependence}
A priori, the pre-order $\preceq_{\nu,\bar{\lambda}}$ depends on the choice of $\bar{\lambda}$
and not only on $\Theta$. However, in the most interesting case when $\Theta$ is a point, it is easy to
see, compare with Lemma \ref{Lem:equiv_preserv} below, that $\preceq_{\nu,\bar{\lambda}}$ is independent
of the choice of $\bar{\lambda}$.
\end{Rem}

\subsubsection{Reductions to characteristic $p$}
Let $M\in \mathcal{O}_{\nu}(\A_{\bar{\lambda},\Q})$ and let $M_\Ring\subset M$ be
a graded $\Ring$-lattice. Pick $z_2\in \Z$ such that $M_z=0$ if $z>z_2$.
Then $M_{\F}:=\F\otimes_\Ring M_\Ring$ lies in $  \A_{\lambda',\F}\operatorname{-mod}^T_{\leqslant z_2}$.

Now choose an integer $z_1<z_2$ and set $\Ifrak=[z_1,z_2]$. We can view $\Ifrak$ as an interval in $\operatorname{Irr}
(\tilde{\OCat}(\A_{\lambda',\F})^0)$.

Let $\pi_{\Ifrak}$ denote the quotient functor $\A_{\lambda',\F}\operatorname{-mod}^T_{\leqslant z_2}
\twoheadrightarrow \tilde{\OCat}(\A_{\lambda,\F})_{\Ifrak}$ (that mods out
$\A_{\lambda',\F}\operatorname{-mod}^T_{< z_1}$, see (\ref{eq:quotient_equiv})).
Set $M_{\F,\Ifrak}:=\pi_{\Ifrak}(M_\F)$.

Recall that we have distinguished lattices $L_{\nu,\bar{\lambda},\Ring}(\mathsf{x},\kappa)
\subset L_{\nu,\bar{\lambda},\Q}(\mathsf{x},\kappa)$ and
$P_{\nu,\bar{\lambda},\Ring}(\mathsf{x},\kappa)
\subset P_{\nu,\bar{\lambda},\Q}(\mathsf{x},\kappa)$, see Section
\ref{SSS_simple_proj_reductions}.  So for $\kappa\in [z_1,z_2]$ we get
objects  $L_{\nu,\bar{\lambda},\Q}(\mathsf{x},\kappa)_\Ifrak:=
\pi_{\Ifrak}(L_{\nu,\bar{\lambda},\F}(\mathsf{x},\kappa))$. Also,
similarly to Lemma \ref{Lem:simple_proj_inj_Ring}, we have a unique subobject
$M_{\Ring}\subset P_{\nu,\bar{\lambda},\Ring}(\mathsf{x},\kappa)$
that is filtered with $\Delta_{\nu,\bar{\lambda},\Ring}(\mathsf{x}',\kappa')$,
where $\kappa'>\kappa$, while the quotient $P_{\nu,\bar{\lambda},\Ring}(\mathsf{x},\kappa)/
M_{\Ring}$ is filtered with $\Delta_{\nu,\bar{\lambda},\Ring}(\mathsf{x}',\kappa')$
with $\kappa'\leqslant \kappa$. We set $P_{\nu,\bar{\lambda},\Q}(\mathsf{x},\kappa)_\Ifrak:=
\pi_{\Ifrak}(P_{\nu,\bar{\lambda},\F}(\mathsf{x},\kappa)/
M_{\F})$.

%P_{\nu,\bar{\lambda},\Q}(\mathsf{x},\kappa)_\Ifrak\in \tilde{\OCat}(\A_{\lambda,\F})_\Ifrak$.

Now for an $h$-block $\OCat_\nu(\A_{\bar{\lambda},\Q})^\beta$ we define its reduction
$\OCat_\nu(\A_{\bar{\lambda},\F})^\beta$. Namely, let $P$ be the direct sum of all indecomposable
projectives in $\OCat_\nu(\A_{\bar{\lambda},\Q})^\beta$ and let $P_\Ring$ be the direct sum
of their distinguished $\Ring$-forms. By definition,
$\OCat_\nu(\A_{\bar{\lambda},\F})^\beta$ is the category of right modules over
$\F\otimes_\Ring \End_{\A_{\lambda,\Ring}}(P_{\Ring})$.

The following is the main result of this section, and of the entire paper.

\begin{Thm}\label{Thm:mod_O_st_stratif}
Pick an interval $\Ifrak\subset \operatorname{Irr}(\tilde{\OCat}(\A_{\lambda',\F}))$ of the form
$\{(\mathsf{x},\kappa)| z-pm\leqslant \kappa<z\}$, where $z$ is the minimal value of
$\kappa$ in some equivalence class for the pre-order $\preceq$. Then the following claims are true:
\begin{enumerate}
\item The category $\tilde{\OCat}(\A_{\lambda',\F})_\Ifrak$ is standardly stratified
with respect to the pre-order $\preceq$. The objects  $P_{\nu,\bar{\lambda},\F}(\mathsf{x},\kappa)_\Ifrak$ are standard
and the objects $L_{\nu,\bar{\lambda},\F}(\mathsf{x},\kappa)_\Ifrak$ are proper standard
for $(\mathsf{x},\kappa)\in \mathfrak{I}$.
\item The component of the associated graded category $\gr \tilde{\OCat}(\A_{\lambda',\F})^0_\Ifrak$
corresponding to any equivalence class with respect to $\preceq$ is $\OCat_\nu(\A_{\bar{\lambda},\F})^\beta$
(as a highest weight category), where $\beta$ is the rational number corresponding to the equivalence
class, see the discussion after Lemma \ref{Lem:equiv_classes_char_p}. In particular,
$\gr \tilde{\OCat}(\A_{\lambda',\F})_\Ifrak\cong \OCat_\nu(\A_{\bar{\lambda},\F})^{\oplus m}$.
\end{enumerate}
\end{Thm}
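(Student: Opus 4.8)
The plan is to prove Theorem \ref{Thm:mod_O_st_stratif} by transporting the highest weight structure on $\tilde{\OCat}(\A_{\lambda',\F})_\Ifrak$ (which we already have from Theorem \ref{Thm:phw_structure}) along the quotient functor $\pi_\Ifrak$ from the larger category $\A_{\lambda',\F}\operatorname{-mod}^T_{\leqslant z}$, and then identifying the subquotients of the pre-order filtration with characteristic-$p$ reductions of the $h$-blocks $\OCat_\nu(\A_{\bar\lambda,\Q})^\beta$. The key input is that the pre-order $\preceq$ on $\operatorname{Irr}(\tilde{\OCat}(\A_{\lambda',\F}))^0$ has, by Lemma \ref{Lem:equiv_classes_char_p}, equivalence classes whose internal order is exactly $\leqslant_{\nu,\bar\lambda}$, i.e.\ the highest weight order of $\OCat_\nu^T(\A_{\bar\lambda,\Q})$; and the distance between consecutive classes, in the $\kappa$-direction, grows linearly in $p$. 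So for $p\gg0$ a single equivalence class together with everything $\preceq$-above it up to a ``ceiling'' $z$ fits inside an interval $[z-pm,z)$ of the kind in Theorem \ref{Thm:phw_structure}.

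First I would set up the standard objects for the stratification. By Lemma \ref{Lem:comp_SS_structure}, since $\tilde{\OCat}(\A_{\lambda',\F})_\Ifrak$ is highest weight and $\preceq$ is compatible with $\leqslant_\nu$, it suffices to check that the quotient functors $\pi_\xi$ to the subquotients $\tilde{\OCat}(\A_{\lambda',\F})_\xi$ have exact left \emph{and} right adjoints. Equivalently (Section \ref{SSS_SSS}), I would verify that the projective $P(\mathsf{x},\kappa)$ in $\tilde{\OCat}(\A_{\lambda',\F})_\Ifrak$ surjects onto $\Delta_\preceq(\mathsf{x},\kappa)$ with kernel filtered by $\Delta_\preceq(\mathsf{x}',\kappa')$, $\kappa'$ strictly $\preceq$-above. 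Here is where the reductions enter: the candidate standard object is $P_{\nu,\bar\lambda,\F}(\mathsf{x},\kappa)_\Ifrak$, which by construction is the image under $\pi_\Ifrak$ of (the mod-$p$ reduction of) the distinguished sub-of-quotient of $P_{\nu,\bar\lambda,\Ring}(\mathsf{x},\kappa)$ that is $\Delta_{\nu,\bar\lambda}$-filtered with all $\Delta$'s in degrees $\leqslant\kappa$. The heart of the argument is that inside the ambient category the ordinary $\A_{\lambda',\F}$-Verma $\Delta_{\nu,\lambda',\F}(\mathsf{x},\kappa)$ — which is the honest standard object for the PHW structure — has a filtration whose subquotients are precisely the reductions $\Delta_{\nu,\bar\lambda,\F}(\mathsf{x}',\kappa')$. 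To see this I would use Lemma \ref{Lem:WC_R} and Lemma \ref{Lem:Verma_transl_global}: applying the wall-crossing / translation functors in $\mathcal{P}_1,\mathcal{P}_2$ to pass from $\lambda'$ to $\bar\lambda$-type parameters converts the PHW-standard $\Delta_{\nu,\lambda',\F}$ into a $\Delta_{\nu,\bar\lambda,\F}$-filtered object, and the filtration degrees are controlled by the functions $\kappa(p)+mp$. The Ext-vanishing $\Ext^{i>0}_{\A_{\lambda',\F},T}(\Delta_{\nu,\lambda',\F},\nabla^\F_{\nu,\lambda})=0$ from Step 2 of the proof of Theorem \ref{Thm:phw_structure}, together with the analogous vanishing for the $\bar\lambda$-modules coming from Lemma \ref{Lem:Ext_vanish_global} and Corollary \ref{Lem:Ext_freeness}, gives that projectives in the subquotients are $\Delta_\preceq$-filtered and in the correct order; this is (SS2), and exactness of $\Delta_\xi$, i.e.\ (SS1), follows because $\Delta_\xi$ is induction from the $\xi$-subquotient, which is a reduction of a highest weight category over $\Ring$ where everything is free.

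For part (2), once the stratification is established, the subquotient $\gr\tilde{\OCat}(\A_{\lambda',\F})^0_\Ifrak$ at a class $\xi$ is, by definition, the category of modules over the endomorphism algebra of the direct sum of the $P_\preceq(\mathsf{x},\kappa)$'s in that class. I would compute this endomorphism algebra as $\F\otimes_\Ring\End_{\A_{\bar\lambda,\Ring}}(P_\Ring)$ where $P_\Ring$ is the sum of distinguished $\Ring$-forms of the indecomposable projectives of the $h$-block $\OCat_\nu(\A_{\bar\lambda,\Q})^\beta$: indeed $P_{\nu,\bar\lambda,\F}(\mathsf{x},\kappa)_\Ifrak$ is by construction (and the previous paragraph) the projective cover of $L_{\nu,\bar\lambda,\F}(\mathsf{x},\kappa)_\Ifrak$ in the subquotient, and Corollary \ref{Lem:Ext_freeness} guarantees $\Hom$-spaces between these objects are free $\Ring$-modules of the expected rank that base-change correctly, so that $\End$ of the reduction is the reduction of $\End$. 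This is exactly the definition of $\OCat_\nu(\A_{\bar\lambda,\F})^\beta$; the match of standard objects (images of $P_{\nu,\bar\lambda}$) and proper standard objects (images of $L_{\nu,\bar\lambda}$) with the highest weight structure there is then automatic from how $\preceq$ restricts to $\leqslant_{\nu,\bar\lambda}$ inside a class, via Lemma \ref{Lem:equiv_classes_char_p}. Summing over the $m$ classes inside $\Ifrak$ (all isomorphic after the $\mathcal{S}$-shift) gives $\gr\tilde{\OCat}(\A_{\lambda',\F})_\Ifrak\cong\OCat_\nu(\A_{\bar\lambda,\F})^{\oplus m}$.

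The main obstacle I expect is controlling the $\Ring$-integral behaviour uniformly: I need that all the relevant $\Hom$ and $\Ext$ groups between the distinguished lattices of $\Delta$'s, $\nabla$'s, $P$'s and $L$'s are free over $\Ring$ and commute with the base changes $\Ring\to\F$ and $\Ring\to\Q$, \emph{simultaneously} for all the finitely many $\bar\lambda\in\Lambda$ and all $\chi\in\mathcal{P}$, and that the translation-functor isomorphisms of Lemmas \ref{Lem:Verma_transl_global}, \ref{Lem:WC_R} survive reduction mod $p$ — this is what forces the various finite localizations of $\Ring$ in Section \ref{S_R_forms}, and the delicate point is that the filtration of $\Delta_{\nu,\lambda',\F}$ by $\Delta_{\nu,\bar\lambda,\F}$'s must be read off from an $\Ring$-form of the filtration, using that good filtrations and $\gr$'s are compatible with generic flatness. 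A secondary subtlety is the congruence condition on $p$ from Section \ref{SS_p_assumptions} ensuring $\,^p\bar\lambda$ lies in $\,^pA$ and that $\bar{\mathsf{c}}_{\nu,\bar\lambda}(\mathsf{x})$ mod $p$ sorts the classes consistently, so that the interval $[z-pm,z)$ genuinely is a union of $\preceq$-classes; this is routine once $p$ is large but needs to be stated carefully.
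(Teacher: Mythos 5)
Your high-level strategy coincides with the paper's: use Lemma \ref{Lem:comp_SS_structure} to reduce to exactness of the adjoints of $\pi_\xi$, take the reductions $P_{\nu,\bar{\lambda},\F}(\mathsf{x},\kappa)_\Ifrak$ and $L_{\nu,\bar{\lambda},\F}(\mathsf{x},\kappa)_\Ifrak$ as the candidate standard and proper standard objects, and prove part (2) by computing the endomorphism algebra of the sum of standards in a class as $\F\otimes_\Ring\End_{\A_{\bar{\lambda},\Ring}}(P_\Ring)$ via the $\Ring$-freeness of Corollary \ref{Lem:Ext_freeness}. That last computation, and your emphasis on uniform base-change over $\Ring$, are exactly right.

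However, the ``heart'' of your argument is off, and the actual key step is missing. First, the mechanism you propose --- that $\Delta_{\nu,\lambda',\F}(\mathsf{x},\kappa)$ acquires a filtration by ``reductions $\Delta_{\nu,\bar{\lambda},\F}(\mathsf{x}',\kappa')$'' via the translation and wall-crossing functors of Lemmas \ref{Lem:Verma_transl_global} and \ref{Lem:WC_R} --- does not make sense: since $\lambda'=\,^p\bar{\lambda}\equiv\bar{\lambda}$ mod $p$, one has $\A_{\lambda',\F}=\A_{\bar{\lambda},\F}$ and these two Vermas are literally the same module; no functor is needed to relate them, and Lemma \ref{Lem:WC_R} enters only in Theorem \ref{Thm:WC_part_Ringel}. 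The correct filtration runs the other way: the candidate standard $P_{\nu,\bar{\lambda},\F}(\mathsf{x},\kappa)_\Ifrak$ is filtered by the PHW-standards (the Vermas) by reducing the $\Delta_\Ring$-filtration of Lemma \ref{Lem:simple_proj_inj_Ring}(2). Second, and more seriously, exactness of $\pi_\xi^!$ is the nontrivial content of (SS1), and your one-sentence justification (``induction from a reduction where everything is free'') does not establish it. What is needed is the vanishing statement: for $M_\Ring$ filtered by $L_{\nu,\bar{\lambda},\Ring}(\mathsf{x},\kappa)$ with all labels in a single class $\beta$, one has $\Ext^i(M_{\F,\Ifrak},N)=0$ for all $i\geqslant 0$ and all $N$ in the strictly lower part of the filtration --- proved by resolving $M_\Ring$ by $\Delta_\Ring$-filtered objects using Lemma \ref{Lem:simple_proj_inj_Ring}(3). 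This yields $M_\F=L\pi_\beta^!\pi_\beta(M_\F)$; applied to the reductions of the simples it simultaneously shows that $\pi_\beta^!$ is exact and (together with the Hom computations of Corollary \ref{Lem:Ext_freeness} and Lemma \ref{Lem:cost_inclusion}) that $L_{\nu,\bar{\lambda},\F}(\mathsf{x},\kappa)_\beta$ and $P_{\nu,\bar{\lambda},\F}(\mathsf{x},\kappa)_\beta$ are the simple and the indecomposable projective of the subquotient. Finally, you never address exactness of the right adjoint $\pi_\xi^*$, which Lemma \ref{Lem:comp_SS_structure} also requires; this is obtained from the contravariant duality of Section \ref{SSS_duality}, which intertwines $(\pi_\beta^{opp})^!$ with $\pi_\beta^*$.
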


\subsubsection{Example of $T^*(G/B)$}
The most interesting case here is when $\Theta$ is a point.

Consider $X=T^*(G/B)$ and take the  fundamental  $p$-alcove:
$\langle \alpha_i^\vee, \lambda\rangle\geqslant 1, \langle\alpha_0^\vee, \lambda\rangle\geqslant 1-p$,
where we write $\alpha_0^\vee$ for the minimal coroot and $i$ runs over $\{1,2,\ldots,r\}$.
Pick $i\in \{0,\ldots,r\}$ and consider the element $\bar{\lambda}_i$ defined by
$\langle \alpha_j^\vee, \bar{\lambda}_i\rangle=1$ for $j\neq i$ (in particular,  $\bar{\lambda}_0=\rho$).  The corresponding point $\lambda'_i=\,^p\!\bar{\lambda}_i$ is given
\begin{itemize}
\item
by $\lambda'_0=\rho$ if $i=0$,
\item
and by $\langle\alpha_j^\vee,\lambda'_i\rangle=1$ for $j\neq 0,i,
\langle\alpha_0^\vee,\lambda'\rangle=1-p$.
\end{itemize}
We have $\mathsf{c}_{\nu,\,^p\!\bar{\lambda}}(w)=
\langle w\,^p\!\bar{\lambda}, \rho^\vee\rangle$ is an affine function in $p$.

Two elements  $w_1,w_2\in X^T\cong W$ lie in the same $h$-block if and only if
$\langle w_1\bar{\lambda}-w_2\bar{\lambda}, \rho^\vee\rangle\in \Z$. So each of these equivalence
classes splits into the union of right cosets for the integral Weyl group $W_{[\lambda]}$
of $\lambda$ (generated by the reflections $s_\alpha$, where $\alpha$ is such that
$\langle \alpha^\vee,\lambda\rangle\in \Z$). The categories $\OCat_\nu(\A_{\bar{\lambda}})$
are the direct sums of the regular blocks of the BGG categories $\mathcal{O}$ for $W_{[\lambda]}$.

\subsubsection{Example of $\operatorname{Hilb}_n(\mathbb{A}^2)$}
Now consider the case of $X=\operatorname{Hilb}_n(\mathbb{A}^2)$ so that the algebra
$\A_{\lambda}$ is a spherical rational Cherednik algebra for $S_n$. As usual we set $c=\lambda-1/2$.

Recall that the $p$-alcoves (for the parameter $c$) are the intervals of the form $\displaystyle [\frac{(p+1)a}{b}+s,
\frac{(p+1)a'}{b'}-s]$, where $a/b<a'/b'$ are two rational numbers with denominators
between $2$ and $n$ such that there are no rational numbers with such denominators between
$a/b,a'/b'$.

Pick $\bar{\lambda}=\frac{a}{b}+s$. It is again clear that $\mathsf{c}_{\nu,\,^p\!\bar{\lambda}}(\mathsf{x})$
is an affine function in $p$, compare to Lemma \ref{Lem:RCA_highest_wt}.
By that lemma, we have $\mu\sim_{\nu,\bar{\lambda}}\mu'$
if and only if $\operatorname{cont}(\mu)-\operatorname{cont}(\mu')$ is divisible by $b$.
The order within the equivalence class is as follows: $\mu< \mu'$ if $\operatorname{cont}(\mu)>\operatorname{cont}(\mu')$.

We note that, thanks to results of \cite{Rouquier,VV_proof}, the category $\OCat_{\nu}(\A_{\lambda})$
is equivalent to the category of modules over the $q$-Schur algebra $\mathcal{S}_{q}(n)$ for the quantum
$\mathfrak{gl}_n$ (a labelling preserving equivalence of highest weight categories).
It follows that $\OCat_{\nu}(\A_{\lambda,\F})\cong \mathcal{S}_{q,\F}(n)\operatorname{-mod}$.

\subsection{Proof of Theorem \ref{Thm:mod_O_st_stratif}}
\subsubsection{Three technical lemmas}
In the proof we will use the following three technical lemmas.
The first one is a straightforward consequence of Corollary
\ref{Lem:Ext_freeness}.

\begin{Lem}\label{Lem:homom_reduction}
Let $M_{\Ring},N_{\Ring}$
be modules of the form $L_{\nu,\lambda,\Ring}(\mathsf{x},\kappa), P_{\nu,\lambda,\Ring}(\mathsf{x}',\kappa')$,
where $\mathsf{x},\mathsf{x}'$ are in the same $h$-block and $|\kappa-\kappa'|\leqslant m$,
where $m$ has the same meaning as in Corollary \ref{Lem:Ext_freeness}.
Then for all $i$, the $\Ring$-modules $\Ext^i_{\A_{\lambda,\Ring},T}(M_{\Ring},N_{\Ring})$
are free and finitely generated over $\Ring$. Then we automatically have
\begin{align*}
&\C\otimes_{\Ring}\Ext^i_{\A_{\lambda,\Ring},T}(M_{\Ring},N_{\Ring})
\xrightarrow{\sim} \Ext^i_{\A_{\lambda,\Ring},T}(M_{\C},N_{\C}), \\
&\F\otimes_{\Ring}\Ext^i_{\A_{\bar{\lambda},\Ring},T}(M_\Ring,N_\Ring)\xrightarrow{\sim}
\Ext^i_{\A_{\lambda',\F},T}(M_\F,N_\F).
\end{align*}
\end{Lem}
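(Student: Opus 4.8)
The plan is to reduce the statement to Corollary~\ref{Lem:Ext_freeness} together with a universal coefficients argument over the principal ideal domain $\Ring$. The freeness and finite generation of $\Ext^i_{\A_{\lambda,\Ring},T}(M_\Ring,N_\Ring)$, as well as the base change isomorphism to $\C$, are already asserted by Corollary~\ref{Lem:Ext_freeness}: the modules allowed here ($L_{\nu,\lambda,\Ring}$ and $P_{\nu,\lambda,\Ring}$) form a sublist of those in that corollary, and the hypotheses on $\mathsf{x},\mathsf{x}',\kappa,\kappa'$ coincide. Since only finitely many pairs $(M_\Ring,N_\Ring)$ occur up to the grading shift $\kappa\mapsto\kappa+p$, one finite localization of $\Ring$ handles all of them. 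So the only point that still needs an argument is the base change to $\F$.

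After a further finite localization, invoking Lemma~\ref{Lem:gen_flatness}, I would assume that $M_\Ring$, $N_\Ring$ and $\A_{\lambda,\Ring}$ are flat over $\Ring$ and that the graded components of $N_\Ring$ are finitely generated free $\Ring$-modules. By Lemma~\ref{Lem:fin_hom_dim1} the algebra $\A_{\lambda,\Ring}$ has finite homological dimension, so, exactly as in the proof of Lemma~\ref{Lem:fin_gen_Ext}, there is a finite $T$-equivariant resolution $P_\bullet\to M_\Ring$ with each $P_j$ of the form $\A_{\lambda,\Ring}\otimes_\Ring V_j$, where $V_j$ is a finite-rank graded free $\Ring$-module. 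Then $C^\bullet:=\Hom_{\A_{\lambda,\Ring},T}(P_\bullet,N_\Ring)$ is a bounded complex of finite-rank free $\Ring$-modules, each term being a finite direct sum of graded components of $N_\Ring$, and $H^i(C^\bullet)=\Ext^i_{\A_{\lambda,\Ring},T}(M_\Ring,N_\Ring)$, which is $\Ring$-free by the previous paragraph.

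Next I would note that, since $M_\Ring$ is $\Ring$-flat and $P_\bullet$ is a finite resolution by $\Ring$-flat modules, $\F\otimes_\Ring P_\bullet\to M_\F$ is again a $T$-equivariant projective resolution; hence $\F\otimes_\Ring C^\bullet=\Hom_{\A_{\lambda,\F},T}(\F\otimes_\Ring P_\bullet,N_\F)$ computes $\Ext^i_{\A_{\lambda,\F},T}(M_\F,N_\F)$. Applying the universal coefficient theorem over the PID $\Ring$ to the bounded complex of free modules $C^\bullet$ yields
\[
0\to \F\otimes_\Ring H^i(C^\bullet)\to H^i(\F\otimes_\Ring C^\bullet)\to \Tor_1^\Ring(\F,H^{i+1}(C^\bullet))\to 0,
\]
and the $\Tor$ term vanishes because $H^{i+1}(C^\bullet)$ is $\Ring$-free. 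This gives $\F\otimes_\Ring\Ext^i_{\A_{\lambda,\Ring},T}(M_\Ring,N_\Ring)\xrightarrow{\sim}\Ext^i_{\A_{\lambda,\F},T}(M_\F,N_\F)$; with $\bar\lambda$ in place of $\lambda$ and using $\F\otimes_\Ring\A_{\bar\lambda,\Ring}\cong\A_{\lambda',\F}$ (valid by the congruence conditions on $p$, as $\lambda'={}^p\!\bar\lambda$ reduces to $(p+1)\bar\lambda$ mod $p$) one obtains the second displayed isomorphism of the lemma. There is no serious obstacle in this argument: the only steps requiring care are checking that $\F\otimes_\Ring P_\bullet$ stays exact — which is precisely where the $\Ring$-flatness of $M_\Ring$ from Lemma~\ref{Lem:gen_flatness} is used — and observing that the finitely many localizations needed are all applied to the single ring $\Ring$, hence are mutually compatible.
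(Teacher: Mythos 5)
Your proposal is correct and matches the paper's (implicit) argument: the paper gives no written proof, stating only that the lemma is a straightforward consequence of Corollary \ref{Lem:Ext_freeness}, and your universal-coefficients computation over the PID $\Ring$ — using a finite $T$-equivariant free resolution so that the Ext's are computed by a bounded complex of finite-rank free $\Ring$-modules with free cohomology, whence the $\Tor_1$ term vanishes — is exactly the standard way to fill in the base change to $\F$. No gaps.
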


\begin{Lem}\label{Lem:hom_vanishing}
Let $\beta$ be an equivalence class for $\sim_\nu$ and let $M_{\Ring}$ be an object in
$\A_{\lambda,\Ring}\operatorname{-mod}^T$ filtered by
$L_{\nu,\lambda,\Ring}(\mathsf{x},\kappa)$, where $(\mathsf{x},\kappa)\in \beta$.
Then for any finite interval $\Ifrak$ containing $\beta$
and any $N_\F\in \tilde{\OCat}(\A_{\lambda',\F})_{\Ifrak,\prec \beta}$ we have
$$\Ext^i_{\tilde{\OCat}(\A_{\lambda',\F})_{\Ifrak}}(\M_{\F,\Ifrak},N)=0, \quad \forall i\geqslant 0.$$
\end{Lem}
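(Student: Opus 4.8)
The plan is to prove Lemma \ref{Lem:hom_vanishing} by reducing the computation of $\Ext$'s in the subquotient category $\tilde{\OCat}(\A_{\lambda',\F})_{\Ifrak}$ to $\Ext$'s between modules over $\A_{\lambda',\F}$ (in the equivariant module category with bounded weights), and then to $\Ext$'s over $\Ring$ that can be computed base-changing from characteristic $0$. First I would use the equivalence (\ref{eq:quotient_equiv}), identifying $\tilde{\OCat}(\A_{\lambda',\F})_{\Ifrak}$ with the Serre quotient $\A_{\lambda',\F}\operatorname{-mod}^T_{\leqslant z_2}/\A_{\lambda',\F}\operatorname{-mod}^T_{<z_1}$, where $\Ifrak=[z_1,z_2]$. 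Since $\M_{\F,\Ifrak}=\pi_\Ifrak(\M_\F)$ is the image of a module $\M_\F$ that is filtered by simples $\underline{L}_{\nu,\lambda',\F}(\mathsf{x},\kappa)$ with $(\mathsf{x},\kappa)\in\beta$, and the quotient functor $\pi_\Ifrak$ is exact with an exact section (this is the content of Step 3 in the proof of Theorem \ref{Thm:phw_structure}, together with Lemma \ref{Lem:comp_SS_structure}-type reasoning once we know the category is standardly stratified), the $\Ext$-groups in the quotient category agree with those computed in the ambient category $\A_{\lambda',\F}\operatorname{-mod}^T_{\leqslant z_2}$ between the chosen lifts. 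Concretely, I would take the lift of $N$ to be a module $N_\F\in\A_{\lambda',\F}\operatorname{-mod}^T_{\leqslant z_2}$ filtered by simples $\underline{L}_{\nu,\lambda',\F}(\mathsf{x}',\kappa')$ with $(\mathsf{x}',\kappa')$ in an equivalence class $\beta'\prec\beta$, and reduce to showing
$$\Ext^i_{\A_{\lambda',\F},T}\bigl(\underline{L}_{\nu,\lambda',\F}(\mathsf{x},\kappa),\underline{L}_{\nu,\lambda',\F}(\mathsf{x}',\kappa')\bigr)=0$$
for all $i\geqslant 0$ whenever $(\mathsf{x},\kappa)\in\beta$, $(\mathsf{x}',\kappa')\in\beta'$, $\beta'\prec\beta$. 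Here $\underline{L}_{\nu,\lambda',\F}$ should really be read as the reduction $L_{\nu,\bar\lambda,\F}(\mathsf{x},\kappa)$ of the characteristic-$0$ simple via the equivalence $\tilde{\OCat}(\A_{\lambda,\F})\cong\tilde{\OCat}(\A_{\lambda',\F})$, so that Lemma \ref{Lem:homom_reduction} applies.

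The key step is then the following dichotomy. By Lemma \ref{Lem:equiv_classes_char_p}, $(\mathsf{x},\kappa)$ and $(\mathsf{x}',\kappa')$ lie in equivalence classes $\beta,\beta'$ that are comparable with $\beta'\prec\beta$; unwinding the definition of $\preceq_{\nu,\bar\lambda}$, the coefficient of $p$ in the affine function $\kappa'-\kappa$ (as $p$ varies) is strictly negative, which means that for $p\gg 0$ the weight $\kappa'$ is much smaller than $\kappa$ — in particular $|\kappa-\kappa'|>m$, so $\mathsf{x},\mathsf{x}'$ cannot lie in the same $h$-block in a way that would make the $\Ext$ between the reductions jump. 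More precisely I would split into two cases. If $\mathsf{x}$ and $\mathsf{x}'$ lie in different $h$-blocks of $\OCat_\nu(\A_{\bar\lambda,\Q})$, then the corresponding simples over $\A_{\bar\lambda,\Q}$ have different annihilators by the generalized central character (the scalar action of $h$ differs by a non-integer), hence $\Ext^i_{\A_{\bar\lambda,\Q},T}$ between them vanishes; after a finite localization of $\Ring$ this persists over $\Ring$, and by a freeness/base-change argument analogous to Corollary \ref{Lem:Ext_freeness} it persists over $\F$. If $\mathsf{x},\mathsf{x}'$ lie in the same $h$-block, then since $\beta\neq\beta'$ (different equivalence classes), the constraint $\mathsf{c}_{\nu,\bar\lambda}(\mathsf{x})-\kappa\neq\mathsf{c}_{\nu,\bar\lambda}(\mathsf{x}')-\kappa'$ forces $\kappa-\kappa'\neq\mathsf{c}_{\nu,\bar\lambda}(\mathsf{x})-\mathsf{c}_{\nu,\bar\lambda}(\mathsf{x}')$, and combined with $\beta'\prec\beta$ this means $\kappa-\kappa'$ exceeds the bound $m$ from Corollary \ref{Lem:Ext_freeness} for all $p$ sufficiently large (using the congruence conditions on $p$ from Section \ref{SS_p_assumptions}); then the two $T_\F$-equivariant simples live in equivariant-weight ranges that do not interact, and by homogeneity of the grading induced by $\nu$ any extension between them would have to be graded, forcing the $\Ext$ to vanish for degree reasons.

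I would then extend from simples $\underline{L}$ to arbitrary modules $\M_\F$ filtered by such simples, and to arbitrary $N\in\tilde{\OCat}(\A_{\lambda',\F})_{\Ifrak,\prec\beta}$ filtered by simples in classes $\prec\beta$, by the usual long-exact-sequence dévissage in each argument: a short exact sequence $0\to A\to B\to C\to 0$ gives a long exact sequence in $\Ext^\bullet(-,N)$, and vanishing for $A$ and $C$ gives vanishing for $B$, and dually in the second variable. Since all filtrations involved are finite, this is a finite induction. The main obstacle I anticipate is making the passage between $\Ext$-groups in the quotient category $\tilde{\OCat}(\A_{\lambda',\F})_{\Ifrak}$ and $\Ext$-groups in the ambient category $\A_{\lambda',\F}\operatorname{-mod}^T_{\leqslant z_2}$ fully rigorous: one needs that $\pi_\Ifrak$ induces isomorphisms on all $\Ext^i$ between the relevant objects, which requires knowing that the lifts chosen (the distinguished $\Ring$-lattices, reduced mod $p$) have no higher self-extensions or cross-extensions with the subcategory $\A_{\lambda',\F}\operatorname{-mod}^T_{<z_1}$ being killed — this is exactly where the $\Ext$-vanishing (\ref{eq:Ext_vanishing_hw2}) and (\ref{eq:Ext_vanishing_hw3}) from the proof of Theorem \ref{Thm:phw_structure}, together with the compatibility of $\preceq$ with $\leqslant_\nu$, has to be invoked carefully. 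The other delicate point is tracking the congruence conditions on $p$ needed to ensure the strict inequality $|\kappa-\kappa'|>m$ holds uniformly; but these are already built into the assumptions of Section \ref{SS_p_assumptions}, so this should be bookkeeping rather than a genuine difficulty.
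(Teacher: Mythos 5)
Your reduction to the subquotient category and the final d\'evissage are fine, but the central step of your argument --- the dichotomy by which you try to prove $\Ext^i$-vanishing between simples directly --- does not work, in either branch. For the ``same $h$-block, different equivalence class'' case you argue that a large gap $|\kappa-\kappa'|>m$ forces vanishing ``for degree reasons.'' A weight gap never forces $\Ext$-vanishing in these categories: the asserted vanishing is genuinely one-directional (it is exactly the statement that the reductions of characteristic-zero simples are \emph{proper standard} objects), and for $N$ with labels $\succ\beta$ rather than $\prec\beta$ the analogous $\Ext$'s are typically nonzero even though the weight separation is just as large. Any argument symmetric in the two classes therefore proves too much. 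For the ``different $h$-blocks'' case, your plan to get vanishing over $\Q$ from central characters and then base-change fails for two reasons: $N$ is filtered by the genuine modular simples $\underline{L}_{\nu,\lambda',\F}(\mathsf{x}',\kappa')$, which are \emph{not} reductions of characteristic-zero objects (the reductions $L_{\nu,\bar\lambda,\F}$ are the proper standards, usually reducible), so there is no characteristic-zero $\Ext$-group to compare with and Corollary \ref{Lem:Ext_freeness} does not apply; and the characteristic-zero $h$-block decomposition does not survive reduction mod $p$ --- only the coarser equivariant-block decomposition does --- so simples coming from different $h$-blocks inside one equivariant block do extend one another mod $p$ (this is precisely why the standardly stratified structure is nontrivial).

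The argument that actually works, and is the one the paper uses, stays entirely inside the already-established PHW structure of Theorem \ref{Thm:phw_structure}. Reduce to $M_\Ring=L_{\nu,\lambda,\Ring}(\mathsf{x},\kappa)$; by Lemma \ref{Lem:simple_proj_inj_Ring} this object has a finite resolution by objects filtered by $\Delta_{\nu,\lambda,\Ring}(\mathsf{x}',\kappa')$ with $(\mathsf{x}',\kappa')\in\beta$, and this resolution survives reduction mod $p$. It then suffices to know that
$\Ext^i_{\tilde{\OCat}(\A_{\lambda',\F})_{\Ifrak}}(\Delta_{\nu,\lambda,\F}(\mathsf{x}',\kappa')_\Ifrak,N)=0$ for all $i\geqslant 0$: this is the standard highest-weight fact that a standard object has no $\Ext$'s against simples whose labels are not $\geqslant$ its own, and the constituents of $N$ have labels $\prec\beta$, hence (by compatibility of $\prec$ with $\leqslant_\nu$) are neither equal to nor greater than any label in $\beta$. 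No base change to characteristic zero and no case analysis on $h$-blocks is needed at this point.
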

\begin{proof}
Of course, it is enough to assume that $M_{\Ring}=L_{\nu,\lambda,\Ring}(\mathsf{x},\kappa)$.
By Lemma \ref{Lem:simple_proj_inj_Ring}, $M_{\Ring}$ admits a resolution by objects filtered
by $\Delta_{\nu,\lambda,\Ring}(\mathsf{x}',\kappa')$ for $(\mathsf{x}',\kappa')\in \beta$.
So it is enough to prove that
$$\Ext^i_{\tilde{\OCat}(\A_{\lambda',\F})_{\Ifrak}}(\Delta_{\nu,\lambda,\F}(\mathsf{x}',\kappa')_\Ifrak,N)=0,
\quad \forall i\geqslant 0.$$
These equalities hold because $\Delta_{\nu,\lambda,\F}(\mathsf{x}',\kappa')_\Ifrak$ is the standard
corresponding to $(\mathsf{x}',\kappa')$ and $N$ is filtered by simples whose
labels are less than the labels in  $\beta$.
\end{proof}

\begin{Cor}\label{Cor:hom_vanishing}
Let $M_\Ring$ be as in Lemma \ref{Lem:hom_vanishing} and $M_\F:=\F\otimes_{\Ring}M_{\Ring}$.
Then we have $M_\F=L\pi_\beta^!\circ \pi_\beta(M_\F)$,
where we write $\pi_\beta$ for the quotient functor $\A_{\lambda,\F}\operatorname{-mod}^{T}_{\preceq \beta}
\twoheadrightarrow \tilde{\OCat}(\A_{\lambda,\F})_\beta$ and $L\pi_\beta^!$ for the
left adjoint functor.
\end{Cor}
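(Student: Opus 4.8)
The plan is to show that the counit $L\pi_\beta^!\pi_\beta M_\F\to M_\F$ of the adjunction between $L\pi_\beta^!$ (the left derived functor of the left adjoint $\pi_\beta^!$ of $\pi_\beta$, hence left adjoint to $\pi_\beta$ on derived categories) and $\pi_\beta$ is an isomorphism in the derived category, and to deduce this from Lemma~\ref{Lem:hom_vanishing} by a standard recollement argument. First I would observe that $M_\F$ really is an object of $\A_{\lambda,\F}\operatorname{-mod}^T_{\preceq\beta}$: by triangularity every composition factor of $L_{\nu,\lambda,\F}(\mathsf{x},\kappa)$ carries a label $\leqslant_\nu(\mathsf{x},\kappa)$, hence a label $\preceq\beta$ when $(\mathsf{x},\kappa)\in\beta$, by compatibility of $\preceq$ with $\leqslant_\nu$. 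The key input is then the vanishing $R\Hom(M_\F,N)=0$ for every $N$ in $\ker\pi_\beta$ (the Serre subcategory of objects all of whose composition factors carry labels $\prec\beta$): this is precisely what Lemma~\ref{Lem:hom_vanishing} gives, applied with $\Ifrak$ any finite interval containing $\beta$, the $\Ext$-groups in $\A_{\lambda,\F}\operatorname{-mod}^T_{\preceq\beta}$ agreeing with those computed in $\tilde{\OCat}(\A_{\lambda',\F})_\Ifrak$ by the PHW structure of Theorem~\ref{Thm:phw_structure}.

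The remaining step is the recollement argument. Complete the counit map to a distinguished triangle $L\pi_\beta^!\pi_\beta M_\F\to M_\F\to C\to L\pi_\beta^!\pi_\beta M_\F[1]$. Since $\pi_\beta$ is exact and $\pi_\beta L\pi_\beta^!\cong\operatorname{id}$, applying $\pi_\beta$ to this triangle shows $\pi_\beta C=0$, so every cohomology object $H^j(C)$ lies in $\ker\pi_\beta$. For any complex $N'$ with cohomology in $\ker\pi_\beta$, derived adjunction gives $R\Hom(L\pi_\beta^!\pi_\beta M_\F,N')\cong R\Hom_{\tilde{\OCat}(\A_{\lambda,\F})_\beta}(\pi_\beta M_\F,\pi_\beta N')=0$; hence applying $R\Hom(-,N')$ to the triangle yields $R\Hom(C,N')\cong R\Hom(M_\F,N')$, which vanishes by the first paragraph. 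If $C$ were nonzero, choose $j$ maximal with $H^j(C)\neq0$ and take $N'=H^j(C)[-j]$: then $\Hom_{D^b}(C,H^j(C)[-j])=0$, yet this group contains $\operatorname{id}_{H^j(C)}\neq0$, a contradiction. So $C=0$ and the counit is an isomorphism, which is the claim.

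The part that requires care — bookkeeping rather than a genuine obstacle — is ensuring that $L\pi_\beta^!$ is defined and bounded on $D^b(\tilde{\OCat}(\A_{\lambda,\F})_\beta)$, that $C$ has finite-length cohomology so that the top-cohomology argument applies, and that the $\Ext$-groups of Lemma~\ref{Lem:hom_vanishing} genuinely compute $R\Hom$ in $\A_{\lambda,\F}\operatorname{-mod}^T_{\preceq\beta}$; all of this follows from the periodic highest weight structure of Theorem~\ref{Thm:phw_structure}, working as usual inside a finite interval $\Ifrak\supset\beta$ on which $M_\F$ and $C$ are supported. The actual homological content — $\Ext$-vanishing of $M_\F$ against all strata strictly below $\beta$ — is already established in Lemma~\ref{Lem:hom_vanishing}, so beyond this bookkeeping the corollary is formal.
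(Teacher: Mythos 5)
Your argument is correct and is exactly the standard way to extract the corollary from Lemma \ref{Lem:hom_vanishing}: the paper states the corollary without proof, treating it as immediate, and your recollement argument (the cone of the counit has homology in $\ker\pi_\beta$, against which both $L\pi_\beta^!\pi_\beta M_\F$ and $M_\F$ have vanishing $R\Hom$ by adjunction and by the lemma respectively) is precisely the intended formal deduction. The bookkeeping points you flag (boundedness of $L\pi_\beta^!$, agreement of Ext groups across the ambient category and the finite-interval subquotients) are genuine but are handled by the PHW structure of Theorem \ref{Thm:phw_structure}, as you say.
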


\begin{Lem}\label{Lem:proj_simpl_reduction}
Pick a label $(\mathsf{x},\kappa)\in \operatorname{Irr}(\tilde{\OCat}(\A_{\lambda,\F})^0)$
and let $\beta$ be the $\sim_\nu$ equivalence class of  $(\mathsf{x},\kappa)$.
Then $P_{\nu,\bar{\lambda},\F}(\mathsf{x},\kappa)_\beta$ is the projective in $\tilde{\OCat}(\A_{\lambda',\F})_\beta$
labelled by $(\mathsf{x},\kappa)$, while $L_{\nu,\bar{\lambda},\F}(\mathsf{x},\kappa)_\beta$ is the simple
labelled by $(\mathsf{x},\kappa)$.
\end{Lem}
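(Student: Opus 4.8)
The plan is to identify $\tilde{\OCat}(\A_{\lambda',\F})_\beta$ with the base change $\OCat_\nu(\A_{\bar\lambda,\F})^\beta$ and then track where the distinguished $\Ring$-forms of projectives and simples go. First, I would reduce to the algebra picture: since the equivalence class $\beta$ is a finite interval in $\operatorname{Irr}(\tilde{\OCat}(\A_{\lambda',\F})^0)$ on which the order $\leqslant_\nu$ restricts to $\leqslant_{\nu,\bar\lambda}$ (Lemma \ref{Lem:equiv_classes_char_p}), the subquotient category $\tilde{\OCat}(\A_{\lambda',\F})_\beta$ is a finite-length $\F$-linear highest weight category with finitely many simples, hence equivalent to modules over a finite-dimensional $\F$-algebra. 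The two candidate presentations of this algebra are: (a) $\F\otimes_\Ring \operatorname{End}_{\A_{\bar\lambda,\Ring}}(P_\Ring)$, where $P_\Ring=\bigoplus P_{\nu,\bar\lambda,\Ring}(\mathsf{x},\kappa)$ over labels in $\beta$ (this computes $\OCat_\nu(\A_{\bar\lambda,\F})^\beta$ by definition), and (b) the endomorphism algebra of a projective generator of $\tilde{\OCat}(\A_{\lambda',\F})_\beta$. The task is to match them.

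For the matching, the key input is Corollary \ref{Lem:Ext_freeness} / Lemma \ref{Lem:homom_reduction}: for modules of the listed types with labels in the same $h$-block and bounded difference of $\kappa$, the $\Ring$-modules $\operatorname{Ext}^i_{\A_{\bar\lambda,\Ring},T}$ are free and finitely generated, and base change commutes with $\operatorname{Ext}$. I would apply this with $i=0$ to the $P_{\nu,\bar\lambda,\Ring}(\mathsf{x},\kappa)$'s: this shows $\F\otimes_\Ring\operatorname{End}_{\A_{\bar\lambda,\Ring}}(P_\Ring)\cong\operatorname{End}_{\A_{\lambda',\F},T}(P_\F)$ where $P_\F=\F\otimes_\Ring P_\Ring$. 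Next I need that $P_\F$, or rather its image $\bigoplus P_{\nu,\bar\lambda,\F}(\mathsf{x},\kappa)_\beta$ under the quotient functor $\pi_\beta$, is a projective generator of $\tilde{\OCat}(\A_{\lambda',\F})_\beta$, and that $\pi_\beta$ induces an isomorphism on the relevant endomorphism algebras. Projectivity of $P_{\nu,\bar\lambda,\F}(\mathsf{x},\kappa)_\beta$ in $\tilde{\OCat}(\A_{\lambda',\F})_\beta$ I would obtain from the PHW structure established in Theorem \ref{Thm:phw_structure}: by Lemma \ref{Lem:simple_proj_inj_Ring}(2), $P_{\nu,\bar\lambda,\Ring}(\mathsf{x},\kappa)$ is $\Delta$-filtered, and by the construction of the subobject $M_\Ring$ (quotient filtered by $\Delta$'s with $\kappa'\le\kappa$, i.e. with labels $\preceq$-equivalent or smaller but within $\beta$ after passing to the subquotient), the quotient $P_{\nu,\bar\lambda,\F}(\mathsf{x},\kappa)/M_\F$ has exactly the right $\Delta$-filtration for it to become, under $\pi_\beta$, the projective cover of $L_{\nu,\bar\lambda,\F}(\mathsf{x},\kappa)_\beta$ — one checks $\operatorname{Hom}(P_{\nu,\bar\lambda,\F}(\mathsf{x},\kappa)_\beta,-)$ is exact on $\tilde{\OCat}(\A_{\lambda',\F})_\beta$ by the standard $\Delta$-flag / $\nabla$-flag $\operatorname{Ext}$-vanishing argument, using the vanishing \eqref{eq:Ext_vanishing_hw3} reduced mod $p$ via Lemma \ref{Lem:homom_reduction}. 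The statement about simples follows once projectivity and the correct head are known: $\pi_\beta$ sends $P_{\nu,\bar\lambda,\F}(\mathsf{x},\kappa)/M_\F$ to the projective cover of $\pi_\beta(L_{\nu,\bar\lambda,\F}(\mathsf{x},\kappa))=L_{\nu,\bar\lambda,\F}(\mathsf{x},\kappa)_\beta$, and simplicity of the latter is inherited from the characteristic-$0$ simple because reduction mod $p$ of a simple highest-weight object is, by Lemma \ref{Lem:simple_proj_inj_Ring}(1), still the head of its standard reduction, which in the subquotient $\tilde{\OCat}(\A_{\lambda',\F})_\beta$ has a unique simple quotient labelled by $(\mathsf{x},\kappa)$.

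The main obstacle I anticipate is showing that the quotient functor $\pi_\beta:\A_{\lambda',\F}\operatorname{-mod}^T_{\preceq\beta}\twoheadrightarrow\tilde{\OCat}(\A_{\lambda',\F})_\beta$ does not lose information when passing to endomorphism algebras — equivalently, that $P_{\nu,\bar\lambda,\F}(\mathsf{x},\kappa)/M_\F$ really has no higher-order ``tails'' once we kill the Serre subcategory $\tilde{\OCat}(\A_{\lambda',\F})_{\prec\beta}$, and that $L\pi_\beta^!\circ\pi_\beta$ acts as the identity on the modules at hand. This is exactly what Corollary \ref{Cor:hom_vanishing} (via Lemma \ref{Lem:hom_vanishing}) is designed to give: $M_\F$-type objects filtered by $L_{\nu,\bar\lambda,\Ring}(\mathsf{x}',\kappa')$ with labels in $\beta$ satisfy $M_\F=L\pi_\beta^!\pi_\beta(M_\F)$, so $\operatorname{Hom}$ and $\operatorname{Ext}$ are preserved under $\pi_\beta$. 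So the real work is just to verify the hypotheses of Lemma \ref{Lem:hom_vanishing} apply to $P_{\nu,\bar\lambda,\F}(\mathsf{x},\kappa)/M_\F$ — namely that its $\Delta$-filtration (reduced mod $p$ from the $\Ring$-flag) only involves standards with labels in $\beta$, which is immediate from the construction of $M_\Ring$ and the fact that the labels of those standards are $\preceq$-comparable to $(\mathsf{x},\kappa)$ and lie above the bottom of the interval — and then assemble the pieces. Once this is in place the identification of both endomorphism algebras, and hence of the two descriptions of $\tilde{\OCat}(\A_{\lambda',\F})_\beta$ as highest weight categories with standards/costandards/projectives/simples matched up, is formal.
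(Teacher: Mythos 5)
Your scaffolding matches the paper's: the load-bearing inputs are the freeness/base-change statement for equivariant $\Ext$'s over $\Ring$ (Corollary \ref{Lem:Ext_freeness}, Lemma \ref{Lem:homom_reduction}) and the identity $M_\F=L\pi_\beta^!\circ\pi_\beta(M_\F)$ of Corollary \ref{Cor:hom_vanishing}, which transports $\Hom$'s and $\Ext$'s into the subquotient $\tilde\OCat(\A_{\lambda',\F})_\beta$. But the two central deductions are each justified by an argument that does not work. For simplicity of $L_{\nu,\bar{\lambda},\F}(\mathsf{x},\kappa)_\beta$: you claim the reduction mod $p$ of a characteristic-zero simple ``is still the head of its standard reduction'' by Lemma \ref{Lem:simple_proj_inj_Ring}(1). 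That lemma only yields, after base change, a surjection $\Delta_{\nu,\bar{\lambda},\F}(\mathsf{x},\kappa)\twoheadrightarrow L_{\nu,\bar{\lambda},\F}(\mathsf{x},\kappa)$; being \emph{a} quotient of the standard is much weaker than being \emph{the} head, and the assertion that this quotient becomes simple in $\tilde\OCat(\A_{\lambda',\F})_\beta$ is precisely what the lemma claims (it is why reductions of simples end up being the proper standards in Theorem \ref{Thm:mod_O_st_stratif}), so the step is circular. The argument actually requires applying Lemma \ref{Lem:homom_reduction} twice — to get $\dim\Hom_{\A_{\lambda',\F},T}(\Delta_{\nu,\bar{\lambda},\F}(\mathsf{x}',\kappa'),L_{\nu,\bar{\lambda},\F}(\mathsf{x},\kappa))=\delta_{\mathsf{x},\mathsf{x}'}\delta_{\kappa,\kappa'}$, which by Lemma \ref{Lem:cost_inclusion} embeds $L_{\nu,\bar{\lambda},\F}(\mathsf{x},\kappa)_\beta$ into the costandard object, and to get $\dim\Hom(L_\F,L'_\F)=\delta$, which identifies the head — and only the combination of simple head, simple socle and multiplicity one forces simplicity.

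For projectivity of $P_{\nu,\bar{\lambda},\F}(\mathsf{x},\kappa)_\beta$: a $\Delta$-filtration together with the vanishing (\ref{eq:Ext_vanishing_hw3}) of $\Ext^{>0}(\Delta,\nabla^\F)$ does not imply projectivity — every standard object is $\Delta$-filtered, and the $\Delta$-flag/$\nabla$-flag criterion only controls extensions against costandard-filtered objects, never against simples. What you need is $\Ext^1(P_{\nu,\bar{\lambda},\F}(\mathsf{x},\kappa)_\beta,S)=0$ for every simple $S$ of $\tilde\OCat(\A_{\lambda',\F})_\beta$; since those simples are exactly the $L_{\nu,\bar{\lambda},\F}(\mathsf{x}',\kappa')_\beta$ by the first half of the lemma, this vanishing is obtained by base-changing the characteristic-zero vanishing $\Ext^1(P_{\nu,\bar{\lambda},\C},L'_{\nu,\bar{\lambda},\C})=0$ through Lemma \ref{Lem:homom_reduction} and then through $L\pi_\beta^!$. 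Relatedly, your opening move of first \emph{identifying} $\tilde\OCat(\A_{\lambda',\F})_\beta$ with $\OCat_\nu(\A_{\bar{\lambda},\F})^\beta$ by matching endomorphism algebras of projective generators risks circularity: that identification is part (2) of Theorem \ref{Thm:mod_O_st_stratif} and is deduced \emph{from} this lemma, since one must first locate the projectives of $\tilde\OCat(\A_{\lambda',\F})_\beta$ before one can compare endomorphism algebras.
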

\begin{proof}
Pick $(\mathsf{x}',\kappa')\in \beta$. By Corollary \ref{Lem:Ext_freeness}, we have
$$\Hom_{\A_{\lambda,\Ring},T}(\Delta_{\nu,\bar{\lambda},\Ring}(\mathsf{x}',\kappa'), L_{\nu,\bar{\lambda},\Ring}(\mathsf{x},\kappa))=
\Ring^{\oplus \delta_{\mathsf{x},\mathsf{x}'}\delta_{\kappa,\kappa'}}.$$
Applying Lemma \ref{Lem:homom_reduction}, we see that
$$\dim\Hom_{\A_{\lambda',\F},T}(\Delta_{\nu,\bar{\lambda},\F}(\mathsf{x}',\kappa'), L_{\nu,\bar{\lambda},\F}(\mathsf{x},\kappa)))=
\delta_{\mathsf{x},\mathsf{x}'}\delta_{\kappa,\kappa'}.$$

By Corollary \ref{Cor:hom_vanishing}, $\Delta_{\nu,\bar{\lambda},\F}(\mathsf{x}',\kappa')=
\pi_\beta^!(\Delta_{\nu,\bar{\lambda},\F}(\mathsf{x}',\kappa')_\beta)$.
From here we deduce that
$$\dim\Hom_{\tilde{\OCat}(\A_{\lambda',\F})_\beta}
(\Delta_{\nu,\bar{\lambda},\F}(\mathsf{x}',\kappa')_\beta, L_{\nu,\bar{\lambda},\F}(\mathsf{x},\kappa)_\beta))=
\delta_{\mathsf{x},\mathsf{x}'}\delta_{\kappa,\kappa'}.$$

Recall that the objects $\Delta_{\nu,\bar{\lambda},\F}(\mathsf{x}',\kappa')_\beta$ are standard. By Lemma
\ref{Lem:cost_inclusion}, $L_{\nu,\bar{\lambda},\F}(\mathsf{x},\kappa)_\beta\hookrightarrow \nabla^\F_{\nu,\lambda'}(\mathsf{x},\kappa)_\beta$.
It remains to show that the head of $L_{\nu,\bar{\lambda},\F}(\mathsf{x},\kappa)_\beta$ coincides
with $\pi_\beta(\underline{L}_{\nu,\lambda',\F}(\mathsf{x},\kappa))$.

By Corollary \ref{Cor:hom_vanishing}, $\pi_\beta^!(L_{\nu,\bar{\lambda},\F}(\mathsf{x},\kappa)_\mathfrak{J})=
L_{\nu,\bar{\lambda},\F}(\mathsf{x},\kappa)_\beta$.
So using Lemma \ref{Lem:homom_reduction} again and arguing as in the first paragraph of the proof
of the present lemma, we conclude
$$\dim\Hom_{\tilde{\OCat}(\A_{\lambda',\F})_\beta}
(L_{\nu,\bar{\lambda},\F}(\mathsf{x},\kappa)_\beta, L_{\nu,\bar{\lambda},\F}(\mathsf{x}',\kappa')_\beta))=
\delta_{\mathsf{x},\mathsf{x}'}\delta_{\kappa,\kappa'}.$$
Since  $L_{\nu,\bar{\lambda},\F}(\mathsf{x}',\kappa')_\beta\hookrightarrow
\nabla^\F_{\nu,\lambda'}(\mathsf{x}',\kappa')_\beta$ we conclude that
the head of $L_{\nu,\bar{\lambda},\F}(\mathsf{x},\kappa)_\beta$ is
$\pi_\beta(\underline{L}_{\nu,\lambda',\F}(\mathsf{x},\kappa))$. This finishes the proof that
$L_{\nu,\bar{\lambda},\F}(\mathsf{x},\kappa)_\beta=\pi_\beta(\underline{L}_{\nu,\lambda',\F}(\mathsf{x},\kappa))$.

Let us show that $P_{\nu,\bar{\lambda},\F}(\mathsf{x},\kappa)_\beta$ is the projective cover of
$\pi_\beta(\underline{L}_{\nu,\lambda',\F}(\mathsf{x},\kappa))$. From Lemma \ref{Lem:homom_reduction}
it follows that
$$\dim\Ext^i_{\A_{\lambda',\F},T}(P_{\nu,\bar{\lambda},\F}(\mathsf{x},\kappa),
L_{\nu,\bar{\lambda},\F}(\mathsf{x}',\kappa'))=
\delta_{i,0}\delta_{\mathsf{x},\mathsf{x}'}\delta_{\kappa,\kappa'}.$$
By Corollary \ref{Cor:hom_vanishing}, $L\pi_{\beta}^!P_{\nu,\bar{\lambda},\F}(\mathsf{x},\kappa)_\beta=P_{\nu,\bar{\lambda},\F}(\mathsf{x},\kappa)$.
We conclude that, for $i=0,1$, we have
$$\dim \Ext^i_{\tilde{\OCat}(\A_{\lambda',\F})_\beta}(P_{\nu,\bar{\lambda},\F}(\mathsf{x},\kappa)_\beta,
L_{\nu,\bar{\lambda},\F}(\mathsf{x}',\kappa')_\beta)=
\delta_{i,0}\delta_{\mathsf{x},\mathsf{x}'}\delta_{\kappa,\kappa'}.$$
We already know that  $L_{\nu,\bar{\lambda},\F}(\mathsf{x}',\kappa')_\beta$ is the simple object corresponding
to $(\mathsf{x}',\kappa')$. Hence $P_{\nu,\bar{\lambda},\F}(\mathsf{x},\kappa)_\beta$ is the projective object
corresponding to $(\mathsf{x},\kappa)$.
\end{proof}

\subsubsection{Completion of the proof}
\begin{proof}[Proof of Theorem \ref{Thm:mod_O_st_stratif}]
Let $\beta$ be as in Lemma \ref{Lem:proj_simpl_reduction} and let
$\Ifrak$ be an interval containing $\beta$. By Lemma \ref{Lem:proj_simpl_reduction},
for $(\mathsf{x},\kappa)\in \beta$, the object $L_{\nu,\bar{\lambda}, \F}(\mathsf{x},\kappa)_{\beta}$ (that coincides
with $\pi_\beta(L_{\nu,\bar{\lambda}, \F}(\mathsf{x},\kappa)_\Ifrak)$ by the definitions)
is simple. By Corollary \ref{Lem:hom_vanishing}, $L\pi_{\beta}^!(L_{\nu,\bar{\lambda}, \F}(\mathsf{x},\kappa)_{\beta})=L_{\nu,\bar{\lambda}, \F}(\mathsf{x},\kappa)_\Ifrak$.
It follows, in particular, that $\pi_{\beta}^!$ is exact.

Thanks to Lemma \ref{Lem:comp_SS_structure}, to finish the proof of the claim that $\preceq$ defines a standardly
stratified structure, it remains to show that $\pi_{\beta}^*:\tilde{\OCat}(\A_{\lambda,\F})_\beta
\rightarrow \tilde{\OCat}(\A_{\lambda,\F})_{\Ifrak,\preceq\beta}$ is exact. For this we will use the duality
functor $\bullet^*$ from Section \ref{SSS_duality}. Recall that
$\underline{L}_{\nu,\lambda,\F}(\mathsf{x},\kappa)=\underline{L}_{-\nu,-\lambda,\F}(\mathsf{x},-\kappa)^*$.
It follows that $\bullet^*$ induces an equivalence $\bullet^*:\tilde{\OCat}(\A_{\lambda,\F})_{\Ifrak}\xrightarrow{\sim}\tilde{\OCat}(\A_{-\lambda,\F})^{opp}_{-\Ifrak}$
for any interval $\Ifrak$. Clearly, the functors $\bullet^{\dagger}$ intertwine
$(\pi_{\beta}^{opp})^!$ with $\pi_\beta^*$. Since we know that $(\pi_\beta^{opp})^!$ is exact,
we see that $\pi_\beta^*$ is exact.

It follows that the pre-order $\preceq$ indeed defines a standardly
stratified structure. Corollary \ref{Cor:hom_vanishing} and
Lemma \ref{Lem:proj_simpl_reduction} imply that $L_{\nu,\lambda,\F}(\mathsf{x},\kappa)_{\Ifrak},
P_{\nu,\lambda,\F}(\mathsf{x},\kappa)_{\Ifrak}$ are the proper standard and standard objects.
This finishes the proof of (1).

Part (2) easily follows from Lemma \ref{Lem:homom_reduction} (applied to $M_\Ring=N_\Ring:=P_\Ring$)
and the projective part of Lemma \ref{Lem:proj_simpl_reduction}.
\end{proof}

\subsubsection{The case of $\dim T>1$}
The case when $\dim T>1$ can be handled similarly to the situation of highest weight structures.
In particular, we can use a more general notion of a standardly stratified category,
see Remark \ref{Rem:gen_SS}.

%\subsection{Dependence on $\lambda$}
%We chose $\bar{\lambda}$ compatible with $(A,\Theta)$ to produce a standardly stratified structure
%on $\tilde{\OCat}(\A_{\lambda,\F})$ for $\lambda\in \,^p\!A$. The next (purely combinatorial) result
%shows that the standardly stratified structure is independent of the choice of $\bar{\lambda}$.
%Recall that, for $\lambda_1,\lambda_2\in \,^p\! A$, we have $\tilde{\OCat}(\A_{\lambda_1,\F})
%\xrightarrow{\sim} \tilde{\OCat}(\A_{\lambda_2,\F})$ with $\underline{L}_{\nu,\lambda_1,\F}(\mathsf{x},\kappa)
%\mapsto \underline{L}_{\nu,\lambda_2,\F}(\mathsf{x},\kappa+\mathsf{wt}_\chi(\mathsf{x}))$, where
%$\chi=\lambda_2-\lambda_1$.
%
%\begin{Prop}\label{Prop:st_st_param_indep}
%Let $\bar{\lambda}_1,\bar{\lambda}_2$ be two parameters compatible with $(A,\Theta)$.
%Then $(\mathsf{x},\kappa)\preceq_{\nu,\bar{\lambda}_1} (\mathsf{x}',\kappa')$ if and only
%if $(\mathsf{x},\kappa+\mathsf{wt}_\chi(\mathsf{x}))\preceq_{\nu,\bar{\lambda}_2}
%(\mathsf{x}',\kappa'+\mathsf{wt}_\chi(\mathsf{x}'))$.
%\end{Prop}

\subsection{Wall-crossing for categories $\tilde{\mathcal{O}}$}
Now let $A_1,A_2$ be two real alcoves that are opposite with respect a common face $\Theta$.
Let $\bar{\lambda}_1,\bar{\lambda}_2=\bar{\lambda}_1+\chi$ be elements in $\Lambda$ compatible with $(A_1,\Theta),
(A_2,\Theta)$ with $\chi\in\mathcal{P}_1$. Set $\lambda_i':=\,^p\bar{\lambda}_i, i=1,2$.
Then we have the wall-crossing functor
$\A_{\lambda'_1,\chi,\F}\otimes^L_{\A_{\lambda'_1,\F}}\bullet: D^b(\A_{\lambda_1',\F}\operatorname{-mod}^T)
\xrightarrow{\sim} D^b(\A_{\lambda_2',\F}\operatorname{-mod}^T)$.

Our goal in this section is to show, that, in a suitable sense, the wall-crossing functor
is a partial Ringel duality functor between the categories $\tilde{\OCat}(\A_{\lambda'_1,\F})$
and $\tilde{\OCat}(\A_{\lambda'_2,\F})$.

\subsubsection{Preparation}
Let us start with the following elementary lemma.

\begin{Lem}\label{Lem:h_block_preserv}
Let $(\mathsf{x},\kappa)$ and $(\mathsf{x}',\kappa')$ lie in the same equivariant block
for $\lambda\in \paramq_{\F}$. Then, for any $\chi\in \operatorname{Pic}(X)$, $(\mathsf{x},\kappa+\mathsf{wt}_\chi(\mathsf{x})),
(\mathsf{x}',\kappa'+\mathsf{wt}_\chi(\mathsf{x}'))$ lie in the same equivariant block.
\end{Lem}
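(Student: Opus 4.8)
The plan is to reduce the lemma to a single numerical identity for $\mathsf{c}_{\nu,\lambda}$ and then read the statement off the description of the equivariant blocks. First I would recall, from the equivariant block decomposition of $\tilde{\OCat}(\A_{\lambda,\F})$ and the congruence conditions on $p$ in Section~\ref{SS_p_assumptions}, that for $\dim T=1$ the equivariant block containing $\underline{L}_{\nu,\lambda,\F}(\mathsf{x},\kappa)$ is indexed by $\Phi_{\mathsf{x}}-\kappa\in\F_p$, and that $\Phi_{\mathsf{x}}=\bar{\mathsf{c}}_{\nu,\lambda}(\mathsf{x})$ is the residue of $\mathsf{c}_{\nu,\lambda}(\mathsf{x})$ modulo $p$ (this is where the assumption $(p+1)\mathsf{c}_{\nu,\lambda}(\mathsf{x})\in\Z$ is used: it makes this residue well defined and equal to $(p+1)\mathsf{c}_{\nu,\lambda}(\mathsf{x})\bmod p$). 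Thus ``$(\mathsf{x},\kappa)$ and $(\mathsf{x}',\kappa')$ lie in the same equivariant block for $\lambda$'' means $\bar{\mathsf{c}}_{\nu,\lambda}(\mathsf{x})-\kappa\equiv\bar{\mathsf{c}}_{\nu,\lambda}(\mathsf{x}')-\kappa'\pmod p$, while the assertion to be proved -- understood for the algebra $\A_{\lambda+\chi,\F}$, with the usual abuse $\A_{\lambda+\chi}=\A_{\lambda+c_1(\chi)}$ -- is $\bar{\mathsf{c}}_{\nu,\lambda+\chi}(\mathsf{x})-\kappa-\mathsf{wt}_\chi(\mathsf{x})\equiv\bar{\mathsf{c}}_{\nu,\lambda+\chi}(\mathsf{x}')-\kappa'-\mathsf{wt}_\chi(\mathsf{x}')\pmod p$.

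Subtracting the first congruence from the second, it suffices to prove that $f(\mathsf{x}):=\bar{\mathsf{c}}_{\nu,\lambda+\chi}(\mathsf{x})-\bar{\mathsf{c}}_{\nu,\lambda}(\mathsf{x})-\mathsf{wt}_\chi(\mathsf{x})$ is independent of $\mathsf{x}\in X^T$ in $\F_p$. For this I would invoke the fact recorded in \cite[Section~6.2]{CWR} and used in the discussion after Lemma~\ref{Lem:Verma_localization}, namely that $\lambda\mapsto\mathsf{c}_{\nu,\lambda}(\mathsf{x})$ is affine with linear part $\chi\mapsto\langle\mathsf{wt}_\chi(\mathsf{x}),\nu\rangle$ (at worst up to an $\mathsf{x}$-independent additive constant coming from the normalization of the chosen $T$-equivariant structure on $O(\chi)$, which is harmless here). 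Hence $\mathsf{c}_{\nu,\lambda+\chi}(\mathsf{x})-\mathsf{c}_{\nu,\lambda}(\mathsf{x})=\langle\mathsf{wt}_\chi(\mathsf{x}),\nu\rangle+\mathrm{const}$; multiplying by $p+1$, reducing modulo $p$, and using $p+1\equiv1$ together with the identification $\langle\,\cdot\,,\nu\rangle\colon\mathfrak{X}(T)\xrightarrow{\sim}\Z$ valid for $\dim T=1$, we get $\bar{\mathsf{c}}_{\nu,\lambda+\chi}(\mathsf{x})-\bar{\mathsf{c}}_{\nu,\lambda}(\mathsf{x})\equiv\mathsf{wt}_\chi(\mathsf{x})+\mathrm{const}\pmod p$. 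Therefore $f(\mathsf{x})\equiv\mathrm{const}$, which is exactly what was needed.

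The only genuinely non-formal ingredient is the identification of the linear part of $\lambda\mapsto\mathsf{c}_{\nu,\lambda}(\mathsf{x})$ with $\langle\mathsf{wt}_\chi(\mathsf{x}),\nu\rangle$. If one prefers not to quote it, I would rederive it from the structure of translation bimodules: for the $T$-equivariant lift $\A^\theta_{\paramq,\chi}$ of $O(\chi)$ one has, on a $T$-weight generator $b$ of the localization at $\mathsf{x}$, the relation $\Phi^{\lambda+\chi}(t)\,b=b\,\Phi^\lambda(t)+\langle\mathsf{wt}_\chi(\mathsf{x}),t\rangle\,b$ for $t\in\mathfrak{t}$; passing to $\Ca_\nu$ at $\mathsf{x}$ this gives $\Phi^{\lambda+\chi}_{\mathsf{x}}-\Phi^\lambda_{\mathsf{x}}=\mathsf{wt}_\chi(\mathsf{x})$ in $\mathfrak{t}^*$, i.e. the desired identity; equivalently it drops out of Propositions~\ref{Prop:transl_grad_O} and~\ref{Prop:transl_Verma} by comparing the $h$-eigenvalues on highest weight lines whenever those results apply, and then extends to all of $\paramq$ by affineness. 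Everything else is bookkeeping, so I do not expect a real obstacle.
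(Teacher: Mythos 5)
Your argument is correct and is essentially the paper's own proof: both reduce the claim to the fact (from [CWR, Section 6.1--6.2]) that $\mathsf{c}_{\nu,\lambda}(\mathsf{x})$ depends affinely on $\lambda$ with the change under translation by $\chi$ governed by $\mathsf{wt}_\chi(\mathsf{x})$, the paper simply phrasing it for the difference $\mathsf{c}_{\nu,\lambda}(\mathsf{x})-\mathsf{c}_{\nu,\lambda}(\mathsf{x}')$ so that the normalization constant you track explicitly cancels automatically.
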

\begin{proof}
Recall, \cite[Section 6.1]{CWR}, $\mathsf{c}_{\lambda}(\mathsf{x})-\mathsf{c}_{\lambda}(\mathsf{x}')$ is
an affine function in $\lambda$ and $(\mathsf{c}_{\lambda+\chi}(\mathsf{x})-\mathsf{c}_{\lambda+\chi}(\mathsf{x}'))-
(\mathsf{c}_{\nu,\lambda}(\mathsf{x})-\mathsf{c}_{\nu,\lambda}(\mathsf{x}'))
=\mathsf{wt}_\chi(\mathsf{x})-\mathsf{wt}_\chi(\mathsf{x}')$. Our claim easily follows from here.
\end{proof}

So by choosing a suitable equivariant structure on $O(\chi)$, we can assume that
$\underline{L}_\nu(\mathsf{x},\kappa+\mathsf{wt}_\chi(\mathsf{x}))\in \tilde{\OCat}(\A_{\lambda+\chi,\F})^0$
if and only if $\underline{L}_\nu(\mathsf{x},\kappa)\in \tilde{\OCat}(\A_{\lambda,\F})^0$.

Arguing as in the proof of  Lemma \ref{Lem:h_block_preserv} and using the condition that $\chi$
is independent of $p$, we get the following.

\begin{Lem}\label{Lem:equiv_preserv}
We have $(\mathsf{x},\kappa)\sim_{\nu,\lambda'_1}(\mathsf{x}',\kappa')$ if and only if
$(\mathsf{x},\kappa+\mathsf{wt}_\chi(\mathsf{x}))\sim_{\nu,\lambda_2'}(\mathsf{x}',\kappa'+\mathsf{wt}_\chi(\mathsf{x}'))$.
\end{Lem}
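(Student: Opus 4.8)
The plan is to translate both equivalence relations into the characteristic‑zero language of Lemma~\ref{Lem:equiv_classes_char_p} and then observe that they coincide. First I would record the elementary relations among the parameters: writing $\,^p\bar{\lambda}_i=\bar{\lambda}_i+p\mu_i$ for the affine‑in‑$p$ lift and using $\bar{\lambda}_2=\bar{\lambda}_1+\chi$ with $\chi\in\operatorname{Pic}(X)$, one gets $\lambda_2'-\lambda_1'=\chi+p(\mu_2-\mu_1)$, hence $\lambda_2'\equiv\lambda_1'+\chi\pmod p$ and $\A_{\lambda_2',\F}=\A_{\lambda_1'+\chi,\F}$. Next I would invoke the normalization of the $T_\F$‑equivariant structure on $O(\chi)$ fixed in the discussion after Lemma~\ref{Lem:h_block_preserv}: it guarantees that $\underline{L}_\nu(\mathsf{x},\kappa)\in\tilde{\OCat}(\A_{\lambda_1',\F})^0$ if and only if $\underline{L}_\nu(\mathsf{x},\kappa+\mathsf{wt}_\chi(\mathsf{x}))\in\tilde{\OCat}(\A_{\lambda_2',\F})^0$, so that $(\mathsf{x},\kappa)\mapsto(\mathsf{x},\kappa+\mathsf{wt}_\chi(\mathsf{x}))$ is a well‑defined $\Z$‑equivariant bijection of zero equivariant blocks (if two labels lie in different equivariant blocks, both sides of the claimed equivalence fail, so there is nothing to prove in that case). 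It therefore suffices to compare $\sim_{\nu,\lambda_1'}$ on $\operatorname{Irr}\bigl(\tilde{\OCat}(\A_{\lambda_1',\F})^0\bigr)$ with the transport of $\sim_{\nu,\lambda_2'}$ under this bijection.

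The second step is the computation. By Lemma~\ref{Lem:equiv_classes_char_p} applied with $\bar{\lambda}_1$, the relation $(\mathsf{x},\kappa)\sim_{\nu,\lambda_1'}(\mathsf{x}',\kappa')$ is equivalent to $\mathsf{c}_{\nu,\bar{\lambda}_1}(\mathsf{x})-\mathsf{c}_{\nu,\bar{\lambda}_1}(\mathsf{x}')\in\Z$ together with $\mathsf{c}_{\nu,\bar{\lambda}_1}(\mathsf{x})-\kappa=\mathsf{c}_{\nu,\bar{\lambda}_1}(\mathsf{x}')-\kappa'$; applied with $\bar{\lambda}_2$, the relation $(\mathsf{x},\kappa+\mathsf{wt}_\chi(\mathsf{x}))\sim_{\nu,\lambda_2'}(\mathsf{x}',\kappa'+\mathsf{wt}_\chi(\mathsf{x}'))$ is equivalent to $\mathsf{c}_{\nu,\bar{\lambda}_2}(\mathsf{x})-\mathsf{c}_{\nu,\bar{\lambda}_2}(\mathsf{x}')\in\Z$ together with $\mathsf{c}_{\nu,\bar{\lambda}_2}(\mathsf{x})-\kappa-\mathsf{wt}_\chi(\mathsf{x})=\mathsf{c}_{\nu,\bar{\lambda}_2}(\mathsf{x}')-\kappa'-\mathsf{wt}_\chi(\mathsf{x}')$. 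Now I would feed in the characteristic‑zero identity established in the proof of Lemma~\ref{Lem:h_block_preserv} (affineness of $\lambda\mapsto\mathsf{c}_{\nu,\lambda}(\mathsf{x})-\mathsf{c}_{\nu,\lambda}(\mathsf{x}')$, see \cite[Section 6.1]{CWR}),
$$\mathsf{c}_{\nu,\bar{\lambda}_1+\chi}(\mathsf{x})-\mathsf{c}_{\nu,\bar{\lambda}_1+\chi}(\mathsf{x}')-\bigl(\mathsf{c}_{\nu,\bar{\lambda}_1}(\mathsf{x})-\mathsf{c}_{\nu,\bar{\lambda}_1}(\mathsf{x}')\bigr)=\mathsf{wt}_\chi(\mathsf{x})-\mathsf{wt}_\chi(\mathsf{x}').$$
Since $\dim T=1$, the numbers $\mathsf{wt}_\chi(\mathsf{x}),\mathsf{wt}_\chi(\mathsf{x}')$ are integers, so this identity shows at once that the integrality condition for $\bar{\lambda}_2$ holds iff the one for $\bar{\lambda}_1$ does; and substituting it into the equality condition for $\bar{\lambda}_2$ collapses that condition precisely to $\mathsf{c}_{\nu,\bar{\lambda}_1}(\mathsf{x})-\mathsf{c}_{\nu,\bar{\lambda}_1}(\mathsf{x}')=\kappa-\kappa'$, i.e.\ to the equality condition for $\bar{\lambda}_1$. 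Hence the two pairs of conditions are equivalent, which is exactly the assertion of the lemma.

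This argument is essentially bookkeeping, so I do not expect a serious obstacle; the only point that genuinely requires care is ensuring that the shift $(\mathsf{x},\kappa)\mapsto(\mathsf{x},\kappa+\mathsf{wt}_\chi(\mathsf{x}))$ is compatible with the identification of zero equivariant blocks used to define $\sim_{\nu,\lambda_2'}$ in the first place — this is exactly what the chosen equivariant structure on $O(\chi)$ and Lemma~\ref{Lem:h_block_preserv} supply, together with the observation that $\chi$ (hence $\mathsf{wt}_\chi(\mathsf{x})$) is independent of $p$, so that the shift does not disturb the leading‑in‑$p$ behaviour that governs $\preceq$. For $\dim T>1$ one replaces $\mathsf{wt}_\chi(\mathsf{x})$ by $\langle\mathsf{wt}_\chi(\mathsf{x}),\nu\rangle$ throughout and the same computation goes through, cf.\ Section~\ref{SSS_higher_dim_mod_hw}.
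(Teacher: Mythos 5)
Your argument is correct and is exactly the one the paper intends: the paper proves this lemma by the one-line instruction ``argue as in Lemma \ref{Lem:h_block_preserv}, using that $\chi$ is independent of $p$,'' i.e.\ by combining the identity $(\mathsf{c}_{\nu,\lambda+\chi}(\mathsf{x})-\mathsf{c}_{\nu,\lambda+\chi}(\mathsf{x}'))-(\mathsf{c}_{\nu,\lambda}(\mathsf{x})-\mathsf{c}_{\nu,\lambda}(\mathsf{x}'))=\mathsf{wt}_\chi(\mathsf{x})-\mathsf{wt}_\chi(\mathsf{x}')$ with the description of the equivalence classes in Lemma \ref{Lem:equiv_classes_char_p}, which is precisely your computation. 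Your added care about the normalization of the equivariant structure on $O(\chi)$ and the $p$-independence of $\mathsf{wt}_\chi$ just makes explicit what the paper leaves implicit.
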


So we have a bijection between equivalence classes of simples in $\tilde{\OCat}(\A_{\lambda'_1,\F})^0$
and in $\tilde{\OCat}(\A_{\lambda'_2,\F})^0$. This bijection is compatible with $\preceq$.
So to an interval (with respect to $\preceq$) $\Ifrak\subset \operatorname{Irr}(\tilde{\OCat}(\A_{\lambda'_1,\F})^0)$
we can assign an interval   $\Ifrak^\chi\subset \operatorname{Irr}(\tilde{\OCat}(\A_{\lambda'_2,\F})^0)$.

\subsubsection{Main result}
The following is the main result of the section.

\begin{Thm}\label{Thm:WC_part_Ringel}
The functor
$$\WC_{\lambda'_2\leftarrow \lambda'_1}:\A_{\bar{\lambda},\chi,\F}\otimes^L_{\A_{\bar{\lambda},\F}}\bullet:
D^b(\A_{\lambda'_1,\F}\operatorname{-mod}^T)\xrightarrow{\sim} D^b(\A_{\lambda'_2,\F}\operatorname{-mod}^T)$$
induces a partial Ringel duality functor  $D^b(\tilde{\OCat}(\A_{\lambda'_1,\F})_{\Ifrak})\xrightarrow{\sim} D^b(\tilde{\OCat}(\A_{\lambda'_2,\F})_{\Ifrak^\chi})$
(with respect to the standardly stratified structures defined
by $\bar{\lambda}$).
\end{Thm}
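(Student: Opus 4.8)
The plan is to verify the defining property of a partial Ringel duality functor from Section~3.1, namely that $\WC_{\lambda'_2\leftarrow\lambda'_1}$ sends the standard object $\Delta^{\Ifrak}_{\nu,\lambda'_1,\F}(\mathsf{x},\kappa)$ to the object $\Delta_{\preceq}(\underline{\nabla}(\mathsf{x},\kappa+\mathsf{wt}_\chi(\mathsf{x})))$ in $\tilde{\OCat}(\A_{\lambda'_2,\F})_{\Ifrak^\chi}$, where the latter is the standard cover (for the stratified structure on $A_2$) of the costandard object $\underline{\nabla}$ in the relevant subquotient $\tilde{\OCat}(\A_{\lambda'_2,\F})_\beta$. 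Since by Theorem~\ref{Thm:mod_O_st_stratif}(1) the standard objects for the $\bar{\lambda}_1$-stratification are exactly $P_{\nu,\bar{\lambda}_1,\F}(\mathsf{x},\kappa)_{\Ifrak}$, it is enough to compute $\WC_{\lambda'_2\leftarrow\lambda'_1}$ on these and to identify the result inside $D^b(\tilde{\OCat}(\A_{\lambda'_2,\F})_{\Ifrak^\chi})$.

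First I would lift everything to $\Ring$. By Lemma~\ref{Lem:perv_Ring} and Proposition~\ref{Prop:pervers_char_p} the wall-crossing bimodule $\A_{\bar{\lambda}_1,\chi,\F}$ reduces well mod $p$ and the functor is a perverse derived equivalence with respect to the ideal filtrations; combined with Lemma~\ref{Lem:WC_R}, which computes $\A_{\lambda,\chi,\Ring}\otimes^L\Delta_{\nu,\lambda,\Ring}(\mathsf{x},\kappa)\cong \nabla_{\nu,\lambda,\Ring}(\mathsf{x},\kappa+\mathsf{wt}_\chi(\mathsf{x}))$ over $\Ring$ (this is the mod-$p$ reduction of the long-wall-crossing-is-Ringel-duality statement, Lemma~\ref{Lem:long_WC_Ringel}), I get that on each individual $h$-block, before passing to the subquotient, $\WC$ already behaves like a Ringel duality functor sending projectives of $\OCat_\nu(\A_{\bar{\lambda}_1,\F})^\beta$ to the objects whose classes are those of the costandards of the Ringel-dual (which is $\OCat_\nu(\A_{\bar{\lambda}_2,\F})^{\beta'}$ by Theorem~\ref{Thm:mod_O_st_stratif}(2)). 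The key point is then to descend this from the "one-$h$-block" picture $\A_{\lambda'_1,\F}\operatorname{-mod}^T_{\leqslant z_2}$ to the subquotient $\tilde{\OCat}(\A_{\lambda'_1,\F})_{\Ifrak}$: this uses that $\WC$, being perverse with respect to the ideal filtrations, preserves the Serre subcategories cut out by the pre-order $\preceq$ (the ideals $\I^j$ are exactly what cut out the stratification filtration up to Morita equivalence, see Remark~\ref{Rem:WC_perv_ideal}), so it induces a functor on subquotients; then I identify the induced functor on the associated graded pieces $\gr\tilde{\OCat}$, which are ordinary highest weight categories $\OCat_\nu(\A_{\bar{\lambda}_i,\F})^\beta$ by Theorem~\ref{Thm:mod_O_st_stratif}(2), with the honest Ringel duality between them — this is again a mod-$p$ reduction of Lemma~\ref{Lem:long_WC_Ringel}.

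The steps, in order: (i) recall the definition and note it suffices to compute $\WC$ on the standard objects $P_{\nu,\bar{\lambda}_1,\F}(\mathsf{x},\kappa)_{\Ifrak}$ and show the outputs are standard covers of the appropriate costandards; (ii) work over $\Ring$ and use Lemma~\ref{Lem:WC_R} plus the $\Ring$-form of Lemma~\ref{Lem:long_WC_Ringel} to compute $\A_{\bar{\lambda}_1,\chi,\Ring}\otimes^L P_{\nu,\bar{\lambda}_1,\Ring}(\mathsf{x},\kappa)\cong I_{\nu,\bar{\lambda}_2,\Ring}(\mathsf{x},\kappa+\mathsf{wt}_\chi(\mathsf{x}))$, i.e.\ projectives go to injectives (this is the content of Ringel duality at the level of the fixed $h$-block, interpreting "costandard in the dual" correctly), reducing mod $p$ via Corollary~\ref{Lem:Ext_freeness}/Lemma~\ref{Lem:homom_reduction}; (iii) check that $\WC$ preserves the pre-order filtration — this follows from perversity and Remark~\ref{Rem:WC_perv_ideal} together with the identification of the stratification filtration with the ideal filtration — hence descends to $D^b(\tilde{\OCat}(\A_{\lambda'_1,\F})_{\Ifrak})\to D^b(\tilde{\OCat}(\A_{\lambda'_2,\F})_{\Ifrak^\chi})$; (iv) identify the induced order on the labels with the order $\leqslant'$ of Section~3.1 (on each equivalence class the order gets reversed, which is exactly what perversity of a wall-crossing functor does, cf.\ the behavior of $\mathsf{c}_{\nu,\bar{\lambda}}$), and apply Theorem~\ref{Thm:mod_O_st_stratif}(2) to see that $\gr$ of the target, with $\leqslant'$, is $\OCat_\nu(\A_{\bar{\lambda}_2,\F})^{\beta}$ with the opposite-on-blocks highest weight order, so that the standard objects of the target stratification are covers of the costandards $\underline{\nabla}$; (v) combine: $\WC$ sends the standard $P_{\nu,\bar{\lambda}_1,\F}(\mathsf{x},\kappa)_{\Ifrak}$ to $\Delta_{\preceq}(\underline{\nabla})$, which is the defining property.

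The main obstacle I expect is step (iii)–(iv), namely carefully matching the filtration by Serre subcategories coming from the pre-order $\preceq$ with the filtration by ideals $\I^j$ making $\WC$ perverse, and then checking that the order induced on each stratum is the Ringel-reversed order $\leqslant'$ rather than $\leqslant$. Over $\C$ this is essentially \cite[Section~4.5]{Perv} combined with Lemma~\ref{Lem:long_WC_Ringel}, but here one must run it over $\Ring$ and reduce mod $p$, so one has to know the ideal filtration reduces mod $p$ compatibly (Lemma~\ref{Lem:perv_Ring}, Proposition~\ref{Prop:pervers_char_p}) and that the reduction of the $h$-block $\OCat_\nu(\A_{\bar{\lambda}_2,\F})^\beta$ is genuinely the Ringel dual of $\OCat_\nu(\A_{\bar{\lambda}_1,\F})^{\beta'}$ — the latter again reducing to the $\Ring$-freeness of the relevant Hom/Ext groups (Corollary~\ref{Lem:Ext_freeness}). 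Everything else is bookkeeping with the equivalences $\tilde{\OCat}(\A_{\lambda,\F})\simeq\tilde{\OCat}(\A_{\lambda',\F})$ inside a $p$-alcove and with the block-preservation lemmas (Lemmas~\ref{Lem:h_block_preserv}, \ref{Lem:equiv_preserv}).
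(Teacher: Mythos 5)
Your overall architecture---descend to the subquotients, compute the functor on distinguished objects over $\Ring$ via Lemma \ref{Lem:WC_R}, and identify the induced functor on the associated graded strata with a Ringel duality---is the right one, but two of your steps have genuine gaps. The more serious one is step (iii): you derive preservation of the $\preceq$-filtration from perversity of $\WC_{\lambda'_2\leftarrow\lambda'_1}$ together with ``the identification of the stratification filtration with the ideal filtration,'' and that identification is false. The filtration making the wall-crossing functor perverse (Remark \ref{Rem:WC_perv_ideal}, Proposition \ref{Prop:pervers_char_p}) comes from the ideals $\I^j$ and has at most $\frac{1}{2}\dim X+1$ steps, governed by Gelfand--Kirillov dimension; the filtration attached to $\preceq_{\nu,\bar{\lambda}}$ is indexed by the equivalence classes for $\preceq$, i.e.\ essentially by the linear-in-$p$ coefficient of $\kappa$, and has a completely different (and much larger) set of steps. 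Perversity with respect to the former gives no control over the latter. The paper instead obtains filtration-compatibility directly from the computation $\WC_{\lambda'_2\leftarrow\lambda'_1}(\Delta_{\nu,\bar{\lambda},\F}(\mathsf{x},\kappa))\cong \nabla_{\nu,\bar{\lambda}+\chi,\F}(\mathsf{x},\kappa+\mathsf{wt}_\chi(\mathsf{x}))$ (Lemma \ref{Lem:WC_R} plus $\Ring$-flatness) combined with Lemma \ref{Lem:equiv_preserv}: the Vermas with labels in a given filtration piece generate it, and their images land in the matching piece of the target. Relatedly, the descent of $\WC$ to the truncated subquotient $\tilde{\OCat}(\A_{\lambda'_1,\F})_{\Ifrak}$, which you dismiss as bookkeeping, is handled in the paper by exhibiting $D^b(\tilde{\OCat}(\A_{\lambda,\F})_{\Ifrak})$ as the Verdier quotient $D^b(\A_{\lambda,\F}\operatorname{-mod}^T_{\leqslant z_2})/D^b(\A_{\lambda,\F}\operatorname{-mod}^T_{<z_1})$, using that the projectives $P_{z,z'}$ have no higher self-extensions in the ambient category.

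The second issue is in steps (i)--(ii): you propose to verify the defining property on the stratified standards $P_{\nu,\bar{\lambda}_1,\F}(\mathsf{x},\kappa)_{\Ifrak}$ and to prove ``projectives go to injectives.'' The definition of a partial Ringel duality is stated on the highest-weight standards $\Delta(\tau)$, which in $\tilde{\OCat}(\A_{\lambda'_1,\F})_{\Ifrak}$ are the (images of the) Verma modules $\Delta_{\nu,\bar{\lambda},\F}(\mathsf{x},\kappa)$, not on the objects $\Delta_{\preceq}(\tau)=P_{\nu,\bar{\lambda}_1,\F}(\mathsf{x},\kappa)_{\Ifrak}$; and Lemma \ref{Lem:WC_R} computes $\WC$ precisely on Vermas, not on projectives. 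Once one knows $\WC(\Delta)=\nabla$ and that $\nabla_{\nu,\bar{\lambda}+\chi,\F}(\mathsf{x},\kappa+\mathsf{wt}_\chi(\mathsf{x}))$ is the standardization of the costandard of its stratum (Corollary \ref{Cor:hom_vanishing}), the defining property follows with no detour through projectives, which is how the paper concludes.
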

\begin{proof}
Recall that, in Step 3 of the proof of Theorem \ref{Thm:phw_structure},
for $z\in \Z$ we have defined the category $\A_{\lambda,\F}\operatorname{-mod}^T_{\leqslant z}$.
Inside we can consider the subcategory $\A_{\lambda,\F}\operatorname{-mod}^{T,0}_{\leqslant z}
\subset \A_{\lambda,\F}\operatorname{-mod}^{T}_{\leqslant z}$
defined similarly to $\tilde{\OCat}(\A_{\lambda,\F})^0\subset \tilde{\OCat}(\A_{\lambda,\F})$.
This subcategory is a direct summand. We have sufficiently many projectives in
$\A_{\lambda,\F}\operatorname{-mod}^T_{\leqslant z}$ and they have no higher self-extensions in
$\A_{\lambda,\F}\operatorname{-mod}^T$, this was established in the proof of
Theorem \ref{Thm:phw_structure}. So   the natural functor
$D^b(\A_{\lambda,\F}\operatorname{-mod}^T_{\leqslant z})\hookrightarrow
D^b(\A_{\lambda,\F}\operatorname{-mod}^T)$ is a full embedding.
So, for an interval $[z_1,z_2]$ corresponding to $\Ifrak$,
we have $D^b(\tilde{\OCat}(\A_{\lambda,\F})_\Ifrak)=
D^b(\A_{\lambda,\F}\operatorname{-mod}^T_{\leqslant z_2})/
D^b(\A_{\lambda,\F}\operatorname{-mod}^T_{< z_1})$.
It follows that $D^b(\tilde{\OCat}(\A_{\lambda,\F})^0_\Ifrak)=
D^b(\A_{\lambda,\F}\operatorname{-mod}^{T,0}_{\leqslant z_2})/
D^b(\A_{\lambda,\F}\operatorname{-mod}^{T,0}_{< z_1})$.
From here and the previous section we deduce that
$\WC_{\lambda'_2\leftarrow \lambda'_1}$ indeed
induces a functor $D^b(\tilde{\OCat}^0(\A_{\lambda'_1,\F})_{\Ifrak})\xrightarrow{\sim} D^b(\tilde{\OCat}^0(\A_{\lambda'_2,\F})_{\Ifrak^\chi})$.
%We can also take the union $\A_{\lambda,\F}\operatorname{-mod}^T_{<\infty}$
%of the categories $\A_{\lambda,\F}\operatorname{-mod}^T_{\leqslant z}$

It follows from Lemma \ref{Lem:WC_R} and the $\Ring$-flatness of
$\A_{\bar{\lambda},\chi,\Ring},\Delta_{\nu,\bar{\lambda},\Ring}(\mathsf{x},\kappa)$ (Corollary \ref{Cor:translation_flatness} and Lemma \ref{Lem:Verma_R_flatness})
that $$\WC_{\lambda'_2\leftarrow \lambda'_1}(\Delta_{\nu,\bar{\lambda},\F}(\mathsf{x},\kappa))=
\nabla_{\nu,\bar{\lambda}+\chi,\F}(\mathsf{x}, \kappa+\mathsf{wt}_\chi(\mathsf{x})).$$ From here it follows
that $\WC_{\lambda'_2\leftarrow \lambda'_1}$ is compatible with the filtrations on the categories
$D^b(\tilde{\OCat}(\A_{\lambda'_1,\F})_{\Ifrak}),$ $D^b(\tilde{\OCat}(\A_{\lambda'_2,\F})_{\Ifrak^\chi})$
coming from the standardly stratified structures.
In particular, the functor induces an equivalence of the associated graded categories.
Assume  $\mathfrak{J}$ is an equivalence class. By Theorem \ref{Thm:mod_O_st_stratif}, for $(\mathsf{x},\kappa)\in \mathfrak{J}$,
the costandard object in $\tilde{\OCat}(\A_{\lambda'_2,\F})_{\mathfrak{J}^\chi}$
is $\pi_{\mathfrak{J}}(\nabla_{\nu,\bar{\lambda}+\chi,\F}(\mathsf{x}, \kappa+\mathsf{wt}_\chi(\mathsf{x})))$.
It follows that the equivalence of the associated graded categories induced by $\WC_{\lambda'_2\leftarrow \lambda'_1}$
is a Ringel duality functor. So the functor
$D^b(\tilde{\OCat}(\A_{\lambda'_1,\F})_{\Ifrak})\xrightarrow{\sim} D^b(\tilde{\OCat}(\A_{\lambda'_2,\F})_{\Ifrak^\chi})$
induced by $\WC_{\lambda'_2\leftarrow \lambda'_1}$ is a partial Ringel duality.
\end{proof}

\section{Applications}
\subsection{Wall-crossing bijections}
Let $\Theta,\lambda',\chi$ be as above. We consider the category $\A_{\lambda',\F}\operatorname{-mod}_0$
of all finite dimensional $\A_{\lambda',\F}$-modules with generalized zero $p$-character.
We also consider the category $D^b_0(\A_{\lambda',\F}\operatorname{-mod})$ of all objects
in $D^b_0(\A_{\lambda',\F}\operatorname{-mod})$ with homology in $\A_{\lambda',\F}\operatorname{-mod}_0$.
Note that $\mathfrak{WC}_{\lambda'+\chi\leftarrow \lambda'}=
\A_{\lambda',\chi,\F}\otimes^L_{\A_{\lambda',\F}}\bullet$ restricts to a perverse
equivalence
$$D^b_0(\A_{\lambda',\F}\operatorname{-mod})\xrightarrow{\sim}
D^b_0(\A_{\lambda'+\chi,\F}\operatorname{-mod})$$

Let us classify the irreducible objects in $\A_{\lambda',\F}\operatorname{-mod}_0$.

\begin{Lem}\label{Lem:irred_modular}
Fix a generic one-parameter subgroup $\nu$. Then we get a bijection between $\operatorname{Irr}(\A_{\lambda',\F}\operatorname{-mod}_0)$ and $X^T$.
\end{Lem}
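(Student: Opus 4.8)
The plan is to identify $\operatorname{Irr}(\A_{\lambda',\F}\operatorname{-mod}_0)$ with the set of underlying (non-equivariant) modules of the simple objects of the modular category $\tilde{\OCat}(\A_{\lambda',\F})$. Since $\lambda'=\,^p\!\bar{\lambda}$ lies in the $p$-alcove $\,^p\!A$, Lemma~\ref{Lem:simple_classif} applies: the choice of $\nu$ labels $\operatorname{Irr}(\tilde{\OCat}(\A_{\lambda',\F}))$ by $X^T\times\mathfrak{X}(T)$, and, as observed right after the definition of $\tilde{\OCat}$, every object of $\tilde{\OCat}(\A_{\lambda',\F})$ has zero $p$-character. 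Hence the functor forgetting the $T_\F$-equivariant structure lands in $\A_{\lambda',\F}\operatorname{-mod}_0$, and I would first record that the underlying module of $\underline{L}_{\nu,\lambda',\F}(\mathsf{x},\kappa)$ depends only on $\mathsf{x}$: twisting the $T_\F$-equivariant structure by a character is a grading shift and so does not change the underlying module, and this twist acts transitively on the set of $\kappa$'s occurring for a fixed $\mathsf{x}$ (it permutes the equivariant blocks, all of which are obtained from $\tilde{\OCat}^0(\A_{\lambda',\F})$ by such a twist). This yields a well-defined map $\Xi_\nu\colon X^T\to\operatorname{Irr}(\A_{\lambda',\F}\operatorname{-mod}_0)$.

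For injectivity of $\Xi_\nu$ I would use that a finite dimensional simple module carries an essentially unique $T_\F$-equivariant structure: its automorphism group is $\F^\times$, so the $T_\F$-equivariant structures on it, if any, form a torsor over $\operatorname{Hom}(T_\F,\F^\times)=\mathfrak{X}(T)$, i.e.\ they differ by character twists. Thus if the underlying modules of $\underline{L}_{\nu,\lambda',\F}(\mathsf{x},\kappa)$ and $\underline{L}_{\nu,\lambda',\F}(\mathsf{x}',\kappa')$ are isomorphic, then after a character twist the two equivariant objects are isomorphic, and Lemma~\ref{Lem:simple_classif} forces $\mathsf{x}=\mathsf{x}'$.

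For surjectivity I must show that every finite dimensional simple $\A_{\lambda',\F}$-module $M$ with generalized zero $p$-character admits a $T_\F$-equivariant structure; then $M$, with that structure, is a simple object of $\tilde{\OCat}(\A_{\lambda',\F})$, hence isomorphic to some $\underline{L}_{\nu,\lambda',\F}(\mathsf{x},\kappa)$, so $M=\Xi_\nu(\mathsf{x})$. Here the point is that, for $N\gg 0$, $M$ is a module over the finite dimensional quotient $\A_{\lambda',\F}/\A_{\lambda',\F}\mathfrak{m}_0^N$, where $\mathfrak{m}_0\subset\F[Y^{(1)}_\F]$ is the maximal ideal at $0$; this quotient is $T_\F$-stable because $0$ is a $T_\F$-fixed point. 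Since $T_\F$ is connected it acts trivially on the finite set of isomorphism classes of simples of this quotient, so $[M]$ is $T_\F$-fixed, and the obstruction to promoting $M$ to a $T_\F$-equivariant module lies in $H^2(T_\F,\F^\times)=0$ (with $T_\F$ a split torus). This descent-of-equivariance step is the only substantive point; the rest is bookkeeping with grading shifts and the equivariant block decomposition, and one concludes that $\Xi_\nu$ is the asserted bijection.
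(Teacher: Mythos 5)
Your proof is correct and takes essentially the same route as the paper's: the paper's two-sentence argument likewise rests on the assertion that every irreducible in $\A_{\lambda',\F}\operatorname{-mod}_0$ admits a $T_\F$-equivariant structure, unique up to a twist by a character, and then invokes the classification of Lemma \ref{Lem:simple_classif}. The only difference is that you spell out the standard connectedness-plus-obstruction argument for existence and uniqueness of the equivariant structure, which the paper leaves implicit.
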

\begin{proof}
Every irreducible from $\A_{\lambda',\F}\operatorname{-mod}_0$ has a $T_\F$-equivariant
structure, unique up to a twist with a character. It is still irreducible as a $T_{\F}$-equivariant
module and hence is  $\underline{L}(\mathsf{x},\kappa)$ for some character $\kappa$. This implies the claim of the lemma.
\end{proof}

As any perverse equivalence, $\mathfrak{WC}_{\lambda'+\chi\leftarrow \lambda'}$ gives rise to
a bijection $\operatorname{Irr}(\A_{\lambda',\F}\operatorname{-mod}_0)\xrightarrow{\sim}
\operatorname{Irr}(\A_{\lambda'+\chi,\F}\operatorname{-mod}_0)$, i.e., a self-bijection of
$X^T$. We want to compare this bijection with a similarly defined bijection for categories  $\mathcal{O}$ in characteristic $0$.

\begin{Prop}\label{Prop:WC_bijections}
For $p\gg 0$, the self-bijection of $X^T$ induced  by $\WC_{\lambda'+\chi\leftarrow \lambda'}$
coincides with the bijection $\operatorname{Irr}(\mathcal{O}_\nu(\A_{\bar{\lambda}}))
\xrightarrow{\sim} \operatorname{Irr}(\mathcal{O}_\nu(\A_{\bar{\lambda}+\chi}))$ coming
from the wall-crossing functor $\WC_{\bar{\lambda}+\chi\leftarrow \bar{\lambda}}$.
\end{Prop}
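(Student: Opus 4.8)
The plan is to read off both wall-crossing bijections from the standardly stratified structures of Theorem~\ref{Thm:mod_O_st_stratif} and the partial Ringel duality property of Theorem~\ref{Thm:WC_part_Ringel}, and then to observe that for $p\gg 0$ the passage between characteristic $0$ and characteristic $p$ does not change the relevant combinatorics. Write $\lambda'={}^p\bar\lambda$, so $\lambda'+\chi={}^p(\bar\lambda+\chi)$. First I would pass to the $T_\F$-equivariant setting: by Lemma~\ref{Lem:irred_modular} every irreducible of $\A_{\lambda',\F}\operatorname{-mod}_0$ is the de-equivariantization of some $\underline L_{\nu,\lambda',\F}(\mathsf x,\kappa)$, and $\WC_{\lambda'+\chi\leftarrow\lambda'}$ is the de-equivariantization of the $T_\F$-equivariant functor $\WC^T_{\lambda'+\chi\leftarrow\lambda'}=\A_{\lambda',\chi,\F}\otimes^L_{\A_{\lambda',\F}}\bullet$. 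Both are perverse for the filtrations given by the $T_\F$-stable chains of ideals $\I^j_{\lambda',\F},\I^j_{\lambda'+\chi,\F}$ (Proposition~\ref{Prop:pervers_char_p}, Remark~\ref{Rem:WC_perv_ideal}), and the forgetful functor is compatible with these, so the self-bijection of $X^T$ induced by $\WC_{\lambda'+\chi\leftarrow\lambda'}$ coincides with the one induced by $\WC^T_{\lambda'+\chi\leftarrow\lambda'}$ on equivariant simples (the shift $\kappa\mapsto\kappa+\mathsf{wt}_\chi(\mathsf x)$ is constant on each equivariant block by Lemma~\ref{Lem:h_block_preserv}, so it is immaterial). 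By Lemmas~\ref{Lem:h_block_preserv} and~\ref{Lem:equiv_preserv} this bijection preserves the partition of $X^T$ into the $h$-blocks of $\OCat_\nu(\A_{\bar\lambda})$, i.e.\ into the equivalence classes of $\preceq_{\nu,\bar\lambda}$.

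Next I would fix such an equivalence class $\beta$ and a finite interval $\Ifrak\ni\beta$ as in the proof of Theorem~\ref{Thm:mod_O_st_stratif}. By Theorem~\ref{Thm:WC_part_Ringel} the functor $\WC^T_{\lambda'+\chi\leftarrow\lambda'}$ induces a partial Ringel duality $D^b(\tilde\OCat(\A_{\lambda',\F})_{\Ifrak})\xrightarrow{\sim}D^b(\tilde\OCat(\A_{\lambda'+\chi,\F})_{\Ifrak^\chi})$ compatible with the $\preceq$-filtration by Serre subcategories, and inducing on the associated-graded piece indexed by $\beta$ the ordinary Ringel duality functor of that subquotient, which by Theorem~\ref{Thm:mod_O_st_stratif}(2) is the reduction mod $p$, $\OCat_\nu(\A_{\bar\lambda,\F})^\beta$, of the characteristic-$0$ block $\OCat_\nu(\A_{\bar\lambda,\Q})^\beta$. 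Since a partial Ringel duality functor is a derived equivalence respecting the $\preceq$-filtration, its induced bijection on $\operatorname{Irr}$ respects the strata and restricts on each stratum to the bijection induced by the corresponding associated-graded functor; combined with the previous paragraph this identifies the characteristic-$p$ wall-crossing bijection, restricted to the $h$-block $\beta$, with the bijection on $\operatorname{Irr}$ induced by the Ringel duality functor of $\OCat_\nu(\A_{\bar\lambda,\F})^\beta$.

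Finally I would run the same analysis over $\Q$. By Lemma~\ref{Lem:WC_R} (base-changed to $\Q$, then $\C$) one has $\WC_{\bar\lambda+\chi\leftarrow\bar\lambda}(\Delta_{\nu,\bar\lambda}(\mathsf x))\cong\nabla_{\nu,\bar\lambda+\chi}(\mathsf x)$, so exactly as in the proof of Theorem~\ref{Thm:WC_part_Ringel} the functor $\WC_{\bar\lambda+\chi\leftarrow\bar\lambda}$ on $\OCat^T_\nu(\A_{\bar\lambda,\Q})$ is a partial Ringel duality whose restriction to the $h$-block $\beta$ is the Ringel duality functor of $\OCat_\nu(\A_{\bar\lambda,\Q})^\beta$; hence the characteristic-$0$ wall-crossing bijection on $X^T$ is also block-diagonal, acting on each block by that block's Ringel duality bijection. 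It then remains to note that, for $p\gg 0$, the reduction $\OCat_\nu(\A_{\bar\lambda,\F})^\beta$ has the same tilting objects (hence the same Ringel duality functor up to base change) and the same perverse structure as $\OCat_\nu(\A_{\bar\lambda,\Q})^\beta$ --- this follows from the $\Ring$-freeness and base-change statements of Corollary~\ref{Lem:Ext_freeness}, Lemma~\ref{Lem:simple_proj_inj_Ring} and Lemma~\ref{Lem:perv_Ring}; consequently the two block Ringel duality bijections agree, and assembling over all $\beta$ yields that $\WC_{\lambda'+\chi\leftarrow\lambda'}$ and $\WC_{\bar\lambda+\chi\leftarrow\bar\lambda}$ induce the same self-bijection of $X^T$.

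The step I expect to be the main obstacle is the assertion that a partial Ringel duality functor induces a well-defined bijection on irreducibles which respects the $\preceq$-stratification and is computed stratum-by-stratum by the Ringel duality functors on the associated-graded pieces: such a functor is not $t$-exact and its value on a simple object is in general a genuine complex, so one has to phrase this precisely in the formalism of perverse equivalences and check that the induced bijection localizes correctly to the subquotients $\tilde\OCat(\A_{\lambda',\F})_{\Ifrak}$ and $\tilde\OCat(\A_{\lambda',\F})_\beta$. A secondary technical point is choosing the finitely many localizations of $\Ring$ and the congruence conditions on $p$ uniformly, so that reduction mod $p$ is faithful on all of the combinatorial data (decomposition numbers, perversity filtrations, tilting modules) at once for $p\gg 0$.
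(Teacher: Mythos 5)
Your proposal is correct and follows essentially the same route as the paper: the characteristic-$p$ wall-crossing respects the standardly stratified filtration attached to $\Theta$, induces on the associated graded (which is $\OCat_\nu^T(\A_{\bar\lambda,\F})$ by Theorem \ref{Thm:mod_O_st_stratif}) the Ringel duality functor, and the Ringel-duality bijections over $\F$ and over $\C$ agree for $p\gg 0$; the characteristic-$0$ side is handled by Lemma \ref{Lem:long_WC_Ringel}. The obstacle you flag is resolved in the paper exactly as you suspect: the bijection on simples comes from the perversity of $\WC_{\lambda'+\chi\leftarrow\lambda'}$ (Proposition \ref{Prop:pervers_char_p}), and compatibility with the $\preceq$-filtration forces the induced functor on each associated-graded stratum to be perverse with the same induced bijection.
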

\begin{proof}
Let $\varphi$ denote the  bijection coming from $\mathfrak{WC}_{\bar{\lambda}+\chi\leftarrow \bar{\lambda}}$.
What we need to show is that the wall-crossing bijection $\tilde{\varphi}$
for the categories $\tilde{\OCat}(\A_{?,\F})$ has the form $(\mathsf{x},\kappa)\mapsto (\varphi(\mathsf{x}), \kappa')$.
Note that  $\WC_{\lambda'+\chi\leftarrow \lambda'}$ respects the filtrations on the categories
$\tilde{\OCat}(\A_{\lambda',\F}), \tilde{\OCat}(\A_{\lambda'+\chi,\F})$ coming from the face
$\Theta$. So the induced functor between the associated graded categories is perverse and the induced bijection is the same as $\tilde{\varphi}$. On the other hand, the associated graded categories are
$\OCat^T_{\nu}(\A_{\bar{\lambda},\F})$ and $\OCat^T_{\nu}(\A_{\bar{\lambda}+\chi,\F})$
and the functor induced by  $\WC_{\lambda'+\chi\leftarrow \lambda'}$ is the Ringel duality.
So forgetting the $T$-character component, we see that the bijection
$X^T=\operatorname{Irr}(\A_{\lambda',\F}\operatorname{-mod}_0)\xrightarrow{\sim}
\operatorname{Irr}(\A_{\lambda'+\chi,\F}\operatorname{-mod}_0)=X^T$ coincides
with the bijection between the sets of simples in $\OCat_{\nu}(\A_{\bar{\lambda},\F})$ and $\OCat_{\nu}(\A_{\bar{\lambda}+\chi,\F})$
induced by the Ringel duality. But $\WC_{\bar{\lambda}+\chi\leftarrow \bar{\lambda}}$ is
the Ringel duality between $\OCat_{\nu}(\A_{\bar{\lambda}})$ and $\OCat_{\nu}(\A_{\bar{\lambda}+\chi})$
and since $p$ is very large, the bijections induced by the Ringel duality over $\F$ and over $\C$
coincide. This completes the proof.
\end{proof}

Wall-crossing bijections were studied (and sometimes computed combinatorially) in a number of cases.
Paper \cite{cacti} studied the case of $X=T^*(G/B)$. There we have seen that the wall-crossing
bijections through the faces containing $0$ define an action of the so called cactus group
$\mathsf{Cact}_W$ on $W$. Using Proposition \ref{Prop:WC_bijections} one can show that this action
extends to an action of the affine cactus group. We note that combinatorial recipes to compute
the action are not known in general.

The case of rational Cherednik algebras of type $A$ (and of more general cyclotomic rational
Cherednik algebras) was considered in \cite{cher_supp}. It was shown that the wall-crossing
bijections for rational Cherednik algebras of type A are extended Mullineux involutions,
see \cite[Corollary 5.7]{cher_supp} for a precise statement. Therefore the wall-crossing
bijection induced by $\WC_{\lambda'+\chi\leftarrow \lambda'}$ is given by the same rule.

\subsection{Gradings}
In this section, following an idea of Bezrukavnikov, we produce  graded lifts of the category
$\tilde{\OCat}(\A_{\lambda',\F})$ that come from the contacting torus action on $X$.
Then we show that our grading lift induces a grading lift of $\OCat_\nu(\A_{\bar{\lambda}})$.
Finally, we compare Koszulity properties of the grading lifts on
$\tilde{\OCat}(\A_{\lambda',\F})$ and on $\OCat_\nu(\A_{\bar{\lambda}})$.

\subsubsection{Grading on $\tilde{\OCat}(\A_{\lambda',\F})$}
Let $Y^{(1),\wedge_0}_\F$ denote the spectrum of the completion $\F[Y^{(1)}]^{\wedge_0}$, an
$\F$-scheme, and let $X^{(1),\wedge_0}_\F$ be its preimage in $X^{(1)}_\F$.

We assume that the microlocal quantization $\A^\theta_{\lambda',\F}$ of $X_\F$
is obtained (by completing with respect to the filtration) from a Frobenius constant
quantization (in the sense of \cite{BK_quant_p}). By abusing the notation,
we denote the corresponding Azumaya algebra on $X^{(1)}_\F$ also by
$\A^\theta_{\lambda',\F}$. This holds in the examples we consider
(and in more general examples of Slodowy varieties, \cite{BMR}, and of Nakajima quiver
varieties, \cite{BFG}).

Consider the restriction $\A^{\theta,\wedge_0}_{\lambda',\F}$ of $\A^\theta_{\lambda',\F}$ to $X^{(1)\wedge_0}_\F$.
We assume that it splits, let $\mathcal{V}_\F^{\wedge_0}$ denote the splitting bundle.
Recall that $\mathcal{V}_\F^{\wedge_0}$ is defined  up to a twist with a line bundle. The splitting bundle again
exists in all examples we consider, see \cite{BMR} for the case of Slodowy varieties
and \cite{BL} for the case of Nakajima quiver varieties.

The splitting bundle $\mathcal{V}_\F^{\wedge_0}$ has no higher self-extensions.
It follows that it admits a $T_{\F}\times \F^\times$-equivariant structure,
where we write $\F^\times$ for the contracting torus. We can choose a $T_{\F}$-equivariant
structure on $\mathcal{V}_\F^{\wedge_0}$ so that the isomorphism
$\mathcal{E}nd(\mathcal{V}_\F^{\wedge_0})\cong \A^{\theta,\wedge_0}_{\lambda',\F}$ is
$T_{\F}$-equivariant.

Since the action of  $\F^\times$ is contracting we can uniquely extend $\mathcal{V}_{\F}^{\wedge_0}$
to a  $T_{\F}\times \F^\times$-equivariant vector bundle $\mathcal{V}_\F$. Set $\tilde{A}_\F:=\operatorname{End}(\mathcal{V}_\F)$.
We can consider the categories $\tilde{A}_\F\operatorname{-mod}^T_{0}$ of all $T_\F$-equivariant
finite dimensional $\tilde{A}_\F$-modules (automatically supported at $0\in Y^{(1)}_{\F}$)
and $\tilde{A}_\F\operatorname{-mod}^{T\times \F^\times}_0$ of all $T_\F\times \F^\times$-equivariant
finite dimensional $\tilde{A}_\F$-modules. The construction of $\tilde{A}_\F$ yields a
category equivalence $\tilde{\OCat}(\A_{\lambda',\F})\cong \tilde{A}_\F\operatorname{-mod}_0^{T}$.
So  $\tilde{A}_\F\operatorname{-mod}^{T\times \F^\times}_0$ is a graded lift of
$\tilde{\OCat}(\A_{\lambda',\F})$. Note that this graded lift is independent of the choice
of $\lambda'$ in its $p$-alcove.

\subsubsection{From graded lift  $\tilde{\OCat}(\A_{\lambda',\F})$
to graded lift of $\OCat_\nu(\A_{\bar{\lambda}})$}
Now we are going to produce a graded lift of $\OCat_\nu(\A_{\bar{\lambda}})$
from a graded lift of $\tilde{\OCat}(\A_{\lambda',\F})$. Note that a graded
lift of $\tilde{\OCat}(\A_{\lambda',\F})$ induces that of any quotient
$\tilde{\OCat}(\A_{\lambda',\F})_{\leqslant z_2}/\tilde{\OCat}(\A_{\lambda',\F})_{\leqslant z_1}$.
In particular, $\OCat_{\nu}(\A_{\bar{\lambda},\F})$ is the direct sum of such subquotients
so we get graded lifts of  $\OCat_{\nu}(\A_{\bar{\lambda},\F})$.

Now we are in the following situation. Let $\Ring$ be a finite localization of $\Z$
and let $B_\Ring$ be an $\Ring$-algebra that is a free finite rank $\Ring$-module
(in our case $B_{\Ring}=\operatorname{End}(P_{\Ring})^{opp}$, where $P_{\Ring}$
is an $\Ring$-form of a pro-generator of $\mathcal{O}_\nu(\A_\lambda)$).
We need to check that for $p\gg 0$ there is a natural bijection between graded lifts
of $B_{\overline{\F}_p}\operatorname{-mod}$ and of $B_{\C}\operatorname{-mod}$.

For this we need to note that, for a finite dimensional algebra $B$ over an algebraically closed
field $\mathbb{K}$, graded lifts of $B\operatorname{-mod}$ are parameterized by conjugacy classes of
one-parameter subgroups of $H:=\operatorname{Aut}(B)/B^\times$.

In our case $H_{\K}$ is the base change of an algebraic group scheme $H_\Ring$ from $\Ring$
to $\K$ assuming that $\operatorname{char}\K$ is 0 or is large enough. After taking a finite
extension of $\Ring$, we can find a subgroup subscheme $T_{\Ring}\subset H_{\Ring}$ that
becomes a maximal torus in $H_\K$ after a base change to $\K$. The conjugacy classes
of one-parameter subgroups in $H_{\K}$ are the orbits of the action of $N_{H_{\K}}(T_{\K})$
on the lattice of one-parameter subgroups in $T_{\K}$. This is clearly independent of $\K$.

This establishes a required bijection between the graded lifts.

\subsubsection{Koszulity}
Let us now show that if $\tilde{A}_\F\operatorname{-mod}^{T\times \F^\times}_0$
is Koszul for infinitely many $p$, then the category $\mathcal{O}_{\nu}(\A_\lambda)$
is Koszul as well.

Recall that being Koszul for the category $\tilde{A}_\F\operatorname{-mod}^{T\times \F^\times}_0$ means
that for every simple $L\in \tilde{A}_\F\operatorname{-mod}^{T}_0$ one can find a lift
$\tilde{L}\in \tilde{A}_\F\operatorname{-mod}^{T\times \F^\times}_0$ such that
$\operatorname{Ext}^i(\tilde{L},\tilde{L}')$ is concentrated in degree $i$ for all
simples $L,L'$ (here the Ext's are taken in $\tilde{A}_\F\operatorname{-mod}^{T}_0$).
Let $\tilde{\OCat}(\A_{\lambda,\F})^{\F^\times}$ denote the corresponding graded lift
of $\tilde{\OCat}(\A_{\lambda,\F})$.

\begin{Lem}\label{Lem:Koszul_subquotient}
For every interval $\Ifrak\subset \Z$ (with respect to $\leqslant_\nu$) the subquotient category
the induced graded lift $\tilde{\OCat}(\A_{\lambda,\F})_\Ifrak$ is Koszul.
\end{Lem}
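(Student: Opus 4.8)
The plan is to deduce Koszulity of the subquotient from the Koszulity hypothesis on the ambient graded category $\tilde{\OCat}(\A_{\lambda,\F})^{\F^\times}$ by an $\Ext$‑comparison argument. It is enough to treat intervals of the form $\Ifrak=[z_1,z_2]$: after the equivariant block decomposition a general interval in $\Z$ realizes $\tilde{\OCat}(\A_{\lambda,\F})_{\Ifrak}$ as a highest weight subquotient of some $\tilde{\OCat}(\A_{\lambda,\F})_{[z_1,z_2]}$, and the same reasoning applies. The first step is to record that every ingredient of the proof of Theorem~\ref{Thm:phw_structure} lifts $\F^\times$‑equivariantly. The splitting bundle $\mathcal V_\F$, hence $\tilde A_\F$, carries a $T_\F\times\F^\times$‑equivariant structure, so the equivalence (\ref{eq:quotient_equiv}) upgrades to an equivalence of graded categories $\A_{\lambda,\F}\operatorname{-mod}^{T\times\F^\times}_{\leqslant z_2}/\A_{\lambda,\F}\operatorname{-mod}^{T\times\F^\times}_{<z_1}\xrightarrow{\sim}\tilde{\OCat}(\A_{\lambda,\F})^{\F^\times}_{[z_1,z_2]}$; the projectives $P_{z_2,z'}$ ($z'\leqslant z_2$) admit $\F^\times$‑equivariant lifts, still have no higher self‑extensions in $\tilde{\OCat}(\A_{\lambda,\F})^{\F^\times}$, the embedding $D^b(\A_{\lambda,\F}\operatorname{-mod}^{T\times\F^\times}_{\leqslant z_2})\hookrightarrow D^b(\A_{\lambda,\F}\operatorname{-mod}^{T\times\F^\times})$ is full, and $\pi_\Ifrak(P_{z_2,z'})$ for $z'\in[z_1,z_2]$ is projective in $\tilde{\OCat}(\A_{\lambda,\F})_\Ifrak$ (Step~5 of the proof of Theorem~\ref{Thm:phw_structure}).

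Next I would carry out the $\Ext$‑comparison. Fix $(\mathsf x,\kappa),(\mathsf x',\kappa')\in\Ifrak$ and choose the minimal $\F^\times$‑graded projective resolution $P_\bullet\to\underline L_{\nu,\lambda,\F}(\mathsf x,\kappa)$ inside $\A_{\lambda,\F}\operatorname{-mod}^{T\times\F^\times}_{\leqslant z_2}$. Since the indecomposable projective cover $P(\mathsf x,\kappa)$ (a summand of $P_{z_2,\kappa}$) is $\Delta$‑filtered by standards with labels $\geqslant(\mathsf x,\kappa)$, every composition factor of every $P_i$ has $T$‑weight in $[\kappa,z_2]\subseteq[z_1,z_2]$; because $\A_{\lambda,\F}$ has finite homological dimension the resolution is finite, and each $P_i$ is a sum of summands of the $P_{z_2,z'}$ with $z'\in[z_1,z_2]$. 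As $\pi_\Ifrak$ is exact and kills nothing occurring in $P_\bullet$, the complex $\pi_\Ifrak(P_\bullet)\to\underline L_{\nu,\lambda,\F}(\mathsf x,\kappa)_\Ifrak$ is a graded projective resolution in $\tilde{\OCat}(\A_{\lambda,\F})_\Ifrak$; moreover $\pi_\Ifrak^*\pi_\Ifrak$ alters $\underline L_{\nu,\lambda,\F}(\mathsf x',\kappa')$ only by composition factors of $T$‑weight $<z_1$, which admit no homomorphism from the $P_i$, so $\Hom_{\tilde{\OCat}(\A_{\lambda,\F})_\Ifrak}(\pi_\Ifrak P_i,\underline L_{\nu,\lambda,\F}(\mathsf x',\kappa')_\Ifrak)\cong\Hom_{\A_{\lambda,\F}\operatorname{-mod}^{T\times\F^\times}_{\leqslant z_2}}(P_i,\underline L_{\nu,\lambda,\F}(\mathsf x',\kappa'))$, $\F^\times$‑equivariantly. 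Therefore
\[
\Ext^\bullet_{\tilde{\OCat}(\A_{\lambda,\F})_\Ifrak}\!\big(\underline L_{\nu,\lambda,\F}(\mathsf x,\kappa)_\Ifrak,\underline L_{\nu,\lambda,\F}(\mathsf x',\kappa')_\Ifrak\big)\cong\Ext^\bullet_{\A_{\lambda,\F}\operatorname{-mod}^{T\times\F^\times}_{\leqslant z_2}}\!\big(\underline L_{\nu,\lambda,\F}(\mathsf x,\kappa),\underline L_{\nu,\lambda,\F}(\mathsf x',\kappa')\big),
\]
and by the full embedding of $D^b(\A_{\lambda,\F}\operatorname{-mod}^{T\times\F^\times}_{\leqslant z_2})$ together with the fact that Yoneda extensions between objects of $\tilde{\OCat}(\A_{\lambda,\F})$ stay in $\tilde{\OCat}(\A_{\lambda,\F})$ (the support of the $p$‑character is closed under extensions), the right‑hand side equals $\Ext^\bullet_{\tilde{\OCat}(\A_{\lambda,\F})^{\F^\times}}\!\big(\underline L_{\nu,\lambda,\F}(\mathsf x,\kappa),\underline L_{\nu,\lambda,\F}(\mathsf x',\kappa')\big)$.

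Finally, the Koszulity hypothesis says precisely that each $\Ext^i$ on the right is concentrated in $\F^\times$‑degree $i$, hence so is each $\Ext^i$ in $\tilde{\OCat}(\A_{\lambda,\F})_\Ifrak$, i.e. the induced graded lift is Koszul. I expect the main obstacle to be bookkeeping rather than a new idea: one must verify that all the structural statements from the proof of Theorem~\ref{Thm:phw_structure} — the quotient presentation (\ref{eq:quotient_equiv}), the existence and projectivity of the $P_{z,z'}$ and of their images under $\pi_\Ifrak$, and the full embedding of the truncated derived category — genuinely lift to the $\F^\times$‑equivariant setting, and that passing the graded projective resolution through $\pi_\Ifrak$ does not disturb the $\F^\times$‑grading; the reason it does not is that the projectives involved are supported in $T$‑weights $\geqslant z_1$ and hence do not interact with the Serre subcategory being quotiented out.
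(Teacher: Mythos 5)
Your overall strategy --- push a graded projective resolution of a simple from the ambient category through the quotient functor $\pi_\Ifrak$ --- is the same as the paper's, and the equivariant upgrades in your first paragraph are the right preparation. The gap is in the second paragraph. The claim that ``every composition factor of every $P_i$ has $T$-weight in $[\kappa,z_2]$'' and hence that ``each $P_i$ is a sum of summands of the $P_{z_2,z'}$ with $z'\in[z_1,z_2]$'' is false. First, the projectives $P_{z_2,z'}$ are filtered by the full Verma modules $\Delta_{\nu,\lambda,\F}(\mathsf{x}'',\kappa'')$, which are infinite dimensional with nonzero weight spaces in all weights $\leqslant\kappa''$, so their composition factors (in the sense of Lemma \ref{Lem:finite_mult}) have arbitrarily negative $T$-weight. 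Second, and more seriously, $\Ext^1(\underline{L}_{\nu,\lambda,\F}(\mathsf{x},\kappa),\underline{L}_{\nu,\lambda,\F}(\mathsf{x}',\kappa'))$ can be nonzero for $\kappa'<\kappa$ (the highest weight order $\leqslant_\nu$ only forces comparability of the labels, not $\kappa'>\kappa$), so the minimal resolution of a simple with $\kappa$ close to $z_1$ already involves projective covers $P(\mathsf{x}',\kappa')$ with $\kappa'<z_1$. For those terms the two assertions you rely on --- that $\pi_\Ifrak(P_i)$ is projective and that $\Hom(\pi_\Ifrak P_i,\underline{L}(\mathsf{x}',\kappa')_\Ifrak)=\Hom(P_i,\underline{L}(\mathsf{x}',\kappa'))$ --- are exactly the ones that fail: a projective generated in weight $<z_1$ has nonzero quotients in the kernel of $\pi_\Ifrak$, so the Gabriel formula for Hom in the Serre quotient no longer collapses. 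Consequently your final conclusion, the isomorphism $\Ext^\bullet_{\tilde{\OCat}(\A_{\lambda,\F})_\Ifrak}(L_\Ifrak,L'_\Ifrak)\cong\Ext^\bullet_{\tilde{\OCat}(\A_{\lambda,\F})}(L,L')$, is unjustified, and it is false in general: passing to a subquotient of a highest weight category does change Ext groups between simples, which is precisely why the lemma is not an immediate consequence of the full embedding.

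The repair, and what the paper's proof actually does, is to prove less. You do not need the Ext groups to be preserved; you only need them to remain concentrated in the correct internal degree. Take a linear projective resolution $P_\bullet\rightarrow\tilde{L}$ in $\A_{\lambda,\F}\operatorname{-mod}^{T\times\F^\times}_{\leqslant z_2}$, i.e.\ one in which $P_i$ is a direct sum of objects $P(\tilde{L}')\langle i\rangle$ generated in internal degree $i$ (this exists by Koszulity of the ambient graded category, to which your first paragraph correctly reduces via the full embedding of the truncated derived category). Since $\pi_\Ifrak$ is exact, $\pi_\Ifrak(P_\bullet)\rightarrow\pi_\Ifrak(\tilde{L})$ is still a resolution whose $i$-th term is generated in internal degree $i$; granting that $\pi_\Ifrak$ carries the projectives occurring in $P_\bullet$ to projectives of $\tilde{\OCat}(\A_{\lambda,\F})_\Ifrak$ (or to zero), each $\Ext^i$ in the subquotient is a subquotient of $\Hom(\pi_\Ifrak P_i,L'_\Ifrak)$ and therefore sits in degree $i$, even though it may be strictly smaller than the ambient $\Ext^i$. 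So the statement you should be establishing about $\pi_\Ifrak(P_\bullet)$ is projectivity and degree placement of its terms, not invariance of the resulting Ext groups; the weight-confinement claim should be dropped, as it is both false and unnecessary.
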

\begin{proof}
Recall that the natural functor $D^b(\tilde{\OCat}(\A_{\lambda,\F})_{\leqslant z})
\hookrightarrow D^b(\tilde{\OCat}(\A_{\lambda,\F}))$ is a full embedding.
So the induced graded lift of $\tilde{\OCat}(\A_{\lambda,\F})_{\leqslant z}$
is Koszul. Note that $\tilde{\OCat}(\A_{\lambda,\F})_{\leqslant z}$
has enough projectives. Let $P(\tilde{L})$ denote the projective cover of
$\tilde{L}$ in $\tilde{\OCat}(\A_{\lambda,\F})_{\leqslant z}$. Since
$\tilde{\OCat}(\A_{\lambda,\F})_{\leqslant z}$ is Koszul, we can find
a projective resolution of any $\tilde{L}$ of the form $\ldots\rightarrow P_1\rightarrow
P_0\rightarrow 0$ such that $P_i$ is the direct sum of $P(\tilde{L}')\langle i\rangle$'s with some
multiplicities, where $\langle\bullet\rangle$ means a grading shift.
Then $\pi_{\Ifrak}(P_\bullet)$ is the projective resolution of $\pi_{\Ifrak}(\tilde{L})$.
Hence  $\tilde{\OCat}(\A_{\lambda,\F})_\Ifrak$ is Koszul.
\end{proof}

In particular, we see that the categories $\OCat_\nu(\A_{\lambda,\F})$ acquire  Koszul graded lifts
for infinitely many $p$. Let us deduce from here that  $\OCat_\nu(\A_{\lambda})$ does. This follows
from the next lemma.

\begin{Lem}\label{Lem:Koszul_preserv}
Let $\Ring$ be a ring that is a finite algebraic extension of $\Z$. Let
$B_{\Ring}$ be an $\Ring$-algebra that is a free finite rank $\Ring$-module.
Suppose that $B_{\Ring}$ has finite homological
dimension. Finally, suppose that
the fibers of $B_{\Ring}$ at infinitely many maximal ideals of $\Ring$ carry Koszul gradings.
Then $B_{\operatorname{Frac}(\Ring)}$ carries a Koszul grading.
\end{Lem}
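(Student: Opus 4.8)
The plan is to combine a spreading-out argument with the description of gradings as conjugacy classes of cocharacters recalled in the previous subsection. First I would replace $\Ring$ by a finite localization (and, if necessary, a finite extension, as in the previous subsection) so that $B_\Ring$ is split, the primitive idempotents of $B_{\operatorname{Frac}(\Ring)}$ lift to $B_\Ring$ and stay primitive in every fibre, and the set of simple modules is identified across all residue fields $\kappa(\mathfrak{m})$ and the generic fibre. Because $B_\Ring$ has finite homological dimension, each simple $L_\Ring$ admits a finite resolution by finitely generated projective $B_\Ring$-modules; hence for any simples $L,L'$ the graded $\Ring$-modules $\Ext^i_{B_\Ring}(L_\Ring,L'_\Ring)$ are finitely generated over $\Ring$ and vanish for $i\gg 0$. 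Applying generic flatness to these finitely many groups and to their graded pieces, and localizing $\Ring$ once more, I would arrange that every $\Ext^i_{B_\Ring}(L_\Ring,L'_\Ring)$ is a graded free $\Ring$-module whose formation commutes, compatibly with the grading, with base change to each $\kappa(\mathfrak{m})$ and to $\operatorname{Frac}(\Ring)$.

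Next I would invoke, exactly as in the previous subsection, that after a further finite localization of $\Ring$ there is a maximal torus $T_\Ring\subset H_\Ring:=\operatorname{Aut}(B_\Ring)/B_\Ring^{\times}$, and that the isomorphism classes of gradings on $B$ --- i.e.\ the conjugacy classes of cocharacters of $H$ --- are parametrised by one and the same set $\mathcal{G}=X_*(T)/N_H(T)$ for every geometric fibre and for the generic fibre. Pick a maximal ideal $\mathfrak{p}$ of $\Ring$, not lying over any of the finitely many inverted primes, at which the fibre of $B_\Ring$ carries a Koszul grading; such $\mathfrak{p}$ exists because infinitely many do. The Koszul grading on the $\mathfrak{p}$-fibre corresponds to some $\gamma\in\mathcal{G}$, which I then regard as a grading on $B_{\operatorname{Frac}(\Ring)}$; the claim is that this grading is Koszul.

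To verify this, fix the integers (one per simple) describing the graded lifts of the simples that realise Koszulity on the $\mathfrak{p}$-fibre, and choose graded $\Ring$-forms $\widetilde{L}_\Ring$ accordingly (possible after one more localization). Then each $\Ext^i_{B_\Ring}(\widetilde{L}_\Ring,\widetilde{L}'_\Ring)$ is a graded free $\Ring$-module whose base change to $\kappa(\mathfrak{p})$ is concentrated in internal degree $i$. Since a graded piece of a graded free $\Ring$-module that dies after $\otimes_\Ring\kappa(\mathfrak{p})$ must already be zero, this $\Ext^i$ is concentrated in degree $i$ over $\Ring$, hence so is its base change to $\operatorname{Frac}(\Ring)$. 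Thus $\Ext^i_{B_{\operatorname{Frac}(\Ring)}}(\widetilde{L}_{\operatorname{Frac}(\Ring)},\widetilde{L}'_{\operatorname{Frac}(\Ring)})$ is concentrated in degree $i$ for all $i,L,L'$, which is precisely Koszulity of the $\gamma$-grading on $B_{\operatorname{Frac}(\Ring)}$.

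The routine but slightly delicate point is the first paragraph: arranging, by a single finite localization, that the finitely many relevant Ext-groups between simples (and their graded refinements) are simultaneously free over $\Ring$ and base-change compatible to all residue fields and to the fraction field. The conceptual reason the argument works --- a grading is a rigid datum living in a field-independent lattice, while Koszulity is detected by finitely many Ext-groups whose formation eventually commutes with base change --- leaves no further obstacle once that bookkeeping is done.
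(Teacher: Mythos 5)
Your proposal is correct and follows essentially the same route as the paper's proof: spread out the simples and their Ext-groups over a localization of $\Ring$ so that all $\Ext^\ell_{B_\Ring}(L^i_\Ring,L^j_\Ring)$ are (graded) projective and base-change compatible, use the identification of gradings across fibers via cocharacters of $\operatorname{Aut}(B)/B^\times$, and transport a Koszul grading from a closed fiber by observing that a graded component of a free $\Ring$-module vanishing at one fiber vanishes identically. Your write-up merely makes explicit the base-change bookkeeping that the paper leaves implicit.
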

\begin{proof}
By replacing $\Ring$ with its finite localization, we can achieve that there are objects
$L^i_{\Ring}, i=1,\ldots,k,$ such that the fibers of these objects at any point of $\Ring$
are the simples over the corresponding algebra. Moreover, localizing $\Ring$
further, we can assume that
$\operatorname{Ext}^\ell_{B_{\Ring}}(L^i_{\Ring}, L^j_{\Ring})$ are projective $\Ring$-modules
for all $i,j,\ell$ (here we use that $B_{\Ring}$ has finite homological dimension).

As we have seen above, after replacing $\Ring$ with its finite algebraic extension, we can assume that
there is a bijection between gradings on all the fibers of $B_{\Ring}$, moreover, we can assume that
every grading of every fiber comes from a grading on $B_\Ring$. So pick a grading that gives rise to
a Koszul grading in some fiber. We  claim that it gives a Koszul grading on some fiber. This follows
from the observation that all $\Ring$-modules    $\operatorname{Ext}^\ell_{B_{\Ring}}(L^i_{\Ring}, L^j_{\Ring})$
are graded and all graded components are projective $\Ring$-modules.
\end{proof}

\end{document}